\documentclass[12pt]{amsart}

\usepackage{amsfonts, amstext, amsmath, amsthm, amscd, amssymb, enumitem, url}
\usepackage[table]{xcolor}
\usepackage{tikz-cd}
\usepackage{pdfpages}
\usepackage{pdflscape}

\usepackage{geometry}
\usepackage[parfill]{parskip}
\usepackage{microtype}

\usepackage{tabularx}

\newcolumntype{x}{>{\centering\arraybackslash}X}
\usepackage{makecell}
\usepackage{longtable}
\usepackage{hhline}
\usepackage{multirow}
\usepackage{subfig}

\usetikzlibrary{arrows.meta}

\usepackage[colorlinks,citecolor=cyan,linkcolor=magenta]{hyperref}
\usepackage{zref-clever} 
\zcsetup{cap = true, nameinlink = true}
\newcommand{\Cref}[1]{\zcref{#1}}

\newtheorem{thm}{Theorem}[section]
\zcRefTypeSetup{thm}{Name-sg = Theorem}

\usepackage{xparse} 

\NewCommandCopy{\newtheoremcopy}{\newtheorem}
\RenewDocumentCommand{\newtheorem}{m O{thm} m}{ 
\newtheoremcopy{#1}[#2]{#3}
\AddToHook{env/#1/begin}{\zcsetup{countertype={thm=#1}}}
\zcRefTypeSetup{#1}{Name-sg = #3}}

\newtheorem{lem}[thm]{Lemma}
\newtheorem{prop}[thm]{Proposition}
\newtheorem{cor}[thm]{Corollary}

\newtheorem{conj}[thm]{Conjecture}

\RenewDocumentCommand{\newtheorem}{m O{thm} m}{ 
\newtheoremcopy{#1}[#2]{#3}
\AddToHook{env/#1/begin}{\zcsetup{countertype={thm=#1}}}
\zcRefTypeSetup{#1}{Name-sg = #3}
\AtBeginEnvironment{#1}{\pushQED{\hfill $\lozenge$}}
\AtEndEnvironment{#1}{\popQED}}

\theoremstyle{definition}
\newtheorem{defn}[thm]{Definition}
\newtheorem{rmk}[thm]{Remark}
\newtheorem{constr}[thm]{Construction}
\newtheorem{eg}[thm]{Example}

\numberwithin{equation}{section}

\usepackage{todonotes}
\renewcommand{\epsilon}{\varepsilon}

\usepackage{stmaryrd}

\newcommand{\Solv}{\mathrm{Solv}}

\DeclareMathOperator{\intr}{\mathrm{int}}

\DeclareMathOperator{\sing}{\mathrm{sing}}
\DeclareMathOperator{\slope}{\mathrm{slope}}
\DeclareMathOperator{\hght}{\mathrm{height}}
\DeclareMathOperator{\cl}{\mathrm{cl}}

\allowdisplaybreaks

\begin{document}

\title[Legendrian position of veering triangulations]{Legendrian position of veering triangulations}

\author{Chi Cheuk Tsang}
\email{chicheuk@hotmail.com}

\begin{abstract}
We make a first step towards connecting the theory of veering triangulations and bicontact structures as tools for studying (pseudo-)Anosov flows:
We show that given a veering triangulation corresponding to an Anosov flow with orientable stable and unstable foliations, the edges of the triangulation can be realized as Legendrian arcs with respect to a strongly adapted bicontact structure that supports the Anosov flow.
Along the way, we show that every veering triangulation can be placed in `steady position', where each pair of edge projections that intersect in the orbit space only intersect once transversely. 
By a previous result of the author, this implies that horizontal surgery of veering triangulations correspond to horizontal Goodman surgery of pseudo-Anosov flows.
\end{abstract}

\maketitle


\section{Introduction} \label{sec:intro}

A flow on a 3-manifold is \textbf{Anosov} if it has a contracting and an expanding direction. A flow is \textbf{pseudo-Anosov} if it is Anosov except possibly having finitely many singular orbits where the contracting and expanding directions are pronged.
Aside from their interest as dynamical objects, work by many authors have shown a deep connection between these flows and many other topological and geometric structures on 3-manifolds, including taut foliations \cite{Mos96, Cal00, Fen02}, hyperbolic geometry \cite{Fen16, FL25}, and Floer homology \cite{AT25a, AT25b, Zun26}.

In recent years, a plethora of new tools for studying (pseudo-)Anosov flows have emerged. These include veering triangulations \cite{FSS19, Tsathesis}, orbit spaces \cite{BFraM25, BBonM24, BarMan25}, geometric types \cite{Iak22, Iak25}, and bicontact structures \cite{Mit95, CLMM22, Hoz24, Mas25, Sal25, AS26}. 
While there is some understanding of the relation between the first three of these \cite{LMT23, SS24, SS23, Hal25}, the literature on bicontact structures has been rather isolated from this cluster of ideas.
The purpose of this paper is to take a first step in making a connection, by showing an interaction between veering triangulations and bicontact structures.

Let us first give a brief review of these objects.
A \textbf{veering triangulation} is an ideal triangulation of an oriented 3-manifold along with combinatorial data, including coorientations on the faces and a coloring of the edges by red and blue, satisfying certain conditions.
Given a pseudo-Anosov flow $\phi$ on a closed oriented 3-manifold $M$ with no perfect fits relative to $\mathcal{C}$, there is an associated veering triangulation $\Delta$ of the drilled 3-manifold $M \backslash \mathcal{C}$.
It is known that $\Delta$ can be placed in \textbf{transverse position} with respect to $\phi$ in $M$ \cite{LMT23}. This means that the faces of $\Delta$ can be arranged to be positively transverse to $\phi$, and the edges of $\Delta$ can be arranged to be transverse to the stable and unstable foliations of $\phi$.
In fact, once placed in this position, $\Delta$ determines $\phi$ up to isotopic equivalence.

In more detail, the edges of $\Delta$ are determined by the \textbf{edge rectangles} in the orbit space of $\phi$. These are rectangles with two opposite corners on points of $\widetilde{\mathcal{C}}$.
Taking the convention that the stable foliation is vertical and the unstable foliation is horizontal in the orbit space, the rectangle corresponding to the top edge of each tetrahedron is taller and thinner than that of the bottom edge. 
That is, as one moves along the flow, the rectangles `rotate towards' the stable foliation.
The color of the edges determine which quadrant the rotation takes place in: the red edges are \textbf{positive}, i.e. lie in the first and third quadrants, while the blue edges are \textbf{negative}, i.e. lie in the second and fourth quadrants. See \Cref{fig:legpositionmotivation} left.

\begin{figure}
    \centering
    \fontsize{10pt}{10pt}
\begingroup%
  \makeatletter%
  \providecommand\color[2][]{%
    \errmessage{(Inkscape) Color is used for the text in Inkscape, but the package 'color.sty' is not loaded}%
    \renewcommand\color[2][]{}%
  }%
  \providecommand\transparent[1]{%
    \errmessage{(Inkscape) Transparency is used (non-zero) for the text in Inkscape, but the package 'transparent.sty' is not loaded}%
    \renewcommand\transparent[1]{}%
  }%
  \providecommand\rotatebox[2]{#2}%
  \newcommand*\fsize{\dimexpr\f@size pt\relax}%
  \newcommand*\lineheight[1]{\fontsize{\fsize}{#1\fsize}\selectfont}%
  \ifx\svgwidth\undefined%
    \setlength{\unitlength}{274.45941114bp}%
    \ifx\svgscale\undefined%
      \relax%
    \else%
      \setlength{\unitlength}{\unitlength * \real{\svgscale}}%
    \fi%
  \else%
    \setlength{\unitlength}{\svgwidth}%
  \fi%
  \global\let\svgwidth\undefined%
  \global\let\svgscale\undefined%
  \makeatother%
  \begin{picture}(1,0.4434198)%
    \lineheight{1}%
    \setlength\tabcolsep{0pt}%
    \put(0,0){\includegraphics[width=\unitlength,page=1]{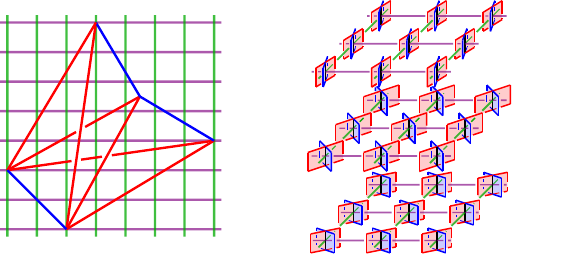}}%
    \put(0.88228195,0.17366437){\color[rgb]{0,0,1}\makebox(0,0)[lt]{\lineheight{1.25}\smash{\begin{tabular}[t]{l}$\xi_+$\end{tabular}}}}%
    \put(0.93930275,0.17366437){\color[rgb]{1,0,0}\makebox(0,0)[lt]{\lineheight{1.25}\smash{\begin{tabular}[t]{l}$\xi_-$\end{tabular}}}}%
    \put(0.87124183,0.17366437){\color[rgb]{0,0,0}\makebox(0,0)[lt]{\lineheight{1.25}\smash{\begin{tabular}[t]{l}$( \hspace{1em},\hspace{1em})$\end{tabular}}}}%
  \end{picture}%
\endgroup%

    \caption{Left: The edges of a veering triangulation rotate towards the stable foliation as one moves along the flow. Right: The contact planes of a supporting bicontact structure $(\xi_+,\xi_-)$ rotate towards the stable foliation as one moves along the flow.}
    \label{fig:legpositionmotivation}
\end{figure}

Meanwhile, for the purposes of this paper, a \textbf{bicontact structure} is a pair $(\xi_+,\xi_-)$ consisting of a positive contact structure and a negative contact structure that intersects transversely at every point.
We say that a bicontact structure \textbf{supports} a flow $\phi$ if the generating vector field $\dot{\phi}$ is contained in the intersection $\xi_+ \cap \xi_-$.
It is known that every Anosov flow with orientable stable and unstable foliations is supported by a bicontact structure.
Conversely, if a bicontact structure $(\xi_+,\xi_-)$ is \textbf{strongly adapted}, meaning $\xi_+$ admits a contact form whose Reeb flow $R_+$ is contained in $\xi_-$, then it supports an Anosov flow with orientable stable and unstable foliations.

When a bicontact structure $(\xi_+,\xi_-)$ supports an Anosov flow, the contact condition forces the contact planes $\xi_+$ and $\xi_-$ to rotate towards the stable foliation as one moves along the flow, with $\xi_+$ doing so within the second and fourth quadrants, and $\xi_-$ doing so within the first and third quadrants.
See \Cref{fig:legpositionmotivation} right.

Notice the similarity in behaviors of the blue/red edges of veering triangulations and the positive/negative contact planes of bicontact structures, respectively.
Our main theorem makes this observation precise:

\begin{thm} \label{thm:legendrianposition}
Let $\phi$ be an Anosov flow with orientable stable and unstable foliations on a closed oriented 3-manifold $M$ with no perfect fits relative to $\mathcal{C}$. 
Let $\Delta$ be the veering triangulation associated to $(\phi,\mathcal{C})$.
Then there exists a strongly adapted bicontact structure $(\xi_+,\xi_-)$ on $M$ supporting $\phi$, such that $\Delta$ can be put in transverse position with respect to $\phi$, with the blue edges of $\Delta$ being Legendrian with respect to $\xi_+$ and the red edges of $\Delta$ being Legendrian with respect to $\xi_-$.
\end{thm}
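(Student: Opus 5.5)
The plan is to build the bicontact structure and the position of the edges together, working in local coordinates adapted to the orbit space. The abstract says every veering triangulation can be put in \emph{steady position}: each pair of edge projections that intersect in the orbit space do so once, transversely. I would first establish this. Starting from transverse position as in \cite{LMT23}, I would isotope each edge in its own edge rectangle so that its projection is a monotone arc joining the two corners. Positive (red) edges then have positive slope and negative (blue) edges negative slope. I would also parametrize the edge so that its slope rotates monotonically towards the vertical (stable) direction as it moves along the flow, matching the top/bottom rule for tetrahedra. The coordinates here are those of the product structure given by the stable and unstable foliations on each flow box.

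Next I would construct the bicontact structure. Orientability of the foliations gives a splitting $TM = \langle \dot\phi\rangle \oplus E^s \oplus E^u$, with continuous but in general not smooth line fields. Take smooth approximations $X_s,X_u$ of vectors spanning $E^s,E^u$. Supporting bicontact structures have the form $\xi_\pm = \langle \dot\phi,\, X_u \mp f_\pm X_s\rangle$ for positive functions $f_\pm$, with the sign convention chosen so that $\xi_+$ lies in the second and fourth quadrants. The contact condition for such a plane field is a differential inequality along flow lines, namely $\dot\phi(\log f_\pm) > $ (a term controlled by the expansion and contraction rates). By the Anosov property this holds once $f$ is obtained by averaging along the flow over a long time, as in Mitsumatsu's construction and its refinements in \cite{Hoz24, Mas25}. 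The freedom in choosing $f_\pm$ is the key point. Near each blue edge I would prescribe $f_+$ to equal the slope of the edge's tangent (in the $E^s,E^u$ coordinates transverse to the flow). Near red edges I would prescribe $f_-$ in the same way. Because these slopes increase towards the stable direction along the flow exactly as the contact condition requires, the prescribed germs satisfy the contact inequality locally. I would then extend the germs to global $f_\pm$ satisfying the inequality by a partition-of-unity argument along flow segments. Steady position is used here: it guarantees that the edges of a given colour meet each flow line consistently, so the prescriptions are compatible. Edges of the same colour whose projections cross transversely once must both lie in $\xi_+$ at different heights, and the rotation must interpolate monotonically between them.

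Finally I have to arrange strong adaptedness, which means finding a contact form $\alpha_+$ for $\xi_+$ whose Reeb field lies in $\xi_-$. Following the known criterion, this reduces to choosing $\xi_-$ so that $\ker d\alpha_+|_{\xi_+}$ direction lies in $\xi_-$; equivalently, given $\xi_+$ we may take $\xi_- = \langle\dot\phi, R_+\rangle$ for a suitable form, after rescaling $\alpha_+$ by a function to move $R_+$. I expect this step to be the main obstacle. Strong adaptedness couples $f_-$ to $f_+$, while the red edges independently impose $f_-$ near them. My first attempt would be to prescribe $f_+$ near blue edges, then choose a conformal factor $h$ on $\alpha_+$ whose derivative is localized away from the blue edges. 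I would use $h$ to steer $R_+$ so that the resulting $\xi_-$ contains the red edges, which sit in a disjoint neighbourhood. If this is too rigid, I would instead take a long-time flow pushforward of both structures, which improves the contact inequalities, and correct via the edges' own flow-equivariant monotone rotation.
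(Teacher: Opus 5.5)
Your proposal leaves the statement unproved, and the gap is the step you flag yourself: strong adaptedness. This step is not a technicality. In a strongly adapted form, $\xi_-$ is not an independent choice. Since $\dot\phi\in\xi_+\cap\xi_-$, $R_+\in\xi_-$ and $\alpha_+(R_+)=1$, you are forced to have $\xi_-=\langle\dot\phi,R_+\rangle$. So prescribing $f_-$ near the red edges separately from $f_+$ is not available; the red edges constrain $\alpha_+$ itself.

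Your proposed fix is to replace $\alpha_+$ by $e^h\alpha_+$. The Reeb field becomes $e^{-h}(R_++Y)$, with $Y\in\xi_+$ determined by $\iota_Y d\alpha_+|_{\xi_+}=dh|_{\xi_+}$. Writing $Y=a\dot\phi+bv_+$, the only part that rotates $\langle\dot\phi,R_+\rangle$ is $b=\dot\phi(h)/d\alpha_+(v_+,\dot\phi)$. If $dh$ is supported near the red edges, $\dot\phi(h)$ has zero integral along every orbit segment crossing the support. So the rotations you create necessarily go both ways along each such segment. Nothing in the outline shows that the resulting plane field stays negative contact, which is a monotonicity condition along flow lines. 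Nor does it show these corrections are compatible along recurrent orbits that pass near red edges infinitely often.

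Pushing forward by $\phi^T$ preserves strong adaptedness but moves the Legendrian locus off the edges, so it does not address this coupling. Even your earlier step of extending $f_\pm$ by a partition of unity is not justified. The contact condition is a differential inequality along the flow, and cutoffs contribute terms $\dot\phi(\psi_i)(f_i-f_j)$ of uncontrolled sign. A transitive flow has no nonconstant flow-invariant cutoffs.

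Your first step has a similar problem: steady position is not obtained by adjusting each edge inside its own rectangle. Steadiness is a condition on every crossing between lifts of same-colour edges at once. It must be achieved $\pi_1M$-equivariantly, and it must be compatible with the requirement that for $R_1<R_2$ the edge over $d_{R_2}$ lies above the one over $d_{R_1}$, so that tetrahedra do not invert. This is the paper's steady-position theorem, where most of the work lies.

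The idea you are missing is a Birkhoff section $S$ with orientation-preserving boundary orbits disjoint from $\mathcal{C}$. On $M\setminus\partial S$ the forms $\alpha^S_\pm=\lambda^t\,ds\pm\lambda^{-t}\,du$ are globally defined, using orientability of the foliations. They are already strongly adapted, since $R_+=\tfrac12\lambda^{-t}\partial_s+\tfrac12\lambda^{t}\partial_u\in\ker\alpha^S_-$. Moreover, Legendrian arcs over a diagonal are the canonical lifts at height $\tfrac12\log_\lambda|\slope|$. So one explicit form handles both colours, and Legendrian position reduces to a two-dimensional slope criterion for diagonals in the quasi-translation structure on $\mathcal{O}^\circ$. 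The paper arranges that criterion with strict anchors, buoys and tight arcs, followed by peeling overlaps, rounding corners and rotating at misalignments.

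Strong adaptedness is then never solved for. Because the diagonals avoid $\widetilde{\partial S}$, the edges miss $\partial S$. One fills $(\alpha^S_+,\alpha^S_-)$ across each boundary orbit with explicit models that stay strongly adapted and keep the flow transverse to the meridional discs carrying the faces. This yields an Anosov flow $\psi$ with $\Delta$ in Legendrian position. The uniqueness theorem for flows with respect to which $\Delta$ is in transverse position then identifies $\psi$ with $\phi$ by an orbit equivalence isotopic to the identity, and one pulls everything back along it.
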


In the rest of this introduction, we explain an application to horizontal Goodman surgery, explain the strategy of proof to \Cref{thm:legendrianposition}, and finally present a speculative framework on the connection between veering triangulations and bicontact structures.

\subsection{Horizontal Goodman surgery}

In \cite{Tsa24a}, we defined a surgery operation on pseudo-Anosov flows which we called \textbf{horizontal Goodman surgery}.
To briefly summarize: A \textbf{positive/negative horizontal surgery curve} to a pseudo-Anosov flow $\phi$ is a curve $c$ that is positive/negative, respectively, and \textbf{steady}, in the sense that whenever there is \textbf{crossing} of $c$, i.e. an orbit segment from $x \in c$ to $y \in c$, then the tangent line $Tc|_y$ lies closer to the stable foliation than $Tc|_x$.
Cutting along an annulus $A$ containing $c$ and transverse to $\phi$, then regluing by a Dehn twist of positive/negative degree $n$, respectively, returns a surgered pseudo-Anosov flow $\phi_{\frac{1}{n}}(c)$ on the surgered manifold $M_{\frac{1}{n}}(c)$.

Furthermore, we conjectured that, in a precise way, horizontal Goodman surgery on pseudo-Anosov flows and \textbf{horizontal surgery} on veering triangulations correspond to each other.
The latter is a combinatorial operation defined in \cite{Tsa23} that takes in a veering triangulation $\Delta$, a \textbf{positive/negative horizontal surgery curve} $c$ satisfying some combinatorial conditions, and a positive/negative integer $n$, respectively, and returns a surgered veering triangulation $\Delta_{\frac{1}{n}}(c)$.

In \cite[Theorem 7.1]{Tsa24b}, we showed one direction of this conjecture for veering triangulations that can be placed in \textbf{steady position}, which means that the union of blue edges and the union of red edges are each steady, in the same sense as for horizontal surgery curves of pseudo-Anosov flows defined above.
In other words, we wish for the blue/red edges to rotate steadily towards the stable foliation as one moves along the flow.
Using the same ideas for proving \Cref{thm:legendrianposition}, we are now able to show that such a position can always be arranged.

\begin{thm} \label{thm:steadyposition}
Let $\phi$ be a pseudo-Anosov flow on a closed oriented 3-manifold $M$ with no perfect fits relative to $\mathcal{C}$. 
Then the veering triangulation $\Delta$ associated to $(\phi,\mathcal{C})$ can be placed in steady position with respect to $\phi$.
\end{thm}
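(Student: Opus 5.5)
The plan is to build the transverse position of \cite{LMT23} again, this time with explicit control over the slopes of the edges, so that steadiness holds by construction. Work in the orbit space $\mathcal{O}$ of $\widetilde{\phi}$. Away from the lifts $\widetilde{\mathcal{C}}$ and the singular orbits, $\mathcal{O}$ carries the transverse pair of stable/unstable foliations, with stable vertical and unstable horizontal. Each edge $e$ of $\Delta$ corresponds to an edge rectangle $R_e$ with two opposite corners on $\widetilde{\mathcal{C}}$. In transverse position, $e$ is realized by a flow-transverse arc whose projection to $\mathcal{O}$ is a monotone arc $\gamma_e$ in $R_e$ joining those two corners. It is positive (increasing) if $e$ is red and negative (decreasing) if $e$ is blue.

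Steadiness concerns pairs of edges $e,f$ of the same color whose projections cross. In that case $R_e$ and $R_f$ cross, and one of them, say $R_f$, is taller and thinner; correspondingly $f$ lies above $e$ along the flow. We need the tangent of $\gamma_f$ at the crossing point to be closer to vertical than that of $\gamma_e$, and we also want the two projections to meet only once, transversely.

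The key step is to choose $\gamma_e$ canonically from $R_e$ in an equivariant way. Fix a $\pi_1$-invariant transverse structure on $\mathcal{O}$, namely a singular flat or affine structure in which the leaves are straight, for instance coming from a Markov partition or from transverse measures defined locally on rectangles. Then let $\gamma_e$ be the \emph{diagonal} of $R_e$. The point is that whenever $R_f$ crosses $R_e$ in the veering sense, the crossing is such that $R_f$ has strictly larger height and strictly smaller width than $R_e$. Two same-sign diagonals of such rectangles then meet exactly once, transversely, and the diagonal of $R_f$ is steeper, so steadiness follows. Since no honest flat structure is $\pi_1$-invariant, I would instead use a convexity trick. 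Choose the arcs $\gamma_e$ inductively over the finitely many edge orbits, making them graphs of convex-type functions whose slope field interpolates appropriately, and spread crossings via the flow so that on each fundamental domain slopes increase monotonically in the flow parameter. Alternatively, one can use a $\pi_1$-invariant family of ``rectangle coordinates'' that identifies each $R_e$ affinely with $[0,1]^2$ and is compatible on overlaps up to monotone reparametrization. In that setting, the slope comparison reduces to the height/width comparison.

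After that, lift back to $M$. Place each edge at flow-height given by a continuous function and take the faces and tetrahedra as in \cite{LMT23}. Transverse position is unaffected, because it only needs projections inside the rectangles together with the ordering along the flow, which is fixed by the combinatorics.

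The main obstacle is the middle step. For a pair $e,f$ with crossing rectangles we must guarantee $\text{slope}(\gamma_f)>\text{slope}(\gamma_e)$ at the crossing point, and this has to hold simultaneously and equivariantly for all pairs, even though the projections live in a space without an invariant metric. Near the singular orbits and the points of $\widetilde{\mathcal{C}}$, the corners of many rectangles accumulate and the slopes degenerate. I would first try to handle this by prescribing local models (prong charts) near $\mathcal{C}$ and singular orbits in which the arcs are straight rays. The remaining comparison happens in compact regions and is handled by a finite-order perturbation argument, using that there are only finitely many orbits of crossing pairs of edges.
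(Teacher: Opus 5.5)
There is a genuine gap, and it is the step you flag as the main obstacle: you never produce an equivariant family of arcs with the required pointwise slope comparison, and the mechanisms you suggest do not supply one. Rectangle coordinates that agree on overlaps only up to monotone reparametrization $(x,y)\mapsto(a(x),b(y))$ do not reduce the comparison to height versus width. In the chart of $R_e$, the slope of $\gamma_f$ at the crossing point is its diagonal slope times $a'(x)/b'(y)$. The height/width inequality controls only averages of $a'$ and $b'$, not their values at that point. Pointwise control requires affine transitions $(x,y)\mapsto(\pm\lambda^k x+x_0,\pm\lambda^{-k}y+y_0)$.

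The idea you are missing is that such a $\pi_1 M$-invariant structure does exist, just not on all of $\mathcal{O}$. A Birkhoff section $S$ induces a quasi-translation structure on $\mathcal{O}^\circ=\mathcal{O}\setminus(\widetilde{\partial S}\cup\widetilde{\sing(\phi)})$, and straightness and slope comparisons at a point are invariant under it. The price is that edge rectangles generally contain punctures of $\widetilde{\partial S}$ with nontrivial holonomy, so there is no straight diagonal. This is why the paper builds diagonals from a strict anchor system and tight arcs around buoys (\Cref{prop:strictanchorsystem}, \Cref{constr:tightarc}). It then peels apart overlaps, rounds corners, and rotates away misalignments.

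Your closing finiteness claim is also false. An edge rectangle meets infinitely many taller, thinner edge rectangles of the same color, lying in infinitely many $\pi_1 M$-orbits of pairs. Downstairs, a red edge flowed forward for longer and longer times keeps meeting red edges, and steadiness constrains every such crossing. So no compact-region, finite-perturbation argument can close. In the paper, the reduction to finitely many orbits (\Cref{lem:overlapfinite}, \Cref{lem:misalignmentfinite}) comes precisely from the quasi-translation structure. A deck transformation of height $k$ rescales slopes by $\lambda^{2k}$, so pairs with $|\hght(g)|>N$ satisfy the inequality automatically.

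Finally, the lifting step is not free. The ``ordering along the flow'' is the crossing criterion of \Cref{prop:edgecandidatestoveertri}. It must hold at every intersection point of projected edges of either color, and an arbitrary equivariant height function does not achieve it. The paper gets it by placing each point $v$ of $d_R$ at height $\frac{1}{2}\log_\lambda|\slope_v(d_R)|$. This canonical lift is Legendrian for the bicontact structure of $S$, and it turns the crossing criterion into the slope criterion. That is why the slope inequality is imposed on opposite-color pairs as well, which your proposal omits.
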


Combining \Cref{thm:steadyposition} and \cite[Theorem 7.1]{Tsa24b}, we obtain the following corollary.

\begin{cor} \label{cor:horsurgerycorr}
Let $\phi$ be a pseudo-Anosov flow on a closed oriented 3-manifold $M$ with no perfect fits relative to $\mathcal{C}$. Let $\Delta$ be the veering triangulation associated to $(\phi,\mathcal{C})$. 
Then for every positive/negative horizontal surgery curve $c_\Delta$ of $\Delta$, there is an isotopic positive/negative horizontal surgery curve $c_\phi$ of the flow $\phi$, which is disjoint from $\mathcal{C}$.
Moreover, for every positive/negative integer $n$, the veering triangulation $\Delta_{\frac{1}{n}}(c_\Delta)$ is the veering triangulation associated to $(\phi_{\frac{1}{n}}(c_\Delta),\mathcal{C})$.
\end{cor}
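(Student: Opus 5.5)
The corollary is a formal combination of \Cref{thm:steadyposition} with \cite[Theorem 7.1]{Tsa24b}. The plan is to check that the hypotheses of the latter are exactly what the former supplies. As recalled above, \cite[Theorem 7.1]{Tsa24b} proves the conjectured correspondence between horizontal surgery on veering triangulations and horizontal Goodman surgery, under the standing assumption that $\Delta$ can be placed in steady position with respect to $\phi$. So the first step is to invoke \Cref{thm:steadyposition}: for $(\phi,\mathcal{C})$ with no perfect fits, the associated $\Delta$ admits a transverse position in which the union of blue edges and the union of red edges are each steady.

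The second step is to fix this steady position and apply \cite[Theorem 7.1]{Tsa24b} verbatim. Let $c_\Delta$ be a positive (respectively negative) horizontal surgery curve of $\Delta$. Combinatorially, $c_\Delta$ is carried by edges of a single color: red for positive and blue for negative, matching the quadrant conventions.

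In steady position, we realize $c_\Delta$ as a curve $c_\phi$ isotopic to it. We take $c_\phi$ to run near the corresponding red (respectively blue) edges, smoothed so that it stays transverse to the stable and unstable foliations and disjoint from $\mathcal{C}$. This is possible because the edges lie in $M \backslash \mathcal{C}$.

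We then check the two conditions in the definition of a horizontal surgery curve of the flow:
\begin{itemize}
\item \textbf{Positivity/negativity.} This is inherited from the quadrant in which the edges of that color lie.
\item \textbf{Steadiness.} Any crossing of $c_\phi$ projects to a crossing between edges of the same color, and steady position makes the tangent line at the later point closer to the stable foliation.
\end{itemize}
This is exactly the content of \cite[Theorem 7.1]{Tsa24b}, which also identifies $\Delta_{\frac{1}{n}}(c_\Delta)$ with the veering triangulation of $(\phi_{\frac{1}{n}}(c_\Delta),\mathcal{C})$ for every $n$ of the appropriate sign.

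The only point I expect to need care is making sure the notion of steady position produced in \Cref{thm:steadyposition} agrees with the one assumed in \cite{Tsa24b}. That notion is: each pair of edge projections that intersect in the orbit space intersect once transversely, and the blue and red unions are steady. The same applies to the conventions for positive/negative. Granting that, the corollary follows directly, with all the substantive work residing in \Cref{thm:steadyposition}.
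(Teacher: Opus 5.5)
Your proposal is correct and matches the paper's argument: the corollary is obtained by feeding the steady position supplied by \Cref{thm:steadyposition} into \cite[Theorem 7.1]{Tsa24b}, whose only extra hypothesis is that $\Delta$ can be placed in steady position. Your intermediate sketch of how $c_\phi$ is produced is not needed. In particular, running it near same-colored edges cannot be meant literally, since the edges run out to $\mathcal{C}$, so that construction is better left inside the black box of the cited theorem.
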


\subsection{Strategy of proof}

Prior to this paper, \Cref{thm:steadyposition} has been known for layered veering triangulations.
We review the argument here, since our proof of \Cref{thm:legendrianposition} and \Cref{thm:steadyposition} will be built on these ideas.

A veering triangulation $\Delta$ is \textbf{layered} if it is the veering triangulation associated to some $(\phi,\mathcal{C})$ where the restriction $\phi^\circ$ of $\phi$ to the complement $M^\circ = M \backslash \mathcal{C}$ is the suspension of a pseudo-Anosov map $f$ on some surface $S^\circ$.
Here, recall that $f$ being a pseudo-Anosov map means that there exists \textbf{stable/unstable measure foliations} $(\ell^{s/u},\mu^{s/u})$ such that $f_*(\ell^s,\mu^s) = (\ell^s,\lambda^{-1} \mu^s)$ and $f_*(\ell^u,\mu^u) = (\ell^u,\lambda \mu^u)$ for some $\lambda > 1$.
In this case, the stable and unstable measured foliations induce a flat metric on $S^\circ$.
This in turn induces a flat metric on the orbit space $\widetilde{\mathcal{O}^\circ} = \widetilde{\mathcal{O} \backslash \widetilde{\mathcal{C}}} \cong \widetilde{S^\circ}$ of $\phi^\circ$, and a $\Solv$ metric on $M^\circ$.

We can now position edges of the veering triangulation by first pulling tight the diagonals of edges rectangles in $\widetilde{\mathcal{O}^\circ} \cong \widetilde{S^\circ}$, so that they become straight lines in the flat metric, and then taking the \textbf{canonical lifts} of these straight lines, i.e. lifting a straight line of slope $m$ to height $\frac{1}{2} \log_\lambda |m|$ in $\widetilde{M^\circ} \cong \mathbb{R} \times \widetilde{S^\circ}$.

There is a bicontact geometry perspective to this canonical lifting procedure:
The stable and unstable measured foliations on $S^\circ$ determine closed 1-forms $ds$ and $du$.
Using these, we can define the positive/negative contact forms
\begin{align} \label{eq:introsurfacebicontact}
\begin{split}
\alpha^S_+ = \lambda^t ds + \lambda^{-t} du \\
\alpha^S_- = \lambda^t ds - \lambda^{-t} du
\end{split}
\end{align}
on $\mathbb{R} \times S^\circ$.

A straightforward computation verifies that $(\ker \alpha^S_+, \ker \alpha^S_-)$ is a strongly adapted bicontact structure supporting the lift of $\phi^\circ$, which is generated by $\frac{\partial}{\partial t}$.
The canonical lift of a diagonal $d \subset \widetilde{\mathcal{O}^\circ}$ can now be defined as the unique arc $d^\wedge$ in $\widetilde{M^\circ}$ lying over $d$ that is Legendrian with respect to the lifted contact structure $\ker \widetilde{\alpha_\pm}$ on $\widetilde{M^\circ}$, where the sign on $\widetilde{\alpha_\pm}$ is negative/positive depending on whether $d$ has positive/negative slope, respectively.

It is clear that the union of positive/negative canonical lifts are steady.
The other important property is what we refer to as the \textbf{slope criterion}: Whenever $R_1$ and $R_2$ are edge rectangles where $R_2$ is taller and thinner than $R_1$, then at every intersection point of the diagonals $d_{R_1}$ and $d_{R_2}$, we have $|\slope(d_{R_2})| > |\slope(d_{R_1})|$.
This implies the following property which we refer to as the \textbf{crossing criterion}: Any crossings between the canonical lifts $d^\wedge_{R_1}$ and $d^\wedge_{R_2}$ are of $d^\wedge_{R_2}$ over $d^\wedge_{R_1}$.

The crossing criterion ensures that we can fill in the rest of the triangulation from the choice of the canonical lifts as the edges.
For instance, taking $R_2$ to be the top edge rectangle and $R_1$ to be the bottom edge rectangle of a tetrahedron rectangle, the crossing criterion ensure that the top edge $d^\wedge_{R_2}$ lies above the bottom edge $d^\wedge_{R_1}$, so that the tetrahedron is not `flipped inside out'.

Taking the quotient of the triangulation under $\pi_1 M^\circ$, we have realized the veering triangulation $\Delta$ in steady position.
Moreover, the forms $\alpha_+$ and $\alpha_-$ descend to contact forms on $M^\circ$, and the blue/red edges of $\Delta$ are Legendrian with respect to $\ker \alpha^S_{+/-}$, respectively.

Here, strictly speaking, the forms $ds, du$ on $S^\circ$ are only well-defined if $\ell^s$ and $\ell^u$ are orientable. Otherwise, they are locally well-defined up to a sign.
Similarly, $\alpha^S_+$ and $\alpha^S_-$ on $M^\circ$ are only well-defined if $\ell^s$ and $\ell^u$ are orientable, and $f$ preserves those orientations. Otherwise, they are locally well-defined up to a sign.

The main insight of this paper is that, with some work, the above argument can be carried over with the notion of a Birkhoff section.
Here, a \textbf{Birkhoff section} to a pseudo-Anosov flow $\phi$ is an immersed surface with boundary $S$ where 
\begin{itemize}
    \item the interior $\intr(S)$ is embedded and is transverse to the flow,
    \item the boundary $\partial S$ lies along closed orbits of $\phi$, and
    \item every orbit of $\phi$ meets $S$ in finite forward and backward time.
\end{itemize}

The first item implies that the stable/unstable foliations of $\phi$ induce singular 1-dimensional foliations $\ell^{s/u}$ on $\intr(S)$.
The third item implies that there is a first return map $f$ on $\intr(S)$, which must preserve the foliations $\ell^{s/u}$.
From this, one can show that $f$ is a pseudo-Anosov map with stable/unstable foliations $\ell^{s/u}$.

Then as above, we have a flat metric on $S^\circ = \intr(S) \backslash \sing(f)$, which induces a flat metric on the universal cover $\widetilde{\mathcal{O}^\circ} \cong \widetilde{S^\circ}$ of the orbit space $\mathcal{O}^\circ = \mathcal{O} \backslash (\widetilde{\partial S} \cup \widetilde{\sing(\phi)})$ of the restriction $\phi^\circ$ of $\phi$ to $M^\circ = M \backslash (\partial S \cup \sing(\phi))$, and a $\Solv$ metric on $M^\circ$.

The problem now is that $\partial S \cup \sing(\phi)$ may not be contained in $\mathcal{C}$, in which case the edge rectangles used to define $\Delta$ will not live in $\widetilde{\mathcal{O}^\circ}$.
Instead, they live in $\mathcal{O}$ and become punctured in $\mathcal{O}^\circ$.
This means that when we pull the diagonals tight, they will in general be caught on the punctures $\widetilde{\partial S}$, a priori in a possibly complicated way, so the slope criterion may not be satisfied.

What we will show is that if we first choose the diagonals to be the first-horizontal-then-vertical arcs from the corners to an appropriate choice of a center \textit{anchor}, then upon pulling it tight, as in \Cref{fig:intropldiag}, the resulting choice of piecewise linear diagonals will satisfy a proxy of the slope criterion.
Namely, diagonals of the same color may overlap along segments, but the segments immediately beyond the overlap satisfy the appropriate slope inequality. 
See \Cref{fig:introperturbdiag} left, and see \Cref{prop:pldiaggoals} for the detailed statement.
This uses nothing more than elementary Euclidean geometry, but does require a bit of casework.
It will also be convenient for us to add in some \textit{buoys} to the orbit space, so that the diagonal gets caught on the buoys instead of $\widetilde{\partial S}$.

\begin{figure}
    \centering
    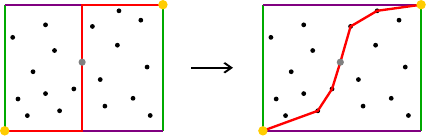
    \caption{Our first construction of the diagonals is to take the first-horizontal-then-vertical arcs from the corners to an appropriate choice of a center anchor, then pull it tight.}
    \label{fig:intropldiag}
\end{figure}

We will then show in \Cref{prop:smoothdiaggoals} that we can modify these piecewise linear diagonals to be smooth and satisfying the slope criterion on the nose.
This involves three substeps: We first peel apart the overlaps, then round the corners, and finally rotate near the intersection points of diagonals of opposite color to arrange for the slope criterion. See \Cref{fig:introperturbdiag}.
Among these, the first substep is the most technical, since we have to ensure that when we peel apart each individual pair of diagonals, we do not break the slope criterion for other pairs.

\begin{figure}
    \centering
    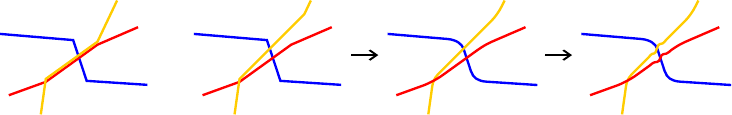
    \caption{Modifying the piecewise linear diagonals from \Cref{fig:intropldiag}. From left to right: Peeling apart the overlaps, rounding the corners, and rotating near the intersection points of diagonals of opposite color.}
    \label{fig:introperturbdiag}
\end{figure}

With the slope criterion being arranged, we can take the canonical lifts of the diagonals and fill in the triangulation as in the layered case. This proves \Cref{thm:steadyposition}.

The proof of \Cref{thm:legendrianposition} is more roundabout.
We first construct the veering triangulation $\Delta$ as in \Cref{thm:steadyposition}.
Since we are assuming that the flow $\phi$ is Anosov and has orientable stable and unstable foliations, the bicontact form $(\alpha^S_+,\alpha^S_-)$ from \Cref{eq:introsurfacebicontact} is well-defined.
However, it is only defined on $M^\circ = M \backslash \partial S$. 
We will show in \Cref{constr:fillbicontactformpos} and \Cref{constr:fillbicontactformneg} that we can extend the bicontact structure over each component of $\partial S$ while maintaining strong adaptedness.
This gives us a strongly adapted bicontact form $(\alpha_+,\alpha_-)$ on $M$. In particular, $(\alpha_+,\alpha_-)$ supports an Anosov flow $\psi$.

Meanwhile, since our diagonals were chosen to lie away from $\widetilde{\partial S}$, the canonical lifts, which are the edges of $\Delta$, lie away from $\partial S$. This means that $\Delta$ intersects $\partial S$ in the interior of its faces in meridional discs.
After our operation on the bicontact form, we can arrange for these meridional discs to be transverse to the new flow $\psi$.
This implies that $\Delta$ is in Legendrian position with respect to $\psi$. That is, we have proved \Cref{thm:legendrianposition} but for $\psi$.

The last step is to apply uniqueness of flows with respect to which $\Delta$ can be put in transverse position, in order to conclude that $\phi$ and $\psi$ are isotopically equivalent.
Transferring the triangulation and the bicontact structure through the isotopic equivalence, we deduce \Cref{thm:legendrianposition} for $\phi$.

\subsection{Hexality of veering triangulations}

We present a speculative picture of how the theories of veering triangulations and bicontact structures could intertwine.
To motivate things, let us consider the 3-torus $T^3 = \mathbb{R}^3/\mathbb{Z}^3$, and consider the three curves 
\begin{align*}
c_x &= \{(x,\frac{1}{2},0) \mid x \in \mathbb{R}/\mathbb{Z}\} \\ 
c_y &= \{(0,y,\frac{1}{2}) \mid y \in \mathbb{R}/\mathbb{Z}\} \\ 
c_z &= \{(\frac{1}{2},0,z) \mid z \in \mathbb{R}/\mathbb{Z}\} 
\end{align*}
and the three tori
\begin{align*}
T_{yz} &= \{(0,y,z) \mid y,z \in \mathbb{R}/\mathbb{Z}\} \\ 
T_{zx} &= \{(x,0,z) \mid x,z \in \mathbb{R}/\mathbb{Z}\} \\ 
T_{xy} &= \{(x,y,0) \mid x,y \in \mathbb{R}/\mathbb{Z}\}.
\end{align*}
For every triple of integers $n_x,n_y,n_z \in \mathbb{Z}$, we denote by $M_{n_x,n_y,n_z}$ the manifold $T^3_{\frac{1}{n_x},\frac{1}{n_y},\frac{1}{n_z}}(c_x,c_y,c_z)$ obtained by $\frac{1}{n_w}$ surgery on $c_w$, where we take the longitude on $\partial \nu(c_w)$ to be a coordinate curve, for $w=x,y,z$.

Suppose $n_x > 0, n_y < 0, n_z > 0$. 
Then $M_{n_x,n_y,0}$ is the mapping torus of the Anosov map $L^{n_x}R^{n_y} = \begin{bmatrix} 1 & 1 \\ 0 & 1 \end{bmatrix}^{n_x} \begin{bmatrix} 1 & 0 \\ 1 & 1 \end{bmatrix}^{n_y}$ on the torus $T_{xy}$, and, up to isotopy, $c_z$ is an orbit of the Anosov suspension flow. 
Since $M_{n_x,n_y,n_z}$ can be obtained from $M_{n_x,n_y,0}$ by performing $\frac{1}{n_z}$-surgery along the orbit $c_z$, it admits the Goodman-Fried surgered Anosov flow, which we denote by $\phi_{xy}$.

Interchanging the roles of $x$ and $z$, we can consider $M_{n_x,n_y,n_z}$ as a surgery of $M_{0,n_y,n_z}$ along the orbit $c_x$. As such, it admits another Anosov flow $\phi_{yz}$.

By construction, $\overline{T_{xy} \backslash c_z}$ is a Birkhoff section for $\phi_{xy}$, thus we can obtain a bicontact form $(\alpha_+,\alpha_-)$ supporting $\phi_{xy}$ by first defining the contact forms as in \Cref{eq:introsurfacebicontact}, then filling in along $c_z$ as in \Cref{constr:fillbicontactformpos}.
Then one can check that $\overline{T_{yz} \backslash c_x}$ is a Birkhoff section to the Reeb flow $R_+$ of $\alpha_+$.
In particular, the monodromy of $R_+$ on $T_{yz} \backslash c_x$ is in the same mapping class as that of $\phi_{yz}$.
We conjecture that we can arrange things so that $R_+ = \phi_{yz}$.

\begin{conj} \label{conj:3toruspulltight}
There exists a bicontact form $(\alpha_+,\alpha_-)$ supporting $\phi_{xy}$ where the Reeb flow $R_+$ to $\alpha_+$ is isotopically equivalent to $\phi_{yz}$.
\end{conj}

Symmetrically, we could make the same conjecture regarding Reeb flows of bicontact forms supporting $\phi_{yz}$.
The following conjecture morally combines these two versions of \Cref{conj:3toruspulltight}.

\begin{conj} \label{conj:3torusswap}
There exists two positive contact structures $\xi_x,\xi_z$ and a negative contact structure $\xi_y$ on $M_{n_x,n_y,n_z}$ such that
\begin{itemize}
    \item $\phi_{xy}$ is isotopically equivalent to a Reeb flow of $\xi_z$ and is supported by $(\xi_x,\xi_y)$.
    \item $\phi_{yz}$ is isotopically equivalent to a Reeb flow of $\xi_x$ and is supported by $(\xi_z,\xi_y)$.
\end{itemize}
\end{conj}

In other words, we predict that between $\phi_{xy}$ and $\phi_{yz}$, the role of the Anosov flow orbits and the positive Reeb flow orbits are interchanged.
In particular, (vertical) Goodman-Fried surgery of $\phi_{xy}$ along the orbit $c_z$ would be the same as horizontal Goodman surgery of $\phi_{yz}$ along the positive horizontal surgery curve $c_z$. Moreover, performing this surgery gets us from the setup on $M_{n_x,n_y,n_z}$ to that on $M_{n_x,n_y,n_z+1}$.

\begin{figure}
    \centering
    \fontsize{10pt}{10pt}
\begingroup%
  \makeatletter%
  \providecommand\color[2][]{%
    \errmessage{(Inkscape) Color is used for the text in Inkscape, but the package 'color.sty' is not loaded}%
    \renewcommand\color[2][]{}%
  }%
  \providecommand\transparent[1]{%
    \errmessage{(Inkscape) Transparency is used (non-zero) for the text in Inkscape, but the package 'transparent.sty' is not loaded}%
    \renewcommand\transparent[1]{}%
  }%
  \providecommand\rotatebox[2]{#2}%
  \newcommand*\fsize{\dimexpr\f@size pt\relax}%
  \newcommand*\lineheight[1]{\fontsize{\fsize}{#1\fsize}\selectfont}%
  \ifx\svgwidth\undefined%
    \setlength{\unitlength}{131.22142762bp}%
    \ifx\svgscale\undefined%
      \relax%
    \else%
      \setlength{\unitlength}{\unitlength * \real{\svgscale}}%
    \fi%
  \else%
    \setlength{\unitlength}{\svgwidth}%
  \fi%
  \global\let\svgwidth\undefined%
  \global\let\svgscale\undefined%
  \makeatother%
  \begin{picture}(1,0.59108982)%
    \lineheight{1}%
    \setlength\tabcolsep{0pt}%
    \put(0,0){\includegraphics[width=\unitlength,page=1]{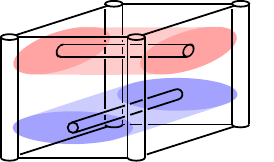}}%
    \put(0.95603956,0.35443007){\color[rgb]{0,0,0}\makebox(0,0)[lt]{\lineheight{1.25}\smash{\begin{tabular}[t]{l}$c_z$\end{tabular}}}}%
    \put(0.70566517,0.2250478){\color[rgb]{0,0,0}\makebox(0,0)[lt]{\lineheight{1.25}\smash{\begin{tabular}[t]{l}$c_x$\end{tabular}}}}%
    \put(0.74353756,0.40849194){\color[rgb]{0,0,0}\makebox(0,0)[lt]{\lineheight{1.25}\smash{\begin{tabular}[t]{l}$c_y$\end{tabular}}}}%
  \end{picture}%
\endgroup%

    \caption{The setup on the 3-torus.}
    \label{fig:3torus}
\end{figure}

We now bring veering triangulations into the picture.
The Anosov flow $\phi_{xy}$ has no perfect fits relative to $c_z$, thus there is an associated veering triangulation $\Delta_{xy}$.
More specifically, $\Delta_{xy}$ can be built from a blue shearing region with core $c_x$ and a red shearing region with core $c_y$. See \Cref{fig:3torus}.
Here, a \textbf{blue/red shearing region} is a solid torus cellulated in a specific way. We refer to \cite{SS23} for details. 
For this discussion, it suffices to know that every veering triangulation has a canonical, combinatorially defined, decomposition into shearing regions. 

Similarly, there is a veering triangulation $\Delta_{yz}$ associated to $(\phi_{yz},c_x)$, whose blue shearing region has core $c_z$ and red shearing region has core $c_y$.
In other words, between $\Delta_{xy}$ and $\Delta_{yz}$, the roles of the drilled out orbit and the core of the blue shearing region are swapped.
We propose considering this as a veering triangulation analogue of \Cref{conj:3torusswap}.

Generalizing this example, we make the following ambitious `hexality' conjecture.

\begin{conj} \label{conj:hexality}
Let $\phi$ be an Anosov flow on $M$ with orientable stable and unstable foliations, with no perfect fits relative to $\mathcal{C}$.
Let $\Delta$ be the associated veering triangulation.
Let $\mathcal{A}$ be the collection of cores of the blue shearing regions of $\Delta$, and $\mathcal{B}$ be the collection of cores of the red shearing regions of $\Delta$.

\begin{enumerate}[label=(\Roman*)]

\item For every sufficiently positive $n_a$ and $n_c$, the surgered manifold $M_{\frac{1}{n_a},\frac{1}{n_c}}(\mathcal{A},\mathcal{C})$ admits the Anosov flow $\phi_{\frac{1}{n_a},\frac{1}{n_c}}(\mathcal{A},\mathcal{C})$ obtained by performing horizontal Goodman surgery along $\mathcal{A}$ and (vertical) Goodman-Fried surgery along $\mathcal{C}$. 

We conjecture that there exists two positive contact structures $\xi_a,\xi_c$, a negative contact structure $\xi_b$, and an Anosov flow $\phi^{\sigma_+}$ on $M_{\frac{1}{n_a},\frac{1}{n_c}}(\mathcal{A},\mathcal{C})$ such that
\begin{itemize}
    \item $\phi_{\frac{1}{n_a},\frac{1}{n_c}}(\mathcal{A},\mathcal{C})$ is isotopically equivalent to a Reeb flow of $\xi_c$ and is supported by $(\xi_a,\xi_b)$.
    \item $\phi^{\sigma_+}$ is isotopically equivalent to a Reeb flow of $\xi_a$ and is supported by $(\xi_c,\xi_b)$.
\end{itemize}

Here, the superscript $\sigma_+$ denotes that we have swapped the roles of the Anosov orbits and the \emph{positive} Reeb flow orbits.
While we have chosen not to reflect this in the notation, $\phi^{\sigma_+}$ should depend on $n_a$ and $n_c$.
Larger choices of $n_a$ and $n_c$ amount to performing (vertical) Goodman-Fried surgery along $\mathcal{A}$ and horizontal Goodman surgery along $\mathcal{C}$ on $\phi^{\sigma_+}$.

\item From \Cref{cor:horsurgerycorr}, we know that the surgered triangulation $\Delta_{\frac{1}{n_a}}(\mathcal{A})$ is the veering triangulation associated to $(\phi_{\frac{1}{n_a},\frac{1}{n_c}}(\mathcal{A},\mathcal{C}),\mathcal{C})$.

We conjecture that the Anosov flow $\phi^{\sigma_+}$ from (I) has no perfect fits relative to $\mathcal{A}$. 
This would allow us to talk about the veering triangulation $\Delta^{\sigma_+}$ associated to $(\phi^{\sigma_+},\mathcal{A})$.

While we have chosen not to reflect this in the notation, $\Delta^{\sigma_+}$ should depend on $n_c$.
A larger choice of $n_c$ amount to performing horizontal surgery along $\mathcal{C}$ on $\Delta^{\sigma_+}$.
On the other hand, $\Delta^{\sigma_+}$ should not depend on $n_a$.

\item Swapping the roles of $\mathcal{A}$ and $\mathcal{B}$ in (I) and (II) gives 
\begin{itemize}
    \item an Anosov flow $\phi^{\sigma_-}$, well-defined up to (vertical) Goodman-Fried surgery along $\mathcal{B}$ and horizontal Goodman surgery along $\mathcal{C}$, and
    \item a veering triangulation $\Delta^{\sigma_-}$, well-defined up to horizontal Goodman surgery along $\mathcal{C}$.
\end{itemize} 

We conjecture that we have the following isotopic equivalences:
\begin{itemize}
    \item $(\phi^{\sigma_+})^{\sigma_+} \cong \phi$ where both sides are taken up to (vertical) Goodman-Fried surgery along $\mathcal{C}$ and horizontal Goodman surgery along $\mathcal{A}$
    \item $(\phi^{\sigma_-})^{\sigma_-} \cong \phi$ where both sides are taken up to (vertical) Goodman-Fried surgery along $\mathcal{C}$ and horizontal Goodman surgery along $\mathcal{B}$
    \item $((\phi^{\sigma_+})^{\sigma_-})^{\sigma_+} \cong ((\phi^{\sigma_-})^{\sigma_+})^{\sigma_-}$ where both sides are taken up to (vertical) Goodman-Fried surgery along $\mathcal{C}$ and horizontal Goodman surgery along $\mathcal{A}$ and $\mathcal{B}$.
\end{itemize}
In other words, we have a $S_3$-torsor of Anosov flows $\{\phi^\rho\}_{\rho \in S_3}$. See \Cref{fig:hexality} left.

Similarly, we conjecture that we have the following combinatorial isomorphisms:
\begin{itemize}
    \item $(\Delta^{\sigma_+})^{\sigma_+} \cong \Delta$ where both sides are taken up to horizontal surgery along $\mathcal{A}$
    \item $(\Delta^{\sigma_-})^{\sigma_-} \cong \Delta$ where both sides are taken up to horizontal surgery along $\mathcal{B}$
    \item $((\Delta^{\sigma_+})^{\sigma_-})^{\sigma_+} \cong ((\Delta^{\sigma_-})^{\sigma_+})^{\sigma_-}$ where both sides are taken up to horizontal surgery along $\mathcal{A}$ and $\mathcal{B}$.
\end{itemize}
In other words, we have a $S_3$-torsor of veering triangulations $\{\Delta^\rho\}_{\rho \in S_3}$. See \Cref{fig:hexality} right.

\end{enumerate}

\end{conj}

\begin{figure}
    \centering
    \begin{tikzpicture}[font=\small, scale=1.2]
    \def\r{2}
    
    \node (A) at (90:\r)  {$\phi$};
    \node (B) at (30:\r)  {$\phi^{\sigma_-}$};
    \node (C) at (-30:\r) {$(\phi^{\sigma_-})^{\sigma_+}$};
    \node (D) at (-90:\r) {$((\phi^{\sigma_+})^{\sigma_-})^{\sigma_+} \cong ((\phi^{\sigma_-})^{\sigma_+})^{\sigma_-}$};
    \node (E) at (-150:\r) {$(\phi^{\sigma_+})^{\sigma_-}$};
    \node (F) at (150:\r) {$\phi^{\sigma_+}$};

    \draw[<->] (A) -- node[above right, font=\scriptsize] {$\sigma_-$} (B);
    \draw[<->] (B) -- node[right, font=\scriptsize] {$\sigma_+$} (C);
    \draw[<->] (C) -- node[below right, font=\scriptsize] {$\sigma_-$} (D);
    \draw[<->] (D) -- node[below left, font=\scriptsize] {$\sigma_+$} (E);
    \draw[<->] (E) -- node[left, font=\scriptsize] {$\sigma_-$} (F);
    \draw[<->] (F) -- node[above left, font=\scriptsize] {$\sigma_+$} (A);
    \end{tikzpicture}
    \hspace{0.5cm}
    \begin{tikzpicture}[font=\small, scale=1.2]
    \def\r{2}
    
    \node (A) at (90:\r)  {$\Delta$};
    \node (B) at (30:\r)  {$\Delta^{\sigma_-}$};
    \node (C) at (-30:\r) {$(\Delta^{\sigma_-})^{\sigma_+}$};
    \node (D) at (-90:\r) {$((\Delta^{\sigma_+})^{\sigma_-})^{\sigma_+} \cong ((\Delta^{\sigma_-})^{\sigma_+})^{\sigma_-}$};
    \node (E) at (-150:\r) {$(\Delta^{\sigma_+})^{\sigma_-}$};
    \node (F) at (150:\r) {$\Delta^{\sigma_+}$};

    \draw[<->] (A) -- node[above right, font=\scriptsize] {$\sigma_-$} (B);
    \draw[<->] (B) -- node[right, font=\scriptsize] {$\sigma_+$} (C);
    \draw[<->] (C) -- node[below right, font=\scriptsize] {$\sigma_-$} (D);
    \draw[<->] (D) -- node[below left, font=\scriptsize] {$\sigma_+$} (E);
    \draw[<->] (E) -- node[left, font=\scriptsize] {$\sigma_-$} (F);
    \draw[<->] (F) -- node[above left, font=\scriptsize] {$\sigma_+$} (A);
    \end{tikzpicture}
    \caption{An illustration of \Cref{conj:hexality}(III).}
    \label{fig:hexality}
\end{figure}

\subsection*{Outline of paper}

In \Cref{sec:paflow} and \Cref{sec:veertri}, we review some background on pseudo-Anosov flows and veering triangulations, respectively.

In \Cref{sec:edgestoveertri}, we explain how the crossing criterion on an edge candidate system allows one to fill in a veering triangulation from the edges (\Cref{prop:edgecandidatestoveertri}).
In \Cref{sec:diagstoveertri}, we explain how a Birkhoff section allows one to talk about canonical lifts, and how this allows one to translate the crossing criterion on an edge candidate system to the slope criterion on a diagonal system (\Cref{prop:diagtoveertri}).

The proof of the main theorems \Cref{thm:legendrianposition} and \Cref{thm:steadyposition} will span \Cref{sec:pldiag}, \Cref{sec:perturbdiag}, and \Cref{sec:fillinginbicontactform}.
In \Cref{sec:pldiag}, we construct a piecewise linear diagonal system that satisfies a proxy of the slope criterion (\Cref{prop:pldiaggoals}).
In \Cref{sec:perturbdiag}, we modify this piecewise linear diagonal system into one that satisfies the slope criterion on the nose (\Cref{prop:smoothdiaggoals}).
From there, \Cref{thm:steadyposition} will follow.

In \Cref{sec:fillinginbicontactform}, we show how one can fill in bicontact forms over the boundary orbits of a Birkhoff section.
From there, \Cref{thm:legendrianposition} will follow.

\subsection*{Acknowledgement}

We thank Michael Landry for discussions about approaches to \Cref{prop:strictanchorsystem}.
We thank Antonio Alfieri, Surena Hozoori, Federico Salmoiraghi, and Samuel Taylor for helpful conversations. 

This project was initiated while the author was supported by a CRM postdoctoral fellowship at Centre Interuniversitaire de Recherches en Géométrie et Topologie (CIRGET) at Montréal, Québec, during the Fall 2025 semester, and completed while the author was supported by the National Science Foundation under Grant No. DMS-2424139 at the Simons Laufer Mathematical Sciences Institute (SLMath) in Berkeley, California, during the Spring 2026 semester.

\section{Pseudo-Anosov flows} \label{sec:paflow}

\subsection{Basic definitions} \label{subsec:flowdefn}

A \textbf{flow} on a 3-manifold $M$ is a continuous map $\phi: \mathbb{R} \times M \to M$, which we write as $\phi^t(x) = \phi(t,x)$, satisfying $\phi^0(x) = x$ and $\phi^s(\phi^t(x)) = \phi^{s+t}(x)$ for every $s,t \in \mathbb{R}, x \in M$.

\begin{eg}[Suspension flow] \label{eg:susflow}
Let $f:S \to S$ be a homeomorphism of a surface. 
The \textbf{mapping torus} of $f$ is the 3-manifold $M_f = \mathbb{R} \times S/(s,x) \sim (s-1,f(x))$.
The \textbf{suspension flow} on $M_f$ is defined by $\phi^t_f(s,x) = (s+t,x)$.
\end{eg}

A flow $\phi$ is \textbf{smooth on an open set $U$} if the function $\phi: \mathbb{R} \times M \to M$ is smooth on $\mathbb{R} \times U$.
In this case, we define the \textbf{generating vector field} $\dot{\phi}$ by $\dot{\phi}|_x = \frac{d}{dt}\big|_{t=0} \phi^t(x)$ for every $x \in U$.
We also define the \textbf{tangent line field} $T\phi$ to be the line field spanned by $\dot{\phi}$.
A flow $\phi$ is \textbf{smooth} if it is smooth on the whole 3-manifold $M$.

Let $\phi_1$ and $\phi_2$ be flows defined on 3-manifolds $M_1$ and $M_2$ respectively. 
We say that $\phi_1$ and $\phi_2$ are \textbf{orbit equivalent} if there is a homeomorphism $h:M_1 \to M_2$ sending the oriented flow lines of $\phi_1$ to that of $\phi_2$, but not necessarily preserving their parametrizations.

\begin{defn}[Anosov flow] \label{defn:aflow}
An \textbf{Anosov flow} is a smooth flow $\phi$ on a closed oriented $3$-manifold $M$ for which there is a Riemannian metric $g$ and a splitting of the tangent bundle into three line bundles $TM=E^s \oplus T\phi \oplus E^u$ such that, for some $C, \lambda > 1$,
$$||d\phi^t(v)||_g < C \lambda^{-t} ||v||_g$$
for every $v \in E^s, t>0$, and 
$$||d\phi^t(v)||_g < C \lambda^t ||v||_g$$
for every $v \in E^u, t<0$.
\end{defn}

A pseudo-Anosov flow is essentially an Anosov flow with finitely many singular orbits. The singular orbits are modeled after the following construction.

\begin{constr}[Pseudo-hyperbolic orbit] \label{constr:phorbit}
Consider the map $\begin{pmatrix} \lambda & 0 \\ 0 & \lambda^{-1} \end{pmatrix}: \mathbb{R}^2 \to \mathbb{R}^2$. 
By first quotienting $\mathbb{R}^2$ by $(x,y) \sim (-x,-y)$, then taking the $n$-fold branched cover over the origin, we obtain a uniquely defined map $f_{n,0,\lambda}:\mathbb{R}^2 \to \mathbb{R}^2$ that preserves the lifts of the quadrants.
Let $f_{n,k, \lambda}: \mathbb{R}^2 \to \mathbb{R}^2$ be the composition of $f_{n,0,\lambda}$ and rotating by $\frac{2\pi k}{n}$ anticlockwise. 
Let $M_{n,k,\lambda}$ be the mapping torus of $f_{n,k,\lambda}$ and consider the suspension flow on $M_{n,k,\lambda}$. 
Let $\gamma_{n,k,\lambda}$ be the suspension of the origin. We refer to it as the \textbf{pseudo-hyperbolic orbit}.
\end{constr}

\begin{defn}[Pseudo-Anosov flow] \label{defn:paflow}
A \textbf{pseudo-Anosov flow} is a flow $\phi$ on a closed oriented $3$-manifold $M$ for which there is a path metric $d$ that is induced from a Riemannian metric $g$ away from a finite collection of closed orbits $\sing(\phi)$, which we call the \textbf{singular orbits}, such that:
\begin{itemize}
    \item Away from the singular orbits, 
    \begin{itemize}
        \item $\phi$ is smooth, and
        \item there is a splitting of the tangent bundle into three line bundles $TM=E^s \oplus T\phi \oplus E^u$, such that, for some $C, \lambda > 1$, 
        $$||d\phi^t(v)||_g < C \lambda^{-t} ||v||_g$$
        for every $v \in E^s, t>0$, and 
        $$||d\phi^t(v)||_g < C \lambda^t ||v||_g$$
        for every $v \in E^u, t<0$.
    \end{itemize}
    \item Around each singular orbit $\gamma$, there exists a neighborhood $N$ of $\gamma$ in $M$, a neighborhood $N_{n,k,\lambda}$ of the pseudo-hyperbolic orbit $\gamma_{n,k,\lambda}$ in $M_{n,k,\lambda}$, for some $n \geq 3$, $k \in \mathbb{Z}$, and $\lambda > 1$, and a homeomorphism $h:N_{n,k,\lambda} \to N$ such that
    \begin{itemize}
        \item $h$ is bi-Lipschitz on $N_{n,k,\lambda}$ and smooth away from $\gamma_{n,k,\lambda}$, and
        \item $h$ sends oriented flow lines to oriented flow lines, but not necessarily preserving their parametrization.
    \end{itemize}
\end{itemize}
\end{defn}

Given a pseudo-Anosov flow, one can show that the splitting $TM = E^s \oplus T\phi \oplus E^u$ in $M \backslash \sing(\phi)$ is automatically $\phi^t$-invariant and continuous, see \cite[Proposition 5.1.4]{FH19}. Furthermore, the plane field $E^s \oplus T\phi$ integrates uniquely to a 2-dimensional foliation $\Lambda^s$ on $M \backslash \sing(\phi)$, see \cite[Theorem 6.1.1]{FH19}, which can be extended into a singular 2-dimensional foliation on $M$. We call $\Lambda^s$ the \textbf{stable foliation}.
Similarly, we have the \textbf{unstable foliation} $\Lambda^u$ which is tangent to $T\phi \oplus E^u$ away from the singular orbits.
See \Cref{fig:paflow} for the local form of the stable and unstable foliations near a point on a nonsingular orbit (left) and near a point on a 3-pronged singular orbit (right).

\begin{figure}
    \centering
    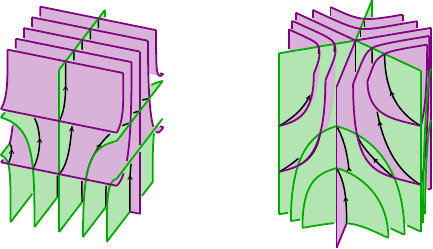
    \caption{The stable and unstable foliations near a point on a nonsingular orbit (left) and near a point on a 3-pronged singular orbit (right).}
    \label{fig:paflow}
\end{figure}

Let $\nu$ be a tubular neighborhood of a closed orbit $\gamma$. The local stable and unstable leaves of $\gamma$ cut $\nu$ into an even number of components. We refer to these components as the \textbf{quadrants} at $\gamma$.

If $\gamma$ is non-singular and has four quadrants, we say that it is \textbf{orientation-preserving}. Otherwise it has two quadrants and we say that it is \textbf{orientation-reversing}.

A flow is \textbf{transitive} if it has a dense orbit. 
For a pseudo-Anosov flow, this is equivalent to the set of closed orbits being dense, see \cite[Proposition 1.4.9]{BarMan25}.
All pseudo-Anosov flows in this paper will be transitive.

We remark that \Cref{defn:aflow} and \Cref{defn:paflow} are usually referred to as the definition of \emph{smooth} (pseudo-)Anosov flows. 
In comparison, one can find a definition of \emph{topological} (pseudo-)Anosov flows in, for example, \cite[Definition 1.1.10]{BarMan25}.
Every smooth pseudo-Anosov flow is a topological Anosov flow.
Conversely, it is known that every transitive topological pseudo-Anosov flow is orbit equivalent to a smooth pseudo-Anosov flow, see \cite[Theorem A]{Sha21} and \cite[Theorem 5.11]{AT24}.

\subsection{Orbit space} \label{subsec:orbitspace}

Let $\phi$ be a pseudo-Anosov flow on $M$. 
Let $\widetilde{\phi}$ be the lifted flow on the universal cover $\widetilde{M}$. 
It can be shown that the space of orbits $\mathcal{O}$ of $\widetilde{\phi}$, with the quotient topology, is homeomorphic to $\mathbb{R}^2$, see \cite[Theorem 1.3.14]{BarMan25}. 
The lifted stable/unstable foliations $\widetilde{\Lambda^{s/u}}$ induce singular 1-dimensional foliations $\mathcal{O}^{s/u}$ on $\mathcal{O}$.
The deck transformations $\pi_1 M$ act on $\mathcal{O}$ preserving the foliations $\mathcal{O}^{s/u}$.
We refer to the space $\mathcal{O}$ with the foliations $\mathcal{O}^{s/u}$ as the \textbf{orbit space} of $\phi$.

Throughout this paper, we will often use the same notation for an orbit of $\widetilde{\phi}$ and the point of $\mathcal{O}$ that it projects down to.
Also, we will adopt the convention of illustrating leaves of the stable foliation as green vertical lines and leaves of the unstable foliation as purple horizontal lines.
Finally, we adopt the convention of orienting the orbit space $\mathcal{O}$ so that orbits come out of the page.

The stable and unstable leaves of each point $\widetilde{\gamma} \in \mathcal{O}$ cut $\mathcal{O}$ into an even number of components. We refer to these components as the \textbf{quadrants} at $\widetilde{\gamma}$.

The stabilizer of each point $\widetilde{\gamma} \in \mathcal{O}$ is either trivial or cyclic.
The latter case is true if and only if $\widetilde{\gamma}$ projects to a closed orbit $\gamma$.
In this case the stabilizer is $\langle [\widetilde{\gamma}] \rangle \cong \mathbb{Z}$, where $[\widetilde{\gamma}]$ is a suitable conjugate of $[\gamma] \in \pi_1 M$, and we say that the point $\widetilde{\gamma}$ is \textbf{periodic}.
Furthermore, the closed orbit $\gamma$ is orientation-preserving if and only if $\widetilde{\gamma}$ is nonsingular and the action of $[\widetilde{\gamma}]$ preserves each quadrant at $\widetilde{\gamma}$. 
In this case, we also say that $\widetilde{\gamma}$ is \textbf{orientation-preserving}.

Each stable or unstable leaf in $\mathcal{O}$ contains at most one periodic point.
We will also use the following fact repeatedly in this paper.

\begin{prop} \label{prop:closedorbitliftdiscrete}
Let $\mathcal{C}$ be a finite collection of closed orbits of $\phi$. 
Let $\widetilde{\mathcal{C}}$ be the collection of orbits of $\widetilde{\phi}$ that project down to an orbit in $\mathcal{C}$. 
Then $\widetilde{\mathcal{C}}$ is a discrete $\pi_1 M$-invariant collection of points in $\mathcal{O}$. \qed
\end{prop}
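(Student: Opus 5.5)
The plan is to prove invariance and discreteness separately. Both properties come from the action of deck transformations on the universal cover together with compactness of $M$.

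\emph{Invariance.} A point of $\mathcal{O}$ lies in $\widetilde{\mathcal{C}}$ exactly when the corresponding orbit of $\widetilde{\phi}$ projects to an orbit of $\mathcal{C}$. Deck transformations commute with the covering projection $\widetilde{M} \to M$ and send orbits of $\widetilde{\phi}$ to orbits of $\widetilde{\phi}$. Hence they send lifts of orbits in $\mathcal{C}$ to lifts of the same orbits, so $\widetilde{\mathcal{C}}$ is $\pi_1 M$-invariant.

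\emph{Discreteness.} It suffices to treat a single closed orbit $\gamma \in \mathcal{C}$, since a finite union of closed discrete subsets is closed and discrete. I will show that every point $p \in \mathcal{O}$ has a neighborhood meeting only finitely many lifts of $\gamma$.

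\emph{Step 1: a flow box.} Take a small embedded disc $D \subset \widetilde{M}$ through a point of the orbit $p$, transverse to $\widetilde{\phi}$. Choose it so that $D$ projects homeomorphically onto a neighborhood $U$ of $p$ in $\mathcal{O}$; this is the standard local product structure used to topologize $\mathcal{O}$. Shrinking $D$, we may assume that $D$ is compact and that its projection $\bar D \subset M$ is an embedded transverse disc. If $p$ is singular, use a pronged transverse disc from the local model of \Cref{constr:phorbit} instead.

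\emph{Step 2: counting intersections.} Every lift $\widetilde{\gamma}$ with $\widetilde{\gamma} \in U$ meets $D$ in exactly one point. That point projects to a point of $\gamma \cap \bar D$. Now $\gamma$ is a closed orbit, so it is compact, and $\bar D$ is a compact transverse disc. The flow is transverse to $\bar D$, so $\gamma$ hits $\bar D$ only at isolated times, and by compactness $\gamma \cap \bar D$ is finite. Since $D$ is embedded and maps injectively to $\bar D$ by the choice in Step 1, distinct lifts in $U$ give distinct points of $\gamma \cap \bar D$. Therefore $U \cap \widetilde{\mathcal{C}}$ is finite.

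\emph{Conclusion.} Since $\mathcal{O} \cong \mathbb{R}^2$ is Hausdorff, a set that meets each point's neighborhood in finitely many points is closed and discrete. Shrinking $U$ further isolates $p$ itself whenever $p \in \widetilde{\mathcal{C}}$.

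The main subtlety is Step 1: arranging for $D$ to inject into $M$ and into $\mathcal{O}$ at the same time. The argument that the projection $\widetilde{M} \to \mathcal{O}$ is locally a product $D \times \mathbb{R}$ already guarantees injectivity into $\mathcal{O}$. Injectivity into $M$ only needs $D$ to be small compared to the injectivity radius, so both conditions can be met together.
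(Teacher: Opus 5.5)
Your argument is correct. The paper gives no proof of this proposition; it is recorded as a standard fact and closed off immediately after the statement. Your flow-box count is the natural justification: a compact transverse disc $\bar D \subset M$ meets the compact orbit $\gamma$ only finitely often, and a disc $D \subset \widetilde{M}$ that injects into both $M$ and $\mathcal{O}$ turns this into a finite bound on $U \cap \widetilde{\mathcal{C}}$.

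One point should be stated as a cited fact rather than as part of how $\mathcal{O}$ is topologized: a small transverse disc in $\widetilde{M}$ meets each $\widetilde{\phi}$-orbit at most once. This does not follow from the quotient topology. It belongs to the structure theory behind $\mathcal{O} \cong \mathbb{R}^2$, and ultimately comes from the fact that a transversal in $\widetilde{M}$ meets each leaf of $\widetilde{\Lambda^s}$ and $\widetilde{\Lambda^u}$ at most once. The paper itself imports $\mathcal{O} \cong \mathbb{R}^2$ from the literature, so citing this input is legitimate.
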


A \textbf{rectangle} in $\mathcal{O}$ is a subspace homeomorphic to $[0,1]^2$ with the restriction of $\mathcal{O}^{s/u}$ identified with the foliation by vertical/horizontal lines respectively. 

Let $R_1$ and $R_2$ be two rectangles. 
We say that $R_2$ is \textbf{taller} than $R_1$ if every $\mathcal{O}^s$-leaf that intersects $R_1$ also intersects $R_2$. 
We say that $R_1$ is \textbf{wider} than $R_2$ if every $\mathcal{O}^u$-leaf that intersects $R_2$ also intersects $R_2$. 
If $R_2$ is taller than $R_1$ and $R_1$ is wider than $R_2$, then we write $R_2 > R_1$ and say that $R_2$ \textbf{lies above} $R_1$.
The reason for this terminology shall become clear in \Cref{subsec:agolgueritaud}.

We say that two points $\widetilde{\gamma}_1$ and $\widetilde{\gamma}_2$ \textbf{span} a rectangle $R$ if they are opposite corners of $R$. In this case we write $R = R(\widetilde{\gamma}_1,\widetilde{\gamma}_2)$.

\subsection{Steadiness} \label{subsec:steady}

Let $\phi$ be a pseudo-Anosov flow on $M$.
Fix a Riemannian metric $g$ on $M \backslash \sing(\phi)$ as in \Cref{defn:paflow}. 
Let $x$ be a point in $M \backslash \sing(\phi)$. 
Pick unit vectors $e^{s/u}$ in $E^{s/u}|_x$, respectively, such that $(e^s, \dot{\phi}, e^u)$ determines a positive basis of $TM|_x$. 
Suppose $\ell$ is a line in $TM|_x$ spanned by a vector $ae^s+b\dot{\phi}+ce^u$.
We say that $\ell$ is \textbf{transverse to $\phi$} if $(a,c) \neq (0,0)$.
In this case, we define the \textbf{slope} of $\ell$ (with respect to $g$) to be $\frac{a}{c} \in \mathbb{R}P^1 = \mathbb{R} \cup \{\infty\}$.
We say that $\ell$ is \textbf{positive/negative} if the slope of $\ell$ is positive/negative, respectively. 
Note that while the slope of $\ell$ depends on the choice of the Riemannian metric $g$, whether $\ell$ is positive/negative depends only on $\ell$.

Now let $c$ be a $1$-manifold, possibly disconnected and with boundary, in $M \backslash \sing(\phi)$.
We say that $c$ is \textbf{transverse to $\phi$} if $c$ is smoothly embedded and its tangent line field $Tc$ is transverse to $\phi$ at every point of $c$.
In this case, we say that $c$ is \textbf{positive/negative} if $Tc$ is positive/negative, respectively, at every point of $c$.
We adopt the convention of illustrating positive 1-manifolds in red and negative 1-manifolds in blue.

Let $c_1$ and $c_2$ be 1-manifolds in $M$. 
We say that a triple $(x,y,t) \in c_1 \times c_2 \times (0,\infty)$ is a time $t$ \textbf{crossing} of $c_2$ over $c_1$ if $y=\phi^t(x)$. 
If $c_1=c_2=c$, we abbreviate this to a time $t$ \textbf{crossing} of $c$. 

\begin{defn}[Steady 1-manifold] \label{defn:steady}
Suppose $e$ is a positive or negative 1-manifold in $M \backslash \sing(\phi)$. 
We say that $e$ is \textbf{steady} if for every crossing $(x,y,t)$ of $c$, we have $|\slope(Te|_y)| > |\slope(d\phi^t(Te|_x))|$.
\end{defn}

Note that steadiness is independent of the choice of the Riemannian metric.
We refer to \cite[Section 3.1]{Tsa24a} for an explanation of the terminology.

\subsection{Contact structures} \label{subsec:bicontact}

Let $M$ be an oriented 3-manifold. 
A \textbf{positive contact form} on $M$ is a 1-form $\alpha_+$ satisfying $\alpha_+ \wedge d\alpha_+ > 0$.
The \textbf{Reeb vector field} of a positive contact form $\alpha_+$ is the unique vector field $R_+$ satisfying $\alpha_+(R_+) = 1$ and $d\alpha_+(R_+, \cdot) = 0$.
A \textbf{positive contact structure} on $M$ is a plane field that can be locally defined as the kernel of a positive contact 1-form.
Similarly, a \textbf{negative contact form} is a 1-form $\alpha_-$ satisfying $\alpha_- \wedge d\alpha_- < 0$.
A \textbf{negative contact structure} is a plane field that can be locally defined as the kernel of a negative contact 1-form.
Note that we allow contact structures that are non-coorientable as plane fields.

Contact structures offer us a convenient way of checking for steadiness.
Recall that a 1-manifold $e$, possibly disconnected and with boundary, is \textbf{Legendrian} with respect to a contact structure $\xi$ if it is tangent to $\xi$ at every point.

\begin{prop} \label{prop:legendriansteady}
Let $\phi$ be a pseudo-Anosov flow. 
Let $U$ be a $\phi^t$-invariant open subset in $M \backslash \sing(\phi)$. 
Suppose $\xi_+$ is a positive contact structure on $U$ such that $T\phi \subset \xi_+$ at every point of $U$.
Then every 1-manifold $e$ in $U$ that is transverse to $\phi$ and Legendrian with respect to $\xi_+$ is negative and steady. 

The symmetric statement holds with `positive' and `negative' interchanged. 
\end{prop}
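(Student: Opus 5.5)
Fix a point $x \in U$. We work in the basis $(e^s,\dot\phi,e^u)$ of $TM|_x$, with $e^s,e^u$ unit vectors in $E^s,E^u$ chosen so that the basis is positive. Since $\dot\phi \in \xi_+$, the plane $\xi_+|_x$ is spanned by $\dot\phi$ and one further vector $v = ae^s + ce^u$ with $(a,c)\neq(0,0)$. Because $e$ is transverse to $\phi$ and tangent to $\xi_+$, the line $Te|_x$ projects to the line spanned by $(a,c)$. So its slope is $a/c$, which is the slope of $\xi_+|_x$ modulo $T\phi$.

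The plan is to show two things. First, this slope is always negative (it is never $0$ or $\infty$). Second, it evolves monotonically under the flow, with $|\slope|$ increasing. Both facts come from the contact condition, computed along flow lines.

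\textbf{Step 1: sign and nondegeneracy.} Choose a local contact form $\alpha$ for $\xi_+$ near an orbit segment. Since the flow preserves $E^s$, $E^u$ and $T\phi$, we can transport the frame: set $v_t = d\phi^t(v)$, which spans the line $d\phi^t(\xi_+|_x)$ modulo $T\phi$. Compare this with the plane $\xi_+|_{\phi^t(x)}$. Write $\xi_+$ along the orbit as $\ker(\beta_s(t)\,\sigma + \beta_u(t)\,\upsilon)$, where $\sigma,\upsilon$ are the flow-invariant $1$-forms dual to $E^s,E^u$ that vanish on $\dot\phi$. These forms are only continuous, so work on the linearized level along the orbit, or use the smoothness of $\xi_+$ via Lie derivatives. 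The contact condition $\alpha\wedge d\alpha(\dot\phi,\cdot,\cdot)>0$ reduces to a strict sign on $\beta_s\dot\beta_u - \beta_u\dot\beta_s$, where the dot means $\mathcal{L}_{\dot\phi}$ in the flow-invariant frame. The main computation is $\alpha\wedge d\alpha(e^s,\dot\phi,e^u) = \alpha(\dot\phi)\cdot(\ldots) + \ldots$, using $\alpha(\dot\phi)=0$ and $d\alpha(\dot\phi,w) = (\mathcal{L}_{\dot\phi}\alpha)(w)$. This says the line $\xi_+/T\phi$ rotates strictly clockwise, relative to the flow-transported frame, in the orientation fixed by $(e^s,\dot\phi,e^u)$. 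In particular it can never be stationary at an invariant direction.

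Next we argue that the slope is never $0$ or $\infty$. If $\xi_+|_y \supset E^s|_y$, say, then strict monotone rotation relative to the invariant direction $E^s$ leads to a contradiction with the hyperbolic dynamics. We do this by following the orbit backward and forward. In the transported frame the invariant directions are fixed. The line rotates strictly in one direction relative to the frame, and the hyperbolicity pushes it towards $E^s$ forward in time. So the line would pass through $E^s$ and then have to cross $E^u$ eventually, using the fact that $U$ is invariant so the whole orbit lies in $U$. A line in $\mathbb{RP}^1$ rotating monotonically relative to a frame in which $d\phi^t$ contracts towards $E^s$ can only do so consistently if it stays in one open quadrant pair. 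The rotation direction determines which pair, namely the second and fourth quadrants, so the slope is negative.

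\textbf{Step 2: steadiness.} Let $(x,y,t)$ be a crossing of $e$. Then $Te|_x \subset \xi_+|_x$, so $d\phi^t(Te|_x)$ has the slope of the transported line, and $Te|_y$ has the slope of $\xi_+|_y$. By the strict monotone rotation of Step 1, these differ in the direction towards the stable direction. Both lines are negative, so this means $|\slope(Te|_y)| > |\slope(d\phi^t(Te|_x))|$, which is the steadiness condition of Definition~\ref{defn:steady}. The negative case follows by reversing orientation, which swaps positive and negative contact structures and the sign of slopes.

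\textbf{Main obstacle.} The hardest step is the sign-and-nondegeneracy argument in Step 1: rigorously excluding tangency of $\xi_+$ with $E^s$ or $E^u$ with only continuous $E^{s/u}$. First, I would define the slope function $m(t)$ of $\xi_+$ along an orbit relative to the transported frame, which is a $C^1$ quantity because $\xi_+$ and the flow are smooth. Then I would prove $\dot m$ has a strict sign, and finally use the behavior as $t\to\pm\infty$ to trap $m$ in $(-\infty,0)$.
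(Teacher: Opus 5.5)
Your rotation computation in Step 1 is the same mechanism the paper uses. The paper phrases it as $d\phi^t(\xi_+|_{\phi^{-t}(y)})$ turning strictly counterclockwise about $\dot{\phi}|_y$ as $t$ increases, which is your clockwise turning relative to the transported frame. Your Step 2 is the paper's one-line conclusion. The genuine gap is the ``sign and nondegeneracy'' half of Step 1, and it cannot be closed under the stated hypotheses. The contact condition fixes only the \emph{direction} in which $\xi_+/T\phi$ turns relative to the transported frame, not how far it turns, and nothing in the hyperbolic dynamics stops it from turning through all four quadrants. (Also, forward transport by $d\phi^t$ pushes lines toward $E^u$, not toward $E^s$.)

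Concretely, take the suspension flow of a hyperbolic $A \in SL(2,\mathbb{Z})$ with eigenvalues $\lambda^{\pm 1} > 0$. Use the coordinates $(t,s,u)$ of the paper's bicontact construction, so $\partial_u$ is stable, $\partial_s$ is unstable, and $dt \wedge ds \wedge du > 0$. Let $\beta = \sin\theta(t)\, ds - \cos\theta(t)\, du$. Then $\partial_t,\ \cos\theta\,\partial_s + \sin\theta\,\partial_u \in \ker\beta$ and $\beta \wedge d\beta = -\theta'(t)\, dt \wedge ds \wedge du$. Moreover, $\ker\beta$ descends to the mapping torus if and only if $\tan\theta(t+1) = \lambda^2 \tan\theta(t)$.

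The paper's $\xi^S_+$ is the solution $\tan\theta = -\lambda^{2t}$, which stays in one quadrant. But let $g\colon \mathbb{R} \to \mathbb{R}$ be the increasing homeomorphism with $g(0)=0$ and $\tan g(\theta) = \lambda^2 \tan\theta$. One may equally take a smooth strictly decreasing $\theta$ with $\theta(t+1) = g(\theta(t)) - \pi$. This is a positive contact structure on the whole closed manifold containing $T\phi$. Yet wherever $\theta(t) \in (0,\frac{\pi}{2}) + \pi\mathbb{Z}$, a short Legendrian segment in the fiber is \emph{positive}. Where $\theta(t) \in \frac{\pi}{2}\mathbb{Z}$, the plane even contains $E^s$ or $E^u$. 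So the statement as written is false. Your plan to trap the slope in $(-\infty,0)$ via the behavior as $t \to \pm\infty$ cannot work, since here $\theta(t) \to \mp\infty$.

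What is missing is an additional input forcing $\xi_+/T\phi$ to have negative slope at every point of $U$. The paper's proof does not supply one either; it simply writes $\slope(Te|_y) < \slope(d\phi^t(Te|_x)) < 0$. In the two situations the paper actually relies on, this input is available:
\begin{itemize}
\item For $\xi^S_+$, negativity is read off from the explicit formula.
\item For $\xi_+$ belonging to a bicontact structure $(\xi_+,\xi_-)$ supporting $\phi$ on a closed manifold, it follows from the invariant-cone argument. The arc running counterclockwise from $\xi_+$ to $\xi_-$ is carried into itself by forward transport, so it contains $E^u$, while the complementary arc contains $E^s$.
\end{itemize}
Granting negativity, your Step 2 is correct once you add one observation. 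The lines $d\phi^\tau(\xi_+|_{\phi^{-\tau}(y)})$ for $\tau \in [0,t]$ all lie in the open negative quadrants, because transport preserves the sign of slopes. Hence the strictly monotone rotation cannot wrap around, and it gives $|\slope(Te|_y)| > |\slope(d\phi^t(Te|_x))|$.
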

\begin{proof}
This is only a slightly more general version of \cite[Proposition 3.18]{Tsa24a}, and the same proof carries over:
The positive contact criterion implies that for every $y \in U$, $d\phi^t(\xi_+|_{\phi^{-t}(y)})$ rotates strictly monotonically counterclockwise around $\dot{\phi}|_y$ as $t$ increases.
Thus for any crossing $(x,y,t)$, we have $\slope(Te|_y) < \slope(d\phi^t(Te|_x)) < 0$.
\end{proof}

Conversely, contact structures can be used to build Anosov flows.
For the purposes of this paper, a \textbf{bicontact structure} is a pair $(\xi_+,\xi_-)$ consisting of a positive contact structure $\xi_+$ and a negative contact structure $\xi_-$ that are transverse to each other at every point.
We say that a bicontact structure $(\xi_+,\xi_-)$ \textbf{supports} a flow $\phi$ if $T\phi = \xi_+ \cap \xi_-$ at every point.

Meanwhile, a \textbf{bicontact form} is a pair $(\alpha_+,\alpha_-)$ consisting of a positive contact form $\alpha_+$ and a negative contact form $\alpha_-$ such that $\xi_+ = \ker \alpha_+$ and $\xi_- = \ker \alpha_-$ are transverse to each other at every point.
We say that a bicontact form $(\alpha_+,\alpha_-)$ \textbf{supports} a flow $\phi$ if its associated bicontact structure $(\xi_+, \xi_-)$ supports $\phi$.

\begin{defn}[Strongly adapted] \label{defn:stronglyadapted}
A bicontact form $(\alpha_+,\alpha_-)$ is \textbf{strongly adapted} if the Reeb vector field $R_+$ of $\alpha_+$ is contained in $\xi_-$ at every point.
\end{defn}

\begin{thm}[{\cite[Theorem 1.10]{Hoz24}}]
Let $\phi$ be a smooth flow on a closed oriented 3-manifold $M$.
Then $\phi$ is Anosov with orientable stable and unstable foliations if and only if it is supported by a strongly adapted bicontact form.
\end{thm}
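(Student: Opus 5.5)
We prove the two directions separately. Throughout we use the following dictionary. Given a transverse pair $(\xi_+,\xi_-)$ with $T\phi=\xi_+\cap\xi_-$, the planes $d\phi^t(\xi_\pm)$ rotate strictly monotonically about $\dot\phi$, as in the proof of \Cref{prop:legendriansteady}. This makes $\phi$ \emph{projectively Anosov}: there is a continuous $d\phi^t$-invariant splitting $TM/T\phi=\overline{E}^s\oplus\overline{E}^u$ with domination. The lifts of $\overline{E}^{s}$ and $\overline{E}^{u}$ lie in the two complementary cones cut out by $\xi_+$ and $\xi_-$. Conversely, a splitting of this kind produces candidate contact planes.

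\textbf{($\Leftarrow$)} Let $(\alpha_+,\alpha_-)$ be a strongly adapted bicontact form supporting $\phi$, and write $X=\dot\phi$.
\begin{itemize}
\item Since $X\in\xi_+$, Cartan's formula gives $L_X\alpha_+=\iota_Xd\alpha_+$. This form kills $X$.
\item Since $R_+$ is the Reeb field, $d\alpha_+(X,R_+)=0$, so $L_X\alpha_+$ also kills $R_+$.
\item By strong adaptedness $X$ and $R_+$ span $\xi_-$, at least where $R_+\notin T\phi$; the latter holds because $R_+$ is transverse to $\xi_+\ni X$. Hence $L_X\alpha_+=g\,\alpha_-$ for a nowhere-vanishing function $g$. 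Nonvanishing follows from $\alpha_+\wedge d\alpha_+>0$.
\end{itemize}
So the flow pushes $\alpha_+$ in the direction of $\alpha_-$ at a definite rate. Consequently $\alpha_+\wedge d\alpha_+$ is a volume form whose flow derivative can be expressed through $g$ and $\alpha_-$.

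Using this, I would define a metric adapted to the frame $(X,R_+,Y)$, with $Y$ spanning $\xi_+\cap\ker\alpha_-$ or a similar choice. I would then compute the infinitesimal expansion rates $r_s$ and $r_u$ of $d\phi^t$ on the invariant line fields $E^s,E^u$, which lie in the cones. The key identity should be that the sum $r_s+r_u$ is controlled by the divergence of $X$ with respect to $\alpha_+\wedge d\alpha_+$, which vanishes since $X\in\ker\alpha_+$ and $L_X\alpha_+=g\alpha_-$. Domination gives $r_u-r_s>0$. Together these force $r_s<0<r_u$ after time-averaging, so $\phi$ is Anosov. Orientability of the foliations follows because $E^s$ and $E^u$ lie in distinct components of the complement of the coorientable planes $\xi_\pm$, so each can be oriented globally.

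\textbf{($\Rightarrow$)} Suppose $\phi$ is Anosov with orientable $\Lambda^s,\Lambda^u$. Then there are nonvanishing 1-forms $\alpha_s$ and $\alpha_u$ with kernels $T\Lambda^s=E^s\oplus T\phi$ and $T\Lambda^u=T\phi\oplus E^u$. These are only continuous, so I would replace them by smooth forms obtained by flowing them for a long time and averaging. The forms $\alpha_u\pm\alpha_s$ then have kernels close to the stable and unstable directions but tilted, and contain $X$. Via the expansion estimates of \Cref{defn:aflow}, the signs of $(\alpha_u\pm\alpha_s)\wedge d(\alpha_u\pm\alpha_s)$ are dominated by the contraction and expansion rates, which gives a supporting bicontact form $(\alpha_+,\alpha_-)$.

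It remains to achieve strong adaptedness, and I expect this to be the main obstacle. The plan is to rescale, $\alpha_+\mapsto e^{h}\alpha_+$, keeping $\xi_+$ fixed, and choose $h$ so that the new Reeb field lies in $\xi_-$. This condition is linear in $dh$ restricted to a transverse direction. It reduces to a cohomological equation along the flow of the form $Xh=\rho$, for a function $\rho$ determined by the bicontact pair. In general such an equation is obstructed by periodic-orbit integrals, in the spirit of Livšic. I would therefore first try to use the remaining freedom in choosing $\alpha_s,\alpha_u$, in particular rescaling them independently by flow-averaged weights, to make the right-hand side a coboundary. Following \cite{Hoz24}, I would arrange that the chosen volume form is one for which the rates satisfy $r_s+r_u=0$ pointwise; this is possible after a reparametrization and conformal change. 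With that normalization the equation becomes solvable.
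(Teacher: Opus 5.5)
The paper does not prove this statement; it is quoted from \cite{Hoz24}. Judged on its own, your proposal has a genuine gap in each direction. Both gaps come from the same misconception: that strong adaptedness is the volume condition $r_s+r_u=0$. It is really a condition on the \emph{ratio} $r_s/r_u$, and that ratio is encoded in the position of $\xi_-$.

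To see this, write $\alpha_+=\alpha_u-\alpha_s$ along the invariant splitting, with $L_X\alpha_u=r_u\alpha_u$ and $L_X\alpha_s=r_s\alpha_s$. Write $\xi_-=\ker(\alpha_u+c\,\alpha_s)$ with $c>0$. Then $\iota_X(\alpha_+\wedge d\alpha_+)=(r_u-r_s)\,\alpha_s\wedge\alpha_u$. Combining $d\alpha_+(R_+,X)=0$ with $\alpha_+(R_+)=1$ shows that $R_+\in\xi_-$ if and only if $r_s=-c\,r_u$ pointwise.

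\textbf{($\Leftarrow$).} Your identity $L_X\alpha_+=g\,\alpha_-$ with $g$ nowhere zero is correct, and it is the heart of the matter. The next step, however, is false: $X$ is not divergence-free for $\alpha_+\wedge d\alpha_+$.
\begin{itemize}
\item The claim is not even invariant under reparametrization $X\mapsto fX$. That change preserves your hypotheses, but $\mathrm{div}(fX)=f\,\mathrm{div}X+Xf$.
\item Substantively, the formula above gives $\mathrm{div}\,X=r_s+r_u+X\log(r_u-r_s)$. Its integral over a periodic orbit is the log-Jacobian of the linearized Poincar\'e map.
\item That Jacobian is $\neq 1$ for dissipative Anosov flows with orientable bundles, e.g.\ generic small perturbations of the suspension of a hyperbolic toral automorphism. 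By the other direction, these flows are supported by strongly adapted forms.
\end{itemize}
No volume or averaging is needed. Normalize sections of the invariant line fields by $\alpha_+(e_s)=\alpha_+(e_u)=1$. Then $L_X\alpha_+=g\alpha_-$ gives the rates directly: $r_u=g\,\alpha_-(e_u)$ and $r_s=g\,\alpha_-(e_s)$. Since $E^s$ and $E^u$ lie in complementary cones of $\xi_+\cup\xi_-$, $\alpha_-(e_s)$ and $\alpha_-(e_u)$ have opposite signs. So the two rates have opposite signs at every point, uniformly by compactness, which gives Anosov.

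\textbf{($\Rightarrow$).} Your normalization $r_s+r_u=0$ cannot be achieved in general. The periodic-orbit integrals of $r_s+r_u$ are those same Poincar\'e log-Jacobians, and they are unchanged by reparametrization or by rescaling $\alpha_s,\alpha_u$. So with $\xi_-=\ker(\alpha_u+\alpha_s)$, i.e.\ $c\equiv 1$, your cohomological equation is genuinely Liv\v{s}ic-obstructed.

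The missing idea is to spend the freedom on $\xi_-$ rather than on the scale of $\alpha_+$.
\begin{itemize}
\item Take $\alpha_s,\alpha_u$ from an adapted (Lyapunov) norm, so that $r_s<0<r_u$ everywhere.
\item Set $c=-r_s/r_u$, i.e.\ $\xi_-=\mathrm{span}(X,R_+)$. Strong adaptedness is then automatic.
\item It remains to show this plane field is negative contact. Since $\iota_X(\alpha_-\wedge d\alpha_-)=(Xc-c(r_u-r_s))\,\alpha_s\wedge\alpha_u$, this amounts to $X\log c<r_u-r_s$.
\item You can arrange this by building the norm through long-time log-averaging, which makes the $X$-derivatives of the rates small.
\end{itemize}
You also need a smoothing step compatible with this second-order condition, since the weak bundles are in general only $C^1$.
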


\section{Veering triangulations} \label{sec:veertri}

\subsection{Definition} \label{subsec:veertridefn}

An \textbf{ideal tetrahedron} is a tetrahedon with its 4 vertices removed. The removed vertices are called the \textbf{ideal vertices}. 
An \textbf{ideal triangulation} of a $3$-manifold $M$ is a decomposition of $M$ into finitely many ideal tetrahedra glued along pairs of faces.

A \textbf{taut structure} on an ideal triangulation is a labeling of the dihedral angles by $0$ or $\pi$, such that:
\begin{itemize}
    \item Each tetrahedron has exactly two dihedral angles labeled by $\pi$, and they are opposite to each other.
    \item The angle sum around each edge in the triangulation is $2\pi$.
\end{itemize}

A \textbf{transverse taut structure} is a taut structure along with a coorientation on each face, such that for any edge $e$ with dihedral angle labeled by $0$ in a tetrahedron $t$, exactly one of the faces of $t$ that are adjacent to $e$ is cooriented into $t$.
A \textbf{transverse taut ideal triangulation} is an ideal triangulation with a transverse taut structure.

A \textbf{veering triangulation} is a transverse taut ideal triangulation with a coloring of the edges by red or blue, so that going counterclockwise around the four $0$-labelled edges, starting from an endpoint of a $\pi$-labelled edge, the edges are colored red, blue, red, blue, in that order.
See \Cref{fig:veertet}.

\begin{figure} 
    \centering
    \fontsize{10pt}{10pt}\selectfont
\begingroup%
  \makeatletter%
  \providecommand\color[2][]{%
    \errmessage{(Inkscape) Color is used for the text in Inkscape, but the package 'color.sty' is not loaded}%
    \renewcommand\color[2][]{}%
  }%
  \providecommand\transparent[1]{%
    \errmessage{(Inkscape) Transparency is used (non-zero) for the text in Inkscape, but the package 'transparent.sty' is not loaded}%
    \renewcommand\transparent[1]{}%
  }%
  \providecommand\rotatebox[2]{#2}%
  \newcommand*\fsize{\dimexpr\f@size pt\relax}%
  \newcommand*\lineheight[1]{\fontsize{\fsize}{#1\fsize}\selectfont}%
  \ifx\svgwidth\undefined%
    \setlength{\unitlength}{110.55212186bp}%
    \ifx\svgscale\undefined%
      \relax%
    \else%
      \setlength{\unitlength}{\unitlength * \real{\svgscale}}%
    \fi%
  \else%
    \setlength{\unitlength}{\svgwidth}%
  \fi%
  \global\let\svgwidth\undefined%
  \global\let\svgscale\undefined%
  \makeatother%
  \begin{picture}(1,0.68593175)%
    \lineheight{1}%
    \setlength\tabcolsep{0pt}%
    \put(0,0){\includegraphics[width=\unitlength,page=1]{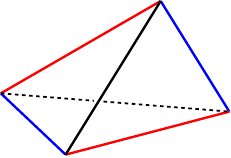}}%
    \put(0.25483533,0.49876369){\color[rgb]{1,0,0}\makebox(0,0)[lt]{\lineheight{1.25}\smash{\begin{tabular}[t]{l}0\end{tabular}}}}%
    \put(0.88822032,0.46067318){\color[rgb]{0,0,1}\makebox(0,0)[lt]{\lineheight{1.25}\smash{\begin{tabular}[t]{l}0\end{tabular}}}}%
    \put(0.63073436,0.00109528){\color[rgb]{1,0,0}\makebox(0,0)[lt]{\lineheight{1.25}\smash{\begin{tabular}[t]{l}0\end{tabular}}}}%
    \put(0.07443572,0.04858473){\color[rgb]{0,0,1}\makebox(0,0)[lt]{\lineheight{1.25}\smash{\begin{tabular}[t]{l}0\end{tabular}}}}%
    \put(0,0){\includegraphics[width=\unitlength,page=2]{veertet.pdf}}%
    \put(0.41844533,0.34464497){\color[rgb]{0,0,0}\makebox(0,0)[lt]{\lineheight{1.25}\smash{\begin{tabular}[t]{l}$\pi$\end{tabular}}}}%
    \put(0.49200224,0.16775368){\color[rgb]{0,0,0}\makebox(0,0)[lt]{\lineheight{1.25}\smash{\begin{tabular}[t]{l}$\pi$\end{tabular}}}}%
  \end{picture}%
\endgroup%

    \caption{A tetrahedron in a veering triangulation. There are no restrictions on the colors of the top and bottom edges.} 
    \label{fig:veertet}
\end{figure}

\subsection{Agol-Guéritaud construction} \label{subsec:agolgueritaud}

Let $\phi$ be a pseudo-Anosov flow on $M$.
Let $\mathcal{C}$ be a nonempty finite collection of closed orbits of $\phi$, containing all the singular orbits.
Let $\widetilde{\mathcal{C}}$ be the collection of orbits of $\widetilde{\phi}$ that project down to an orbit of $\mathcal{C}$.
By \Cref{prop:closedorbitliftdiscrete}, $\widetilde{\mathcal{C}}$ is a $\pi_1 M$-invariant discrete collection of points in $\mathcal{O}$.

A \textbf{perfect fit} in $\mathcal{O}$ is a properly embedded subspace homeomorphic to $[0,1]^2 \backslash \{(1,1)\}$ with the restriction of $\mathcal{O}^{s/u}$ identified with the foliation by vertical/horizontal lines respectively. 
We say that $\phi$ has \textbf{no perfect fits} relative to $\mathcal{C}$ if every perfect fit in $\mathcal{O}$ contains at least one point of $\widetilde{\mathcal{C}}$.
We remark that in this case $\phi$ must be transitive, see \cite[Remark 2.11]{Tsa24}.

When $\phi$ has no perfect fits relative to $\mathcal{C}$, Agol-Guéritaud showed that one can build a veering triangulation from the data of the pair $(\phi, \mathcal{C})$.
To explain this, we introduce a few more definitions.

\begin{defn}[Edge/face/tetrahedron rectangle] \label{defn:edgefacetetrect}
An \textbf{edge rectangle} in $\mathcal{O}$ is a rectangle with two opposite corners on $\widetilde{\mathcal{C}}$. 
An edge rectangle is \textbf{red} if its bottom-left and top-right corners lie in $\widetilde{\mathcal{C}}$, and \textbf{blue} if its top-left and bottom-right corners lie in $\widetilde{\mathcal{C}}$.

A \textbf{face rectangle} in $\mathcal{O}$ is a rectangle with one corner on $\widetilde{\mathcal{C}}$ and the two opposite sides to the corner containing elements of $\widetilde{\mathcal{C}}$ in their interior. 
Note that each face rectangle contains three edge subrectangles.
Two of these edge rectangles have the same color, while the remaining one has the opposite color.

A \textbf{tetrahedron rectangle} in $\mathcal{O}$ is a rectangle all of whose sides contain elements of $\widetilde{\mathcal{C}}$ in their interior. 
Note that each tetrahedron rectangle $T$ contains four face subrectangles and six edge subrectangles.
We refer to the edge subrectangle that is as tall as $T$ as the \textbf{top} edge subrectangle, and the edge subrectangle that is as wide as $T$ as the \textbf{bottom} edge subrectangle.

See \Cref{fig:rectangles} for examples of edge/face/tetrahedron rectangles.
\end{defn}

\begin{figure}
    \centering
    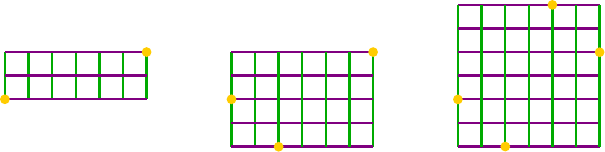
    \caption{From left to right: an edge rectangle, a face rectangle, and a tetrahedron rectangle. Yellow dots denote elements of $\widetilde{\mathcal{C}}$.}
    \label{fig:rectangles}
\end{figure}

The key consequence of the no perfect fit condition is the following.

\begin{prop} \label{prop:astroidlemma}
Suppose $\phi$ has no perfect fits relative to $\mathcal{C}$.
Let $c$ be a point on $\mathcal{O}$. 
Let $S$ be the collection of points $s \in \widetilde{\mathcal{C}}$ in a fixed quadrant of $c$ that span a rectangle with $c$ with no points of $\widetilde{\mathcal{C}}$ in the interior of $R(c,s)$.
The elements of $S$ are totally ordered by the relation $s_1 < s_2$ if $R(c,s_1) < R(c,s_2)$.
Under this order, $S$ is order isomorphic to $\mathbb{Z}$.

Furthermore, suppose $c$ is periodic. Let $k$ be the smallest positive exponent such that $[c]^k$ preserves each quadrant at $c$.
Then $[c]^k$ acts as a translation $m \mapsto m+m_0$ for some $m_0 \in \mathbb{Z}_+$. See \Cref{fig:astroidlemma}.
\end{prop}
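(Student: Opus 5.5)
We prove the two assertions in turn. Fix the quadrant $Q$ of $c$ under consideration; by symmetry we may take $Q$ to be the top-right quadrant, so the stable/unstable leaves through $c$ bound $Q$ on the left and below.

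\textbf{Total order.} Let $s_1 \neq s_2$ be elements of $S$. Both rectangles $R(c,s_1)$ and $R(c,s_2)$ have $c$ as their bottom-left corner. Two such rectangles are nested in the stable direction and in the unstable direction, so one is taller than the other and one is wider than the other.

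Suppose $R(c,s_2)$ is both taller and wider than $R(c,s_1)$. Then $s_1$ lies in $R(c,s_2)$ and is not its corner. If $s_1$ is in the interior, we contradict the defining property of $s_2$. If $s_1$ is on a side, then $s_1$ and $s_2$ lie on a common stable or unstable leaf. That is impossible, because a leaf contains at most one periodic point and points of $\widetilde{\mathcal{C}}$ are periodic.

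Hence one of the two rectangles is taller and thinner than the other, so exactly one of $R(c,s_1) < R(c,s_2)$ or $R(c,s_2) < R(c,s_1)$ holds. Transitivity of $<$ is immediate from the definition. So $<$ is a total order on $S$.

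\textbf{Order type $\mathbb{Z}$.} This is the main step, and it needs three facts: $S$ is nonempty, every element has an immediate successor and predecessor, and there are no infinite chains trapped between two elements.

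\emph{Successors.} Given $s \in S$, consider rectangles based at $c$ that are taller than $R(c,s)$ but whose width is bounded by the stable side through $s$. Sweep the top side of $R(c,s)$ upward, with the width limited by $s$'s stable leaf. Two outcomes are possible:
\begin{itemize}
\item The swept region hits a point of $\widetilde{\mathcal{C}}$. Then, by discreteness (\Cref{prop:closedorbitliftdiscrete}), there is a first point hit. This point spans with $c$ an empty rectangle, and it is the successor of $s$.
\item The swept region is unbounded in height and never hits $\widetilde{\mathcal{C}}$. Then one produces a perfect fit (or a leaf configuration forcing one) containing no point of $\widetilde{\mathcal{C}}$, contradicting the hypothesis.
\end{itemize}
I expect this dichotomy to be the delicate part. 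The plan is to argue that the union of the swept rectangles is an unbounded region whose boundary leaves must form a perfect fit with the stable leaf of $s$, unless a leaf terminates at a singular point. Singular points lie in $\mathcal{C}$, so this case also yields a point of $\widetilde{\mathcal{C}}$.

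\emph{Nonemptiness, predecessors, and sweeping from $c$.} Running the symmetric argument with widths gives predecessors. The same sweeping procedure started from degenerate rectangles at $c$ shows that $S$ is nonempty.

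\emph{Finiteness between two elements.} For $s_1 < s_2$, every element strictly between them lies in the compact rectangle spanned by the widths and heights of $R(c,s_1)$ and $R(c,s_2)$. By discreteness there are only finitely many such elements.

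\emph{Unboundedness.} The existence of successors and predecessors for every element shows $S$ has no maximum or minimum. Combined with the finiteness between any two elements, this gives $S \cong \mathbb{Z}$.

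\textbf{Periodic case.} Now suppose $c$ is periodic and let $g = [c]^k$, where $k$ is the smallest positive exponent for which $[c]^k$ preserves each quadrant at $c$. Then $g$ fixes $c$, preserves $Q$ and $\widetilde{\mathcal{C}}$, and maps rectangles to rectangles while preserving "taller" and "wider". Hence $g$ acts on $S$ as an order-preserving bijection, i.e. as a translation $m \mapsto m + m_0$.

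\begin{itemize}
\item \emph{$m_0 \neq 0$.} If $m_0 = 0$, then $g$ fixes some $s \in S$. A nontrivial element of the deck group would then fix two distinct points $c$ and $s$, which is impossible: the fixed points of $g$ would give two periodic points whose rectangle is $g$-invariant, contradicting the expanding/contracting dynamics.
\item \emph{$m_0 > 0$.} The sign of the translation comes from the dynamics. Under the orientation convention, $g$ stretches along one foliation and contracts along the other. For the appropriate sign of the generator this makes rectangles at $c$ taller and thinner, so $R(c,g(s)) > R(c,s)$ and hence $m_0 > 0$.
\end{itemize}
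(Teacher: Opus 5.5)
Your total-order argument is fine, and so is the periodic case, which matches the paper's argument: $[c]^k$ induces an order automorphism of $S$ with no fixed points, and the forward generator acts by $(x,y)\mapsto(\lambda^{-1}x,\lambda y)$.

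\textbf{The main gap: finiteness between two elements.} You assert that every $t\in S$ with $s_1<t<s_2$ lies in the compact rectangle based at $c$ with the width of $R(c,s_1)$ and the height of $R(c,s_2)$. But $R(c,s_1)\cup R(c,s_2)$ is only an L-shaped region, and nothing guarantees that the rectangle filling in its missing corner exists in $\mathcal{O}$.

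The configuration the proposition must exclude is exactly one where that rectangle fails to exist. Take an increasing sequence $t_1<t_2<\cdots<s_2$ in $S$ with heights increasing to $y^*$ and widths decreasing to $x^*$, where the stable leaf at width $x^*$ and the unstable leaf at height $y^*$ never meet (a perfect fit). Then the $t_n$ leave every compact subset of $\mathcal{O}$. Discreteness of $\widetilde{\mathcal{C}}$ says nothing here, every $t_n$ still has an immediate successor, and $S$ is not order isomorphic to $\mathbb{Z}$.

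Ruling this out is where the no-perfect-fit hypothesis must be used. Apply it to that perfect fit shrunk away from $c$ and from the at most one periodic point on the limiting unstable leaf; the point of $\widetilde{\mathcal{C}}$ it provides lies in the interior of some $R(c,t_n)$, a contradiction. Your finiteness step never invokes the hypothesis. This non-accumulation statement (projecting $S$ along unstable leaves to the stable half-leaf $\ell^s$ of $c$ gives a discrete set) is what the paper imports from \cite[Lemma 4.10]{SS24}.

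Even under the hypothesis, your corner rectangle can fail to exist. If a singular point $p\in\widetilde{\mathcal{C}}$ lies in $S$, its predecessor and successor span an L whose corner rectangle would contain $p$ in its interior. So the containment claim fails as stated, not merely without proof.

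\textbf{A related flaw in the successor dichotomy.} If the upward sweep (width fixed at that of $s$) stops at a finite height $h^*$ because the unstable leaf at height $h^*$ makes a perfect fit with a stable leaf of width at most that of $s$, the hypothesis applies. Applied to a copy avoiding $c$, it puts a point of $\widetilde{\mathcal{C}}$ on that unstable leaf, and that point is the successor.

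An unobstructed, unbounded sweep is different. It produces a product half-strip between the stable half-leaf of $c$ and the stable leaf of $s$, with no perfect fit anywhere. In the orbit space of a suspension Anosov flow there are no perfect fits at all, every such sweep is unobstructed, and it stops only because the half-strip contains points of $\widetilde{\mathcal{C}}$. So the contradiction you plan cannot come from the hypothesis. You need a separate input showing such a half-strip meets $\widetilde{\mathcal{C}}$, e.g.\ Fenley's results on product regions together with density of $\widetilde{\mathcal{C}}$ in the suspension case. The same remark applies to your nonemptiness and predecessor arguments.
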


\begin{figure}
    \centering
    \fontsize{8pt}{8pt}
\begingroup%
  \makeatletter%
  \providecommand\color[2][]{%
    \errmessage{(Inkscape) Color is used for the text in Inkscape, but the package 'color.sty' is not loaded}%
    \renewcommand\color[2][]{}%
  }%
  \providecommand\transparent[1]{%
    \errmessage{(Inkscape) Transparency is used (non-zero) for the text in Inkscape, but the package 'transparent.sty' is not loaded}%
    \renewcommand\transparent[1]{}%
  }%
  \providecommand\rotatebox[2]{#2}%
  \newcommand*\fsize{\dimexpr\f@size pt\relax}%
  \newcommand*\lineheight[1]{\fontsize{\fsize}{#1\fsize}\selectfont}%
  \ifx\svgwidth\undefined%
    \setlength{\unitlength}{93.07067871bp}%
    \ifx\svgscale\undefined%
      \relax%
    \else%
      \setlength{\unitlength}{\unitlength * \real{\svgscale}}%
    \fi%
  \else%
    \setlength{\unitlength}{\svgwidth}%
  \fi%
  \global\let\svgwidth\undefined%
  \global\let\svgscale\undefined%
  \makeatother%
  \begin{picture}(1,1.00822292)%
    \lineheight{1}%
    \setlength\tabcolsep{0pt}%
    \put(0,0){\includegraphics[width=\unitlength,page=1]{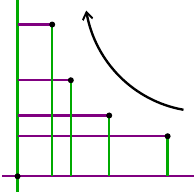}}%
    \put(-0.00487909,0.01136344){\color[rgb]{0,0,0}\makebox(0,0)[lt]{\lineheight{1.25}\smash{\begin{tabular}[t]{l}$c$\end{tabular}}}}%
    \put(0.74603934,0.57084193){\color[rgb]{0,0,0}\makebox(0,0)[lt]{\lineheight{1.25}\smash{\begin{tabular}[t]{l}$[c]$\end{tabular}}}}%
    \put(0.91662799,0.34011138){\color[rgb]{0,0,0}\makebox(0,0)[lt]{\lineheight{1.25}\smash{\begin{tabular}[t]{l}$1$\end{tabular}}}}%
    \put(0.60781791,0.45645051){\color[rgb]{0,0,0}\makebox(0,0)[lt]{\lineheight{1.25}\smash{\begin{tabular}[t]{l}$2$\end{tabular}}}}%
    \put(0.41253832,0.64024369){\color[rgb]{0,0,0}\makebox(0,0)[lt]{\lineheight{1.25}\smash{\begin{tabular}[t]{l}$3$\end{tabular}}}}%
    \put(0.30892133,0.93861256){\color[rgb]{0,0,0}\makebox(0,0)[lt]{\lineheight{1.25}\smash{\begin{tabular}[t]{l}$4$\end{tabular}}}}%
  \end{picture}%
\endgroup%

    \caption{The setting of \Cref{prop:astroidlemma}.}
    \label{fig:astroidlemma}
\end{figure}

\begin{proof}
Let $\ell^s$ be the stable half-leaf of $c$ that borders the fixed quadrant.
We can define an order-preserving map $\pi:S \to \ell^s \cong \mathbb{R}$ by mapping each $s \in S$ to the point of $\ell^s$ that shares an unstable leaf with $s$.
\cite[Lemma 4.10]{SS24} states that the image of $\pi$ has no accumulation points. Thus $S$ must be order isomorphic to $\mathbb{Z}$.

If $c$ is periodic, then $[c]^k$ acts as a translation on $\ell^s \cong \mathbb{R}$. Thus it must act as a translation on $S$ as well.
\end{proof}

We are now ready to state the Agol-Guéritaud construction.

\begin{constr}[Agol-Guéritaud {\cite[Section 4.1]{LMT23}}, {\cite[Section 5.8]{SS24}}] \label{constr:agolgueritaud}
Take an ideal tetrahedron $t_R$ for each tetrahedron rectangle $R$. Fix a bijection between the four ideal vertices of $t_R$ with the four points of $\widetilde{\mathcal{C}}$ on the boundary of $R$. This induces a bijection between:
\begin{itemize}
    \item The six edges of $t_R$ with the six edge subrectangles of $R$.
    \item The four faces of $t_R$ with the four face subrectangles of $R$.
\end{itemize}
Whenever two tetrahedron rectangles $R_1$ and $R_2$ intersect in rectangle that is a face subrectangle of both $R_1$ and $R_2$, we glue $t_{R_1}$ and $t_{R_2}$ along their corresponding faces.
This gives a ideal triangulation $\widetilde{\Delta}$.

We define a veering structure on $\widetilde{\Delta}$ by declaring the edge of $t_R$ corresponding to the top/bottom edge rectangle to be the top/bottom edge, respectively, and coloring an edge red/blue if its corresponding edge rectangle is red/blue, respectively.

The deck transformations $\pi_1 M$ act naturally on $\widetilde{\Delta}$, preserving the veering structure.
In fact, $\pi_1 M$ acts freely on the set of edge/face/tetrahedron rectangles, thus the action of $\pi_1 M$ on the edges/faces/tetrahedra is free, so the quotient $\Delta = \widetilde{\Delta}/\pi_1 M$ is a triangulation.
Moreover, by \Cref{prop:astroidlemma} applied to points in $\widetilde{\mathcal{C}}$, one can show that $\Delta$ is a finite triangulation of a 3-manifold homeomorphic to $M \backslash \mathcal{C}$.

We refer to $\Delta$ as the \textbf{veering triangulation associated to $(\phi,\mathcal{C})$}.
\end{constr}

\subsection{Positioning of veering triangulations} \label{subsec:veertriposition}

In \Cref{constr:agolgueritaud}, we mentioned that the veering triangulation $\Delta$ lives on a 3-manifold homeomorphic to $M \backslash \mathcal{C}$.
In particular, we can transfer $\Delta$ into a triangulation of $M \backslash \mathcal{C}$.
However, the construction does not specify how $\Delta$ interacts with $\phi$ after this transfer.
In this subsection, we will define three increasingly strong ways $\Delta$ can be positioned with respect to $\phi$.

\begin{defn}[Transverse position] \label{defn:transposition}
Let $\phi$ be a pseudo-Anosov flow on $M$ and let $\mathcal{C}$ be a nonempty finite collection of closed orbits of $\phi$.
Let $\Delta$ be a veering triangulation on $M \backslash \mathcal{C}$.
We say that $\Delta$ is in \textbf{transverse position} with respect to $\phi$ if:
\begin{itemize}
    \item each face $f$ of $\Delta$ is positively transverse to the orbits of $\phi$, and
    \item each edge $e$ of $\Delta$ extends to a smoothly embedded compact arc $\overline{e}$ that is transverse to the stable and unstable foliations of $\phi$. \qedhere
\end{itemize}

\end{defn}

Transverse position is sufficient for basic applications of the correspondence theory between pseudo-Anosov flows and veering triangulations.
For example, we have the following existence and uniqueness result.

\begin{thm}[Landry-Minsky-Taylor {\cite[Theorem 5.1]{LMT23}}, Landry-Taylor {\cite[Theorem 6.1]{Tsa24b}}] \label{thm:transpositionexistunique}
Let $\phi$ be a pseudo-Anosov flow on $M$ with no perfect fits relative to $\mathcal{C}$. Then the veering triangulation $\Delta$ associated to $(\phi,\mathcal{C})$ can be placed in transverse position with respect to $\phi$.
Moreover, $\phi$ is the only pseudo-Anosov flow on $M$ with respect to which $\Delta$ can be placed in transverse position, up to orbit equivalence by a map isotopic to identity. \qed
\end{thm}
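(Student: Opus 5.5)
The plan is to treat existence and uniqueness separately. Both work in the universal cover $\widetilde{M}$ with the orbit space $\mathcal{O}$ of \Cref{subsec:orbitspace}, and the combinatorics are supplied by \Cref{constr:agolgueritaud}.

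For existence, I will realize $\widetilde{\Delta}$ $\pi_1 M$-equivariantly inside $\widetilde{M} \backslash \widetilde{\mathcal{C}}$, building it skeleton by skeleton.
\begin{enumerate}
\item For each edge rectangle $R$, choose a diagonal arc $d_R \subset R$ joining its two corners in $\widetilde{\mathcal{C}}$ and transverse to $\mathcal{O}^s$ and $\mathcal{O}^u$. Since $\pi_1 M$ acts freely on edge rectangles, I first choose one diagonal per orbit of rectangles and then translate it.
\item Lift each $d_R$ to an arc $e_R$ in $\widetilde{M}$ lying over $d_R$. The lift is determined by a height function along $d_R$ in a product chart $\widetilde{M}\backslash\widetilde{\mathcal{C}} \to \mathcal{O}$ given by a transverse section. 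Its ends are asymptotic to the orbits at the two corners, so its closure is a compact arc there, and it is transverse to the foliations because $d_R$ is.
\item Choose the heights so that a \emph{crossing criterion} holds: whenever $R_2 > R_1$ and $d_{R_1}, d_{R_2}$ intersect, $e_{R_2}$ lies above $e_{R_1}$ along the flow.
\item Given the crossing criterion, fill each face rectangle by a disc spanning the three lifted edges and positively transverse to the flow. Then fill each tetrahedron rectangle by the region of flow segments between its two bottom faces and its two top faces.
\item Quotient by $\pi_1 M$.
\end{enumerate}

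The main obstacle in existence is step 3, choosing the heights equivariantly so that the crossing criterion holds. The first thing I would try is to pick $\pi_1 M$-invariant transverse measures, or a flow-invariant metric, that make the rectangles quasi-Euclidean. I would then lift an arc at a height given by a monotone function of its aspect ratio. Because taller and thinner rectangles rotate toward the stable direction, this makes the height increase under the relation $>$. Local finiteness near $\widetilde{\mathcal{C}}$ is handled by \Cref{prop:astroidlemma}.

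For uniqueness, suppose $\Delta$ is in transverse position with respect to another pseudo-Anosov flow $\psi$ on $M$. I will show that the faces and edges determine the orbit space of $\widetilde{\psi}$ together with its foliations, $\pi_1 M$-equivariantly.
\begin{enumerate}
\item Project each edge of $\widetilde{\Delta}$ to the orbit space $\mathcal{O}_\psi$. Transversality to the foliations makes each projection the diagonal of a rectangle with corners on $\widetilde{\mathcal{C}}$. Positive transversality of the faces then forces the tetrahedra to project onto tetrahedron rectangles, with the top and bottom edges as in \Cref{constr:agolgueritaud}.
\item Conclude that the combinatorial structure of edge, face and tetrahedron rectangles in $\mathcal{O}_\psi$ is $\pi_1 M$-equivariantly isomorphic to that of $\mathcal{O}_\phi$.
\item Reconstruct each orbit space from this combinatorics. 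Points of $\widetilde{\mathcal{C}}$ correspond to ideal vertices. Other points correspond to nested sequences of rectangles, using density of the rectangle structure, which follows from transitivity and the no perfect fits condition.
\item Obtain an equivariant homeomorphism $\mathcal{O}_\phi \to \mathcal{O}_\psi$ preserving the foliations.
\item Apply the theorem that two (pseudo-)Anosov flows with $\pi_1$-equivariantly conjugate orbit spaces are orbit equivalent via a map homotopic to the identity, and upgrade homotopy to isotopy in the standard way.
\end{enumerate}
I expect steps 3 and 4 to be the delicate part of uniqueness, namely verifying that the reconstruction is a homeomorphism and really preserves the foliations.
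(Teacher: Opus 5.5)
The paper does not prove this statement. It quotes existence from \cite[Theorem 5.1]{LMT23} and uniqueness from \cite[Theorem 6.1]{Tsa24b}. It does reprove existence, in the stronger Legendrian form, in Sections 4--6. Your Steps 1, 2, 4, 5 are essentially \Cref{prop:edgecandidatestoveertri}, which also needs the face embeddedness criterion to get embedded faces.

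The genuine gap is your Step 3, which carries all of the difficulty. Literally $\pi_1 M$-invariant transverse measures of full support cannot exist: the deck transformation fixing a periodic point expands its unstable leaf. At best the measures are rescaled by $\lambda^{\mp k(g)}$ for a height homomorphism $k$. Such a system, which would make ``aspect ratio'' and ``height'' equivariantly meaningful, is exactly what a fibration of $M\backslash\mathcal{C}$ transverse to $\phi$ provides. That is the layered case recalled in the introduction, where your argument is the classical canonical-lift argument. For non-layered $\Delta$ you have no such structure. A flow-invariant metric is also ruled out, by the uniform contraction and expansion of $\phi$.

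The paper's substitute is a Birkhoff section $S$ with boundary disjoint from $\mathcal{C}$ (\Cref{thm:birkhoffsectionexist}). This gives only a quasi-translation structure on the punctured space $\mathcal{O}^\circ$. The measures are rescaled by powers of $\lambda$ around each point of $\widetilde{\partial S}$, and edge rectangles generally contain such points. So their aspect ratios are undefined, and heights cannot be assigned rectangle by rectangle. Heights are instead assigned pointwise, at $\frac{1}{2}\log_\lambda|\slope|$ (\Cref{constr:surfacebicontact}). This turns the crossing criterion into the slope criterion at each intersection point (\Cref{prop:diagtoveertri}). The real work is then producing diagonals that avoid $\widetilde{\partial S}$ and satisfy that criterion. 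This is done via strict anchors, buoys and tight arcs (\Cref{prop:pldiaggoals}), followed by peeling apart overlaps, smoothing, and rotating away misalignments (\Cref{prop:smoothdiaggoals}). Nothing in your proposal replaces this, so as written it proves existence only for layered $\Delta$. Also, in Step 2 a lift whose projection tends to a corner need not converge to a point of the corner orbit. The paper ensures convergence by giving diagonals constant slope near their endpoints (\Cref{rmk:constantslopeendpoint}).

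For uniqueness, your strategy is reasonable: recover $\mathcal{O}_\psi$ and its foliations equivariantly from $\widetilde{\Delta}$, then use the fact that equivariantly conjugate orbit spaces come from an orbit equivalence homotopic to the identity. However, your Steps 1 and 3 use, for $\psi$, properties you only know for $\phi$.

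First, an arc transverse to both foliations need not be the diagonal of a rectangle. Inside a perfect fit $[0,1]^2\backslash\{(1,1)\}$, a segment from a point of $\{1\}\times[0,1)$ to a point of $[0,1)\times\{1\}$ is transverse to both foliations, yet its endpoints span no rectangle.

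Second, your density claim rests on transitivity of $\psi$ and on $\psi$ having no perfect fits relative to $\mathcal{C}$. These are not hypotheses of the theorem; they would have to be derived from the transverse position of $\Delta$ with respect to $\psi$. Your outline does not say how, and without them Steps 1--4 do not go through.
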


However, we need the following stronger condition in order to apply the horizontal surgery machinery of \cite{Tsa24a} and \cite{Tsa24b}.

\begin{defn}[Steady position] \label{defn:steadyposition}
Let $\phi$ be a pseudo-Anosov flow with no perfect fits relative to $\mathcal{C}$, and let $\Delta$ be the veering triangulation associated to $(\phi,\mathcal{C})$.
Suppose $\Delta$ is in transverse position with respect to $\phi$.
We further say that $\Delta$ is in \textbf{steady position} with respect to $\phi$ if:
\begin{itemize}
    \item the union of red edges of $\Delta$ is positive and steady, and
    \item the union of blue edges of $\Delta$ is negative and steady. \qedhere
\end{itemize}
\end{defn}

In this paper, we will certify steady position using the following even stronger condition.

\begin{defn}[Legendrian position] \label{defn:legendrianposition}
Let $\phi$ be a pseudo-Anosov flow with no perfect fits relative to $\mathcal{C}$, and let $\Delta$ be the veering triangulation associated to $(\phi,\mathcal{C})$.
Suppose $\Delta$ is in transverse position with respect to $\phi$.

Let $(\xi_+,\xi_-)$ be a bicontact structure on a $\phi^t$-invariant open subset $U \subset M \backslash \mathcal{C}$ that supports the restriction of $\phi$ to $U$.
We say that $\Delta$ is in \textbf{Legendrian position} with respect to $\phi$ and $(\xi_+,\xi_-)$ if:
\begin{itemize}
    \item the union of red edges lies in $U$ and is Legendrian with respect to $\xi_-$, and
    \item the union of blue edges lies in $U$ and is Legendrian with respect to $\xi_+$. \qedhere
\end{itemize}
\end{defn}

\begin{prop}
If $\Delta$ is in Legendrian position then it is in steady position.
\end{prop}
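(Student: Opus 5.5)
This is a direct application of \Cref{prop:legendriansteady}, once to each contact structure. Suppose $\Delta$ is in Legendrian position with respect to $\phi$ and a bicontact structure $(\xi_+,\xi_-)$ defined on a $\phi^t$-invariant open set $U \subset M \backslash \mathcal{C}$ and supporting $\phi|_U$. Transverse position is part of the hypothesis of \Cref{defn:legendrianposition}, so only the two bullet points of \Cref{defn:steadyposition} need checking.

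\textbf{Blue edges.} Let $e$ be the union of blue edges. We check the hypotheses of \Cref{prop:legendriansteady} for $\xi_+$:
\begin{itemize}
\item $U$ is $\phi^t$-invariant and open by assumption.
\item $U$ lies in $M \backslash \sing(\phi)$, because $\mathcal{C}$ contains all singular orbits in the Agol--Guéritaud setting.
\item $T\phi = \xi_+ \cap \xi_- \subset \xi_+$ at every point of $U$, since the bicontact structure supports $\phi|_U$.
\item $e$ lies in $U$ and is Legendrian with respect to $\xi_+$.
\item $e$ is transverse to $\phi$. Transverse position makes each edge a smooth arc transverse to both $\Lambda^s$ and $\Lambda^u$. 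A tangent vector $ae^s + b\dot\phi + ce^u$ tangent to $\Lambda^s$ has $c=0$, and one tangent to $\Lambda^u$ has $a=0$. Transversality to both foliations therefore forces $a \neq 0$ and $c \neq 0$, and in particular $(a,c) \neq (0,0)$.
\end{itemize}
\Cref{prop:legendriansteady} then says $e$ is negative and steady.

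\textbf{Red edges.} The symmetric version of \Cref{prop:legendriansteady}, with $\xi_-$ in place of $\xi_+$, shows by the same checks that the union of red edges is positive and steady.

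\textbf{Possible subtlety.} One might worry whether steadiness of the union of edges needs more than steadiness of each edge separately. It does not: \Cref{prop:legendriansteady} applies to an arbitrary, possibly disconnected, $1$-manifold. Its proof compares slopes at the two ends of any crossing, including crossings between different edges, using only the monotone rotation of $\xi_\pm$ along flow lines in $U$. Since $U$ is flow-invariant, the whole orbit segment of any crossing stays inside $U$, so this rotation argument applies along it.

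The only other point to note is that \Cref{defn:steadyposition} concerns the interiors of the edges, which lie in $U$. Nothing further is needed.
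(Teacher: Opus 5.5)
Your proof is correct and is the paper's argument: apply \Cref{prop:legendriansteady} to the union of blue edges with $\xi_+$, and its symmetric version to the union of red edges with $\xi_-$. You also spell out hypothesis checks the paper leaves implicit, namely that $U$ avoids $\sing(\phi)\subset\mathcal{C}$ and that transversality of the edges to $\Lambda^s$ and $\Lambda^u$ gives transversality to $\phi$; both checks are valid.
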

\begin{proof}
If $\Delta$ is in Legendrian position, then the red edges are transverse to $\phi$ and Legendrian to a negative contact structure containing $T\phi$, so by \Cref{prop:legendriansteady}, they are steady.
Similarly, we deduce that the blue edges are steady.
\end{proof}

\section{From edges to triangulations} \label{sec:edgestoveertri}

In this section, we state a set of sufficient criteria for a collection of arcs to be realized as the edges of a veering triangulation. 

\subsection{Edge candidates} \label{subsec:edgecandidate}

We first set up some terminology.
Let $\phi$ be a pseudo-Anosov flow on $M$ with no perfect fits relative to $\mathcal{C}$.

\begin{defn}[Diagonal] \label{defn:diag}
Let $R$ be an edge rectangle in the orbit space $\mathcal{O}$.
A \textbf{diagonal} of $R$ is an arc in $\mathcal{O}$ between the two elements of $\widetilde{\mathcal{C}}$ at the corners of $R$ that is transverse to the foliations $\mathcal{O}^s$ and $\mathcal{O}^u$.
A \textbf{diagonal system} is a collection $\{d_R \mid \text{$R$ is an edge rectangle}\}$ that is equivariant, i.e. $g \cdot d_R = d_{g \cdot R}$ for every $g \in \pi_1 M$.
\end{defn}

\begin{defn}[Edge candidate] \label{defn:edgecandidate}
Let $R$ be an edge rectangle in the orbit space $\mathcal{O}$.
An \textbf{edge candidate} for $R$ is a smoothly embedded arc in $\widetilde{M}$ between the two elements of $\widetilde{\mathcal{C}}$ at the corners of $R$ that projects homeomorphically to a diagonal of $R$.
An \textbf{edge candidate system} is a collection $\{d^\wedge_R \mid \text{$R$ is an edge rectangle}\}$ that is equivariant, i.e. $g \cdot d^\wedge_R = d^\wedge_{g \cdot R}$ for every $g \in \pi_1 M$.
Note that the projection of an edge candidate system onto $\mathcal{O}$ is a diagonal system.
\end{defn}

\subsection{Crossing criterion} \label{subsec:crossingcriterion}

We are now ready to state our criteria. 

\begin{defn}[Face embeddedness criterion] \label{defn:faceembcriterion}
We say that a diagonal system $\{d_R\}$ satisfies the \textbf{face embeddedness criterion} if for every pair of edge rectangles $R_1 < R_2$ that share a corner, $\intr(d_{R_1})$ and $\intr(d_{R_2})$ are disjoint.

We say that an edge candidate system $\{d^\wedge_R\}$ satisfies the \textbf{face embeddedness criterion} if 
for every pair of edge rectangles $R_1 < R_2$ that share a corner, there are no crossings between $\intr(d^\wedge_{R_1})$ and $\intr(d^\wedge_{R_2})$, equivalently, if the diagonal system obtained by projecting $\{d^\wedge_R\}$ to the orbit space satisfies the face embeddedness criterion.
\end{defn}

The following lemma justifies our terminology.

\begin{lem} \label{lem:faceembcriterion}
A diagonal system $\{d_R\}$ satisfies the face embeddedness criterion if and only if for every face rectangle $Q$, the interiors of diagonals $\intr(d_{R_1}), \intr(d_{R_2}), \intr(d_{R_3})$, for the three edge subrectangles $R_1,R_2,R_3 \subset Q$ are disjoint.
\end{lem}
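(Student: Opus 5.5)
Both directions come down to the combinatorics of edge subrectangles of a face rectangle, together with the fact that two edge rectangles sharing a corner with $R_1 < R_2$ always sit inside a common face rectangle.

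For the forward direction, fix a face rectangle $Q$ with corner $c \in \widetilde{\mathcal{C}}$, and let $a,b \in \widetilde{\mathcal{C}}$ be the points in the interiors of the two sides of $Q$ opposite to $c$. Say $a$ lies on the top side and $b$ on the right side, with $c$ at the bottom-left; the other configurations are symmetric. The three edge subrectangles are $R(c,a)$, $R(c,b)$ and $R(a,b)$.

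I would first check that any two of these share a corner and are comparable under $<$.
\begin{itemize}
    \item $R(c,a)$ and $R(c,b)$ share $c$. The rectangle $R(c,a)$ is as tall as $Q$ and thinner than it, while $R(c,b)$ is as wide as $Q$ and shorter. Hence $R(c,b) < R(c,a)$.
    \item $R(a,b)$ shares $a$ with $R(c,a)$ and $b$ with $R(c,b)$. Its horizontal extent runs from the $x$-coordinate of $a$ to the right side of $Q$, and its vertical extent runs from the height of $b$ to the top of $Q$. So it is thinner than $R(c,b)$ and shorter than $R(c,a)$.
    \item Comparing with the definitions of taller and wider, this gives $R(c,b) < R(a,b) < R(c,a)$.
\end{itemize}
Each pair therefore shares a corner and is strictly ordered, so the face embeddedness criterion gives pairwise disjoint interiors.

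For the converse, suppose $R_1 < R_2$ share a corner $c$. I would show that they are two of the three edge subrectangles of a single face rectangle $Q$.

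Since $R_2$ is taller and $R_1$ is wider, and both have $c$ as a corner, both must lie in the same quadrant at $c$. If they lay in different quadrants, a shared vertical or horizontal extent would force degenerate rectangles. Normalize so this is the top-right quadrant. Then $R_1 = R(c,b)$ and $R_2 = R(c,a)$, where $a$ lies strictly above and to the left of $b$.

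Take $Q$ to be the rectangle with corner $c$, spanned by the width of $R_1$ and the height of $R_2$. This $Q$ exists as a rectangle in $\mathcal{O}$, since it is the union of the rectangles $R_1$ and $R_2$ together with the product of their extents. The existence of such a product rectangle in the orbit space follows from the local product structure, and the fact that $R_1 \cup R_2$ is an embedded "L" shape whose completion has no singular obstruction. I expect this to be the only delicate point, and would justify it by noting that the stable leaves through $R_1$ meet the unstable leaves through $R_2$, via the standard fact that $\mathcal{O}^s$ and $\mathcal{O}^u$ leaves intersect at most once in $\mathcal{O}$ and that the region is swept out.

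Then $a$ lies in the interior of the top side of $Q$ and $b$ in the interior of the right side. We do not need emptiness of the interior, because face rectangles are defined only by corner and side conditions. So $Q$ is a face rectangle with $R_1, R_2$ among its edge subrectangles, and the hypothesis gives $\intr(d_{R_1}) \cap \intr(d_{R_2}) = \emptyset$.

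One remaining case is when $R_1$ and $R_2$ share both corners. This would force $R_1 = R_2$, contradicting $R_1 < R_2$.
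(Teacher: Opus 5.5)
\textbf{Forward direction.} Your claim that $R(c,b) < R(a,b) < R(c,a)$ is false. The relation $<$ is not a comparison of lengths. It requires that the sets of vertical leaves meeting the two rectangles be nested, and that the sets of horizontal leaves meeting them be nested, in opposite directions. The horizontal leaves meeting $R(a,b)$ run from the leaf of $b$ to the top of $Q$, while those meeting $R(c,b)$ run from the leaf of $c$ to the leaf of $b$. These families share only the leaf of $b$, so $R(a,b)$ and $R(c,b)$ are incomparable. Similarly, the vertical leaves meeting $R(a,b)$ and $R(c,a)$ share only the leaf of $a$. Your own converse argument already shows this: comparable rectangles sharing a corner occupy the same quadrant at that corner. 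But $R(a,b)$ and $R(c,b)$ occupy different quadrants at $b$, and $R(a,b)$ has the opposite color to the other two. So the face embeddedness criterion says nothing about these two pairs. The observation you need, which is how the paper argues, is that $R(a,b)$ meets each of $R(c,a)$ and $R(c,b)$ only along part of a side. The rectangles therefore have disjoint interiors, and so do their diagonals, since a diagonal is transverse to both foliations and its interior lies in the interior of its rectangle. Only the same-colored pair $R(c,b) < R(c,a)$ needs the criterion.

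\textbf{Converse.} Your rectangle $Q$, spanned by $R_1 = R(c,b)$ and $R_2 = R(c,a)$, is a face rectangle only when $R_1$ and $R_2$ are \emph{consecutive} in the staircase at $c$. The criterion, however, concerns all comparable pairs sharing a corner. If $R_1 < R(c,s) < R_2$ for some edge rectangle $R(c,s)$, then $s$ lies in the interior of $Q$. Your remark that emptiness is not needed is exactly where the argument breaks. Face rectangles must contain no point of $\widetilde{\mathcal{C}}$ in their interior. This is implicit in the phrase ``the three edge subrectangles'' in the statement you are proving, since $R(c,s)$ would be a fourth. It is also needed for the Agol--Gu\'eritaud correspondence between faces and face rectangles. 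So the hypothesis tells you nothing about $Q$. Worse, $s$ may be a singular point, because $\mathcal{C}$ contains all singular orbits. In that case $Q$ does not exist as a rectangle at all, so your existence claim fails as well. What is missing is a chaining argument. By \Cref{prop:astroidlemma} there is a finite chain $R_1 = R'_1 < \dots < R'_n = R_2$ of consecutive edge rectangles $R'_k = R(c,s_k)$ at $c$. Each consecutive pair consists of edge subrectangles of a common face rectangle, so consecutive diagonals have disjoint interiors by hypothesis. Then induct on $j-i$. For $i<m<j$, the arc $d_{R'_m}$ runs from $c$ to the boundary corner $s_m$ of the staircase disc $\bigcup_k R'_k$, so it separates the disc with $s_i$ and $s_j$ on opposite sides. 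By induction, $\intr(d_{R'_i})$ and $\intr(d_{R'_j})$ both avoid $d_{R'_m}$, so they lie in different components and are disjoint.
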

\begin{proof}
Recall that each face rectangle has exactly two edge subrectangles of the same color.
The interiors of the edge subrectangles of opposite color are disjoint, so the interiors of their diagonals are automatically disjoint.
The remaining pair of edge subrectangles that have the same color share a corner, so the interiors of their diagonals are disjoint if the face embeddedness criterion is satisfied.

Conversely, if $R_1 < R_2$ are two edge rectangles that share a corner, then by \Cref{prop:astroidlemma}, there exists a sequence of edge rectangles $R_1 = R'_1 < R'_2 < ... < R'_n = R_2$ such that $R'_i$ and $R'_{i+1}$ are edge subrectangles of a common face rectangle.
If $\intr(d_{R'_i})$ and $\intr(d_{R'_{i+1}})$ are disjoint for each $i$, then $d_{R'_j}$ separates $d_{R'_1},...,d_{R'_{j-1}}$ from $d_{R'_{j+1}},...,d_{R'_n}$ in $\bigcup_{i=1}^n R'_i$ for any $j$. Hence $\intr(d_{R_1})$ and $\intr(d_{R_2})$ are disjoint.
\end{proof}

\begin{defn}[Crossing criterion] \label{defn:edgecrossingcriterion}
We say that an edge candidate system $\{d^\wedge_R\}$ satisfies the \textbf{crossing criterion} if  
the edge candidates $d^\wedge_R$ are smoothly embedded, and for every $R_1 < R_2$, 
every intersection point between the projections of $\intr(d^\wedge_{R_1})$ and $\intr(d^\wedge_{R_2})$ in $\mathcal{O}$ is the projection of a crossing of $\intr(d^\wedge_{R_2})$ over $\intr(d^\wedge_{R_1})$.
\end{defn}

\begin{prop} \label{prop:edgecandidatestoveertri}
Let $\phi$ be a pseudo-Anosov flow on $M$ with no perfect fits relative to $\mathcal{C}$.
Let $\{d^\wedge_R\}$ be an edge candidate system satisfying the face embeddedness and the crossing criteria.
Then the veering triangulation $\Delta$ associated to $(\phi,\mathcal{C})$ can be put in transverse position with respect to $\phi$, with $\intr(d^\wedge_R)$ being the edges of the lifted triangulation $\widetilde{\Delta}$ on $\widetilde{M}$.
\end{prop}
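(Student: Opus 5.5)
The plan is to build the lifted triangulation $\widetilde{\Delta}$ directly in $\widetilde{M}$, one simplex dimension at a time, starting from the given edges $\intr(d^\wedge_R)$ and working equivariantly throughout. The argument then descends to $M \backslash \mathcal{C}$, and transverse position follows from the construction.

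\emph{Edges.} Each $d^\wedge_R$ projects homeomorphically to a diagonal transverse to $\mathcal{O}^s$ and $\mathcal{O}^u$, so its interior is a smooth arc transverse to $\widetilde{\Lambda^s}$ and $\widetilde{\Lambda^u}$. It extends to a compact arc once we push slightly into a neighborhood of the endpoint orbits. The first check is that distinct edges are disjoint. If $\intr(d^\wedge_{R_1})$ and $\intr(d^\wedge_{R_2})$ met, their projections would meet, so the two rectangles are either nested in the order $<$ or of opposite color. If $R_1<R_2$, the crossing criterion says every intersection of the projections comes from a crossing with positive time, so the arcs cannot meet at time $0$. If neither $R_1<R_2$ nor $R_2<R_1$ holds and the projections intersect, we use the standard fact about edge rectangles under no perfect fits: two edge rectangles whose interiors meet with no points of $\widetilde{\mathcal{C}}$ in their interiors are comparable under $<$. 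The only remaining case is rectangles whose diagonals meet only at a shared corner, which is not an interior point.

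\emph{Faces and tetrahedra.} For a face rectangle $Q$ with edge subrectangles $R_1<R_2<R_3$, \Cref{lem:faceembcriterion} says the three projected diagonals bound an embedded triangle $\tau_Q$ in $Q \subset \mathcal{O}$. By the crossing criterion, the three edges lie over $\partial\tau_Q$ as a closed curve that is a graph over $\partial\tau_Q$ for a height function along the flow. We therefore define the face $f_Q$ as a section over $\tau_Q$ extending this boundary graph, for instance by choosing a flow-time function on $\widetilde{M}|_{\tau_Q}$ and interpolating. Such a face is automatically transverse to $\widetilde{\phi}$.

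For a tetrahedron rectangle $T$, the four face rectangles split into two bottom faces, whose union projects onto a region $D$ containing the bottom diagonal, and two top faces over the same $D$ containing the top diagonal. The crossing criterion for top over bottom puts the top edge above the bottom edge, and it puts the other edges compatibly. We then want to perturb the face sections, keeping their boundaries fixed, so that over $D$ the top pair lies strictly above the bottom pair except along the four common edges. The tetrahedron $t_T$ is the region between these two graphs, i.e. a union of flow segments, which makes the coorientations upward and the veering structure match \Cref{constr:agolgueritaud}.

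\emph{Equivariance and gluing.} Every choice is made on a set of orbit representatives of the $\pi_1 M$-action on face rectangles, which is free by \Cref{constr:agolgueritaud}, and then translated, so the result is equivariant. The main obstacle is making the face sections consistent. Each face is shared by two tetrahedra, and its height must lie below every face above it in the stacking order. We handle this by choosing each face's height function as a canonical average, such as linear interpolation in flow time of a fixed equivariant parametrization over the boundary graph. Because the crossing criterion gives strict inequalities on the boundaries and on interior crossings of projected diagonals, these averages stay ordered. We then need to show that the tetrahedra fill $\widetilde{M}$ minus $\widetilde{\mathcal{C}}$ with disjoint interiors. The tetrahedra over each orbit are ordered by $<$ and form a sequence that runs off to $\pm\infty$ by \Cref{prop:astroidlemma}, so every orbit meets them in consecutive flow segments covering it. Quotienting by $\pi_1 M$ then places $\Delta$ in transverse position, with faces positively transverse and edges as prescribed.
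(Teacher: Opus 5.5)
There is a genuine gap at the step where you choose the face sections, which is the step you yourself flag as the main obstacle. The crossing criterion only compares an edge with another edge, and only at the points where their projections cross. It says nothing about where an edge sits relative to the \emph{interior} of a face, and that interior is whatever your interpolation produces.

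Concretely, suppose an edge $d^\wedge_R$ projects across the interior of $\tau_Q$ for some face rectangle $Q$ with $Q<R$. The criterion places $d^\wedge_R$ above the two edges of $Q$ whose projections it crosses. Between those two points, however, $d^\wedge_R$ is an arbitrary smooth arc. An averaged or linearly interpolated section over $\tau_Q$ is controlled by the boundary heights of $Q$, possibly far away. Nothing prevents $d^\wedge_R$ from dipping below that section, i.e.\ piercing the face, and then overlapping interpolated faces can cross each other as well. So ``these averages stay ordered'' does not follow.

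The single-tetrahedron perturbation does not rescue this. Each face is the top face of one tetrahedron and the bottom face of another. Its projection is also overlapped by infinitely many other faces, including its own $\pi_1 M$-translates. So the perturbations for different tetrahedra cannot be made independently.

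The missing idea is a way to turn the crossing criterion into control over face interiors. The paper does not prescribe heights at all; it argues as follows.
\begin{enumerate}
\item Take arbitrary equivariant lifts $F^\wedge_Q$ of the projected triangles and put them in general position.
\item For each edge rectangle $R$, let $D_R$ be the union of orbits through $\intr(d^\wedge_R)$.
\item If a face rectangle $Q$ meets $R$ with no common corner, then $Q<R$ or $Q>R$. In either case the two edges of $Q$ that meet $R$ lie both below or both above $R$.
\item By the crossing criterion, the compact arc $F^\wedge_Q \cap D_R$ therefore has both endpoints on the same side of $d^\wedge_R$. So $F^\wedge_Q$ can be isotoped along flow lines, relative to its edges, off $d^\wedge_R$.
\item After doing this equivariantly, faces meet only in arcs ending at ideal vertices and in circles. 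These are removed by cut-and-paste.
\item Only then are the tetrahedra read off as flow-foliated regions bounded by faces.
\end{enumerate}
You need this push-off step, or an equivalent construction that explicitly keeps every edge off every face, before any ordering argument can start.

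Two smaller points. The three edge subrectangles of a face rectangle are not totally ordered by $<$: only the two of the same color are comparable. Also, \Cref{prop:astroidlemma} gives only the combinatorial ordering of rectangles at a point, not that face heights over an orbit are unbounded in flow time.
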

\begin{proof}
The strategy of the proof is to build the triangulation $\widetilde{\Delta}$ in the universal cover $\widetilde{M}$ starting with $\intr(d^\wedge_R)$ as the edges.

For each face rectangle $Q$, let $F_Q$ be the ideal triangle bounded by the projected diagonals in the three edge subrectangles of $Q$, minus the three vertices at $\widetilde{\mathcal{C}}$.
Note that \Cref{lem:faceembcriterion} is used here.
Pick a lift $F^\wedge_Q$ of $F_Q$ in $\widetilde{M}$, such that $F^\wedge_Q$ restricts to $\intr(d^\wedge_R)$ over the interior of the diagonal in each edge subrectangle $R$.

We do this for one representative in each $\pi_1 M$-orbit of face rectangles, and then extend this over all face rectangles equivariantly.
Here, we use the fact that $\pi_1 M$ acts freely on the face rectangles.

We now analyze the intersections between the triangles $F^\wedge_Q$.
By first making a $\pi_1 M$-equivariant perturbation of $F^\wedge_Q$, or equivalently, making a perturbation of the projections of $F^\wedge_Q$ in $M$, then taking the lift of those projections, we can assume that the triangles intersect in a $\pi_1 M$-invariant collection of arcs and curves, and that the number of $\pi_1 M$-orbits of such arcs and curves is finite.
Here, each arc can either end at a point in the interior of a triangle, at a point in the interior of an edge, or at an ideal vertex.

We first isotope the triangles so that the arcs cannot end on the interior of edges:
For every edge rectangle $R$, let $D_R \cong \intr(d^\wedge_R) \times \mathbb{R}$ be the union of orbits of $\widetilde{\phi}$ that pass through $\intr(d^\wedge_R)$.
Each triangle $F^\wedge_Q$ that intersects $D_R$ must do so either in a compact arc, a half-infinite arc, or a bi-infinite arc, depending on whether $Q$ shares zero, one, or two points of $\widetilde{\mathcal{C}}$ with $R$, respectively.
See \Cref{fig:pushfacesoffedges} left. 

\begin{figure}
    \centering
    \fontsize{10pt}{10pt}
\begingroup%
  \makeatletter%
  \providecommand\color[2][]{%
    \errmessage{(Inkscape) Color is used for the text in Inkscape, but the package 'color.sty' is not loaded}%
    \renewcommand\color[2][]{}%
  }%
  \providecommand\transparent[1]{%
    \errmessage{(Inkscape) Transparency is used (non-zero) for the text in Inkscape, but the package 'transparent.sty' is not loaded}%
    \renewcommand\transparent[1]{}%
  }%
  \providecommand\rotatebox[2]{#2}%
  \newcommand*\fsize{\dimexpr\f@size pt\relax}%
  \newcommand*\lineheight[1]{\fontsize{\fsize}{#1\fsize}\selectfont}%
  \ifx\svgwidth\undefined%
    \setlength{\unitlength}{263.56090876bp}%
    \ifx\svgscale\undefined%
      \relax%
    \else%
      \setlength{\unitlength}{\unitlength * \real{\svgscale}}%
    \fi%
  \else%
    \setlength{\unitlength}{\svgwidth}%
  \fi%
  \global\let\svgwidth\undefined%
  \global\let\svgscale\undefined%
  \makeatother%
  \begin{picture}(1,0.38444957)%
    \lineheight{1}%
    \setlength\tabcolsep{0pt}%
    \put(0,0){\includegraphics[width=\unitlength,page=1]{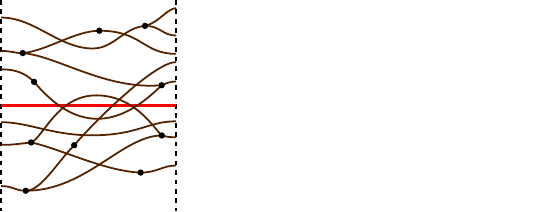}}%
    \put(0.34292924,0.18809271){\color[rgb]{1,0,0}\makebox(0,0)[lt]{\lineheight{1.25}\smash{\begin{tabular}[t]{l}$d^\wedge_R$\end{tabular}}}}%
    \put(0,0){\includegraphics[width=\unitlength,page=2]{pushfacesoffedges.pdf}}%
    \put(0.94168665,0.18808795){\color[rgb]{1,0,0}\makebox(0,0)[lt]{\lineheight{1.25}\smash{\begin{tabular}[t]{l}$d^\wedge_R$\end{tabular}}}}%
    \put(0,0){\includegraphics[width=\unitlength,page=3]{pushfacesoffedges.pdf}}%
  \end{picture}%
\endgroup%

    \caption{Analyzing arcs $F^\wedge_Q \cap D_R$.}
    \label{fig:pushfacesoffedges}
\end{figure}

In the compact arc case, we claim that the endpoints of the arc must both lie below $\intr(d^\wedge_R)$ or both lie above $\intr(d^\wedge_R)$.
Indeed, since $R$ is an edge rectangle and $Q$ is a face rectangle that intersect, either we have $Q < R$ or $Q > R$. Without loss of generality suppose the former is true.
Then also using the fact that $Q$ and $R$ do not share points of $\widetilde{\mathcal{C}}$, we see that $Q$ has exactly two edge rectangles $R'$ and $R''$ that intersect $R$, both of which lie below $R$, as in \Cref{fig:edgerectfacerect}.
Thus applying the crossing criterion, the endpoints of the arc $F^\wedge_Q \cap D_R$, which must lie on $\intr(d^\wedge_{R'})$ and $\intr(d^\wedge_{R''})$ respectively, must lie below $\intr(d^\wedge_R)$.

\begin{figure}
    \centering
    \fontsize{8pt}{8pt}
\begingroup%
  \makeatletter%
  \providecommand\color[2][]{%
    \errmessage{(Inkscape) Color is used for the text in Inkscape, but the package 'color.sty' is not loaded}%
    \renewcommand\color[2][]{}%
  }%
  \providecommand\transparent[1]{%
    \errmessage{(Inkscape) Transparency is used (non-zero) for the text in Inkscape, but the package 'transparent.sty' is not loaded}%
    \renewcommand\transparent[1]{}%
  }%
  \providecommand\rotatebox[2]{#2}%
  \newcommand*\fsize{\dimexpr\f@size pt\relax}%
  \newcommand*\lineheight[1]{\fontsize{\fsize}{#1\fsize}\selectfont}%
  \ifx\svgwidth\undefined%
    \setlength{\unitlength}{93.86233232bp}%
    \ifx\svgscale\undefined%
      \relax%
    \else%
      \setlength{\unitlength}{\unitlength * \real{\svgscale}}%
    \fi%
  \else%
    \setlength{\unitlength}{\svgwidth}%
  \fi%
  \global\let\svgwidth\undefined%
  \global\let\svgscale\undefined%
  \makeatother%
  \begin{picture}(1,0.90125653)%
    \lineheight{1}%
    \setlength\tabcolsep{0pt}%
    \put(0,0){\includegraphics[width=\unitlength,page=1]{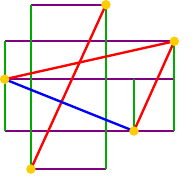}}%
    \put(0.950478,0.57675955){\color[rgb]{1,0,0}\makebox(0,0)[lt]{\lineheight{1.25}\smash{\begin{tabular}[t]{l}$R'$\end{tabular}}}}%
    \put(0.56856296,0.00375611){\color[rgb]{1,0,0}\makebox(0,0)[lt]{\lineheight{1.25}\smash{\begin{tabular}[t]{l}$R$\end{tabular}}}}%
    \put(0.58977362,0.11757004){\color[rgb]{0,0,1}\makebox(0,0)[lt]{\lineheight{1.25}\smash{\begin{tabular}[t]{l}$R''$\end{tabular}}}}%
    \put(0.95761328,0.21723255){\color[rgb]{0,0,0}\makebox(0,0)[lt]{\lineheight{1.25}\smash{\begin{tabular}[t]{l}$Q$\end{tabular}}}}%
  \end{picture}%
\endgroup%

    \caption{If $R$ is an edge rectangle and $Q$ is a face rectangle that intersect, then either we have $Q < R$ or $Q > R$.}
    \label{fig:edgerectfacerect}
\end{figure}

From this claim, we can isotope the triangles, along orbits of $\widetilde{\phi}$ and relative to their edges, so that no triangles intersect $\intr(d^\wedge_R)$, see \Cref{fig:pushfacesoffedges}.
Doing so for one representative $R$ in each $\pi_1 M$-orbit of edge rectangle, then extending it by $\pi_1 M$-equivariance, we will have arranged it so that the triangles $F^\wedge_Q$ only intersect in arcs that end on ideal vertices and in circles.

We can now get rid of these arcs and circles of intersection by cutting and pasting along them, see \Cref{fig:cutandpaste}.
After doing so, we now have a collection of mutually disjoint triangles $F^\wedge_Q$.
We declare these to be the faces of the constructed triangulation.

\begin{figure}
    \centering
    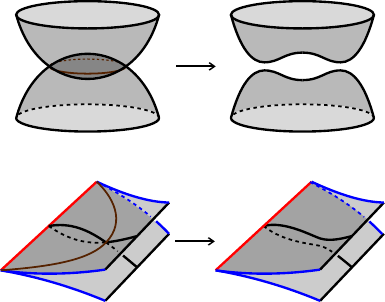
    \caption{We can get rid of arcs and circles of intersection between faces by cutting and pasting along them.}
    \label{fig:cutandpaste}
\end{figure}

For each tetrahedron rectangle $T$, the faces $F^\wedge_Q$ corresponding to the four face subrectangles of $T$ bound a region in $\widetilde{M} \backslash \widetilde{\mathcal{C}}$ foliated by orbit segments of $\widetilde{\phi}$.
Moreover, $\widetilde{\phi}$ enters the region on two faces and exits the region on the remaining two.
Thus the region is an ideal tetrahedron.
This shows that we have constructed an ideal triangulation of $\widetilde{M} \backslash \widetilde{\mathcal{C}}$. 
Moreover, by construction, this is the veering triangulation $\widetilde{\Delta}$.
Taking the quotient by $\pi_1 M$ gives us the proposition.
\end{proof}

\section{From diagonals to edges} \label{sec:diagstoveertri}

\Cref{prop:edgecandidatestoveertri} gives a method of putting veering triangulations in transverse position. However, this is not very wieldy in practice since there is too much freedom in picking arcs in 3-manifolds.
In this section, we build on \Cref{prop:edgecandidatestoveertri} and give another set of sufficient criteria for building veering triangulations, but this time in terms of just the diagonals and also achieving Legendrian position automatically.

\subsection{Birkhoff sections} \label{subsec:birkhoffsection}

The key notion in this section is that of a Birkhoff section.

\begin{defn}[Birkhoff section] \label{defn:birkhoffsection}
A \textbf{Birkhoff section} to a pseudo-Anosov flow $\phi$ is an immersed surface with boundary $S$ where 
\begin{itemize}
    \item the interior $\intr(S)$ is embedded and is transverse to the flow,
    \item the boundary $\partial S$ lies along closed orbits of $\phi$, and
    \item every orbit of $\phi$ meets $S$ in finite forward and backward time.
\end{itemize}
\end{defn}

It is a classical result that Birkhoff sections exist in plenty:

\begin{thm}[{\cite{Bru95}}] \label{thm:birkhoffsectionexist}
Every transitive pseudo-Anosov flow admits a Birkhoff section $S$. 
Moreover, given any finite collection of closed orbits $\mathcal{C}$,
the boundary orbits of $S$ can be chosen to be orientation-preserving and disjoint from $\mathcal{C}$.
\end{thm}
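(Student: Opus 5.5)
This is a classical result, and the plan is to follow Fried's approach as refined by Brunella: build many small \emph{partial sections} bounded by closed orbits, then resolve their intersections by cut-and-paste. The extra freedom to pick orientation-preserving boundary orbits outside $\mathcal{C}$ comes from density of periodic orbits.

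\textbf{Step 1 (choice of boundary orbits).} Since $\phi$ is transitive, its closed orbits are dense, as recalled in \Cref{subsec:flowdefn}. I would first upgrade this to: the \emph{orientation-preserving} nonsingular closed orbits are dense. Given an orientation-reversing closed orbit $\gamma$, or a recurrent orbit segment near a point $x$, I would apply the Anosov closing/shadowing lemma to the pseudo-orbit that runs around the relevant segment twice. The resulting periodic orbit has return map close to the square of the original holonomy, so it preserves each quadrant. Since $\mathcal{C}$ is finite, we may additionally require all chosen orbits to avoid $\mathcal{C}$ and $\sing(\phi)$.

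\textbf{Step 2 (local partial sections).} Cover $M$ by finitely many flow boxes $B_i$ whose transverse faces are small rectangles $D_i$. Each $D_i$ is chosen so that its four sides lie on stable or unstable leaves of orientation-preserving closed orbits $\gamma$ chosen as in Step 1. This is possible because the leaves of a dense set of periodic orbits are dense. Flow the sides of $D_i$ forward or backward along their stable or unstable leaves, so that they converge onto the corresponding $\gamma$. Each side then sweeps out a half-infinite strip in the stable or unstable leaf, and this strip spirals onto $\gamma$. The strip is not transverse to the flow, so I would replace it by a tilted version: a transverse surface that accumulates onto $\gamma$ as a half-open annulus, winding once around $\gamma$ per period. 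This is where orientation-preservation of $\gamma$ is used: the local quadrants are preserved, so the tilted strip closes up consistently near $\gamma$. The result is a compact immersed surface $S_i$ with interior transverse to $\phi$ and boundary on the chosen periodic orbits. Moreover, every orbit through $B_i$ meets $S_i$.

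\textbf{Step 3 (resolution).} Put $\bigcup_i S_i$ in general position. Its interiors are all positively transverse to $\phi$, so they intersect in arcs and circles. These can be resolved by the standard oriented cut-and-paste (double-curve surgery). Because every sheet is cooriented by the flow, the resolution preserves positive transversality and keeps the boundary on the chosen orbits. The union met every orbit in bounded forward and backward time, since the flow boxes cover the compact $M$. The resolved surface meets exactly the same orbits, so the third condition of \Cref{defn:birkhoffsection} holds.

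\textbf{Main obstacle.} I expect the hardest part to be the behavior near the boundary orbits in Step 3. Several partial sections may spiral onto the same $\gamma$, and the resolution must produce a surface whose interior is embedded near $\gamma$, with $\partial S$ immersed along $\gamma$ only in the allowed way. My first attempt would be to arrange, in a linearizing chart around each $\gamma$, that all tilted strips approaching $\gamma$ are radial half-planes at distinct angles in the quadrants. Resolving them then amounts to a combinatorial resolution of a finite collection of helicoidal sheets, which is exactly Fried's local model.
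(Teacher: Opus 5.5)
Your overall plan matches the paper's. The paper's proof runs Brunella's construction with boundary orbits drawn from the dense set of orientation-preserving closed orbits outside $\mathcal{C}$. The only ingredient beyond the citation is the density statement of your Step 1, and your argument for it fails.

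\textbf{Step 1.} Take the pseudo-orbit that goes twice around $\gamma$, or twice around a closing loop $x \to \phi^T(x) \approx x$. It is a double cover of a periodic pseudo-orbit. By uniqueness of the shadowing orbit, the closing lemma therefore returns $\gamma$ (resp.\ the orbit $p$ closing the single loop) traversed twice. Its time-$2T$ return map is indeed the square of the holonomy. But orientation-preservation is a property of the primitive orbit, which has not changed, so you get nothing new.

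You need a pseudo-orbit that is not a multiple of a shorter one. For instance, go $n$ times around an orientation-reversing $\gamma$ through the given open set, then once along a homoclinic excursion of $\gamma$ (these exist for transitive flows). The orientation character of $E^u$ is a homomorphism on loops, so it takes opposite values on the shadowing orbits $\gamma_n$ and $\gamma_{n+1}$. Hence for one parity of $n$, $\gamma_n$ is a primitive orientation-preserving closed orbit through the open set. The $\gamma_n$ are pairwise distinct, so all but finitely many avoid $\mathcal{C}\cup\sing(\phi)$.

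\textbf{Step 2.} As written, this step does not produce partial sections either. Whatever surface you attach along a side $\sigma$ of $D_i$, its boundary leaves $\sigma$ at the two corners, and you never say where it goes. With the strips you describe, the lateral edges are the forward or backward orbits of the corners of $D_i$. These are not closed orbits; they only spiral onto $\gamma$. So $D_i$ together with its four strips has boundary, as a $1$-chain, equal to the four corner orbits.

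Tilting does not change this: a strip with two spiralling edges is not a half-open annulus bounded by $\gamma$. Nor can Step 3 repair it, since oriented cut-and-paste preserves the boundary $1$-chain. Arranging the pieces so that the total boundary lies on closed orbits is precisely the nontrivial content of Fried's and Brunella's argument. That is why the paper quotes it rather than reproving it.

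Note also that in the paper, orientation-preservation enters only through the choice of the dense set of boundary orbits. It is not the mechanism that makes the surfaces close up near $\gamma$, as your Step 2 suggests.
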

\begin{proof}
This follows from the proof of \cite[Theorem 1]{Bru95}, using the fact that since $\mathcal{C}$ is finite, the set of orientation-preserving closed orbits that do not belong to $\mathcal{C}$ is dense.
\end{proof}

The importance of Birkhoff sections is that they reduce the 3-dimensional dynamics of a pseudo-Anosov flow into the 2-dimensional dynamics of a pseudo-Anosov map:
Let $S$ be a Birkhoff section to a pseudo-Anosov flow $\phi$.
The first item of \Cref{defn:birkhoffsection} implies that the stable/unstable foliations of $\phi$ induce singular 1-dimensional foliations $\ell^{s/u}$ on $\intr(S)$.
The last item of \Cref{defn:birkhoffsection} implies that there is a first return map $f$ on $\intr(S)$, which must preserve the foliations $\ell^{s/u}$.

From this, one can show that $f$ is a pseudo-Anosov map with stable/unstable foliations $\ell^{s/u}$, i.e. there exists transverse measure $\mu^{s/u}$ on $\ell^{s/u}$ such that $f_*\mu^s = \lambda^{-1} \mu^s$, $f_*\mu^u = \lambda \mu^u$ for some $\lambda > 1$ which we refer to as the \textbf{dilatation} of $f$, see \cite[Lemma 14.12]{FLP79}. 

\begin{prop} \label{prop:smoothalmostequivalence}
There is an orbit equivalence $h$ from the suspension flow on $M_f$ to the restriction of $\phi$ to $M \backslash \partial S$. Moreover, $h$ can be arranged to be smooth on $M^\circ_f = M_f \backslash \sing(\phi_f)$.
\end{prop}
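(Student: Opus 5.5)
We build $h$ directly from the first return time; it will be a homeomorphism on all of $M_f$ and a diffeomorphism away from the singular orbits. Write $S^\circ = \intr(S)$ and let $\tau: S^\circ \to (0,\infty)$ be the first return time of $\phi$ to $S^\circ$, so that $f(x) = \phi^{\tau(x)}(x)$. Consider the map
$$H: \mathbb{R} \times S^\circ \to M \backslash \partial S, \qquad H(s,x) = \phi^{s\tau(x)}(x)$$
for $s \in [0,1]$, extended to all $s$ by the rule $H(s+1,x) = H(s,f(x))$. For $s\in[0,1]$ we have $H(1,x) = f(x) = H(0,f(x))$, so this rule is consistent on overlaps. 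Since the mapping torus of \Cref{eg:susflow} identifies $(s,x)$ with $(s-1,f(x))$, the map $H$ descends to a map $h: M_f \to M\backslash\partial S$. By construction $h$ sends each suspension flow line $s\mapsto(s,x)$ onto the $\phi$-orbit through $x$, preserving orientation and reparametrizing by the factor $\tau$.

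Next I check that $h$ is a homeomorphism.

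\emph{Surjectivity:} by the third item of \Cref{defn:birkhoffsection}, every orbit in $M \backslash \partial S$ meets $S^\circ$ in finite forward and backward time.

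\emph{Injectivity:} between consecutive returns to $S^\circ$ a $\phi$-orbit meets $S^\circ$ only at the endpoints. Also, orbits in $M\backslash\partial S$ never meet $\partial S$, since $\partial S$ is a union of closed orbits.

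\emph{Continuity of $\tau$ and $h^{-1}$:} because $S^\circ$ is embedded and transverse to $\phi$, the implicit function theorem makes $\tau$ continuous, and smooth wherever $\phi$ is smooth. Continuity of $h^{-1}$ follows from the same local flow-box charts near $S^\circ$.

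Smoothness is the main point. On $M^\circ_f$ the orbits avoid $\sing(\phi)$, and $\phi$ is smooth there by \Cref{defn:paflow}. I take the transverse surface $\intr(S)$ to be smooth away from the singular orbits, which I expect is arranged in the construction of \Cref{thm:birkhoffsectionexist}, or can be achieved by a small isotopy along the flow. With that assumption the implicit function theorem gives a smooth $\tau$ away from $\sing(f)$, so $H$ is smooth for $s$ in the interior of $(0,1)$.

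The main obstacle is that $H$ may fail to be smooth across the gluing locus $\{s\in\mathbb{Z}\}$. There the one-sided derivatives in $s$ are $\tau(x)\dot\phi$ and $\tau(f(x))\dot\phi$, which differ in general. I would fix this by replacing the linear time reparametrization $s\tau(x)$ with $\beta(s,x)$. Here $\beta$ is a smooth function with $\beta(0,x)=0$ and $\beta(1,x)=\tau(x)$, strictly increasing in $s$, and constant-speed in $s$ (slope $1$, say) near $s=0$ and near $s=1$. Such a $\beta$ can be built from a bump function in $s$ multiplied by suitable functions of $x$. With this choice every $s$-derivative matches across the gluing, and all derivatives in $x$ match because $H(1,\cdot)=f = H(0,f(\cdot))$ there, so $h$ is smooth on $M^\circ_f$.

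Finally, $h$ is a diffeomorphism there, since its differential is invertible. Its derivative in the flow direction is a positive multiple of $\dot\phi$. Its derivative in the $S$ directions restricts at $s=0$ to the inclusion of $TS^\circ$, which is transverse to $\dot\phi$. For general $s$ this transversality is carried along by $d\phi$, because flow maps preserve transversality to the flow.
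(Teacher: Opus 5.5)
Your proposal is correct and follows the paper's proof. The paper also first moves $\intr(S)$ along flow lines so that it is smooth away from $\sing(\phi)$, and then sets $h(t,x)=\phi^{\rho(t,x)}(x)$, where the time change $\rho$ runs at unit speed near both ends of each return interval; this is exactly your $\beta$, which makes the gluing across $\{s\in\mathbb{Z}\}$ smooth, and your checks of bijectivity and of the invertibility of $dh$ only fill in details the paper leaves implicit.
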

\begin{proof}
By an arbitrarily small homotopy along flow lines, we can first ensure that $\intr(S)$ is smoothly embedded in $M \backslash \sing(\phi)$.
Once this is arranged, the first return time $T$ is a smooth function on $\intr(S)$ away from the singular points.
Pick a function $\rho:\{(t,x) \in \mathbb{R} \times \intr(S) \mid t \in [0,T(x)]\} \to \mathbb{R}$ that is smooth away from $[0,T(x)] \times \{x\}$ for singular points $x$, and where $\rho(t,x) = t$ for $t$ close to $0$ or $T(x)$.
We can then extend the embedding $\intr(S) \subset M \backslash \partial S$ into an orbit equivalence $h$ by sending $(t,x)$ to $\phi^{\rho(t,x)}(x)$. This orbit equivalence is smooth away from singular orbits.
\end{proof}

\subsection{Quasi-translation structures} \label{subsec:quasitranslationstructure}

We introduce the following terminology.

\begin{defn}[(Quasi-)translation structure]
For the purposes of this paper, we define a \textbf{translation structure} on a surface $S$ to be an atlas of charts into $\mathbb{R}^2$ with transition functions of the form $(x,y) \mapsto (x+x_0, y+y_0)$.

Building on this, we define a \textbf{quasi-translation structure} on $S$ to be an atlas of charts into $\mathbb{R}^2$ with transition functions of the form $(x,y) \mapsto (\pm \lambda x+x_0, \pm \lambda^{-1} y+y_0)$.
\end{defn}

On a surface equipped with a quasi-translation structure, we can talk about quantities on $\mathbb{R}^2$ that are preserved by homeomorphisms of the form $(x,y) \mapsto (\pm \lambda x+x_0, \pm \lambda^{-1} y+y_0)$.
This includes:
\begin{itemize}
    \item Linear arcs, smooth arcs.
    \item For a point $v$ on a smooth arc $p$, whether $\slope_v(p)$ is positive or negative.
    \item For an intersection point $v$ between two smooth arcs $p$ and $q$, whether $|\slope_v(p)| < |\slope_v(q)|$.
\end{itemize}

We also introduce the following terminology.

\begin{defn}[Piecewise linear arc] \label{defn:plarc}
Let $S$ be a surface equipped with a quasi-translation structure.
A \textbf{piecewise linear arc} on $S$ is an arc $p$ along with a finite collection of points $v_1,...,v_{k-1} \in p$ such that $p$ is a concatenation of linear subarcs $p_1 \ast_{v_1} ... \ast_{v_{k-1}} p_k$.
We refer to the points $v_1,...,v_{k-1}$ as well as the endpoints $v_0,v_k$ of $p$ as the \textbf{nodes}, and the linear subarcs $p_i$ as the \textbf{segments}.

Note that we allow for the possibility that $\slope(p_i) = \slope(p_{i+1})$ at $v_i$. 
In particular, we can always add nodes in the interior of the segments.
This is the reason why we have to specify the nodes as part of the data of a piecewise linear arc.
If $\slope(p_i) \neq \slope(p_{i+1})$ then we say that $v_i$ is a \textbf{turn}.
This subtle point will come up in \Cref{subsec:perturbfortransverse}.

We say that $e$ is \textbf{positive} if every $p_i$ has positive slope.
In this case, we say that $p$ is \textbf{convex} if either $0 < \slope(p_1) < ... < \slope(p_k)$ or $\slope(p_1) > ... > \slope(p_k) > 0$.
Similarly we define \textbf{negative} and \textbf{negative convex} piecewise linear paths.
\end{defn}

\begin{defn}[Overlap of piecewise linear arcs] \label{defn:plarcslopes}
Let $S$ be a surface equipped with a quasi-translation structure.
Let $p$ and $q$ be two piecewise linear arcs in $S$, which are either both positive or both negative.
Suppose $p$ and $q$ overlap along a (possibly degenerate) subarc $r$.
That is, we can write $p = p_1 \ast_{v_1} r \ast_{v_2} p_2$ and $q = q_1 \ast_{v_1} r \ast_{v_2} q_2$, for (possibly degenerate) subarcs $p_1,p_2,q_1,q_2$. Here, by a \textbf{degenerate subarc}, we just mean a point. 
Note that we do not assume that $v_1$ or $v_2$ is a node of $p$ or $q$.

In this setting, we write $|\slope_r(p)| < |\slope_r(q)|$ if either
\begin{itemize}
    \item $p_1$ and $q_1$ are degenerate and $|\slope_{v_2}(p_2)| < |\slope_{v_2}(q_2)|$, or
    \item $p_1,p_2,q_1,q_2$ are all nondegenerate and $|\slope_{v_i}(p_i)| < |\slope_{v_i}(q_i)|$ for $i=1,2$.
\end{itemize}
See \Cref{fig:plarcslopes}.
\end{defn}

\begin{figure}
    \centering
    \fontsize{8pt}{8pt}
\begingroup%
  \makeatletter%
  \providecommand\color[2][]{%
    \errmessage{(Inkscape) Color is used for the text in Inkscape, but the package 'color.sty' is not loaded}%
    \renewcommand\color[2][]{}%
  }%
  \providecommand\transparent[1]{%
    \errmessage{(Inkscape) Transparency is used (non-zero) for the text in Inkscape, but the package 'transparent.sty' is not loaded}%
    \renewcommand\transparent[1]{}%
  }%
  \providecommand\rotatebox[2]{#2}%
  \newcommand*\fsize{\dimexpr\f@size pt\relax}%
  \newcommand*\lineheight[1]{\fontsize{\fsize}{#1\fsize}\selectfont}%
  \ifx\svgwidth\undefined%
    \setlength{\unitlength}{152.28272686bp}%
    \ifx\svgscale\undefined%
      \relax%
    \else%
      \setlength{\unitlength}{\unitlength * \real{\svgscale}}%
    \fi%
  \else%
    \setlength{\unitlength}{\svgwidth}%
  \fi%
  \global\let\svgwidth\undefined%
  \global\let\svgscale\undefined%
  \makeatother%
  \begin{picture}(1,0.41707182)%
    \lineheight{1}%
    \setlength\tabcolsep{0pt}%
    \put(0,0){\includegraphics[width=\unitlength,page=1]{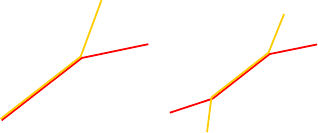}}%
    \put(0.90898868,0.20337502){\color[rgb]{1,0,0}\makebox(0,0)[lt]{\lineheight{1.25}\smash{\begin{tabular}[t]{l}$p_2$\end{tabular}}}}%
    \put(0.35788745,0.2014196){\color[rgb]{1,0,0}\makebox(0,0)[lt]{\lineheight{1.25}\smash{\begin{tabular}[t]{l}$p_2$\end{tabular}}}}%
    \put(0.5537998,0.10729872){\color[rgb]{1,0,0}\makebox(0,0)[lt]{\lineheight{1.25}\smash{\begin{tabular}[t]{l}$p_1$\end{tabular}}}}%
    \put(0.69221798,0.02405288){\color[rgb]{1,0.8,0}\makebox(0,0)[lt]{\lineheight{1.25}\smash{\begin{tabular}[t]{l}$q_1$\end{tabular}}}}%
    \put(0.78830138,0.31458245){\color[rgb]{1,0.8,0}\makebox(0,0)[lt]{\lineheight{1.25}\smash{\begin{tabular}[t]{l}$q_2$\end{tabular}}}}%
    \put(0.20483818,0.33627161){\color[rgb]{1,0.8,0}\makebox(0,0)[lt]{\lineheight{1.25}\smash{\begin{tabular}[t]{l}$q_2$\end{tabular}}}}%
    \put(0.78372618,0.13777585){\color[rgb]{0,0,0}\makebox(0,0)[lt]{\lineheight{1.25}\smash{\begin{tabular}[t]{l}$r$\end{tabular}}}}%
    \put(0.13525772,0.08972179){\color[rgb]{0,0,0}\makebox(0,0)[lt]{\lineheight{1.25}\smash{\begin{tabular}[t]{l}$r$\end{tabular}}}}%
  \end{picture}%
\endgroup%

    \caption{The setting of \Cref{defn:plarcslopes}.}
    \label{fig:plarcslopes}
\end{figure}

\subsection{From Birkhoff sections to quasi-translation structures}

In this subsection, we explain how quasi-translation structures naturally come up when studying Birkhoff sections to pseudo-Anosov flows.

Let $S$ be a Birkhoff section to a pseudo-Anosov flow $\phi$ on $M$ with orbit space $\mathcal{O}$.
Let $f$ be the pseudo-Anosov first return map defined on $\intr(S)$.
By \Cref{prop:smoothalmostequivalence}, there is an orbit equivalence between the suspension flow on $M_f$ and the restriction of $\phi$ to $M \backslash \partial S$.
This induces an orbit equivalence $h$ between the suspension flow on $M^\circ_f = M_f \backslash \sing(\phi_f)$ and the restriction of $\phi$ to $M^\circ = M \backslash (\partial S \cup \sing(\phi))$.
Taking a lift, we have an orbit equivalence $\widetilde{h}$ between the universal covers $\widetilde{M^\circ_f}$ and $\widetilde{M^\circ}$.

The space of orbits in $\widetilde{M^\circ_f}$ can be identified with the universal cover of $S^\circ = \intr(S) \backslash \sing(f)$, while the space of orbits in $\widetilde{M^\circ}$ is the universal cover of the \textbf{punctured orbit space} $\mathcal{O}^\circ = \mathcal{O} \backslash (\widetilde{\partial S} \cup \widetilde{\sing(\phi)})$.
Thus $\widetilde{h}$ induces an identification of universal covers $\widetilde{S^\circ} \cong \widetilde{\mathcal{O}^\circ}$ preserving the stable and unstable foliations.
We refer to this common space, together with the stable and unstable foliations, as the \textbf{translation orbit space}.

The reason for this terminology is that the measures on the stable and unstable foliations of $f$ induce a translation structure on $\widetilde{S^\circ}$.
Each element $g \in \pi_1(M^\circ_f)$ acts on this translation structure by a homeomorphism locally of the form $(x,y) \mapsto (\pm \lambda^k x+x_0, \pm \lambda^{-k} y+y_0)$, where $\lambda$ is the dilatation of $f$ and $k \in \mathbb{Z}$.
We refer to $k$ as the \textbf{height} of $g$.

For $k \in \mathbb{Z}$, let $\pi_1(M^\circ)_k$ be the set of elements $g \in \pi_1(M^\circ)$ with height $k$.
The set $\pi_1(M^\circ)_0$ is exactly the subgroup $\pi_1(S^\circ) < \pi_1(M^\circ_f) \cong \pi_1(M^\circ)$. 
Thus $\pi_1(M^\circ)_0$ acts properly discontinuously on $\widetilde{\mathcal{O}^\circ} \cong \widetilde{S^\circ}$, with quotient $S^\circ$.

One can compactify $S^\circ$ into a compact surface with boundary $\cl(S^\circ)$ by adding a boundary component to each puncture. 
Accordingly, one can extend $\widetilde{\mathcal{O}^\circ}$ into a space $\cl(\widetilde{\mathcal{O}^\circ}) \cong \widetilde{\cl(S^\circ)}$, so that $\pi_1(M^\circ)_0$ also acts properly discontinuously on the extended space, with quotient $\cl(S^\circ)$.

We record the following fact, which will play a key role in our analysis in \Cref{sec:perturbdiag}.

\begin{lem} \label{lem:transorbitspacelocfinite}
Let $K$ be a compact set in $\cl(\widetilde{\mathcal{O}^\circ})$.
Then the collection of $\pi_1(M^\circ)_h$-orbits of $K$ is locally finite in $\cl(\widetilde{\mathcal{O}^\circ})$ for every $h$.
\end{lem}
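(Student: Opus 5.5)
The plan is to reduce the height-$h$ statement to the height-$0$ statement, which is just proper discontinuity of the action of $\pi_1(M^\circ)_0 = \pi_1(S^\circ)$ on $\cl(\widetilde{\mathcal{O}^\circ})$. Fix $h$. If $\pi_1(M^\circ)_h$ is empty there is nothing to prove, so pick some $g_h \in \pi_1(M^\circ)_h$. Height is additive under composition: in local coordinates the elements act by $(x,y) \mapsto (\pm\lambda^k x + x_0, \pm\lambda^{-k}y + y_0)$, and composing such maps multiplies the scaling factors. Hence $\pi_1(M^\circ)_h = \pi_1(M^\circ)_0 \cdot g_h$, the right coset of the height-zero subgroup. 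Consequently the collection of $\pi_1(M^\circ)_h$-translates of $K$ coincides with the collection of $\pi_1(M^\circ)_0$-translates of $K' := g_h \cdot K$.

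Next I need $K'$ to be compact in $\cl(\widetilde{\mathcal{O}^\circ})$. The action of $\pi_1(M^\circ)$ on $\widetilde{\mathcal{O}^\circ}$ is by homeomorphisms. I would check that each element extends continuously to the compactification $\cl(\widetilde{\mathcal{O}^\circ})$. This holds because the added boundary lines correspond to lifts of the punctures $\widetilde{\partial S}$, which are permuted by deck transformations. The extension can be seen on the 3-dimensional side: $\cl(\widetilde{\mathcal{O}^\circ})$ is the orbit space of the universal cover of $M$ blown up along $\partial S \cup \sing(\phi)$, and $\pi_1(M^\circ)$ acts on it by homeomorphisms. So $K'$ is compact as the image of a compact set under a homeomorphism.

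Then the claim reduces to a standard fact. If a group $\Gamma$ acts properly discontinuously on a locally compact Hausdorff space $X$ and $K'$ is compact, the family $\{\gamma K'\}_{\gamma \in \Gamma}$ is locally finite. To see this, take $x \in X$ and a compact neighborhood $U$ of $x$. Then $\gamma K' \cap U \neq \emptyset$ only for the finitely many $\gamma$ with $\gamma K' \cap U \neq \emptyset$, by proper discontinuity applied to the compact set $K' \cup U$. Here $\Gamma = \pi_1(M^\circ)_0$ and $X = \cl(\widetilde{\mathcal{O}^\circ})$, and proper discontinuity was recorded just before the lemma, via the quotient being the compact surface $\cl(S^\circ)$.

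The only mildly delicate step is justifying that elements of nonzero height act by homeomorphisms of the compactified space $\cl(\widetilde{\mathcal{O}^\circ})$, rather than just of $\widetilde{\mathcal{O}^\circ}$. I would handle it via the blow-up description above, or simply by noting that the first return map $f$ extends to a homeomorphism of $\cl(S^\circ)$ (by its action on prongs at each puncture) and lifts accordingly. A further subtlety is that "locally finite collection of orbits" should be read as local finiteness of the family of translates of $K$. If it instead means the set of distinct translates, the argument above gives that a fortiori.
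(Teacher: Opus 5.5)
Your proof is correct and follows the paper's argument. The height-$0$ case is proper discontinuity of $\pi_1(M^\circ)_0 \cong \pi_1(S^\circ)$ on $\cl(\widetilde{\mathcal{O}^\circ})$, and the general case follows by writing $\pi_1(M^\circ)_h = \pi_1(M^\circ)_0 \cdot g_h$ and replacing $K$ with $g_h \cdot K$. You also check that nonzero-height elements extend to homeomorphisms of the compactification, so that $g_h \cdot K$ is compact; the paper uses this only implicitly. Your local-finiteness step is right in intent, bounding the relevant $\gamma$ by the finite set $\{\gamma : \gamma(K' \cup U) \cap (K' \cup U) \neq \emptyset\}$, but the sentence as you wrote it is circular.
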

\begin{proof}
For $h=0$, this follows from the fact that $\pi_1(M^\circ)_0 \cong \pi_1(S^\circ)$ acts properly discontinuously on $\cl(\widetilde{\mathcal{O}^\circ}) \cong \widetilde{\cl(S^\circ)}$.

For general $k$, the set $\pi_1(M^\circ)_k$ is a coset $\pi_1(M^\circ)_0 \cdot g_k$. 
Thus the collection of orbits $\pi_1(M^\circ)_k \cdot K = \pi_1(M^\circ)_0 \cdot (g_k \cdot K)$ is also locally finite.
\end{proof}

Since the deck transformations of $\widetilde{S^\circ} \cong \widetilde{\mathcal{O}^\circ}$ over $\mathcal{O}^\circ$ is the subgroup $\pi_1(\widehat{M^\circ}) < \pi_1(M^\circ) \cong \pi_1(M^\circ_f)$, where $\widehat{M^\circ} := \widetilde{M} \backslash (\widetilde{\partial S} \cup \widetilde{\sing(\phi)})$, there is an induced quasi-translation structure on the punctured orbit space $\mathcal{O}^\circ$. 
In the sequel, we will refer to this structure as the \textbf{quasi-translation structure induced by $S$}.

For these quasi-translation surfaces, we have the following construction of convex piecewise linear arcs.

\begin{constr}[Tight arc] \label{constr:tightarc}
Let $\mathcal{B}$ be a discrete collection of points in $\mathcal{O}$, containing $\widetilde{\partial S} \cup \widetilde{\sing(\phi)}$.
Let $R$ be a rectangle in $\mathcal{O}$. Let $\ell_1$ and $\ell_2$ be the two unstable sides of $R$.
Let $R^!$ be the surface obtained by slitting $R$ along the stable leaf segments lying between points of $\mathcal{B}$ and points on $\ell_2$, see \Cref{fig:tightarc}.
Observe that $R^!$ is simply connected and contained in $\mathcal{O}^\circ$, thus we can lift $R^!$ to $\widetilde{\mathcal{O}^\circ}$ then use the translation structure charts of $\widetilde{\mathcal{O}^\circ}$ to identify $R^!$ with a subset of $\mathbb{R}^2$.

Let $s_1$ and $s_2$ be two opposite corners of $R$, where $s_i \in \ell_i$ for $i=1,2$.
Then the geodesic $p$ between $s_1$ and $s_2$ in $R^! \subset \mathbb{R}^2$ (with respect to the standard flat Riemannian metric on $\mathbb{R}^2$) is a positive convex piecewise linear arc, where we declare the nodes on $p$ to be the points of $\mathcal{B}$ that lie on $p$.

Intuitively, $p$ is the arc that one would get if one places pegs at $\mathcal{B}$, lays a tight piece of string hooked across the unstable side $\ell_1$ from $s_1$ to the corner $w$ and the stable side from $w$ to $s_2$, then releases the hook.
See \Cref{fig:tightarc}.
Motivated by this mental picture, we refer to $p$ as a \textbf{tight arc} between $s_1$ and $s_2$ with respect to $\mathcal{B}$.
We refer to $w$ as the \textbf{hook} of $p$.
\end{constr}

\begin{figure}
    \centering
    \fontsize{8pt}{8pt}
    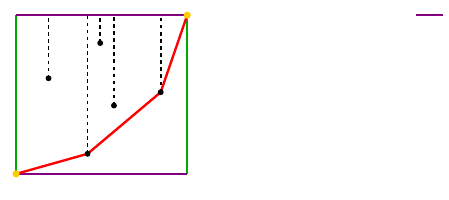
    \caption{Constructing the tight arc between $\gamma_1$ and $\gamma_2$ near $w$ with respect to $\mathcal{B}$}
    \label{fig:tightarc}
\end{figure}

\subsection{Canonical lifts} \label{subsec:canonicallift}

We continue the setting from the previous subsection.
Let $d$ be a smooth arc on $\mathcal{O}^\circ$.
Suppose that $d$ either has positive slope everywhere or negative slope everywhere.
Consider the collection of arcs $\mathcal{D}$ that consists of the $\pi_1 M$-translates of $d$.
Let $\widetilde{\mathcal{D}}$ be the collection of preimages of arcs in $\mathcal{D}$ under the cover $\widetilde{S^\circ} \cong \widetilde{\mathcal{O}^\circ} \to \mathcal{O}^\circ$. 
This is a $\pi_1(M^\circ) \cong \pi_1(M^\circ_f)$-invariant collection.

Recall that $M^\circ_f = \mathbb{R} \times S^\circ/(s,x) \sim (s-1,f(x))$, thus $\widetilde{M^\circ_f} = \mathbb{R} \times S^\circ$.
For each element $\widetilde{d}$ of $\widetilde{\mathcal{D}}$, we pick a smooth parametrization of $\widetilde{d}$, and define the \textbf{canonical lift} of $\widetilde{d}$ (with respect to $S$) to be the curve $\widetilde{d}^\wedge(t) = (\frac{1}{2} \log_\lambda |\slope_{\widetilde{d}(t)}(\widetilde{d})|, \widetilde{d}(t))$ in $\widetilde{M^\circ_f}$.
In particular, $\widetilde{d}^\wedge$ projects to $\widetilde{d}$.
Let $\widetilde{\mathcal{D}}^\wedge$ be the collection of such canonical lifts $\widetilde{d}^\wedge$.

We claim that $g \cdot \widetilde{d}^\wedge = (g \cdot \widetilde{d})^\wedge$ for every $g \in \pi_1(M^\circ_f)$. 
It is clear that both arcs project to $g \cdot \widetilde{d}$, so it suffices to show that the arcs have the same $\mathbb{R}$-coordinate over each point $g \cdot \widetilde{d}(t)$.
The element $g$ acts on $\widetilde{M^\circ_f}$ locally of the form $(s,x,y) \mapsto (s + k, \pm \lambda^k x + x_0, \pm \lambda^{-k} y + y_0)$.
Thus the point of $g \cdot \widetilde{d}^\wedge$ lying over $g \cdot \widetilde{d}(t)$ has $\mathbb{R}$-coordinate $\frac{1}{2} |\log_\lambda \slope_{\widetilde{d}(t)}(\widetilde{d})| + k$, while the point of $(g \cdot \widetilde{d})^\wedge$ lying over $g \cdot \widetilde{d}(t)$ has $\mathbb{R}$-coordinate $\frac{1}{2} \log_\lambda |\slope_{g \cdot \widetilde{d}(t)}(g \cdot \widetilde{d})| = \frac{1}{2} \log_\lambda \left| \lambda^{2k} \slope_{\widetilde{d}(t)}(\widetilde{d}) \right|$, and the two expressions agree.

In particular, this implies that $\widetilde{\mathcal{D}}^\wedge$ is a $\pi_1(M^\circ) \cong \pi_1(M^\circ_f)$-invariant collection as well.
Let $\mathcal{D}^\wedge$ be the image of $\widetilde{\mathcal{D}}^\wedge$ in $\widehat{M^\circ}$.
The projection of $\mathcal{D}^\wedge$ to $\mathcal{O}$ are exactly the $\pi_1 M$-translates of the initial arc $d$.
In particular, there is an element $d^\wedge \in \mathcal{D}^\wedge$ lying over $d$. We refer to it as the \textbf{canonical lift} of $d$ (with respect to $S$).

\begin{constr}[Bicontact form associated to Birkhoff section] \label{constr:surfacebicontact}
We give another perspective on the canonical lift construction in terms of bicontact structures:
The measured foliations $(\ell^s, \mu^s)$ and $(\ell^u, \mu^u)$ on $S$ determine closed 1-forms $ds$ and $du$ on $S^\circ$, locally well-defined up to a sign.
Using these, we can locally define the 1-forms
\begin{align} \label{eq:surfacebicontact}
\begin{split}
\alpha^S_+ &= \lambda^t ds + \lambda^{-t} du \\
\alpha^S_- &= \lambda^t ds - \lambda^{-t} du
\end{split}
\end{align}
on $\mathbb{R} \times S^\circ$.
These are locally positive/negative contact forms, respectively, under the orientation $dtdsdu>0$.
It is straightforward to verify that $\xi^S_+ = \ker \alpha^S_+$ and $\xi^S_- = \ker \alpha^S_-$ are transverse, intersecting in the line spanned by $\frac{\partial}{\partial t}$ at every point.
Thus $(\xi^S_+,\xi^S_-)$ is a bicontact structure, and if $f$ has orientable stable/unstable foliations, so that $\alpha^S_\pm$ are well-defined, then $(\alpha^S_+,\alpha^S_-)$ is a bicontact form.

In the latter case, one can further compute that the Reeb flow to $\alpha^S_+$ is 
$$R_+ = \frac{1}{2} \lambda^{-t} \frac{\partial}{\partial s} + \frac{1}{2} \lambda^t \frac{\partial}{\partial u} \subset \xi^S_-$$
thus $(\alpha^S_+,\alpha^S_-)$ is a strongly adapted bicontact form.

\begin{rmk}
Similarly, one can compute that 
$$R_- = \frac{1}{2} \lambda^{-t} \frac{\partial}{\partial s} - \frac{1}{2} \lambda^t \frac{\partial}{\partial u} \subset \xi^S_+.$$
In the terminology of \cite{Hoz25}, $(\alpha^S_+,\alpha^S_-)$ is \textbf{strongly bi-adapted}. 
However, the filling of the bicontact forms that we will do over $\partial S$ in \Cref{sec:fillinginbicontactform} will destroy the property that $R_- \subset \xi_+$ and only retain strong adaptedness. 
\end{rmk}

Since $f^*ds = \lambda ds$ and $f^*du = \lambda^{-1} du$ up to a sign, $\alpha_+$ and $\alpha_-$ descend to locally defined positive/negative contact forms on $M^\circ_f$.
They determine a bicontact structure $(\xi_+,\xi_-)$ supporting $\phi_f$, and if $f$ has orientable stable/unstable foliations, and $f$ preserves those orientations, so that $\alpha_\pm$ are well-defined, then $(\alpha_+,\alpha_-)$ is a strongly adapted bicontact form supporting $\phi_f$.

By \Cref{prop:smoothalmostequivalence}, the orbit equivalence $M^\circ_f \cong M^\circ$ can be assumed to be smooth.
Thus we can transfer $(\xi_+,\xi_-)$ into a bicontact structure that supports $\phi$ on $M^\circ$.
The canonical lift of $d \subset \mathcal{O}^\circ$ can now be defined as the unique arc in $\widehat{M^\circ}$ lying over $d$ that is Legendrian with respect to the lifted contact structure $\widetilde{\xi_\pm}$ on $\widetilde{M}$, where the sign is negative/positive depending on whether $d$ has positive/negative slope, respectively.
\end{constr}

\begin{rmk} \label{rmk:constantslopeendpoint}
Suppose $\partial S$ is disjoint from $\sing(\phi)$, then the canonical lift construction can be extended to arcs $d$ with endpoints on $\widetilde{\sing(\phi)}$ and where the slope of $\intr(d)$ is constant near the endpoints:
One first takes the canonical lift $\intr(d)^\wedge$ of $\intr(d)$ away from the endpoints.
Since the slope of $\intr(d)$ is constant near the endpoints, each end of $\intr(d)^\wedge$ limits onto a single point on $\widetilde{\sing(\phi)}$.
The \textbf{canonical lift} of $d$ can thus be defined to be the union of $\intr(d)^\wedge$ and these limit points.
\end{rmk}

\subsection{Slope criterion} \label{subsec:slopecriterion}

We are now ready to state our new criterion for diagonals.

\begin{defn}[Slope criterion] \label{defn:slopecriterion}
Let $S$ be a Birkhoff section to $\phi$ with boundary orbits disjoint from $\mathcal{C}$.
We say that a diagonal system $\{d_R\}$ satisfies the \textbf{slope criterion} with respect to $S$ if 
under the induced quasi-translation structure on $\mathcal{O}^\circ$, the interior of every $d_R$ is a smooth arc in $\mathcal{O}^\circ$ with constant slope near its endpoints, and 
for every $R_1 < R_2$, 
\begin{itemize}
    \item if $R_1$ and $R_2$ share a corner, then $\intr(d_{R_1})$ and $\intr(d_{R_2})$ are disjoint, and
    \item if $R_1$ and $R_2$ do not share a corner, then $\intr(d_{R_1})$ and $\intr(d_{R_2})$ intersect at exactly one point $v$, with $|\slope_v(d_{R_1})| < |\slope_v(d_{R_2})|$.
\end{itemize}
\end{defn}

\begin{prop} \label{prop:diagtoveertri}
Let $\phi$ be a pseudo-Anosov flow on $M$ with no perfect fits relative to $\mathcal{C}$.
Let $\{d_R\}$ be a diagonal system satisfying the slope criterion with respect to a Birkhoff section $S$.
Then the veering triangulation $\Delta$ associated to $(\phi,\mathcal{C})$ can be put in Legendrian position with respect to $\phi$ and the bicontact form $(\alpha^S_+,\alpha^S_-)$, with $d_R$ being the projections to $\mathcal{O}$ of the edges of the lift $\widetilde{\Delta}$ to $\widetilde{M}$.
\end{prop}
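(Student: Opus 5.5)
We reduce to \Cref{prop:edgecandidatestoveertri}: we take as edge candidates the canonical lifts of the diagonals, and check the face embeddedness and crossing criteria. The Legendrian property then comes for free from \Cref{constr:surfacebicontact}.

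Concretely, for each edge rectangle $R$, the interior of $d_R$ lies in $\mathcal{O}^\circ$, since its endpoints are in $\widetilde{\mathcal{C}}$, which is disjoint from $\widetilde{\partial S}$. It is smooth, and its slope has constant sign, since a diagonal is transverse to both foliations and runs between the corners of $R$. So we may define $d^\wedge_R$ as the canonical lift of $d_R$ with respect to $S$. The only singular orbits that $d_R$ could touch are its endpoints in $\widetilde{\mathcal{C}}$. Because $\intr(d_R)$ has constant slope near its endpoints, \Cref{rmk:constantslopeendpoint} lets us close up the lift at the endpoints. Here we use that $\partial S$ is disjoint from $\mathcal{C}$, which contains $\sing(\phi)$, and for non-singular points of $\widetilde{\mathcal{C}}$ the same limiting argument applies. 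The lift is smoothly embedded because it is a graph over the embedded arc $d_R$. Equivariance $g\cdot d^\wedge_R = d^\wedge_{g\cdot R}$ follows from the computation in \Cref{subsec:canonicallift}, together with equivariance of $\{d_R\}$. By \Cref{constr:surfacebicontact}, each $d^\wedge_R$ is Legendrian for $\xi^S_-$ or $\xi^S_+$ according as $R$ is red (positive slope) or blue (negative slope).

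The face embeddedness criterion is immediate from the first bullet of the slope criterion. For the crossing criterion, let $R_1<R_2$ be rectangles that do not share a corner, and let $v$ be the unique intersection point of their diagonals. Work in the lifted chart $\widetilde{M^\circ_f}=\mathbb{R}\times\widetilde{S^\circ}$, choosing lifts of $d_{R_1},d_{R_2}$ through a common lift of $v$. The flow is $\partial_t$, and the lift of $d_{R_i}$ over $v$ sits at height $\frac12\log_\lambda|\slope_v(d_{R_i})|$. Since $|\slope_v(d_{R_1})|<|\slope_v(d_{R_2})|$, the point on $d^\wedge_{R_2}$ is strictly higher in flow time, so we get a crossing of $d^\wedge_{R_2}$ over $d^\wedge_{R_1}$ in $\widetilde{M^\circ}$. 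We then push this down to $\widehat{M^\circ}\subset\widetilde{M}$. The heights are only defined up to the deck action, but the deck action shifts both heights by the same $k$, so the comparison is well-defined.

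The main subtlety is that "crossing" in \Cref{defn:edgecrossingcriterion} refers to orbits of $\widetilde{\phi}$ in $\widetilde{M}$, while the heights are computed in the cover $\widetilde{M^\circ}$. One must check that the two lifts over $v$ lie on the same $\widetilde{\phi}$-orbit in $\widetilde{M}$, with the correct time ordering. This holds because both canonical lifts $d^\wedge_{R_i}$ are, by definition, the elements of $\mathcal{D}^\wedge\subset\widehat{M^\circ}$ lying over $d_{R_i}$, and $\widehat{M^\circ}\to\mathcal{O}^\circ$ has orbits as fibres. Their points over $v$ therefore lie on the single orbit $v$, and the time difference is computed in any local chart via $\partial_t$.

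With both criteria verified, \Cref{prop:edgecandidatestoveertri} produces $\Delta$ in transverse position with edges $\intr(d^\wedge_R)$. These edges lie in $M^\circ$, which is the region $U$ where $(\alpha^S_+,\alpha^S_-)$ is defined, and they are Legendrian as stated. This gives Legendrian position.
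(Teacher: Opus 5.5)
Your proposal is correct and follows the paper's proof: you take canonical lifts closed up at the endpoints via \Cref{rmk:constantslopeendpoint}, get face embeddedness from the first bullet of the slope criterion, get the crossing criterion by comparing the heights $\frac{1}{2}\log_\lambda|\slope_v(d_{R_i})|$, and then apply \Cref{prop:edgecandidatestoveertri}, with Legendrianity supplied by \Cref{constr:surfacebicontact}. Your extra checks (equivariance, embeddedness, and that both lifts over $v$ lie on one $\widetilde{\phi}$-orbit with a well-defined time ordering) fill in details the paper leaves implicit; the one slip is that $\intr(d_R)\subset\mathcal{O}^\circ$ is a hypothesis of the slope criterion, not a consequence of where the endpoints of $d_R$ lie.
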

\begin{proof}
For each $R$, we let $d^\wedge_R$ be the canonical lift of $d_R$. Here we use the fact that $d_R$ has constant slope near its endpoints, so that we can take its canonical lift even if it has an endpoint on a singular point, as explained in \Cref{rmk:constantslopeendpoint}.

The first item in \Cref{defn:slopecriterion} immediately implies that $\{d_R\}$, thus $\{d^\wedge_R\}$, satisfies the face embeddedness criterion.

Next, we verify that $\{d^\wedge_R\}$ satisfies the crossing criterion:
Suppose $R_1 < R_2$ and suppose $v$ is an intersection point between $\intr(d_{R_1})$ and $\intr(d_{R_2})$.
By the slope criterion, we have $|\slope_v(d_{R_1})| < |\slope_v(d_{R_2})|$.
But for $i=1,2$, the $\mathbb{R}$-coordinate of $d^\wedge_{R_i}$ over $v$ is $\frac{1}{2} \log_\lambda |\slope_v(d_{R_i})|$, so $v$ is the projection of a crossing of $d^\wedge_{R_2}$ over $d^\wedge_{R_1}$.

Thus by \Cref{prop:edgecandidatestoveertri}, the veering triangulation $\Delta$ can be placed in transverse position with respect to $\phi$, with $d^\wedge_R$ being the edges of $\widetilde{\Delta}$.
As explained in \Cref{constr:surfacebicontact}, $d^\wedge_R$ are Legendrian with respect to $\widetilde{\alpha^S_\pm}$, lifted from the contact forms $\alpha^S_\pm$ associated to $S$ and which support $\phi$ on $M^\circ$.
Thus $\Delta$ is in fact in Legendrian position with respect to $\phi$.
\end{proof}

\section{Constructing piecewise linear diagonals} \label{sec:pldiag}

The final three sections of this paper is devoted to proving the main theorems \Cref{thm:legendrianposition} and \Cref{thm:steadyposition}.
In this section, we will construct an initial diagonal system.
The precise objective is the following proposition.

\begin{prop} \label{prop:pldiaggoals}
Let $\phi$ be a pseudo-Anosov flow on $M$ with no perfect fits relative to $\mathcal{C}$.
Let $S$ be a Birkhoff section with boundary orbits disjoint from $\mathcal{C}$.
Then there exists a diagonal system $\{d_R\}$ such that:
\begin{enumerate}
    \item The interior of each $d_R$ is a piecewise linear arc in $\mathcal{O}^\circ$, under the quasi-translation structure induced from $S$.
    \item The system $\{d_R\}$ satisfies the following proxy of the slope criterion:
    For every $R_1 < R_2$ of the same color, 
    \begin{itemize}
        \item if $R_1$ and $R_2$ share a corner, then $\intr(d_{R_1})$ and $\intr(d_{R_2})$ are disjoint, and
        \item if $R_1$ and $R_2$ do not share a corner, then $\intr(d_{R_1})$ and $\intr(d_{R_2})$ overlap along a (possibly degenerate) subarc $r$, with $|\slope_r(d_{R_1})| < |\slope_r(d_{R_2})|$.
    \end{itemize}
    \item If $b$ is a node of some diagonal $d_{R_1}$ that lies on some other diagonal $d_{R_2}$, then it is a node of $d_{R_2}$.
    \item If $b_1$ and $b_2$ are adjacent nodes on a diagonal, then $b_1$ and $b_2$ do not lie in the same $\pi_1 M$-orbit.
\end{enumerate}
\end{prop}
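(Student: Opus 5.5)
We construct each diagonal as a tight arc in the sense of \Cref{constr:tightarc}, following the picture in \Cref{fig:intropldiag}. First we fix the set of pegs: let $\mathcal{B}$ be $\widetilde{\partial S} \cup \widetilde{\sing(\phi)}$ together with a $\pi_1 M$-invariant discrete collection of \emph{buoys}, chosen away from every stable and unstable leaf through $\widetilde{\mathcal{C}}$. For each edge rectangle $R$ we choose an \emph{anchor} $w_R \in \intr(R)$, equivariantly, and define $d_R$ as follows. Take the first-horizontal-then-vertical paths from each corner of $R$ to $w_R$, and pull their concatenation tight relative to $\mathcal{B}$ inside $R$; more precisely, use the two hooked tight arcs from the corners to $w_R$ and then release $w_R$. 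Because the pegs are discrete and $R$ is compact, the result is a convex piecewise linear arc of the correct sign whose nodes are the points of $\mathcal{B}$ it passes through. Property (1) is then immediate. Property (3) follows once we declare every point of $\mathcal{B}$ on an arc to be a node and check that two diagonals can meet at a point off $\mathcal{B}$ only transversely, and never along a segment unless both segments end at pegs. This holds because each segment of a tight arc runs between consecutive pegs, or between a peg and a corner. Property (4) is arranged by adding buoys generically: if two adjacent nodes of some diagonal lie in the same $\pi_1M$-orbit, insert a buoy orbit so that the segment between them is broken. There are finitely many $\pi_1 M$-orbits of edge rectangles to treat, by \Cref{constr:agolgueritaud}, so each addition only needs to avoid finitely many configurations up to the group action. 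We must check that these additions do not destroy what has already been arranged.

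The main work is (2). For $R_1 < R_2$ of the same colour, say red, $R_2$ is taller and $R_1$ is wider. We choose anchors compatibly, for instance using \Cref{prop:astroidlemma} to order the red rectangles sharing a corner. If the rectangles share a corner, convexity together with the nesting of the hooks (the hook of $d_{R_2}$ lies on the same side as that of $d_{R_1}$ but further out) shows that the interiors of the two tight arcs are disjoint. This is the standard fact that tight strings with nested hooks do not cross. If they share no corner, $R_1 \cap R_2$ is a rectangle crossing $R_2$ horizontally and $R_1$ vertically. We argue that the set $d_{R_1}\cap d_{R_2}$ is connected, since two geodesics in a locally CAT(0) flat domain bounded by the slits intersect in a connected set. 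It is therefore a possibly degenerate subarc $r$. At the endpoints of $r$, $d_{R_2}$ must exit through the top and bottom sides of $R_1\cap R_2$ while $d_{R_1}$ exits through the left and right sides, and comparing the outgoing segments at each end gives the required $|\slope_r(d_{R_1})| < |\slope_r(d_{R_2})|$.

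I expect the main obstacle to be the choice of anchors. These must be equivariant, and they must be consistent enough that the exit argument works in every case, in particular when $r$ is degenerate or when $r$ ends at a peg where both arcs turn. Here a careful casework on the position of pegs relative to $R_1 \cap R_2$ is needed. My first attempt would be to take $w_R$ to be the centre of $R$ in the flat coordinates of a lift, adjusted by the buoys so that the slit domains of nested rectangles are compatible.
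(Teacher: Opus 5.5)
The step that carries all the weight is not justified: the claim that $d_{R_1}\cap d_{R_2}$ is connected. The two diagonals are tight with respect to different peg sets $\mathcal{B}\cap R_1$ and $\mathcal{B}\cap R_2$ and different slit systems. So there is no common simply connected flat domain in which both are geodesics. In $\mathcal{O}^\circ$ minus the pegs they are only locally tight, and nothing in your argument prevents them from cobounding a bigon that contains a peg. The intersection is then disconnected, which (2) forbids; indeed successive crossings have opposite orientations, so one of them has the wrong slope order. Your exit-side comparison is fine once connectedness is known, but connectedness is exactly what has to be engineered.

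Releasing the anchor also makes $d_R$ S-shaped (slopes increase, then decrease), so it is not convex and your convexity arguments do not apply. The paper does not release the anchor. Each $d_R$ is the union of two half-diagonals, each a \emph{convex} tight arc in an anchor subrectangle $R(s_i,\alpha_R)$, hooked at the corner on the stable leaf of $\alpha_R$. Intersecting pairs of anchor subrectangles are then sorted into types (0), (I), (II) (\Cref{defn:type012}):
\begin{itemize}
\item Type (0), a shared corner or shared anchor, is handled by \Cref{lem:rectcrosssharecorner}, with strict staircase monotonicity supplying the nesting.
\item For type (I), buoys are placed in explicit subrectangles determined by the hooks, so that the half-diagonals become disjoint (\Cref{lem:anchorrectcases1}).
\item Type (II) uses \Cref{lem:rectcrossnosharecorner}, whose proof needs convexity to exclude osculation from the wrong side. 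In cases (II-1)/(II-2), buoys are first added so that one anchor subrectangle can be truncated at a buoy $b$, e.g.\ to $R(s_1,b)<R'_2$ (\Cref{lem:anchorrectcases2}).
\end{itemize}
That the required buoy regions are nonempty is a finiteness argument that uses that anchors are periodic.

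The anchor system, which you leave open, is a genuine ingredient. \Cref{prop:strictanchorsystem} produces an equivariant, \emph{strictly} staircase monotone system of periodic anchors off $\widetilde{\partial S}$. The difficulty is that $\alpha_R$ must be monotone simultaneously in the staircases at both corners of $R$, and equivariantly. Landry--Minsky--Taylor core points almost do this but coincide for pinched rectangles; separating them is the content of \Cref{lem:pinchedpreimagetogrid} and \Cref{lem:gridtobox}. Your ``centre in flat coordinates'' is not well defined. An edge rectangle can contain points of $\widetilde{\partial S}$, around which the quasi-translation structure has holonomy scaling by $\lambda^{\pm q_\gamma}\neq 1$, so $R$ has no consistent flat shape. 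Nor would a centre be periodic, which the finiteness arguments above require.

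There are also three smaller points.
\begin{itemize}
\item Property (1) is not immediate. A tight arc can be caught on a point of $\widetilde{\partial S}$, which then lies in $\intr(d_R)$ but not in $\mathcal{O}^\circ$. Shielding buoys as in \Cref{lem:diagonalsoffboundaryorbits} are needed.
\item For (4), inserting buoys does not work. Buoys must be periodic to keep $\mathcal{B}$ discrete, and a periodic point on the given segment need not exist. One slightly off the segment either deflects the arc or is not touched by it. Instead, add extra nodes at generic interior points of segments; \Cref{defn:plarc} allows this, and it changes no arc.
\item Nested hooks do not give disjoint interiors in the shared-corner case. If a buoy $b$ near the common corner $s$ is the first peg met by both arcs, both begin with the segment $sb$. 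What one actually gets there (\Cref{lem:rectcrosssharecorner}) is an overlap along a possibly nondegenerate subarc starting at $s$, followed by the correct slope inequality. Such overlaps are removed only later, in the peeling step leading to \Cref{lem:perturbfortransverse}.
\end{itemize}
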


We provide a quick outline of our construction:
In \Cref{subsec:anchor}, we will choose a point $\alpha_R$ in the interior of each edge rectangle $R$, which we refer to as the \textbf{anchor}.
This terminology is taken from \cite{LMT23}, but we will strengthen their work in order to choose anchors with better properties.
Writing the elements of $\mathcal{C}$ at the corners of $R$ to be $s_1$ and $s_2$, we refer to the subrectangles $R(s_1,\alpha_R)$ and $R(s_2,\alpha_R)$ as the \textbf{anchor subrectangles}. 

Our diagonals $d_R$ will be the union of two half-diagonals, each contained in an anchor subrectangle.
In turn, each half-diagonal is the tight arc whose hook is the corner that lies on the same stable leaf as the anchor, with respect to a collection $\mathcal{B}$, which we refer to as the \textbf{buoys} (terminology also taken from \cite{LMT23}).
The larger the collection $\mathcal{B}$, the more `tight' the half-diagonals will be.
Thus the diagonals will end up resembling \Cref{fig:pldiagform}.

\begin{figure}
    \centering
    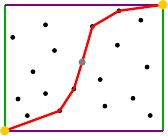
    \caption{Our diagonals $d_R$ will be the union of two half-diagonals, each half-diagonal being a tight arc near near the corner that lies on the same stable leaf as the anchor, with respect to the buoys.}
    \label{fig:pldiagform}
\end{figure}

By choosing enough buoys, we can ensure that the diagonals lie within $\mathcal{O}^\circ$. Then (1) will be true. We will explain this in \Cref{subsec:pldiagawayfromboundaryorbits}.

The bulk of the remaining work is to show (2), regarding the slope inequality.
To this end, we inspect the intersections of anchor subrectangles. 
There are many possible configurations here, but the strategy is the uniform: We tighten the half-diagonals by enlarging the collection of buoys $\mathcal{B}$. 
For the sake of presentation, we divide up the cases into three batches, to be covered from \Cref{subsec:anchorrectcases0} to \Cref{subsec:anchorrectcases2}.

The third and fourth items are technical conditions that will come into play in \Cref{subsec:perturbfortransverse}.
We will arrange for them in \Cref{subsec:pldiaggoalsproof}.

\subsection{Casting anchors} \label{subsec:anchor}

We make the following definition.

\begin{defn}[(Strict) anchor system] \label{defn:anchor}
An \textbf{anchor} for an edge rectangle $R$ is a point contained in the interior of $R$.

An \textbf{anchor system} is a collection $\{\alpha_R \mid \text{$R$ is an edge rectangle}\}$ that is 
\begin{itemize}
    \item equivariant, i.e. $g \cdot \alpha_R = \alpha_{g \cdot R}$, and
    \item \textbf{staircase monotone}, i.e. for every $R_1 < R_2$ that share a corner $s$, $R(s,\alpha_{R_1})$ is wider and strictly shorter than $R(s,\alpha_{R_2})$.
\end{itemize}
This is the same definition as in \cite[Section 5.1.1]{LMT23}. 
Meanwhile, the following definition is new.

A \textbf{strict anchor system} is a collection of one anchor $\alpha_R$ per edge rectangle $R$ that is 
\begin{itemize}
    \item equivariant, and
    \item \textbf{strictly staircase monotone}, i.e. for every $R_1 < R_2$ that share a corner $s$, $R(s,\alpha_{R_1})$ is strictly wider and strictly shorter than $R(s,\alpha_{R_2})$.
    See \Cref{fig:staircasemonotone}.
\end{itemize}
\end{defn}

\begin{figure}
    \centering
    \fontsize{8pt}{8pt}
    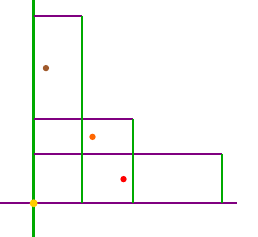
    \caption{Definition of strictly staircase monotone.}
    \label{fig:staircasemonotone}
\end{figure}

In \cite[Lemma 5.10]{LMT23}, Landry-Minsky-Taylor showed that anchor systems exist. 
The goal of this subsection is to prove the following upgrade.

\begin{prop} \label{prop:strictanchorsystem}
There exists a strict anchor system $\{\alpha_R\}$. Moreover, given any finite collection of closed orbits $\mathcal{B}$, the anchors $\alpha_R$ can be chosen to be periodic and disjoint from $\widetilde{\mathcal{B}}$.
\end{prop}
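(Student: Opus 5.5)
The plan is to construct the anchors one $\pi_1 M$-orbit of edge rectangles at a time, using the finiteness of $\Delta$ from \Cref{constr:agolgueritaud}. There are finitely many $\pi_1 M$-orbits of edge rectangles, and $\pi_1 M$ acts freely on them. For each edge rectangle $R$ with corners $s_1,s_2\in\widetilde{\mathcal{C}}$, an anchor $\alpha_R$ is recorded by the pair of subrectangles $R(s_1,\alpha_R)$ and $R(s_2,\alpha_R)$. Staircase monotonicity is a condition at each corner $s$: by \Cref{prop:astroidlemma}, the edge rectangles in a fixed quadrant at $s$ form a $\mathbb{Z}$-ordered sequence $\cdots<R_{-1}<R_0<R_1<\cdots$. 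The condition asks that the anchor subrectangles $R(s,\alpha_{R_i})$ be strictly increasing in height and strictly decreasing in width.

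I would first record these constraints via two coordinates. For each $R_i$ at corner $s$, project the anchor to the stable half-leaf $\ell^s$ and the unstable half-leaf $\ell^u$ of $s$ bounding the quadrant. Strict staircase monotonicity says the $\ell^s$-projections strictly increase in $i$ and the $\ell^u$-projections strictly decrease in $i$. Each edge rectangle has two corners, so it appears in two such sequences, and the anchor must satisfy both sets of inequalities simultaneously.

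The key observation I would aim to prove is that these constraints are \emph{open and locally finite}. The no-accumulation statement underlying \Cref{prop:astroidlemma} (\cite[Lemma 4.10]{SS24}) shows that any given edge rectangle $R$ has only finitely many neighbours $R'$ sharing a corner with it modulo the relevant stabilizers. Moreover, each strict inequality compares the anchor of $R$ with the anchor of an adjacent rectangle in the sequence. I would then start from an anchor system $\{\alpha_R\}$ as given by \cite[Lemma 5.10]{LMT23}, which is non-strictly monotone, and perturb it. Concretely, I would choose a representative $R$ of each orbit and replace $\alpha_R$ by a nearby point: move slightly in the direction that makes $R(s_1,\alpha_R)$ taller and $R(s_2,\alpha_R)$ narrower, with the displacement scaled by a function of the position of $R$ in the $\mathbb{Z}$-sequences. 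Equivalently, I would realize the anchors via an equivariant ``height function'': pick positive weights and define $\alpha_R$ so that its projections to the leaves through the corners are strictly monotone functions of the index. Equivariance under the stabilizer $[s]^k$ of a periodic corner is the subtle point. There, $[s]^k$ acts on the sequence by the translation $m\mapsto m+m_0$ and acts on $\ell^s$ by an expansion, so the chosen projections must be compatible with this affine action. This is arranged by first defining them on one fundamental domain of $m_0$ consecutive rectangles, with strict inequalities inside and across its boundary, and then extending by the group action. The ends are compatible because $[s]^k$ preserves order.

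The step I expect to be the main obstacle is this compatibility. The same anchor $\alpha_R$ must satisfy the fundamental-domain choices at both of its corners $s_1,s_2$, which live in different stabilizer orbits and give interlocking cyclic constraints. I would handle it with an inductive or open-set argument. The set of admissible anchors for $R$, given those of its neighbours, is an open subrectangle of $R$. By the no-perfect-fits structure, that subrectangle is nonempty whenever the neighbours are chosen strictly monotone, except that the neighbours themselves depend on $R$ via the cycle. To break the cycle, I would perturb the existing non-strict anchor system of \cite{LMT23} by an arbitrarily small equivariant amount and argue that generic small perturbations turn non-strict inequalities into strict ones. I would justify this by noting that equality in either coordinate is a codimension-one condition for finitely many orbit-pairs.

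Finally, for the periodicity and avoidance of $\widetilde{\mathcal{B}}$: the admissible set for each representative anchor is open and nonempty. Periodic points are dense, since $\phi$ is transitive (forced by no perfect fits, as noted after the definition of perfect fits). And $\widetilde{\mathcal{B}}$ is discrete by \Cref{prop:closedorbitliftdiscrete}. So I can choose each representative anchor to be a periodic point outside $\widetilde{\mathcal{B}}$ and translate it equivariantly.
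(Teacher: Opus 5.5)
There is a genuine gap at the step where you pass from the non-strict anchor system of \cite{LMT23} to a strict one by ``generic small perturbations''. Breaking a tie is generic, but strict staircase monotonicity requires every tie to be broken in a \emph{prescribed} direction. Suppose $R_1<R_2$ share the corner $s$ and $\alpha_{R_1},\alpha_{R_2}$ lie on a common stable leaf (the equal-width case the LMT23 definition allows). Then after perturbing, $\alpha_{R_2}$ must be strictly closer to $s$ than $\alpha_{R_1}$ in the unstable direction. The opposite order is also an open condition, so a generic perturbation is just as likely to break the tie the wrong way. No codimension count fixes this.

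The prescribed directions are also coupled. Moving one anchor in the unstable direction narrows its anchor subrectangle at one corner and widens it at the other. And when tied rectangles lie in the same $\pi_1 M$-orbit, their relative displacement is dictated by the group action rather than chosen freely. For the same reason, your claim that the admissible subrectangle for $\alpha_R$, given its neighbours, is nonempty ``by the no-perfect-fits structure'' is unsupported. The constraints from the two corners $s_1,s_2$ can a priori be incompatible, and showing they are compatible is essentially the whole content of the proposition.

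What is missing is an analysis of where ties occur, followed by an explicit, consistent choice there. Note that the relevant symmetry is the stabilizer of a \emph{core point}, not that of the corners $s$. The paper does not start from the LMT23 anchors but from the core points $c(R)$. By \cite[Lemma 5.5]{LMT23}, these are strictly monotone except for pinched rectangles, i.e. those sharing a core point. A core box system then handles all non-pinched pairs uniformly: any points in the boxes work.

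For a pinched core point $c$, the paper proceeds in two steps:
\begin{itemize}
\item \Cref{lem:pinchedpreimagetogrid}: the preimage $\mathcal{P}$ embeds $\langle [c]\rangle$-equivariantly into a strip in $\mathbb{Z}^2$, in which sharing a corner means sharing a coordinate.
\item \Cref{lem:gridtobox}: the anchors for $\mathcal{P}$ are placed by an explicit map built from log coordinates, in which $[c]$ acts by translation, and a linear map on $\mathbb{Z}^2$. This map is strictly monotone in the required, opposite senses along rows and columns. The orientation-reversing case, where $[c]$ swaps the two families of corners and flips signs, needs a separate fundamental-domain argument.
\end{itemize}
This produces small boxes $b'(R)$ in which \emph{any} choice of anchor works. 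Only then does your final step apply: periodic points are dense and $\widetilde{\mathcal{B}}$ is discrete, so you can pick periodic anchors avoiding $\widetilde{\mathcal{B}}$.
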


We reuse the following notions in \cite{LMT23} and blackbox some lemmas whose proofs are not important to this paper.

\begin{defn}[Core point, staircase, pinched]
Let $R$ be an edge rectangle. There exists a bi-infinite sequence of edge rectangles $(R_n)_{n \in \mathbb{Z}}$ such that $R_0 = R$, and such that for each $n$, there exists a tetrahedron rectangle $T_n$ such that $R_{n+1}$ is the top edge subrectangle and $R_n$ is the bottom edge subrectangle of $T_n$.
It can be shown that $\bigcap_n R_n$ is a single periodic point, see \cite[Fact 4.4]{LMT23}.
We refer to this point as the \textbf{core point} of $R$ and denote it by $c(R)$.

Let $s$ be a point of $\widetilde{\mathcal{C}}$. 
Consider the collection of edge rectangles that occupy a common quadrant of $s$. 
This collection is totally ordered by the relation `lying above'. 
We refer to the union of these rectangles as a \textbf{staircase} at $s$.
If $R_1,...,R_n$ are consecutive rectangles in the staircase with $c(R_1) = ... = c(R_n)$, we say that $R_1,...,R_n$ are \textbf{pinched}, and also that $c$ is a \textbf{pinched} core point.

If $c$ is a pinched core point, the \textbf{preimage} of $c$ is the collection $\mathcal{O}^\circ$ of edge rectangles that have core point at $c$.
It can be shown that all edge rectangles in $\mathcal{O}^\circ$ have the same color, see \cite[Lemma 5.6]{LMT23}.
Depending on whether these rectangles are red/blue, we say that $c$ is \textbf{red/blue} respectively.
\end{defn}

\begin{lem}[{\cite[Lemma 5.5]{LMT23}}]
For any edge rectangles $Q_1$ and $Q_2$ that share a corner $s$, if $Q_2 > Q_1$, then either $c(Q_1) = c(Q_2)$, or $R(s,c(Q_1))$ is strictly wider and strictly shorter than $R(s,c(Q_2))$.
\end{lem}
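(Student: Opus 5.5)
The plan is to reduce to consecutive rectangles in the staircase at $s$ and then compare the two bi-infinite sequences of edge rectangles whose intersections define the core points. First, by \Cref{prop:astroidlemma}, the edge rectangles in the quadrant of $s$ containing $Q_1$ and $Q_2$ form a $\mathbb{Z}$-ordered staircase. Between $Q_1$ and $Q_2$ there is a finite chain $Q_1 = P_1 < P_2 < \dots < P_m = Q_2$ of consecutive members. The relation ``$c(P)=c(P')$, or $R(s,c(P))$ is strictly wider and strictly shorter than $R(s,c(P'))$'' is transitive along such a chain. Hence it suffices to treat the case where $Q_1 < Q_2$ are consecutive, i.e. two same-colored edge subrectangles of a common face rectangle with corner $s$.

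Write $(R^1_n)$ and $(R^2_n)$ for the sequences defining $c(Q_1)$ and $c(Q_2)$. Here $R^i_0 = Q_i$, and $R^i_{n+1}$ is the top edge of the tetrahedron rectangle $T^i_n$ whose bottom edge is $R^i_n$.
\begin{itemize}
\item As $n \to +\infty$, the $R^i_n$ get thinner and converge to a stable segment through $c(Q_i)$. This segment determines the ``width coordinate'' of $R(s,c(Q_i))$.
\item As $n \to -\infty$, they get shorter and converge to an unstable segment through $c(Q_i)$. This determines the ``height coordinate''.
\end{itemize}
The key combinatorial step is an inductive comparison of these sequences. For $n \geq 0$, either $R^1_n$ already appears in the sequence $(R^2_k)$, or the stable sides of $R^2_n$ lie weakly closer to $s$ than those of $R^1_n$. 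Symmetrically, for $n \leq 0$, either $R^2_n$ appears in $(R^1_k)$, or $R^1_n$ is weakly shorter in the unstable direction. To prove this, I would examine the tetrahedron rectangle $T^1_0$ with bottom edge $Q_1$. Since $Q_2$ is the next rectangle in the staircase at $s$, either $Q_2$ is the top edge of $T^1_0$, or $T^1_0$ is squeezed between $Q_1$ and the face rectangle spanned by $Q_1,Q_2$. The inductive step repeats this analysis at each stage, using the no-perfect-fits structure encoded in \Cref{prop:astroidlemma} to control where the new corners of $T^i_n$ can sit.

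If the two sequences ever share a rectangle, they coincide from then on (up to index shift), and so $c(Q_1)=c(Q_2)$. Otherwise, passing to the limit gives only non-strict inequalities: $R(s,c(Q_1))$ is weakly wider and weakly shorter than $R(s,c(Q_2))$. To upgrade to strict inequalities I would use periodicity. Suppose the stable leaves of $c(Q_1)$ and $c(Q_2)$ coincide. Since each stable leaf contains at most one periodic point and core points are periodic, we get $c(Q_1)=c(Q_2)$, contrary to assumption. The same argument applies to unstable leaves.

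The main obstacle is the inductive comparison, specifically the case analysis of how $T^1_n$ and $T^2_n$ sit relative to each other when the sequences have not merged. That is where one must rule out the configuration in which one sequence ``overtakes'' the other. I would first try to show that whenever $R^1_n$ and $R^2_n$ are distinct but comparable, $R^2_n$ is an edge rectangle sharing a corner with some rectangle in the staircase above $R^1_n$. This would reduce each inductive step to the consecutive-staircase situation already analyzed.
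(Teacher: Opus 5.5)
Your outer framework is sound. Transitivity does reduce you to consecutive rectangles. The horizontal position of $c(Q_i)$ is the limit of the nested horizontal extents of $R^i_n$ for $n \ge 0$, and its vertical position is the limit of the vertical extents for $n \le 0$. Once you have the two \emph{weak} inequalities, the fact that each leaf carries at most one periodic point does upgrade them to ``$c(Q_1)=c(Q_2)$, or both inequalities are strict''.

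The gap is that the weak inequalities are the entire content of the lemma, and the proposal does not establish them. The one concrete claim you make about the first step is false on both branches:
\begin{itemize}
\item $Q_2$ is never the top edge of $T^1_0$. The top and bottom edges of a tetrahedron rectangle share no corner, whereas $Q_1$ and $Q_2$ share $s$.
\item $T^1_0$ is not squeezed inside the face rectangle $F$ spanned by $Q_1,Q_2$; it contains $F$. Since $\intr(F)$ has no points of $\widetilde{\mathcal{C}}$, the top side of $T^1_0$ passes exactly through the far corner $b$ of $Q_2$. Meanwhile $T^1_0$ extends past the unstable leaf of $s$ to a bottom point $d$, so $R^1_1 = R(b,d)$.
\end{itemize}
Your inductive hypothesis is also too weak to propagate. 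Knowing only that the stable sides of $R^2_n$ lie weakly closer to $s$ than those of $R^1_n$ says nothing about where the vertical extensions of these two rectangles stop. So it gives no control over $R^2_{n+1}$ versus $R^1_{n+1}$.

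Finally, the two sequences never share a rectangle. In the pinched case they \emph{interleave}: $\dots < R^2_{-1} < Q_1 < Q_2 < R^1_1 < R^2_1 < \dots$, with consecutive terms sharing a corner. So ``merging'' is not the mechanism behind $c(Q_1)=c(Q_2)$. Your ``contrary to assumption'' in the strictness step is therefore wrong, though harmlessly, since $c(Q_1)=c(Q_2)$ is an allowed conclusion.

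What is missing is the right invariant. Draw $s$ at the bottom-left of $F$, with $a$ on its right side and $b$ on its top side.
\begin{itemize}
\item If $d$ lies to the right of the stable leaf of $b$, that leaf separates the horizontal extent of $R^1_1$ from that of $R^2_1$, which lies inside that of $Q_2$. It then separates all later terms too, giving the horizontal inequality strictly.
\item Otherwise the downward extension of $Q_2$ also stops at $d$, so $R^2_1 = R(d,e)$, where $e$ is the top point of $T^2_0$.
\begin{itemize}
\item If $e$ lies left of the stable leaf of $d$, that leaf separates the two sequences as before.
\item Otherwise $R^1_1 < R^2_1$ is again a consecutive pair at the common corner $d$. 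It is in exactly the original configuration, with $(s,a,b)$ replaced by $(d,b,e)$.
\end{itemize}
\end{itemize}
Induct on the statement: ``$(R^1_n,R^2_n)$ is such a consecutive pair, or the sequences have already been separated by the stable leaf of a point of $\widetilde{\mathcal{C}}$.'' If separation never occurs, the widths nest as $R^1_0 \supset R^2_0 \supset R^1_1 \supset R^2_1 \supset \cdots$. Then $c(Q_1)$ and $c(Q_2)$ lie on one stable leaf, and they coincide by periodicity. The vertical coordinate is handled symmetrically, using horizontal extensions for $n \le 0$.
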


In other words, the core points are almost a strict anchor system, except they might overlap for pinched rectangles. 
Thus one needs to push the pinched core points apart.
The following definition gives a permissible region for one to do so.

\begin{defn}[Core box] \label{defn:corebox}
A \textbf{core box system} is a collection of rectangles $\{b(R) \mid \text{$R$ is an edge rectangle}\}$ satisfying:
\begin{itemize}
    \item $c(R) \in b(R) \subset \intr(R)$ for every edge rectangle $R$,
    \item $b(g \cdot R) = g \cdot b(R)$ for every $g \in \pi_1 M$ and edge rectangle $R$, and
    \item for any edge rectangles $R_1$ and $R_2$ that share a corner $s$, if $R_1 < R_2$ and $c(R_1) \neq c(R_2)$, then $R(s,t_1)$ is strictly wider and strictly shorter than $R(s,t_2)$, for every $t_1 \in b(R_1)$ and $t_2 \in b(R_2)$. \qedhere
\end{itemize}
\end{defn}

\begin{lem}[{\cite[Claim 5.7]{LMT23}}]
There exists a core box system.
\end{lem}

Meanwhile, the following lemma gives a framework for the pushing process.

\begin{lem} \label{lem:pinchedpreimagetogrid}
Let $c$ be a pinched core point.
Let $\mathcal{P}$ be the preimage of $c$.
There exists an embedding of partially ordered sets 
$$\iota:\mathcal{P} \hookrightarrow G = 
\begin{cases}
\{(m,n) \in \mathbb{Z}^2 \mid \frac{n_0}{m_0}m-r \leq n \leq \frac{n_0}{m_0}m+r \} & \text{if $c$ is orientation-preserving} \\
\{(m,n) \in \mathbb{Z}^2 \mid m \leq n \leq m+r \} & \text{if $c$ is orientation-reversing}
\end{cases}$$
for some $m_0,n_0,r \in \mathbb{Z}_+$,
where elements of $\mathcal{P}$ are ordered by the relation `lying above' and the element of $G$ are ordered by $(m_1,n_1) \leq (m_2,n_2)$ if $m_1 \leq m_2$ and $n_1 \leq n_2$, such that:
\begin{itemize}
    \item $\iota$ is equivariant under the action of $\langle [c] \rangle$ on $\mathcal{P}$ and the action of $\langle [c] \rangle$ on $G$ with 
    $$[c] \cdot (m,n) =
    \begin{cases}
    (m+m_0,n+n_0) & \text{if $c$ is orientation-preserving,} \\
    (n,m+r) & \text{if $c$ is orientation-reversing.}
    \end{cases}$$
    \item $R_1, R_2 \in \mathbb{P}$ share a corner if and only if $\iota(R_1)$ and $\iota(R_2)$ share a coordinate. 
\end{itemize}
\end{lem}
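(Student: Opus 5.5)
Fix a staircase at one corner $s\in\widetilde{\mathcal{C}}$ of a rectangle in $\mathcal{P}$, i.e.\ a quadrant of $s$. By \Cref{prop:astroidlemma} the rectangles in that staircase are order isomorphic to $\mathbb{Z}$, and the pinched ones with core $c$ form a finite consecutive block. The plan is to read off the two coordinates of $\iota(R)$ from the two corners of $R$, much as a point of $\mathbb{Z}^2$ is recorded by its two coordinate lines.

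\emph{Coordinates.} Each $R\in\mathcal{P}$ is, say, red. It has a bottom-left corner $s_1(R)$ and a top-right corner $s_2(R)$, both in $\widetilde{\mathcal{C}}$ and lying in opposite quadrants of $c$ (the third and first quadrants). Let $A$ be the set of points of $\widetilde{\mathcal{C}}$ that occur as a corner of some element of $\mathcal{P}$ in the first quadrant of $c$, and let $B$ be the analogous set in the third quadrant. Elements of $A$ with no points of $\widetilde{\mathcal{C}}$ in $R(c,\cdot)$ are ordered by ``lying above''. Using \Cref{prop:astroidlemma} at $c$, this set is order isomorphic to $\mathbb{Z}$, and the same holds for $B$. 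I would first check that every corner of an element of $\mathcal{P}$ is such an ``innermost'' point: since $c\in\intr R$ and $R$ has no points of $\widetilde{\mathcal{C}}$ in its interior, $R(c,s_i)\subset R$ is empty. This gives order isomorphisms $A\cong\mathbb{Z}$ and $B\cong\mathbb{Z}$, and I would set $\iota(R)=(m,n)$, where $m$ indexes $s_1(R)$ in $B$ and $n$ indexes $s_2(R)$ in $A$. The indexing directions are chosen so that moving ``up'' (taller and thinner) increases the index. Injectivity is then immediate, since an edge rectangle is determined by its two corners. The statement that $R_1,R_2$ share a corner iff $\iota(R_1),\iota(R_2)$ share a coordinate is also immediate, since the two quadrants are disjoint.

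\emph{Order and image.} I would check that $R_1<R_2$ iff both corners move up, i.e.\ iff $m_1\le m_2$ and $n_1\le n_2$. A rectangle spanned by two corners lying in opposite quadrants of $c$ contains $c$, so the comparison of heights and widths reduces to the comparison of $R(c,s_i)$. Next, the image lies in a band. A pair $(s_1,s_2)$ spans an edge rectangle without interior points only when neither $R(c,s_1)$ nor $R(c,s_2)$ is much taller than the other; otherwise an intermediate staircase point would lie inside. Using the group action, only finitely many offsets $n-\frac{n_0}{m_0}m$ occur, which gives the width $r$ of the band.

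\emph{Equivariance.} By \Cref{prop:astroidlemma}, in the orientation-preserving case $[c]$ acts on $A$ and on $B$ by translations $m_0$ and $n_0$, which gives $[c]\cdot(m,n)=(m+m_0,n+n_0)$. In the orientation-reversing case $[c]$ swaps the first and third quadrants, so it swaps $A$ and $B$. After identifying both with $\mathbb{Z}$ compatibly, it acts by $(m,n)\mapsto(n,m+r)$, with $[c]^2$ being the translation by $r$.

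The main obstacle is this last normalization together with the band description. I need to choose the identifications of $A$ and $B$ with $\mathbb{Z}$ so that the swap has exactly the stated form. I also need a concrete bound showing that the realizable pairs satisfy $m\le n\le m+r$ after this normalization. For the bound I would first try to compare the heights of $R(c,s_1)$ and $R(c,s_2)$ directly using pinching: all the rectangles share the core $c$, so the staircase sequences through $T_n$ never separate $c$ from the corners.
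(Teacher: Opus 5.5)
The one genuine gap is the orientation-reversing window $m\le n\le m+r$, which you leave open. The route you suggest, comparing heights of $R(c,s_1)$ and $R(c,s_2)$ via pinching, targets the wrong quantity. At best a height estimate yields \emph{some} bounded band. It cannot produce the specific window whose width is the same $r$ that appears in the action.

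The missing idea is to compare $R$ with its own translate $[c]\cdot R$. You already have $\iota([c]\cdot R)=(n,m+r)$ whenever $\iota(R)=(m,n)$, and you have that $\iota$ is an order embedding. So $R<[c]\cdot R$ gives $(m,n)\le(n,m+r)$, i.e.\ $m\le n\le m+r$, and this is exactly how the paper concludes. With your setup the window is in fact \emph{equivalent} to $R<[c]\cdot R$ for every $R\in\mathcal{P}$. The paper takes this comparison as given, and it is precisely the fact your outline still has to supply.

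The rest of your outline matches the paper: the definition of $\iota$ through the corners' positions in the staircases at $c$, the order-embedding and corner-sharing checks, and the equivariance formulas in both cases. You give more detail than the paper does at several of these steps.

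For the orientation-preserving strip you take a different route.
\begin{itemize}
\item You bound the partners of a fixed corner using points of $\widetilde{\mathcal{C}}$ in the other quadrants of $c$, then apply $[c]$-equivariance.
\item The paper instead shows that each corner $s$ carries only finitely many members of $\mathcal{P}$. An infinite pinched block in the staircase at $s$ would contain both $R$ and some $[s]^k\cdot R$, whose core $[s]^k\cdot c$ differs from $c$. Equivariance then gives the strip.
\end{itemize}
Your version works, but it needs to be stated precisely. Fix $s_1$. The staircase at $c$ in the second quadrant bounds $s_2$ from above, and the one in the fourth quadrant bounds it from below. The first and third quadrants contribute nothing, since each $R(c,s_i)$ is empty. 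Without this reduction, ``neither is much taller than the other'' has no intrinsic meaning, because the two heights are measured along different half-leaves.

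The finiteness of pinched blocks asserted in your opening paragraph is exactly the paper's key step for the strip. You give no argument for it, although you do not use it later.
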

\begin{proof}
We first suppose that $c$ is orientation-preserving. Let $M,N \subset \widetilde{\mathcal{C}}$ be the two sets of corners of rectangles in $\mathcal{P}$ occupying the two quadrants of $c$.
By \Cref{prop:astroidlemma}, $M$ and $N$ are order isomorphic to $\mathbb{Z}$ under the relation $m_1 < m_2$ if $R(c,m_1) < R(c,m_2)$. 
Moreover, identifying $M,N \cong \mathbb{Z}$, the element $[c]$ acts by $m \mapsto m+m_0$ and $n \mapsto n+n_0$ on $M$ and $N$ respectively, for some $m_0,n_0 \in \mathbb{Z}_+$.
We define the embedding $\iota$ by mapping a rectangle $R$ to its pair of corners in $\widetilde{\mathcal{C}}$.
We demonstrate a local example in \Cref{fig:pinchedpreimagetogrid}.

\begin{figure}
    \centering
    \fontsize{8pt}{8pt}
    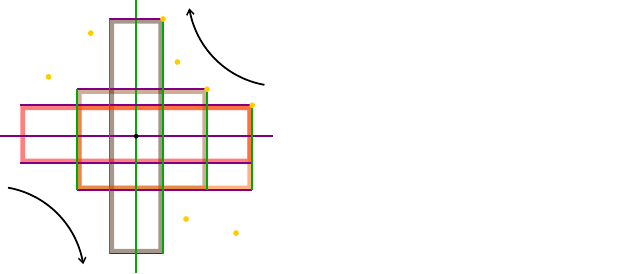
    \caption{Defining the embedding $\iota:\mathcal{P} \to G$.}
    \label{fig:pinchedpreimagetogrid}
\end{figure}

It remains to show that the image of $\iota$ lies in some strip $\{(m,n) \in \mathbb{Z}^2 \mid \frac{m_0}{n_0}n-r \leq m \leq \frac{m_0}{n_0}n+r \}$.
First note that for each $m \in M$, there can only be finitely many elements of $\mathcal{P}$ that have a corner at $m$.
This is because otherwise there will be a staircase $S$ at $m$ with infinitely many consecutive pinched elements. But $\langle [m] \rangle$ acts cofinitely on $S$, and for any $R \in S$, $[m] \cdot R$ cannot be pinched with $R$, since $c([m] \cdot R) = [m] \cdot c(R) \neq c(R)$.
Now using the equivariance under $\langle [c] \rangle$, we deduce that every $m \in M$ can have only boundedly many elements of $\mathcal{P}$ that have a corner at $m$.
A similar statement holds for every $n \in N$. It follows from this that the image of $\iota$ lies within a strip.

The proof when $c$ is orientation-reversing is similar. We outline the key differences: 
We define $M,N$ in the same way. 
Now $[c]$ maps $M$ to $N$ and $N$ to $M$.
We choose the identifications $M,N \cong \mathbb{Z}$ such that the map $[c]:N \to M$ is identity. 
Then the map $[c]:M \to N$ is $m \mapsto m+r$ for some $r \in \mathbb{Z}_+$.
Since $[c] \cdot R > R$ for each $R \in \mathbb{R}$, the image of $\iota$ must lie within the strip $\{(m,n) \in \mathbb{Z}^2 \mid m \leq n \leq m+r \}$.
\end{proof}

\begin{lem} \label{lem:gridtobox}
Fix a core box system $\{b(R)\}$.
Let $c$ be a pinched core point. 
Let $\mathcal{P}$ be the collection of edge rectangles that have core point $c$. 
Then there exists a collection of rectangles $\{b'(R) \mid R \in \mathcal{P}\}$ satisfying
\begin{itemize}
    \item $b'(R) \subset b(R)$,
    \item $[c] \cdot b'(R) = b'([c] \cdot R)$ for every $R \in \mathcal{P}$, and
    \item for every $R_1, R_2 \in \mathcal{P}$ that share a corner $s$, if $R_1 < R_2$, then $R(s,t_1)$ is strictly wider and strictly shorter than $R(s,t_2)$, for every $t_1 \in b'(R_1)$ and $t_2 \in b'(R_2)$.
\end{itemize}
\end{lem}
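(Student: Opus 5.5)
We use \Cref{lem:pinchedpreimagetogrid} to pass to a combinatorial problem on the grid $G$, solve it there, and transfer the solution back into the core boxes. Since all rectangles in $\mathcal{P}$ have the same color, say red (blue is symmetric), each $R \in \mathcal{P}$ has one corner in each of the two quadrants at $c$ used to define $M$ and $N$. We record the position of a point $t$ near $c$ by two real coordinates: $x(t)$, its position along the unstable direction, and $y(t)$, its position along the stable direction. Strict staircase monotonicity at a shared corner $s \in M$ then becomes a strict inequality in one coordinate, and the reverse strict inequality in the other, between the points chosen for the two rectangles. The same holds at a shared corner in $N$, with the roles of the coordinates matched to the quadrant.

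The construction has three steps.
\begin{itemize}
\item \emph{Choose a strictly monotone map.} We look for a map $\theta: G \to (0,1)^2$, $\theta(m,n) = (a(m,n), b(m,n))$, that is strictly monotone in the following sense. For $(m,n) < (m',n')$ sharing the coordinate $m = m'$ (a shared corner in $M$), we need $b(m,n) < b(m,n')$ together with the reverse strict inequality on $a$. The analogous condition holds for a shared coordinate $n = n'$. A natural candidate is $a = \epsilon\,(n - m)$, $b = \epsilon\,(m + n)$, up to an affine rescaling adapted to the strip. This handles both kinds of shared coordinate simultaneously.
\item \emph{Make $\theta$ equivariant.} The map must intertwine the $[c]$-action on $G$ with the action of $[c]$ near $c$. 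Since $[c]$ scales the stable and unstable coordinates of the orbit space around $c$ by $\lambda^{\mp1}$, and also swaps the quadrants in the orientation-reversing case, we instead work in logarithmic coordinates. Concretely, we fix a small product neighborhood $U$ of $c$ inside $\bigcap_{R \in \mathcal{P}} b(R)$, viewed in a chart where $[c]$ acts by $(x,y) \mapsto (\lambda x, \lambda^{-1} y)$. We then choose, for one representative $R$ in each of the finitely many $\langle [c] \rangle$-orbits in $\mathcal{P}$ (finitely many because of the strip bound in \Cref{lem:pinchedpreimagetogrid}), a tiny rectangle $b'(R) \subset U$. The remaining boxes are defined by $b'([c]^k \cdot R) = [c]^k \cdot b'(R)$.
\item \emph{Preserve strictness.} The boxes must be small enough that strict inequalities between chosen center points survive for all points of the boxes.
\end{itemize}

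It remains to check that the third condition holds, and here we must be careful: $U$ is not $[c]$-invariant, and in fact $[c]^k \cdot U$ shrinks toward the stable leaf of $c$ in one direction. We handle this by placing the center of $b'(R)$ in the quadrant of $c$ opposite to the corners, at the point $(\lambda^{u} x_0, \lambda^{-u} y_0)$ with $u$ a linear function of $\iota(R)$. In other words, the grid coordinate becomes the exponent. Then $[c]$ acts on $G$ by translation by $(m_0, n_0)$, and on centers by multiplication, and these match.

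\textbf{Main obstacle.} The main obstacle is verifying that pairs $R_1 < R_2$ sharing a corner get strictly ordered in both coordinates with the correct signs, using only $\iota(R_1) \le \iota(R_2)$ with one coordinate equal. We will choose centers $(x,y) = (\lambda^{\alpha m + \beta n} x_0,\ \lambda^{-(\gamma m + \delta n)} y_0)$ with positive $\alpha, \beta, \gamma, \delta$ satisfying the equivariance equation $\alpha m_0 + \beta n_0 = 1 = \gamma m_0 + \delta n_0$. We then expect a sign check to show that increasing $n$ with $m$ fixed moves the point in the required direction for the $M$-corner, and symmetrically for the other corner. Strictness is automatic once $\alpha, \beta, \gamma, \delta > 0$, and this is possible since $m_0, n_0 > 0$. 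In the orientation-reversing case we use the exponent $u = m + n$ (suitably normalized) together with a swap of coordinates, so that the action $(m,n) \mapsto (n, m + r)$ matches the action of $[c]$. We then intersect with the given $b(R)$ by first shrinking $x_0, y_0$: the centers accumulate only at $c \in b(R)$, and only finitely many orbit representatives need to be checked.
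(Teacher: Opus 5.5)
\textbf{Verdict: genuine gap.} Your overall plan matches the paper's: encode $\iota(R)$ as exponents in a chart where $[c]$ acts diagonally, choose centers for the finitely many $\langle[c]\rangle$-orbit representatives, extend equivariantly, and shrink into the $b(R)$. However, the step you single out as the main obstacle is resolved with the wrong signs, and as written it fails.

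\textbf{Sign error in the orientation-preserving case.} Every $R\in\mathcal{P}$ contains $c$ in its interior, since $c\in b(R)\subset\intr(R)$, and all have the same color. Hence the $M$-corners and $N$-corners lie in \emph{opposite} quadrants of $c$.
\begin{itemize}
\item If $R_1<R_2$ share an $M$-corner $s$ (so $n$ increases), then $R(s,t)$ must become strictly thinner and strictly taller. So the center $t$ moves horizontally towards $s$ and vertically away from $s$.
\item If they share an $N$-corner $s'$ (so $m$ increases), then $t$ moves horizontally towards $s'$ and vertically away from $s'$.
\end{itemize}
Since $s$ and $s'$ lie on opposite sides of $c$ in both directions, each coordinate of the center must be strictly increasing in one of $m,n$ and strictly decreasing in the other. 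Your centers $(\lambda^{\alpha m+\beta n}x_0,\lambda^{-(\gamma m+\delta n)}y_0)$ with $\alpha,\beta,\gamma,\delta>0$ move each coordinate in the same direction whether $m$ or $n$ increases. So one of the two families of shared-corner pairs always gets the reverse inequalities: positivity of the coefficients is exactly what breaks strictness, not what guarantees it. The Step 1 candidate already has this defect: $a=\epsilon(n-m)$ increases with $n$, contradicting the condition you state for a shared $m$, and $b=\epsilon(m+n)$ increases in both variables.

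This case is repairable by using exponents with coefficients of opposite signs. In the paper's normalization ($[c]$ acting by $(\lambda^{-1}x,\lambda y)$, $M$-corners in the first quadrant, centers in the fourth), take $\log_\lambda|x|=-\frac{3m}{2m_0}+\frac{n}{2n_0}$ and $\log_\lambda|y|=-\frac{m}{2m_0}+\frac{3n}{2n_0}$. This is what the paper's $r^{-1}t$ amounts to. Equivariance imposes only one linear equation per coordinate, which remains solvable with mixed signs.

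\textbf{Missing idea in the orientation-reversing case.} Here $[c]$ acts near $c$ by $(x,y)\mapsto(-\lambda^{-1}x,-\lambda y)$. It swaps the roles of $m$ and $n$ on the grid, but not the two chart coordinates. It also exchanges opposite quadrants, so by equivariance the signs of the centers' coordinates must alternate along $[c]$-orbits.

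Now suppose an exponent $f$ is affine in $(m,n)$ and exactly equivariant, $f(n,m+r)=f(m,n)\pm1$. Comparing coefficients forces the coefficients of $m$ and $n$ to coincide, so $f$ depends only on $m+n$ --- which is what you propose. By the previous paragraph, such a center moves in the same direction in $m$ and $n$ wherever its sign is constant, so it fails for shared-corner pairs there. No single closed formula of this kind works in general.

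The paper instead proceeds as follows.
\begin{itemize}
\item It takes a fundamental domain $H$ for $\langle[c]\rangle$ on the strip $G$.
\item It places centers on $H$ using the orientation-preserving formula, which is $[c]^2$-equivariant because $[c]^2$ is translation by $(r,r)$.
\item It extends to all of $\mathcal{P}$ by $[c]$-equivariance.
\item It checks separately the pairs straddling two consecutive translates of $H$. Their centers lie in the two opposite quadrants of $c$ containing no corners, and that alone yields the strict inequalities, provided the quadrant used on $H$ is chosen compatibly with which seams occur.
\end{itemize}
You need this seam argument, or a substitute, to finish.

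\textbf{Minor point.} $\bigcap_{R\in\mathcal{P}}b(R)$ has empty interior, because $b([c]^kR)=[c]^kb(R)$ degenerates as $|k|\to\infty$. So $U$ must be taken inside the intersection over the finitely many representatives only, which is what you effectively do at the end.
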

\begin{proof}
Let $P$ be the union of rectangles in $\mathcal{P}$. There exists an embedding $\Psi:P \hookrightarrow \mathbb{R}^2$ that maps $\mathcal{O}^{s/u}$ to vertical/horizontal lines and is equivariant under the action of $\langle [c] \rangle$ on $\mathcal{P}$ and the action of $\langle [c] \rangle$ on $\mathbb{R}^2$ with 
$$[c] \cdot (x,y) = 
\begin{cases}
(\lambda^{-1} x, \lambda y) & \text{if $c$ is orientation-preserving} \\
(-\lambda^{-1} x, -\lambda y) & \text{if $c$ is orientation-reversing}
\end{cases}$$
for some $\lambda > 1$, as explained in \cite[Claim 5.8]{LMT23}.

Without loss of generality suppose $c$ is red.
We fix an embedding $\mathcal{P} \hookrightarrow G$ as in \Cref{lem:pinchedpreimagetogrid}.
Up to composing $\Psi$ with the reflection $(x,y) \mapsto (y,x)$, we can assume that if $\iota(R_1)$ and $\iota(R_2)$ have the same first/second coordinate, then $\Psi(R_1)$ and $\Psi(R_2)$ share a corner that lies in the first/third quadrant of $0$ in $\mathbb{R}^2$, respectively.

We first suppose that $c$ is orientation-preserving. 
Consider the map $r:\mathbb{R}_+ \times \mathbb{R}_- \to \mathbb{R}^2$ defined by $(x,y) \mapsto (u,v) = (\log_\lambda x - \log_\lambda (-y), - \log_\lambda x - \log_\lambda (-y))$. 
The stable leaves are sent to lines of slope $1$, while the unstable leaves are sent to lines of slope $-1$. 
The action of $\langle [c] \rangle$ is conjugated to the translation $[c] \cdot (u,v) = (u-2,v)$.

We now define an embedding $t:\mathbb{Z}^2 \to \mathbb{R}^2$ by $t(m,n) = (-\frac{m}{m_0}-\frac{n}{n_0},\frac{2m}{m_0}-\frac{2n}{n_0})$.
Observe that the composed map $r^{-1}t$ satisfies the following properties:
\begin{itemize}
    \item $[c] \cdot r^{-1}t(m,n) = r^{-1}t(m+m_0,n+n_0)$ for every $(m,n) \in \mathbb{Z}^2$,
    \item for every $m,n_1,n_2 \in \mathbb{Z}$ with $n_1<n_2$, $R(0,r^{-1}t(m,n_1))$ is strictly thinner and strictly shorter than $R(0,r^{-1}t(m,n_2))$, and
    \item for every $m_1,m_2,n \in \mathbb{Z}$ with $m_1<m_2$, $R(0,r^{-1}t(m_1,n))$ is strictly wider and strictly taller than $R(0,r^{-1}t(m_2,n))$.
\end{itemize}
See \Cref{fig:gridtoboxorientable}.

\begin{figure}
    \centering
    \fontsize{8pt}{8pt}
    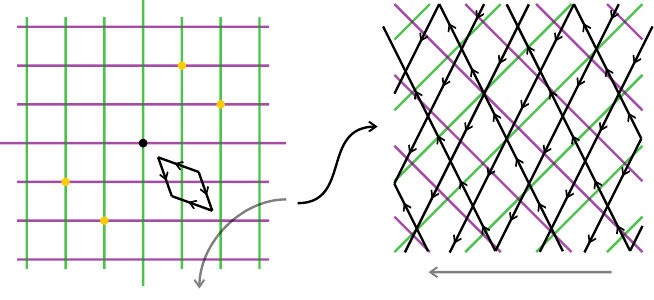
    \caption{Defining the maps $r,t$.}
    \label{fig:gridtoboxorientable}
\end{figure}

This implies that the composed map $T:= \Psi^{-1}r^{-1}t\iota$ satisfies the following properties:
\begin{itemize}
    \item $[c] \cdot T(R) = T([c] \cdot R)$ for every $R \in \mathcal{P}$, and
    \item for every $R_1, R_2 \in \mathcal{P}$ that share a corner $s$, if $R_1 < R_2$, then $R(s,T(R_1))$ is strictly wider and strictly shorter than $R(s,T(R_2))$.
\end{itemize}
Using the fact that the image of $\iota$ lies within a strip $\{(m,n) \in \mathbb{Z}^2 \mid \frac{n_0}{m_0}m-r \leq n \leq \frac{n_0}{m_0}m+r \}$, we can compose $\Psi$ with a dilation $(x,y) \mapsto (\rho x, \rho y)$ for large $\rho$, so that $T(R) \in b(R)$ for every $R \in \mathcal{P}$.
Thus it remains to take $b'(R)$ to be a small rectangle neighborhood of $T(R)$, $\langle [c] \rangle$-equivariantly, for each $R \in \mathcal{P}$.

Next, we tackle the case when $c$ is orientation-reversing.
We set $H = \{(m,n) \in G \mid 0 \leq m < r, n < r\}$, then $H$ is a fundamental domain of the action of $\langle [c] \rangle$ on $G$.
We define the maps $r$ and $t$ as in the orientation-preserving case, but now we set
$$T(R) = \begin{cases}
\Psi^{-1}r^{-1}t\iota(R) & \text{if $\iota(R) \in [c]^{2k}H$} \\
[c]^{-1} \cdot \Psi^{-1}r^{-1}t\iota([c] \cdot R) & \text{if $\iota(R) \in [c]^{2k+1}H$.}
\end{cases}$$
In other words, we define $T$ to be $\Psi^{-1}r^{-1}t\iota$ on $\iota^{-1}(H)$, and extend by equivariance.
We then claim that $T$ satisfies the following properties:
\begin{itemize}
    \item $[c] \cdot T(R) = T([c] \cdot R)$ for every $R \in \mathcal{P}$, and
    \item for every $R_1, R_2 \in \mathcal{P}$ that share a corner $s$, if $R_1 < R_2$, then $R(s,T(R_1))$ is strictly wider and strictly shorter than $R(s,T(R_2))$.
\end{itemize}
The first item follows from our definition by equivariance. 
For the second item, observe that if $\iota(R_1) = (m,n_1)$ and $\iota(R_2) = (m,n_2)$ with $n_1<n_2$, then either $(m,n_1), (m,n_2) \in [c]^k H$, in which case the property is inherited from $\Psi^{-1}r^{-1}t\iota$, or $(m,n_1) \in [c]^{2k-1} H$ and $(m,n_2) \in [c]^{2k} H$, in which case the property holds since $r^{-1}t(m,n_1)$ lies in the second quadrant $\mathbb{R}_- \times \mathbb{R}_+$ while $r^{-1}t(m,n_2)$ lies in the fourth quadrant $\mathbb{R}_+ \times \mathbb{R}_-$.
One can check similarly if $\iota(R_1) = (m_1,n)$ and $\iota(R_2) = (m_2,n)$ with $m_1<m_2$.
See \Cref{fig:gridtoboxnonorientable}.

\begin{figure}
    \centering
    \fontsize{8pt}{8pt}
    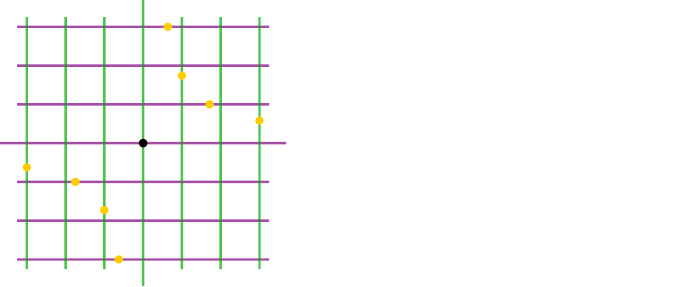
    \caption{The map $T$ in the orientation-reversing case. This figure illustrates the case when $r=2$.}
    \label{fig:gridtoboxnonorientable}
\end{figure}

As in the orientation-preserving case, it remains to compose $\Psi$ with a dilation so that $T(R) \in b(R)$ for every $R \in \mathcal{P}$, and take $b'(R)$ to be a small rectangle neighborhood of $T(R)$, $\langle [c] \rangle$-equivariantly, for each $R \in \mathcal{P}$.
\end{proof}

Putting the core boxes and the smaller boxes in \Cref{lem:gridtobox} together, we deduce the following lemma.

\begin{lem} \label{lem:equivariantsmallbox}
There exists a collection of rectangles $\{b'(R) \mid \text{$R$ is an edge rectangle}\}$ satisfying
\begin{itemize}
    \item $b'(R) \subset \intr(R)$,
    \item $g \cdot b'(R) = b'(g \cdot R)$ for every $g \in \pi_1 M$ and edge rectangle $R$, and
    \item for any edge rectangle $R_1$ and $R_2$ that share a corner $s$, if $R_1 < R_2$, then $R(s,t_1)$ is strictly wider and strictly shorter than $R(s,t_2)$, for every $t_1 \in b'(R_1)$ and $t_2 \in b'(R_2)$.
\end{itemize}
\end{lem}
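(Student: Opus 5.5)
We define $b'(R)$ separately according to whether the core point $c(R)$ is pinched or not, and then verify the three properties by comparing the core point and box data.

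Fix a core box system $\{b(R)\}$ as in \cite[Claim 5.7]{LMT23}.

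\emph{Step 1: non-pinched core points.} If $c(R)$ is not pinched, set $b'(R) = b(R)$.

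\emph{Step 2: pinched core points.} The pinched core points form a $\pi_1 M$-invariant set. Choose one representative $c$ in each $\pi_1 M$-orbit. For each representative, apply \Cref{lem:gridtobox} to the preimage $\mathcal{P}$ of $c$, obtaining boxes $b'(R) \subset b(R)$ for $R \in \mathcal{P}$ that are $\langle [c] \rangle$-equivariant. Then extend to the whole orbit by $b'(g \cdot R) := g \cdot b'(R)$.

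This extension is well defined for two reasons. First, $c(g \cdot R) = g \cdot c(R)$, since core points are defined canonically from the sequence of tetrahedron rectangles. Second, the stabilizer of the periodic point $c$ is exactly $\langle [c] \rangle$, which is the group under which the boxes from \Cref{lem:gridtobox} are equivariant. Equivariance of the boxes in Step 1 is inherited from the core box system.

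\emph{Step 3: the containment.} We have $b'(R) \subset b(R) \subset \intr(R)$ in both cases, by the definition of a core box system together with the first item of \Cref{lem:gridtobox}.

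\emph{Step 4: the staircase property.} Let $R_1 < R_2$ share a corner $s$.

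If $c(R_1) \neq c(R_2)$, the third item of \Cref{defn:corebox} gives the strict wider/shorter comparison for all $t_i \in b(R_i)$. Since $b'(R_i) \subset b(R_i)$, it holds in particular for all $t_i \in b'(R_i)$.

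If $c(R_1) = c(R_2) = c$, then $c$ is pinched and $R_1, R_2$ lie in the same preimage $\mathcal{P}$. Translate by some $g \in \pi_1 M$ so that $c$ becomes the chosen representative. Then \Cref{lem:gridtobox} applies directly. The conclusion transports back under $g$, because $g$ preserves the foliations and hence the relations ``strictly wider'' and ``strictly shorter'' relative to the corner $g \cdot s$.

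\emph{Main obstacle.} The only delicate point is the well-definedness of the equivariant extension in Step 2, namely identifying the stabilizer of the collection $\mathcal{P}$ with $\langle [c] \rangle$. We settle this by noting that any $g$ with $g \cdot \mathcal{P} = \mathcal{P}$ must fix $c$, because $c$ is the common core point of the rectangles in $\mathcal{P}$. Since $c$ is periodic, its stabilizer is cyclic and generated by $[c]$.
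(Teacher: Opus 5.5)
Your proposal is correct and follows the paper's proof. Both fix a core box system, keep $b(R)$ away from pinched core points, apply \Cref{lem:gridtobox} to one representative per $\pi_1 M$-orbit of pinched core points and extend equivariantly, then check the staircase property by splitting on whether $c(R_1)=c(R_2)$. Your stabilizer argument for well-definedness supplies a detail the paper leaves implicit, and the one step you assert without justification (that $c(R_1)=c(R_2)$ for corner-sharing $R_1<R_2$ forces $c$ to be pinched) follows from \cite[Lemma 5.5]{LMT23} applied to the intermediate rectangles of the staircase at $s$.
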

\begin{proof}
Fix a core box system $\{b(R)\}$. 
For each non-pinched rectangle $R$, we set $b'(R) = b(R)$.
Since each pinched rectangle has a unique pinched core point, we can apply \Cref{lem:gridtobox} to one representative in each $\pi_1 M$-orbit of pinched core points, then extend equivariantly, in order to define $b'(R)$ for all pinched rectangles.
The first two items are immediate. The third item follows from the last item of \Cref{defn:corebox} and the last item in \Cref{lem:gridtobox}. 
\end{proof}

\begin{proof}[Proof of \Cref{prop:strictanchorsystem}]
Fix a collection of rectangles $\{b'(R)\}$ as in \Cref{lem:equivariantsmallbox}.
We pick $\alpha_R$ to be a periodic point in $b'(R)$ disjoint from $\widetilde{\mathcal{B}}$ for a representative $R$ of each $\pi_1 M$-orbit of edge rectangles and extend to all other edge rectangles by equivariance.
\end{proof}

\subsection{Keeping diagonals away from boundary orbits} \label{subsec:pldiagawayfromboundaryorbits}

For the rest of this section, we fix a strict anchor system $\{\alpha_R\}$ that is disjoint from $\widetilde{\partial S}$.
We use the following terminology:
If $R$ is an edge rectangle with corners $s_1, s_2 \in \mathcal{C}$, we refer to the subrectangles $R(s_1,\alpha_R)$ and $R(s_2,\alpha_R)$ as the \textbf{anchor subrectangles}. 

Let $\mathcal{B}$ be a discrete collection of points in $\mathcal{O}$.
We refer to the points in $\mathcal{B}$ as the \textbf{buoys}.
The \textbf{anchor half-diagonal} in an anchor subrectangle $R(s_i,\alpha_R)$ with respect to $\mathcal{B}$ is the tight arc between $s_i$ and $\alpha_R$ whose hook is the corner that lies on the same stable leaf as $\alpha_R$, with respect to $\mathcal{B}$.
The \textbf{anchor diagonal} in $R$ with respect to $\mathcal{B}$ is the union of the two anchor half-diagonals with respect to $\mathcal{B}$. 
If $\mathcal{B}$ is $\pi_1 M$-invariant, then the collection of anchor diagonals with respect to $\mathcal{B}$ is a diagonal system.

We first add enough buoys so that the interior of the anchor diagonals lie within $\mathcal{O}^\circ$. 

\begin{lem} \label{lem:diagonalsoffboundaryorbits}
There exists a discrete $\pi_1 M$-invariant collection of points $\mathcal{B}_0$ such that for every discrete collection of points $\mathcal{B} \supset \mathcal{B}_0$, the interior of the anchor diagonals with respect to $\mathcal{B}$ lie within $\mathcal{O}^\circ$.
\end{lem}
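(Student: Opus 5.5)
The plan is to choose $\mathcal{B}_0$ to contain $\widetilde{\partial S}\cup\widetilde{\sing(\phi)}$ together with a $\pi_1 M$-invariant family of extra buoys placed close to each point of $\widetilde{\partial S}$. These extra buoys will shield the boundary orbits: no tight arc can pass through a boundary point once a buoy sits next to it on the appropriate side. Since the anchors and the corners lie off $\widetilde{\partial S}$ (the anchors by our choice of strict anchor system, the corners because $\partial S$ is disjoint from $\mathcal{C}$), the only danger is that a half-diagonal gets caught on a point $b\in\widetilde{\partial S}$ lying in the interior of an anchor subrectangle. By \Cref{constr:tightarc}, a tight arc hooked at $w$ can only touch a buoy $b$ where the arc turns around $b$, with $b$ on the side of the arc toward the hook $w$, that is, on the concave side.

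First I reduce to a finite problem. There are finitely many $\pi_1 M$-orbits of edge rectangles, and each anchor subrectangle is compact. By \Cref{prop:closedorbitliftdiscrete}, each anchor subrectangle contains only finitely many points of $\widetilde{\partial S}$. It therefore suffices to handle, for one representative pair $(Q,b)$ in each $\pi_1 M$-orbit of pairs with $Q$ an anchor subrectangle and $b\in\widetilde{\partial S}\cap Q$, a choice of extra buoys near $b$, and then extend equivariantly. The resulting set stays discrete: near each $b$ only finitely many translates accumulate, since the stabilizer of $b$ is cyclic and I can choose the shielding buoys to be periodic and invariant under $\langle[b]\rangle$, or simply to lie in a small neighborhood.

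Next comes the local shielding step. Fix $b\in\widetilde{\partial S}$ in the interior of some anchor subrectangle $Q$ with hook $w$. Since $b$ is orientation-preserving by \Cref{thm:birkhoffsectionexist}, it has four quadrants. A tight arc in $Q$ hooked at $w$ that bends at $b$ does so with $b$ lying between the arc and $w$. I will add a buoy $b'$, taken periodic and off $\widetilde{\mathcal{C}}$ by density of periodic points, inside the quadrant of $b$ facing $w$ and very close to $b$. Then the arc, wrapping around the convex hull of buoys on the $w$-side, meets $b'$ before $b$, or else bends at $b'$ and misses $b$. The point is that the region swept by geodesics in $Q^!$ is bounded by the convex hull of the buoys together with the corner path $s_i\to w\to\alpha$, and $b$ is not an extreme point of that hull if $b'$ is placed strictly between $b$ and $w$ in both coordinates. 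Enlarging $\mathcal{B}$ further only adds points to the hull, so $b$ remains non-extreme. This gives the ``for every $\mathcal{B}\supset\mathcal{B}_0$'' clause.

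The main obstacle is that a single $b$ lies in many anchor subrectangles, infinitely many in fact, with hooks in different quadrants of $b$. My plan is to put one shielding buoy in each of the four quadrants of $b$, chosen in $\langle[b]\rangle$-invariant families accumulating only at $b$. Then for any hook $w$, some buoy lies in the quadrant of $b$ facing $w$, and the convex-hull argument applies as long as that buoy also lies in $Q$. To guarantee this, I will use the fact that $b$ is in the interior of $Q$ and that the $\langle[b]\rangle$-orbit of a buoy in a fixed quadrant approaches $b$ along that quadrant, so some translate lies in $Q$. The technical check is that placing infinitely many such buoys near $b$ is still discrete in $\mathcal{O}$. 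I expect this to hold because the orbit of a point under $[b]$ accumulates only on the stable and unstable leaves of $b$ at infinity, not near $b$ itself. If that fails, I will fall back on choosing finitely many buoys per representative $(Q,b)$ and using the finiteness reduction above.
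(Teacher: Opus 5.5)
Your finite reduction followed by equivariant extension is correct and is the paper's proof. The paper fixes one anchor subrectangle $R'$ per $\pi_1 M$-orbit and puts finitely many periodic buoys in $\intr(R')$ so that every point of $\widetilde{\partial S}\cap R'$ lies strictly off the tight arc. It notes that this persists when $\mathcal{B}$ is enlarged, since extra buoys only push the arc further toward the hook, and then takes the $\pi_1 M$-orbit. Your single buoy $b'$ in the open quadrant of $b$ containing the hook $w$, close to $b$, is a good concrete way to do the shielding. Discreteness of $\mathcal{B}_0$, however, comes from the buoys being periodic, via \Cref{prop:closedorbitliftdiscrete}. Your remarks about the cyclic stabilizer of $b$, or about choosing buoys in a small neighborhood, do not give it: a non-periodic point can have a dense $\pi_1 M$-orbit.

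Two parts of the proposal should be discarded.

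First, your description of which side the buoys lie on is backwards. In \Cref{constr:tightarc} the slits run from the buoys to $\ell_2$, the unstable side not containing the hook. So the arc passes between $w$ and every buoy, and it is the $w$-facing boundary of the convex hull of the two endpoints and the buoys in the rectangle. That is why your $b'$ works. What it gives is that $b$ lies strictly beyond this boundary, which is what you need. Being a non-extreme point of the hull is not enough, since $b$ could sit in the interior of a hull edge.

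Second, the ``main obstacle'' paragraph is unnecessary, and its proposed fix fails. It is unnecessary because the infinitely many anchor subrectangles containing $b$ form finitely many $\pi_1 M$-orbits of pairs $(Q,b)$, and equivariance transports shields: for example, $[b]^k\cdot b'$ shields $b$ in $[b]^k\cdot Q$. It fails for two reasons. A family of buoys accumulating at $b$ is not discrete in $\mathcal{O}$, and $b$ is itself a buoy. Moreover, the $\langle [b]\rangle$-orbit of a point off the leaves of $b$ does not approach $b$ at all. The action of $[b]$ near $b$ is hyperbolic, so the orbit runs out along the stable and unstable leaves of $b$ while staying outside a fixed neighborhood of $b$. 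Your two claims in that paragraph (translates approach $b$; translates do not accumulate near $b$) contradict each other, and only the second is true. Your fallback is the actual proof.
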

\begin{proof}
Since $\widetilde{\partial S}$ is discrete, for every anchor subrectangle $R'$, we can find a finite collection of periodic points $\mathcal{B}_{R'} \subset \intr(R')$ such that all points of $\widetilde{\partial S} \cap R'$ lie strictly to one side of the anchor half-diagonal in $R'$ with respect to $\mathcal{B}_{R'}$.
This property continues to hold for the anchor half-diagonal in $R'$ with respect to any $\mathcal{B} \supset \mathcal{B}_{R'}$.
In particular, it implies that the interior of the anchor half-diagonal lies within $\mathcal{O}^\circ$.

We take the union of $\mathcal{B}_{R'}$ over one representative in each $\pi_1 M$-orbit of anchor subrectangles, then take its $\pi_1 M$-orbit, in order to get $\mathcal{B}_0$.
Here we use the fact that the anchors are disjoint from $\widetilde{\partial S}$ to ensure that the anchor diagonals have interior disjoint from $\widetilde{\partial S}$, after concatenating the anchor half-diagonals. 
\end{proof}

\subsection{Analyzing half-diagonal intersections} \label{subsec:anchorrectcases0}

In this subsection, we begin analyzing intersection points between diagonals.
Since our diagonals come as the union of two half-diagonals, it is natural to analyze intersection points between half-diagonals instead.
We do so by categorizing pairs of anchor rectangles as follows.

\begin{defn}[Type (0)/(I)/(II) pairs] \label{defn:type012}
Suppose $R_1$ and $R_2$ are edge rectangles of the same color, with $R_1 < R_2$.
Suppose $R'_1 = R(s_1,\alpha_{R_1})$ and $R'_2 = R(s_2,\alpha_{R_2})$ are anchor subrectangles of $R_1$ and $R_2$ that intersect.

We say that the pair $(R'_1,R'_2)$ is of \textbf{type (0)} if $R'_1$ and $R'_2$ overlap along some subsegment of a side.

We say that the pair $(R'_1,R'_2)$ is of \textbf{type (I)} if $R'_1$ and $R'_2$ are not of type (0), and the \textbf{hooked arcs} $h'_i$ obtained by traversing the unstable side of $R'_i$ containing $s_i$ then the stable side of $R'_i$ containing $\alpha_{R_i}$, for $i=1,2$, are disjoint.

We say that the pair $(R'_1,R'_2)$ is of \textbf{type (II)} if $R'_1$ and $R'_2$ are not of type (0), and the hooked arcs $h'_1$ and $h'_2$ defined above intersect.
\end{defn}

The analysis for type (0) pairs is relatively straightforward and only uses the following lemma.

\begin{lem} \label{lem:rectcrosssharecorner}
Let $\mathcal{B}$ be a discrete collection of points in $\mathcal{O}$.
Let $P$ and $Q$ be two rectangles in $\mathcal{O}$, with $Q > P$, and which share a corner $v$.
Let $p$ be a tight arc between $v$ and its opposite corner in $P$ with respect to $\mathcal{B} \cap \intr(P)$, and $q$ be a tight arc between $v$ and its opposite corner in $Q$ with respect to $\mathcal{B} \cap \intr(Q)$.
Suppose that the interiors of $p$ and $q$ lie in $\mathcal{O}^\circ$.

Furthermore, suppose that $v$ and the hooks $w_p$ and $w_q$ of $p$ and $q$ either all lie on the same stable or all lie on the same unstable leaf.
Then $p$ and $q$ overlap along a (possibly degenerate) connected subarc $r$ containing $v$, and are disjoint away from $r$.
Moreover, we have $|\slope_r(p)| < |\slope_r(q)|$.
\end{lem}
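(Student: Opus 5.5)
We will work in a single translation chart and reduce the statement to a comparison of two convex hulls. The tight arcs $p$ and $q$ are defined in the slit surfaces $P^!$ and $Q^!$ of \Cref{constr:tightarc}. Since $P \cup Q$ is a union of two rectangles sharing the corner $v$ with $Q > P$, it is a "plus-shaped" region. After slitting it along stable segments between buoys and the far unstable side, as in \Cref{constr:tightarc}, it is still simply connected. It contains the interiors of $p$ and $q$, which lie in $\mathcal{O}^\circ$ by hypothesis, so we can develop it into $\mathbb{R}^2$ by one chart of the translation structure on $\widetilde{\mathcal{O}^\circ}$. By the symmetry of the setup (reflections and swapping stable/unstable), we may assume $v = (0,0)$, $P = [0,a] \times [0,b]$ and $Q = [0,a'] \times [0,b']$ with $a' < a$ and $b' > b$. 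We may also assume that all three points $v, w_p, w_q$ lie on the stable leaf $\{x=0\}$, so $w_p = (0,b)$ and $w_q = (0,b')$. In this picture, $p$ is the lower-right boundary of the convex hull of $\{(0,0),(0,b),(a,b)\} \cup (\mathcal{B} \cap \intr(P))$, traversed from $v$. Equivalently, $p$ is the concave graph of a function on $[0,a]$ obtained by gift-wrapping. At each node, the next node is the point of $\mathcal{B} \cap \intr(P)$ or the endpoint $(a,b)$ which maximizes the slope seen from the current node, among points further to the right. The same description holds for $q$, with $P$ replaced by $Q$.

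The key step is a comparison of these greedy choices. We run the two gift-wrapping processes simultaneously from $v$. As long as the two arcs are at a common node $u \in P \cap Q$, let $\sigma_p(u)$ and $\sigma_q(u)$ be their next slopes. We claim $\sigma_q(u) \ge \sigma_p(u)$, with equality only if both arcs pick the same next node. The candidate points for $q$ that are not candidates for $p$ lie above height $b$, or are the corner $(a',b')$; the candidates for $p$ not available to $q$ lie at $x > a'$. Let $z$ be the maximizer for $p$. If $z$ lies in $Q$, it is also a candidate for $q$, so $\sigma_q \ge \sigma_p$. If $z$ has $x > a'$, the segment from $u$ towards $z$ exits $Q$ through its right side $\{x = a'\}$ at height $< b < b'$. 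Then the corner $(a',b')$, which lies directly above that exit point, is seen from $u$ at strictly larger slope. In either case, if the slopes are equal, the maximizers lie on a common ray, and the nearest point on it is chosen by both. Hence the two arcs share nodes up to some node $v_2$, possibly $v_2 = v$. This common initial subarc is $r$. It is connected and contains $v$, and the arc beyond $v_2$ satisfies $|\slope_{v_2}(p_2)| < |\slope_{v_2}(q_2)|$. Since $p_1,q_1$ are degenerate (as $r$ starts at $v$), this is exactly $|\slope_r(p)| < |\slope_r(q)|$ in the sense of \Cref{defn:plarcslopes}. The possibility that $r$ ends at an endpoint is excluded, since the far endpoints $(a,b) \ne (a',b')$ differ.

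It remains to show that $p$ and $q$ are disjoint beyond $v_2$. I would argue via the hulls. Let $U_q$ be the closed region of $Q$ above-left of $q$, which contains no buoys in its interior. Let $W$ be the region of $P$ above-left of $p$. The plan is to show that $p \setminus r$ lies strictly below-right of $q$. Both arcs are concave graphs leaving $v_2$ with $q$ strictly steeper. If they met again at some first point $y$, then $q$ would have to bend down to $y$ at a node $z$. That node is a buoy, and it would lie strictly above $p$ but inside $P$ (height $\le b$), or it is the corner $(a',b')$. A buoy $z$ of this kind would lie strictly inside the convex region $W$, contradicting that $p$ is the hull boundary. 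For $z = (a',b')$, the segment of $q$ afterwards ends at $(a',b')$, which is outside $P$ since $b' > b$, so it cannot reach $p$ except through the top side of $P$. This contradicts $p$ lying in $P$ with its only point on $y = b$ being $(a,b)$, unless the slopes match. I expect this last disjointness step, together with carefully justifying the single-chart reduction (that the slits of $P^!$ and $Q^!$ are compatible within $P \cup Q$), to be the main obstacle. The slope comparison itself is elementary.
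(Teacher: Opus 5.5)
\textbf{The gap is the single-chart reduction.} It cannot be justified as you state it, because in your normalization the slits are genuinely incompatible on $P\cup Q$ (which is L-shaped, not plus-shaped).

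With the hooks on $\{x=0\}$, the tight-arc construction slits downward to $\{y=0\}$. So every buoy of $\intr(Q)$ above height $b$ has a slit cutting all of $P$ from top to bottom, and $p$ necessarily crosses it. Hence $(P\cup Q)^!$ does not contain $p$.

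These buoys include any point of $\widetilde{\partial S}$ in $\intr(Q)$ above height $b$. Going once around such a point has nontrivial holonomy in the quasi-translation structure: the meridian of a boundary orbit $\gamma$ has height $\pm q_\gamma\neq 0$, so slopes get rescaled by $\lambda^{\pm 2q_\gamma}$. Consequently the developed positions of the same buoy in the charts of $P^!$ and $Q^!$ can genuinely differ, and $P\cup Q$ is not the Euclidean region $[0,a]\times[0,b]\cup[0,a']\times[0,b']$.

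Your greedy comparison mixes exactly these two charts:
\begin{itemize}
\item ``$z\in Q$ is also a candidate for $q$, so $\sigma_q\ge\sigma_p$'' compares a slope measured in $P^!$ with one measured in $Q^!$.
\item ``$(a',b')$ lies directly above the exit point'' puts the right side of $Q$ above height $b$ in one chart with a segment of $p$ that may pass under such a puncture.
\end{itemize}
So the central step of the proposal is unproven as written.

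\textbf{The repair, which is what the paper does, is to localize to $R=P\cap Q$.} All intersection points of $p$ and $q$ lie in $R$. Moreover $p\cap R$ and $q\cap R$ are initial subarcs: $p$ leaves $R$ through its right side below height $b$, and $q$ leaves through its top side.

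In the paper's normalization (hooks on the unstable leaf of $v$, slits pointing up), the slits of $P^!$ and $Q^!$ restrict on $R$ to exactly the slits of $R^!$. In yours, the extra slits from buoys of $Q\setminus P$ cross $R$ only to the right of $q\cap R$, since $q$ passes above those buoys at heights $>b$. Either way, a single chart of $R^!$ carries both arcs.

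In that chart you have two options.
\begin{itemize}
\item Keep your comparison, but replace $(a',b')$ by the point where $q$ exits the top of $R$. Replace ``candidate for $q$'' by the fact that the initial slope of a concave arc at $u$ is at least the slope from $u$ to any point lying below the arc.
\item Follow the paper. First rule out bigons between $p\cap R$ and $q\cap R$; your disjointness paragraph is essentially this argument once it is placed in $R$. Note that the region of $P$ above $p$ is merely free of buoys, not convex, and that is all you use. Then get the strict inequality by separation: if $p$ left the far end of $r$ more steeply than $q$ (equal slopes would extend $r$), then $q$ would have to recross $p$ inside $R$ to reach the top side of $R$.
\end{itemize}

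Finally, a smaller error: your first description of $p$, as the lower-right boundary of the hull of $\{(0,0),(0,b),(a,b)\}\cup(\mathcal{B}\cap\intr(P))$, is the tight arc for the opposite hook $(a,0)$. For the hook $(0,b)$ the tight arc is the concave majorant, which is what your gift-wrapping description actually produces.
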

\begin{proof}
Without loss of generality, suppose $p$ and $q$ are positive and $v$, $w_p$, $w_q$ lie on the same unstable leaf. Let $R = P \cap Q$ and define $R^!$ as in \Cref{constr:tightarc}. Choose a chart of $R^!$ into $\mathbb{R}^2$ such that $v$ is mapped to the bottom-left corner.
Then $p \cap R$ and $q \cap R$ are geodesics in $R^! \subset \mathbb{R}^2$.
See \Cref{fig:rectcrosssharecorner} left.

\begin{figure}
    \centering
    \fontsize{10pt}{10pt}
\begingroup%
  \makeatletter%
  \providecommand\color[2][]{%
    \errmessage{(Inkscape) Color is used for the text in Inkscape, but the package 'color.sty' is not loaded}%
    \renewcommand\color[2][]{}%
  }%
  \providecommand\transparent[1]{%
    \errmessage{(Inkscape) Transparency is used (non-zero) for the text in Inkscape, but the package 'transparent.sty' is not loaded}%
    \renewcommand\transparent[1]{}%
  }%
  \providecommand\rotatebox[2]{#2}%
  \newcommand*\fsize{\dimexpr\f@size pt\relax}%
  \newcommand*\lineheight[1]{\fontsize{\fsize}{#1\fsize}\selectfont}%
  \ifx\svgwidth\undefined%
    \setlength{\unitlength}{196.96864331bp}%
    \ifx\svgscale\undefined%
      \relax%
    \else%
      \setlength{\unitlength}{\unitlength * \real{\svgscale}}%
    \fi%
  \else%
    \setlength{\unitlength}{\svgwidth}%
  \fi%
  \global\let\svgwidth\undefined%
  \global\let\svgscale\undefined%
  \makeatother%
  \begin{picture}(1,0.45164408)%
    \lineheight{1}%
    \setlength\tabcolsep{0pt}%
    \put(0,0){\includegraphics[width=\unitlength,page=1]{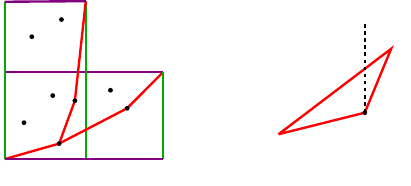}}%
    \put(0.19894198,0.0058454){\color[rgb]{0,0,0}\makebox(0,0)[lt]{\lineheight{1.25}\smash{\begin{tabular}[t]{l}$Q$\end{tabular}}}}%
    \put(0.39065052,0.0058454){\color[rgb]{0,0,0}\makebox(0,0)[lt]{\lineheight{1.25}\smash{\begin{tabular}[t]{l}$P$\end{tabular}}}}%
    \put(0.9362208,0.1848875){\color[rgb]{1,0,0}\makebox(0,0)[lt]{\lineheight{1.25}\smash{\begin{tabular}[t]{l}$p'$\end{tabular}}}}%
    \put(0.7892494,0.26280693){\color[rgb]{1,0,0}\makebox(0,0)[lt]{\lineheight{1.25}\smash{\begin{tabular}[t]{l}$q'$\end{tabular}}}}%
    \put(0.86895633,0.12211473){\color[rgb]{0,0,0}\makebox(0,0)[lt]{\lineheight{1.25}\smash{\begin{tabular}[t]{l}$t_p$\end{tabular}}}}%
  \end{picture}%
\endgroup%

    \caption{Left: The setting of \Cref{lem:rectcrosssharecorner}. Right: There cannot exists subarcs $p' \subset p$ and $q' \subset q$ that cobound a disc.}
    \label{fig:rectcrosssharecorner}
\end{figure}

To show that $p$ and $q$ overlap in a (possibly degenerate) connected subarc $r$ containing $v$, it suffices to show that there cannot exist subarcs $p' \subset p$ and $q' \subset q$ which cobound a disc.
Suppose otherwise, and suppose $p'$ has a turn $t_p = (x_t,y_{t,p})$. Then $q'$ must contain a point $(x_t,y_{t,q})$ with $y_{t,q} \leq y_{t,p}$, since otherwise $q$ would pass through one of the slits in $R \backslash R^!$.
See \Cref{fig:rectcrosssharecorner} right.
In fact we must have $y_{t,q} < y_{t,p}$, otherwise $p'$ and $q'$ would not cobound a disc.
Similarly, if $q'$ has a turn $t_q = (x_t,y_{t,q})$. Then $p'$ must contain a point $(x_t,y_{t,p})$ with $y_{t,p} < y_{t,q}$.
Thus $p'$ and $q'$ cannot both have turns.

Without loss of generality, suppose $p'$ has no turns, so that $p'$ is actually a straight arc. But then $q'$ must also be straight, since otherwise the path obtained by excising $q'$ out of $q$ and replacing it with $p'$ is shorter. 
Thus $p'$ and $q'$ completely overlap and cannot cobound a disc.

We write $p = r \ast_{v_2} p_2$ and $q = r \ast_{v_2} q_2$. 
If $|\slope_{v_2}(p_2)| > |\slope_{v_2}(q_2)|$, then $p_2$ and $q_2$ must intersect somewhere in their interiors. This contradicts the connectedness of the overlap between $p$ and $q$ which we have just proved.
Simiarly, if $|\slope_{v_2}(p_2)| = |\slope_{v_2}(q_2)|$, then $p_2$ and $q_2$ must overlap along some nondegenerate subarc, contradicting the definition of $r$.
Thus $|\slope_{v_2}(p_2)| < |\slope_{v_2}(q_2)|$, so we have $|\slope_r(p)| < |\slope_r(q)|$.
\end{proof}

\begin{lem} \label{lem:anchorrectcases0}
Let $\mathcal{B}$ be a discrete $\pi_1 M$-invariant collection of points.
Let $\{d_R\}$ be the diagonal system consisting of anchor diagonals with respect to $\mathcal{B}$.
Then whenever $(R'_1,R'_2)$ is a type (0) pair of anchor rectangles, the anchor half-diagonals in $R'_1$ and $R'_2$ overlap along a (possibly degenerate) subarc $r$, with $|\slope_r(d_{R_1})|<|\slope_r(d_{R_2})|$.
\end{lem}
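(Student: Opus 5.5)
The plan is to reduce to \Cref{lem:rectcrosssharecorner}. A type (0) pair has $R'_1 = R(s_1,\alpha_{R_1})$ and $R'_2 = R(s_2,\alpha_{R_2})$ overlapping along a subsegment of a side. Since $R_1,R_2$ have the same color and $R_1<R_2$, I first show that this forces them to share a corner. If $s_1 = s_2 = s$, the shared corner is $s$. Otherwise the shared corner is an anchor, which would require $\alpha_{R_1}$ and $\alpha_{R_2}$ to lie on a common leaf in a way I expect strict staircase monotonicity to rule out.

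So the main configuration is $s_1=s_2=s$, with $R_1<R_2$ sharing the corner $s$. Strict staircase monotonicity (\Cref{prop:strictanchorsystem}) says $R'_1 = R(s,\alpha_{R_1})$ is strictly wider and strictly shorter than $R'_2=R(s,\alpha_{R_2})$. Hence $P=R'_1$ and $Q=R'_2$ satisfy $Q>P$ and share the corner $v=s$.

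Next I check the hook hypothesis. In each $R'_i$ the hook $w_i$ is the corner on the same stable leaf as $\alpha_{R_i}$, so it lies on the unstable side through $s$. Thus $s,w_1,w_2$ all lie on the unstable leaf through $s$. The tight arcs are taken with respect to $\mathcal{B}$, but only buoys in $\intr(R'_i)$ can affect the geodesic in $R'^!_i$, so they coincide with the tight arcs with respect to $\mathcal{B}\cap\intr(R'_i)$.

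The remaining hypothesis of \Cref{lem:rectcrosssharecorner} is that the interiors of the arcs lie in $\mathcal{O}^\circ$. I would either assume $\mathcal{B}\supset\mathcal{B}_0$ from \Cref{lem:diagonalsoffboundaryorbits}, or observe that the geodesic argument only needs the charts on $R^!$, which exist regardless. Either way \Cref{lem:rectcrosssharecorner} then gives an overlap $r$ containing $s$ with $|\slope_r(p)|<|\slope_r(q)|$.

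Since $r$ contains the endpoint $s$, its initial pieces $p_1,q_1$ are degenerate. Because $d_{R_i}$ restricted near $r$ agrees with the half-diagonal, this inequality is exactly $|\slope_r(d_{R_1})|<|\slope_r(d_{R_2})|$ in the sense of \Cref{defn:plarcslopes}.

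I expect the main obstacle to be the case analysis excluding configurations where the sides overlap but the anchor subrectangles do not share the corner $s$. One such configuration is where $R'_1$ contains $s_1$ and $R'_2$ contains the opposite corner $s_2\neq s_1$, with the two rectangles abutting along a side. For these I would use that same-colored $R_1<R_2$ occupy compatible quadrants, together with strict monotonicity, to show the overlap of sides cannot occur.
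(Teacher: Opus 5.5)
Your handling of the case $s_1=s_2$ matches the paper: strict staircase monotonicity gives $R'_1<R'_2$, the hooks lie on the unstable leaf through $s$, and \Cref{lem:rectcrosssharecorner} applies.

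The gap is in your reduction. You expect strict staircase monotonicity to rule out the configuration where the shared corner is an anchor, i.e.\ $\alpha_{R_1}=\alpha_{R_2}$ with $s_1\neq s_2$. This configuration cannot be ruled out, and it occurs for every edge rectangle. The anchors are periodic and the system is equivariant. So if $g\neq 1$ is a power of $[\alpha_R]$ preserving the quadrants at $\alpha_R$, then $\alpha_{g\cdot R}=g\cdot\alpha_R=\alpha_R$. At the same time $g\cdot R\neq R$, because $\pi_1 M$ acts freely on edge rectangles. The rectangles $R$ and $g\cdot R$ have the same color, and one lies above the other. Their anchor subrectangles on the same side of $\alpha_R$ overlap along the sides through $\alpha_R$, so they form a type (0) pair. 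Strict staircase monotonicity does the opposite of what you want here: it is exactly what prevents $R$ and $g\cdot R$ from sharing a corner. The proof of \Cref{prop:pldiaggoals} in fact relies on this case (``$R_1$ and $R_2$ share an anchor'').

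So you must treat this case, and it goes through with the same lemma. Write $\alpha=\alpha_{R_1}=\alpha_{R_2}$. The overlap of sides forces $s_1,s_2$ into the same quadrant of $\alpha$, and then $R_1<R_2$ directly gives $R(s_1,\alpha)<R(s_2,\alpha)$. Both hooks lie on the stable leaf through $\alpha$, so \Cref{lem:rectcrosssharecorner} applies with $v=\alpha$.

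For the case analysis you flag as the main obstacle, the tool you are missing is periodicity, not staircase monotonicity. Each side of $R(s_i,\alpha_{R_i})$ lies on a leaf through $s_i$ or through $\alpha_{R_i}$. All of these points are periodic, and a stable or unstable leaf contains at most one periodic point. Hence any overlap of sides forces $s_1=s_2$ or $\alpha_{R_1}=\alpha_{R_2}$. A mixed coincidence $s_i=\alpha_{R_j}$ is impossible, because anchors lie in interiors of edge rectangles, which contain no points of $\widetilde{\mathcal{C}}$. This disposes of your \emph{abutting} configurations at once. It leaves exactly the two cases above, each handled by \Cref{lem:rectcrosssharecorner}.
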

\begin{proof}
Write $R'_1 = R(s_1,\alpha_{R_1})$ and $R'_2 = R(s_2,\alpha_{R_2})$. 
Since both $s_i$ and $\alpha_{R_i}$ are periodic, if $R'_1$ and $R'_2$ overlap along some subsegment of a side, we must have either $\alpha_{R_1} = \alpha_{R_2}$ or $s_1 = s_2$.
In the former case, we have $R'_1 < R'_2$ since we are assuming that $R_1 < R_2$, thus we conclude by \Cref{lem:rectcrosssharecorner}.
In the latter case, we have $R'_1 < R'_2$ since $\{\alpha_R\}$ is a strict anchor system, thus we conclude by \Cref{lem:rectcrosssharecorner} as well.
\end{proof}

In the next two subsections, we will analyze the type (I) and type (II) pairs.
Suppose we have such a pair $(R'_1,R'_2)$.
The stable and unstable leaves that contain the sides of $R'_2$ cut the orbit space $\mathcal{O}$ into nine components.
Since $R'_1$ and $R'_2$ do not overlap along any sides, $s_1$ and $\alpha_{R_1}$ must lie in the interior of one of these components.
Thus there are 81 possible configurations here.

Of these configurations, we can rule out all but 8 by using the assumptions that:
\begin{itemize}
    \item $R_1$ is an edge rectangle, thus $s_2$ cannot lie within $R'_1$.
    \item $R_2$ is an edge rectangle, thus $s_1$ cannot lie within $R'_2$.
    \item $R_1<R_2$.
    \item $R_1$ and $R_2$ are of the same color.
    \item $R'_1$ and $R'_2$ intersect.
\end{itemize}

Of the remaining 8 configurations, 4 of them are of type (I).
We exhibit these in \Cref{fig:anchorrectcases} top row (in the case when $R_1$ and $R_2$ are red), and label them type (I-1) - (I-4) accordingly.

The last 4 configurations are of type (II).
We exhibit these in \Cref{fig:anchorrectcases} bottom row (in the case when $R_1$ and $R_2$ are red), and label them type (II-1) - (II-4) accordingly.

\begin{figure}
    \centering
    \fontsize{10pt}{10pt}
    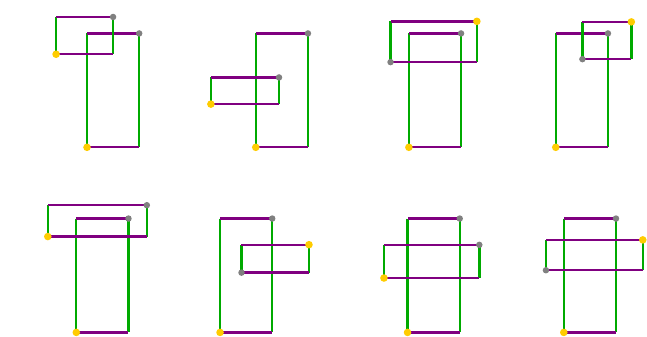
    \caption{The possible configurations of type (I) and (II) pairs.}
    \label{fig:anchorrectcases}
\end{figure}

\subsection{Eliminating avoidable intersections} \label{subsec:anchorrectcases1}

We turn to analyze the type (I) pairs.

\begin{lem} \label{lem:anchorrectcases1}
There exists a discrete $\pi_1 M$-invariant collection of points $\mathcal{B}_0$ such that for every discrete $\pi_1 M$-collection of points $\mathcal{B} \supset \mathcal{B}_0$, if $\{d_R\}$ is the diagonal system consisting of anchor diagonals with respect to $\mathcal{B}$, then whenever $(R'_1,R'_2)$ is a type (I) pair of anchor rectangles, the half-diagonals in $R'_1$ and $R'_2$ are disjoint.
\end{lem}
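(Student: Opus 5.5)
The strategy mirrors \Cref{lem:diagonalsoffboundaryorbits}. In a type (I) pair the hooked arcs $h'_1$ and $h'_2$ are disjoint, so each half-diagonal can be pulled close to its own hooked arc by adding finitely many buoys. Once both half-diagonals lie in thin neighbourhoods of their disjoint hooked arcs, they cannot meet. The work is to make this uniform and $\pi_1 M$-equivariant. The key monotonicity fact is that adding buoys only tightens a tight arc toward its hook. For $\mathcal{B} \subset \mathcal{B}'$, the tight arc with respect to $\mathcal{B}'$ lies in the region of $R^!$ between the hooked arc and the tight arc with respect to $\mathcal{B}$. This holds because both are geodesics in $R^!$ with the same endpoints, and the slit region for $\mathcal{B}'$ contains the one for $\mathcal{B}$, which is the same reasoning as in \Cref{lem:rectcrosssharecorner}.

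\textbf{Step 1 (single anchor subrectangle).} Fix an anchor subrectangle $R'$ and any closed set $K \subset R'$ that is disjoint from the hooked arc $h'$ of $R'$. Since the periodic points are dense, choose finitely many periodic buoys in $\intr(R')$ near $h'$, close to the corner and spaced along $h'$. The tight arc through them then lies in an $\epsilon$-neighbourhood of $h'$ that misses $K$. By the monotonicity fact, this persists for every $\mathcal{B}$ containing these buoys.

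\textbf{Step 2 (finiteness).} For a fixed anchor subrectangle $R'_2$, I claim only finitely many anchor subrectangles $R'_1$ form a type (I) pair with it, up to the relevant compactness. Here I would use the configurations (I-1)--(I-4) in \Cref{fig:anchorrectcases}. In each, the corner $s_1$ and the anchor $\alpha_{R_1}$ lie in specified components cut out by the leaves through the sides of $R'_2$. Also $R'_1$ must intersect $R'_2$ while $s_1 \notin R'_2$ and $s_2 \notin R'_1$. Discreteness of $\widetilde{\mathcal{C}}$ (\Cref{prop:closedorbitliftdiscrete}) and \Cref{prop:astroidlemma} then bound the possible corners $s_1$. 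Where infinitely many $R_1$ share a corner in a staircase, I would instead show that the intersection $R'_1 \cap R'_2$ stays a definite distance from one of the hooked arcs. This follows from the configuration, for instance when $h'_1$ passes outside $R'_2$ entirely.

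In each configuration, disjointness of $h'_1$ and $h'_2$ should give a \emph{separating} picture. Specifically, there are compact sets $K_1 \subset R'_1 \setminus h'_1$ and $K_2 \subset R'_2 \setminus h'_2$ with the following property: whenever half-diagonal $i$ avoids $K_i$ for $i=1,2$, the two half-diagonals are disjoint. Concretely, $R'_1 \cap R'_2$ is a rectangle that touches $h'_1$ on one side and $h'_2$ on another, and it suffices to keep each half-diagonal inside a small collar of the side it hooks on. This is the case check I expect to be the main obstacle. It requires going through (I-1)--(I-4) and confirming that in each case the two hooked arcs enter $R'_1 \cap R'_2$ along non-adjacent or otherwise separated parts of its boundary, so that collars around them are disjoint.

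\textbf{Step 3 (equivariance).} Choose one representative $R'_2$ from each $\pi_1 M$-orbit of anchor subrectangles; there are finitely many such orbits, because edge rectangles come in finitely many orbits. For each, apply Step 1 to $R'_2$ and to each of the finitely many relevant $R'_1$ from Step 2. Take the union of the resulting buoys and then its $\pi_1 M$-orbit to obtain $\mathcal{B}_0$. Discreteness of $\mathcal{B}_0$ follows as in \Cref{lem:diagonalsoffboundaryorbits}, since $\mathcal{B}_0$ is the equivariant orbit of a finite set of periodic points. The monotonicity fact shows the conclusion holds for every $\mathcal{B} \supset \mathcal{B}_0$. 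By equivariance, it holds for every type (I) pair, not just those with $R'_2$ a representative.
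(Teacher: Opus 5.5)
Your Step 1 and the monotonicity observation are fine. For a \emph{single} type (I) pair the conclusion is then immediate: $h'_1$ and $h'_2$ are disjoint compact arcs, so thin enough collars of them are disjoint. So the separation check you flag as the main obstacle is not where the difficulty lies. The real content of the lemma is the uniformity needed to end up with only finitely many $\pi_1 M$-orbits of buoys, and this is exactly where your Step 2 fails.

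The claim that discreteness of $\widetilde{\mathcal{C}}$ and the astroid lemma bound the possible corners $s_1$ is false. Take configuration (I-2), where $\alpha_{R_1}$ lies in $R'_2$. For all sufficiently negative $k$, the edge rectangles $[\alpha_{R_1}]^k \cdot R_1$ have the same periodic anchor $\alpha_{R_1}$, become shorter and wider, and still lie below $R_2$. Each of them forms a type (I) pair with the same $R'_2$, and their corners leave every compact set. So the infinite families are not staircases at a shared corner, as you suggest. They are $\langle [\alpha] \rangle$-orbits of edge rectangles sharing a periodic anchor, and you give no argument that one choice of buoys handles all of them.

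Your Step 3 has a related problem. It normalizes only $R'_2$ and puts buoys into ``each of the finitely many relevant $R'_1$''. In configuration (I-3), where $\alpha_{R_2}$ lies in $R'_1$, a fixed $R'_2$ again has infinitely many partners $R'_1$. Moreover, it is half-diagonal $1$ that must be constrained there: half-diagonal $2$ ends at $\alpha_{R_2} \in R'_1$, so it cannot be kept out of $R'_1$. Choosing buoys separately inside infinitely many partners does not produce a discrete $\pi_1 M$-invariant set.

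The missing ingredient is the paper's counting, which has four parts.
\begin{enumerate}
\item Anchors are periodic, so only finitely many lie in the relevant compact rectangle. Depending on the subtype this is $R_2$, $R'_2$ or $R'_1$.
\item Each anchor $\alpha$ is the anchor of only finitely many $\langle [\alpha] \rangle$-orbits of edge rectangles.
\item The exponent $k$ is cut off on one or both sides by comparing heights and widths.
\item For the remaining infinite tail, the buoy region shrinks monotonically to a nondegenerate rectangle. This region is the rectangle spanned by the two hooks, or by a hook and the fourth corner. Hence the intersection over all partners is still a nonempty rectangle.
\end{enumerate}

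You must also choose, subtype by subtype, which of $R'_1$, $R'_2$ to fix. The paper fixes $R'_2$ in (I-1) and (I-2), fixes $R'_1$ in (I-3), and uses one buoy on each side in (I-4). With this in place, one periodic buoy per representative suffices, because a tight arc always passes on the hook side of every buoy in its rectangle. Your collar formulation could replace that last step, but only after this uniformity has been established.
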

\begin{proof}
We will argue for each sub type (I-1)-(I-4) one-by-one, using a similar strategy in each case.

\begin{figure}
    \centering
    \fontsize{10pt}{10pt}
\begingroup%
  \makeatletter%
  \providecommand\color[2][]{%
    \errmessage{(Inkscape) Color is used for the text in Inkscape, but the package 'color.sty' is not loaded}%
    \renewcommand\color[2][]{}%
  }%
  \providecommand\transparent[1]{%
    \errmessage{(Inkscape) Transparency is used (non-zero) for the text in Inkscape, but the package 'transparent.sty' is not loaded}%
    \renewcommand\transparent[1]{}%
  }%
  \providecommand\rotatebox[2]{#2}%
  \newcommand*\fsize{\dimexpr\f@size pt\relax}%
  \newcommand*\lineheight[1]{\fontsize{\fsize}{#1\fsize}\selectfont}%
  \ifx\svgwidth\undefined%
    \setlength{\unitlength}{296.28152442bp}%
    \ifx\svgscale\undefined%
      \relax%
    \else%
      \setlength{\unitlength}{\unitlength * \real{\svgscale}}%
    \fi%
  \else%
    \setlength{\unitlength}{\svgwidth}%
  \fi%
  \global\let\svgwidth\undefined%
  \global\let\svgscale\undefined%
  \makeatother%
  \begin{picture}(1,0.2447777)%
    \lineheight{1}%
    \setlength\tabcolsep{0pt}%
    \put(0,0){\includegraphics[width=\unitlength,page=1]{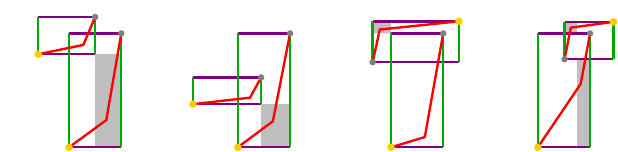}}%
    \put(-0.00180623,0.2247376){\color[rgb]{0,0,0}\makebox(0,0)[lt]{\lineheight{1.25}\smash{\begin{tabular}[t]{l}I-1\end{tabular}}}}%
    \put(0.29689645,0.2247376){\color[rgb]{0,0,0}\makebox(0,0)[lt]{\lineheight{1.25}\smash{\begin{tabular}[t]{l}I-2\end{tabular}}}}%
    \put(0.53484604,0.2247376){\color[rgb]{0,0,0}\makebox(0,0)[lt]{\lineheight{1.25}\smash{\begin{tabular}[t]{l}I-3\end{tabular}}}}%
    \put(0.81825439,0.2247376){\color[rgb]{0,0,0}\makebox(0,0)[lt]{\lineheight{1.25}\smash{\begin{tabular}[t]{l}I-4\end{tabular}}}}%
  \end{picture}%
\endgroup%

    \caption{\Cref{lem:anchorrectcases1} holds if we place buoys in the shaded rectangles.}
    \label{fig:anchorrectbuoy1}
\end{figure}

\textbf{Type (I-1).}
For each type (I-1) pair $(R'_1,R'_2)$, we let $Q(R'_1,R'_2) = R(h_{R'_1},h_{R'_2})$ be the subrectangle of $R'_2$ spanned by the hooks $h_{R'_1}$ and $h_{R'_2}$.
Note that if $\mathcal{B}$ contains a point in $Q(R'_1,R'_2)$, then the half-diagonals in $R'_1$ and $R'_2$ will be disjoint.
See \Cref{fig:anchorrectbuoy1}.
Thus if we can argue that for every fixed $R'_2$, the intersection $Q(R'_2) = \bigcap_{R'_1} Q(R'_1,R'_2)$ taken over all $R'_1$ for which $(R'_1,R'_2)$ is a type (I-1) pair is a (nonempty) rectangle, then we can be done by simply picking $\mathcal{B}_0$ to consist of one periodic point lying in $Q(R'_2)$ for one representative $R'_2$ in each $\pi_1 M$-orbit, then taking the $\pi_1 M$-translates.

To argue that $Q(R'_2)$ is a rectangle, we analyze which $R'_1$ can come up.
If $(R'_1,R'_2)$ is a type (I-1) pair, then the anchor $\alpha_{R_1}$ must lie in the edge rectangle $R_2$, but not on an unstable leaf that passes through $R'_2$.
Since the anchors are periodic, there are only finitely many anchors in $R_2$.
Meanwhile, for each such anchor $\alpha$, there are only finitely many $\langle [\alpha] \rangle$-orbits of edge rectangles that can have $\alpha$ as its anchor.
Hence the collection of rectangles $R'_1$ that can come up for a fixed $R'_2$ lies within a collection of the form $\{[\alpha_i]^k \cdot S_{i,j} \mid i=1,...,p, j=1,...,J_i, k \in \mathbb{Z}\}$, where $\alpha_i$ are the anchors that lie in $R_2$ but not in $R'_2$.

In fact, we can further restrict to a collection of the form $\{[\alpha_i]^k \cdot S_{i,j} \mid i=1,...,p, j=1,...,J_i, k=-K_{i,j},...,K_{i,j} \}$, since for each $i,j$, $[\alpha_i]^k \cdot S_{i,j}$ becomes thinner than $R_2$ for large $k$, and is disjoint from $R'_2$ for large $-k$.
In other words, $Q(R'_2)$ is only a finite intersection of rectangles $Q(R'_1,R'_2)$. Since each $Q(R'_1,R'_2)$ contains the hook $h_{R'_2}$, a finite intersection of them is nonempty, thus a rectangle.

\textbf{Type (I-2).}
For each type (I-2) pair $(R'_1,R'_2)$, we let $Q(R'_1,R'_2) = R(h_{R'_1},h_{R'_2})$.
As in the type (I-1) case, if $\mathcal{B}$ contains a point in $Q(R'_1,R'_2)$, then the half-diagonals in $R'_1$ and $R'_2$ will be disjoint.
See \Cref{fig:anchorrectbuoy1}.
Thus it suffices to argue that every fixed $R'_2$, the intersection $Q(R'_2) = \bigcap_{R'_1} Q(R'_1,R'_2)$ taken over all $R'_1$ for which $(R'_1,R'_2)$ is a type (I-2) pair is a rectangle.

As in the type (I-1) case, we analyze which $R'_1$ can come up.
If $(R'_1,R'_2)$ is a type (I-2) pair, then the anchor $\alpha_{R_1}$ must lie in $R'_2$.
Since the anchors are periodic, there are only finitely many anchors in $R'_2$.
Meanwhile, for each such anchor $\alpha$, there are only finitely many $\langle [\alpha] \rangle$-orbits of edge rectangles that can have $\alpha$ as its anchor.
Hence the collection of rectangles $R'_1$ that can come up for a fixed $R'_2$ lies within a collection of the form $\{[\alpha_i]^k \cdot S_{i,j} \mid i=1,...,p, j=1,...,J_i, k \in \mathbb{Z}\}$, where $\alpha_i$ are the anchors that lie in $R'_2$.

In fact, we can further restrict to a collection of the form $\{[\alpha_i]^k \cdot S_{i,j} \mid i=1,...,p, j=1,...,J_i, k \leq K_{i,j} \}$, since for each $i,j$, $[\alpha_i]^k \cdot S_{i,j}$ becomes thinner than $R'_2$ for large $k$.
Note that \emph{unlike} the type (I-1) case, this collection is infinite.
Nevertheless, since $[\alpha_i]^k \cdot S_{i,j}$ becomes shorter as $k \to -\infty$, the intersection $Q(R'_2) = \bigcap_{R'_1} Q(R'_1,R'_2)$ is still a rectangle.

\textbf{Type (I-3).}
This case is very similar to the type (I-2) case, but with the roles of $R'_1$ and $R'_2$ exchanged.
For each type (I-3) pair $(R'_1,R'_2)$, we let $Q(R'_1,R'_2) = R(h_{R'_1},l_{R'_2})$, where $l_{R'_2}$ is the remaining fourth corner of $R'_2$. 
If $\mathcal{B}$ contains a point in $Q(R'_1,R'_2)$, then the half-diagonals in $R'_1$ and $R'_2$ will be disjoint.
See \Cref{fig:anchorrectbuoy1}.
Thus it suffices to argue that every fixed $R'_1$, the intersection $Q(R'_1) = \bigcap_{R'_2} Q(R'_1,R'_2)$ taken over all $R'_2$ for which $(R'_1,R'_2)$ is a type (I-3) pair is a rectangle

A similar argument as in the type (I-2) case shows that the collection of $R'_2$ that can come up for a fixed $R'_1$ lies within a collection of the form $\{[\alpha_i]^k \cdot S_{i,j} \mid i=1,...,p, j=1,...,J_i, k \geq -K_{i,j} \}$.
From this, we see that $Q(R'_1)$ is a rectangle.

\textbf{Type (I-4).} 
This case is a bit more interesting.
For each type (I-4) pair $(R'_1,R'_2)$, we choose a stable leaf $\ell$ passing through the interior of $R'_1 \cap R'_2$.
We let $Q_1(R'_1,R'_2)$ be the subrectangle of $R_1$ consisting of points lying in the same component of $R_1 \backslash \ell$ as $\alpha_{R'_1}$, but lying outside of $R_2$. Symmetrically, we let $Q_2(R'_1,R'_2)$ be the subrectangle of $R_2$ consisting of points lying in the same component of $R_2 \backslash \ell$ as $\alpha_{R'_2}$, but lying outside of $R_1$.
Then if $\mathcal{B}$ contains a point in $Q_1(R'_1,R'_2)$ and a point in $Q_2(R'_1,R'_2)$, then the half-diagonals in $R'_1$ and $R'_2$ will be disjoint.
See \Cref{fig:anchorrectbuoy1}.
Thus we have to argue that the intersections $Q_1(R'_1) = \bigcap_{R'_2} Q_1(R'_1,R'_2)$ and $Q_2(R'_2) = \bigcap_{R'_1} Q_2(R'_1,R'_2)$ are rectangles.

A priori this could be tricky without choosing $\ell$ more carefully.
Fortunately, each intersection, say the one for $Q_1(R'_1)$, is taken over a finite collection $\{[\alpha_i]^k \cdot S_{i,j} \mid i=1,...,p, j=1,...,J_i, k=-K_{i,j},...,K_{i,j} \}$, where $\alpha_i$ are the anchors in $R'_1$, since for each $i,j$, $[\alpha_i]^k \cdot S_{i,j}$ is thinner than $R'_1$ for large $k$, and is shorter than $R'_1$ for large $-k$.
Thus $Q_1(R'_1)$ are $Q_2(R'_2)$ are indeed rectangles.
\end{proof}

\subsection{Tightening remaining half-diagonals} \label{subsec:anchorrectcases2}

We turn to the type (II) pairs.
The analysis in this case is simple once we prove the following lemma.

\begin{lem} \label{lem:rectcrossnosharecorner}
Let $\mathcal{B}$ be a discrete collection of points in $\mathcal{O}$.
Let $P$ and $Q$ be two rectangles in $\mathcal{O}$, with $Q > P$, and which do not share a corner.
Let $p$ be a tight arc between two opposite corner in $P$ with respect to $\mathcal{B} \cap \intr(P)$, and $q$ be a tight arc between two opposite corner in $Q$ with respect to $\mathcal{B} \cap \intr(Q)$.
Suppose that the interiors of $p$ and $q$ lie in $\mathcal{O}^\circ$.

Suppose $p$ and $q$ are of the same color. 
Then $p$ and $q$ overlap along a (possibly degenerate) connected subarc $r$, and are disjoint away from $r$.
Moreover, we have $|\slope_r(p)| < |\slope_r(q)|$.
\end{lem}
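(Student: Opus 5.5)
The plan is to imitate the proof of \Cref{lem:rectcrosssharecorner}, working inside the intersection rectangle $R = P \cap Q$. Without loss of generality $p$ and $q$ are positive. Since $Q > P$ and the two rectangles do not share a corner, $R$ is a rectangle that spans the full height of $P$ and the full width of $Q$. Equivalently, the two stable sides of $R$ lie on the stable sides of $Q$, and the two unstable sides of $R$ lie on the unstable sides of $P$.

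\textbf{Step 1: the traces in $R$ are monotone crossings.} A positive convex piecewise linear arc is strictly monotone in both coordinates of any translation chart. Hence $p \cap R$ is a connected subarc. It enters $R$ through one stable side and leaves through the other, because $p$ joins opposite corners of $P$ and $R$ spans the full height of $P$. Similarly, $q \cap R$ is a connected arc joining the two unstable sides of $R$. Any intersection of $p$ and $q$ lies in $R$, and a transversality/Jordan argument shows that $p \cap R$ and $q \cap R$ must meet, so the overlap $r$ is nonempty (possibly degenerate).

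\textbf{Step 2: no bigons.} As in \Cref{lem:rectcrosssharecorner}, it suffices to rule out subarcs $p' \subset p$ and $q' \subset q$ cobounding a disc $D \subset R$. First I would show that $D$ contains no buoy and no point of $\widetilde{\partial S} \cup \widetilde{\sing(\phi)}$.
\begin{itemize}
\item A point of $\mathcal{B} \cap \intr(R)$ lies in both $\intr(P)$ and $\intr(Q)$, so both strings are pulled taut around it.
\item The slit picture of \Cref{constr:tightarc} shows that every buoy in $\intr(P)$ lies weakly on the side of $p$ away from its hook; the analogous statement holds for $q$.
\item A buoy in $D$ would therefore have to be separated from the two hooks in a way that, using the monotonicity from Step 1, forces $p$ or $q$ to pass through one of the slits of $R^!$. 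This is the same mechanism as the ``$y_{t,q} \le y_{t,p}$'' argument in the earlier lemma.
\end{itemize}
Once $D$ is puncture-free, it lifts isometrically to a flat chart. At a turn of $p'$ or $q'$, the peg lies on the concave side, which is outside $D$, so the interior angle of $D$ there is $>\pi$. Gauss–Bonnet for a flat disc then says that the total exterior turning equals $2\pi$. But $D$ has only two genuine corners, each contributing less than $\pi$, and every turn contributes negatively. This is a contradiction. Alternatively, I would use the shortening argument from the earlier proof: if $p'$ is straight, then $q'$ can be replaced by $p'$ to shorten $q$. Either way, the overlap $r$ is a connected subarc and $p$, $q$ are disjoint off $r$.

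\textbf{Step 3: the slope inequality.} Write $p = p_1 \ast_{v_1} r \ast_{v_2} p_2$ and $q = q_1 \ast_{v_1} r \ast_{v_2} q_2$. Since $p$ enters $R$ through a stable side and $q$ through an unstable side, all four of $p_1, p_2, q_1, q_2$ are nondegenerate. By Step 1, near $v_2$ the arc $p_2$ exits toward the far stable side and $q_2$ toward the far unstable side, so $p_2$ lies below $q_2$. If $|\slope_{v_2}(p_2)| \ge |\slope_{v_2}(q_2)|$, then either $p_2$ and $q_2$ would cross again, contradicting the connectedness of the overlap, or they would overlap further, contradicting maximality of $r$. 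Hence $|\slope_{v_2}(p_2)| < |\slope_{v_2}(q_2)|$. The symmetric argument at $v_1$ gives $|\slope_{v_1}(p_1)| < |\slope_{v_1}(q_1)|$, and therefore $|\slope_r(p)| < |\slope_r(q)|$.

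The main obstacle is the first part of Step 2, showing that no buoy or puncture lies in a putative bigon. The hooks of $p$ and $q$ are not assumed aligned here, unlike in \Cref{lem:rectcrosssharecorner}. I would handle this by casework on the two possible hook positions for each arc, bottom-right or top-left, reducing each case to a slit-crossing contradiction.
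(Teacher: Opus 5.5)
Your Steps 1 and 3 are fine. Step 2, however, has a genuine gap, and it lies exactly in the case the paper's proof is really about: hooks on opposite sides.

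Say $p,q$ are red, the hook of $p$ is the top-left corner of $P$, and the hook of $q$ is the bottom-right corner of $Q$. Then the $P$-slits point down, so $p$ is concave and every buoy of $\intr(P)$ lies weakly below it. The $Q$-slits point up, so $q$ is convex and every buoy of $\intr(Q)$ lies weakly above it. A lens $D$ with $p'$ on top and $q'$ underneath is then locally consistent. A buoy or puncture inside $D$ is below $p$ and above $q$, as required. Its $P$-slit crosses $q'$ and its $Q$-slit crosses $p'$, but neither arc is obliged to avoid the other rectangle's slits. So no casework on hook positions will produce a slit-crossing contradiction, and $D$ need not even be developable.

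Your two fallback arguments also fail in this configuration:
\begin{itemize}
\item Gauss--Bonnet: both arcs bend toward the inside of $D$, so every turn is a convex corner of the lens and contributes positively.
\item Shortening: replacing $q'$ by $p'$ crosses the upward $Q$-slit of any buoy inside $D$.
\end{itemize}
Even with aligned hooks, one of the two arcs bends toward $D$, so ``every turn is reflex'' is not right as stated. There one instead shows, as in \Cref{lem:rectcrosssharecorner}, that this arc has no turns in $D$, because the slit at such a turn would pierce the other arc.

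The missing idea is a global one that uses $Q>P$. You only rule out lenses with $q'$ above $p'$ locally. Every buoy in $\intr(R)$, in particular every puncture, lies weakly below $p$ and weakly above $q$, so such a lens is puncture-free. In a chart, $p'$ is then concave and $q'$ convex with common endpoints, which forces $q'\le p'$, a contradiction.

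Next, the opposite convexities show that $p$ cannot touch $q$ from above. At an overlap $p_1\ast r\ast p_2$, $q_1\ast r\ast q_2$, having both $\slope(p_1)<\slope(q_1)$ and $\slope(p_2)>\slope(q_2)$ contradicts $\slope(p_1)\ge\slope(p_2)$ and $\slope(q_1)\le\slope(q_2)$.

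Hence a lens with $q'$ below $p'$ is bounded by genuine crossings, so $q$ lies above $p$ just before it. But by your Step 1, $q$ enters $R=P\cap Q$ through its bottom side, below $p$. So an earlier stretch of the arcs bounds a lens with $q$ above $p$, which has already been excluded.

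Use this argument in place of Step 2 in the opposite-hook case. When the hooks are aligned, all slits in $P\cap Q$ point the same way and the argument of \Cref{lem:rectcrosssharecorner} applies. With Step 2 repaired this way, your Step 3 goes through.
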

\begin{proof}
If the unstable side of $P$ containing the hook of $p$ lies closer to the unstable side of $Q$ containing the hook of $q$ than the other unstable side of $P$, then the same argument as in \Cref{lem:rectcrosssharecorner} works, since we can still consider the slit rectangle $(P \cap Q)^!$. So in the following we restrict to the remaining case when the unstable side of $P$ containing the hook of $p$ lies further from the unstable side of $Q$ containing the hook of $q$ than the other unstable side of $P$.
For ease of reference, we position the rectangles so that the hook of $p$ is the top-left corner of $P$ and the hook of $q$ is the bottom-right corner of $Q$. See \Cref{lem:rectcrossnosharecorner} left.

\begin{figure}
    \centering
    \fontsize{10pt}{10pt}
    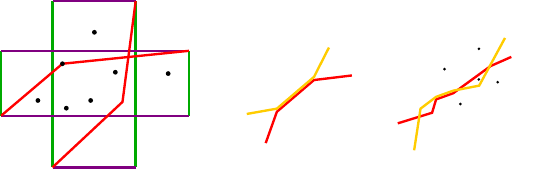
    \caption{Left: The setting of \Cref{lem:rectcrossnosharecorner}. Middle: By convexity, $p$ cannot osculate $q$ from above. Right: There cannot exists subarcs $p' \subset p$ and $q' \subset q$ that cobound a disc.}
    \label{fig:rectcrossnosharecorner}
\end{figure}

We first claim that there cannot exist subarcs $p' \subset p$ and $q' \subset q$ which cobound a disc, with $q'$ lying above $p'$.
Suppose otherwise, then let $R$ be the rectangle with corners at the common endpoints of $p'$ and $q'$.
Since all points of $\partial S \cap R$ have to lie below $p'$ and above $q'$, we can consider the set $R^! \subset \mathcal{O}^\circ$ obtained by slitting the points of $\partial S \cap R$ lying below $p'$ downwards, and slitting the points lying above $q'$ upwards.
Choose a chart of $R^!$ into $\mathbb{R}^2$, both $p'$ and $q'$ have to be geodesics, hence they cannot bound a disc, contradiction.

Next, suppose $p$ and $q$ overlap along a subarc $r$.
Since $P$ and $Q$ do not share a corner, $r$ does not contain an endpoint of $p$ or $q$, so we can write $p$ as a concatenation $p_1 \ast_{v_1} r \ast_{v_2} p_2$ and $q$ as a concatenation $q_1 \ast_{v_1} r \ast_{v_2} q_2$ near $r$.
Let us first suppose that $r$ is degenerate, then since $p$ and $q$ are convex, we have $\slope_r(p_1) \geq \slope_r(p_2)$ and $\slope_r(q_1) \leq \slope_r(q_2)$.
In particular, we cannot have both $\slope_r(p_1) < \slope_r(q_1)$ and $\slope_r(p_2) > \slope_r(q_2)$.
That is, $p$ cannot osculate $q$ from above at $r$.

The same conclusion holds when $r$ is nondegenerate, for we have $\slope_{v_1}(p_1) \geq \slope_{v_1}(r)$ and $\slope_{v_2}(r) \geq \slope_{v_2}(p_2)$, and $\slope_{v_1}(q_1) \leq \slope_{v_1}(r)$ and $\slope_{v_2}(r) \leq \slope_{v_2}(q_2)$ by convexity of $p$ and $q$.
See \Cref{fig:rectcrossnosharecorner} middle.

Using this observation, we claim that there cannot exist subarcs $p' \subset p$ and $q' \subset q$ which cobound a disc, with $q'$ lying below $p'$ as well.
Suppose otherwise, then $p$ and $q$ must cross each other at the two endpoints of $p'$ and $q'$, possibly after overlapping along some segment. But since $P < Q$, we must see subarcs $p'' \subset p$ and $q'' \subset q$ cobounding a disc elsewhere, with $q''$ lying above $p''$. 
See \Cref{fig:rectcrossnosharecorner} right.
This contradicts what we have proven above.

Thus we conclude that $p$ and $q$ overlap in a (possibly degenerate) connected subarc $r$.
The same argument as in \Cref{lem:rectcrosssharecorner} shows that we have $|\slope_r(p)| < |\slope_r(q)|$.
\end{proof}

\begin{lem} \label{lem:anchorrectcases2}
There exists a discrete $\pi_1 M$-invariant collection of points $\mathcal{B}_0$ such that for every discrete $\pi_1 M$-collection of points $\mathcal{B} \supset \mathcal{B}_0$, if $\{d_R\}$ is the diagonal system consisting of anchor diagonals with respect to $\mathcal{B}$, then whenever $(R'_1,R'_2)$ is a type (II) pair of anchor rectangles, the anchor half-diagonals in $R'_1$ and $R'_2$ overlap along a (possibly degenerate) subarc $r$ containing $s_1 = s_2$, with $|\slope_r(d_{R_1})|<|\slope_r(d_{R_2})|$.
\end{lem}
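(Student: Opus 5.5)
We follow the template of \Cref{lem:anchorrectcases1}: for each subtype (II-1)--(II-4) of \Cref{fig:anchorrectcases}, we find a region $Q(R'_1,R'_2)$ such that a buoy in it forces the half-diagonals into the situation of \Cref{lem:rectcrossnosharecorner}. We then show that, for fixed $R'_1$ or fixed $R'_2$, the intersection of these regions over all partners is a nonempty rectangle. Choosing one periodic point in each such intersection for a representative of each $\pi_1 M$-orbit and taking translates gives $\mathcal{B}_0$. We also take $\mathcal{B}_0$ to contain the collection from \Cref{lem:diagonalsoffboundaryorbits}, so the interiors of all half-diagonals lie in $\mathcal{O}^\circ$.

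\textbf{Reduction to \Cref{lem:rectcrossnosharecorner}.} In a type (II) pair the hooked arcs $h'_1$ and $h'_2$ cross. Given $R_1<R_2$ of the same color, this should force $R'_2>R'_1$ as rectangles. The half-diagonals are tight arcs with respect to $\mathcal{B}$ restricted to the respective anchor subrectangles, so they are exactly the arcs $p,q$ of \Cref{lem:rectcrossnosharecorner} with $P=R'_1$ and $Q=R'_2$. That lemma then gives a connected overlap $r$ with $|\slope_r(p)|<|\slope_r(q)|$, which is the desired inequality for $d_{R_1},d_{R_2}$ along $r$.

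Two points need care.
\begin{itemize}
\item \Cref{lem:rectcrossnosharecorner} assumes $P,Q$ do not share a corner. If they do, they overlap along a side, and the pair is of type (0), not type (II).
\item The statement claims $r$ contains a specific point. I read this as the point where the hooked arcs cross, i.e.\ the overlap must be nondegenerate and pass through a prescribed region.
\end{itemize}
To guarantee this, in each subtype I would place a buoy in the small rectangle spanned by the crossing point of $h'_1,h'_2$ and the nearer hook. This forces both tight arcs to pass through the buoy, so their overlap is a nonempty subarc there.

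\textbf{Finiteness.} As in types (I-1)--(I-4), fix one anchor rectangle of the pair. The partners have anchors among the finitely many periodic anchors lying in a fixed compact rectangle, so they form a family $\{[\alpha_i]^k\cdot S_{i,j}\}$. Far enough in either direction in $k$, the partner becomes too thin or too short to produce a crossing of hooked arcs. This cuts the family down to finitely many partners, or to a one-sided infinite family that shrinks monotonically toward the fixed hook. In both cases the intersection of the regions is a nonempty rectangle containing a neighborhood of that hook corner.

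\textbf{Main obstacle.} The hard step is choosing the buoy regions per subtype so that they are simultaneously
\begin{itemize}
\item compatible across all partners, so the intersection is nonempty, and
\item sufficient, so a buoy there forces the overlap of both tight arcs rather than merely a crossing.
\end{itemize}
I expect (II-4), the analogue of the delicate (I-4) case, to need two buoys on either side of a chosen leaf, handled as there.
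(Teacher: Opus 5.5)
\textbf{Gap in the reduction step.} A type (II) pair need not satisfy $R'_1<R'_2$. Take $R_1<R_2$ red, let $R'_1,R'_2$ be their lower-left anchor subrectangles, and suppose $\alpha_{R_2}\in\intr(R'_1)$, to the left of and below $\alpha_{R_1}$ but above the bottom side of $R_1$. The hooked arcs cross at the point of the bottom side of $R_1$ directly below $\alpha_{R_2}$, so this is a type (II) pair. But the top of $R'_2$ sits at the height of $\alpha_{R_2}$, below $\alpha_{R_1}$, so $R'_2$ is not taller than $R'_1$. The $180^\circ$-rotated situation (upper-right anchor subrectangle of $R_1$, with $\alpha_{R_1}\in\intr(R'_2)$) fails the other half of the relation.

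In these configurations \Cref{lem:rectcrossnosharecorner} cannot be applied to $(R'_1,R'_2)$, and the conclusion can genuinely fail. The tight arc $p$ in $R'_1$ passes below-right of every buoy in $R'_1$. Nothing in your choice of buoys prevents $\alpha_{R_2}$ from also lying below-right of $p$. In that case $q$, which enters $R'_1$ through its bottom side and stops at $\alpha_{R_2}$, need not meet $p$ at all. Only in the remaining two subtypes are the anchor subrectangles in cross position. There \Cref{lem:rectcrossnosharecorner} applies directly and no buoys are needed, so the delicate cases are not the ones you anticipate.

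\textbf{The missing idea} is where to put the buoy and what it must accomplish. In the configuration above, place $b\in R(h_{R'_1},\alpha_{R_2})$, which lies inside $R'_1$ and has the crossing point of the hooked arcs as a corner. Since $p$ has positive slope and passes below $b$, it passes strictly below $\alpha_{R_2}$. Moreover, the smaller rectangle $R(s_1,b)$ does satisfy $R(s_1,b)<R'_2$, so \Cref{lem:rectcrossnosharecorner} applies to that pair. The rotated configuration is handled symmetrically.

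For finiteness, fix $R'_1$. Then $\alpha_{R_2}$ ranges over the finitely many periodic anchors in $R'_1$. These rectangles all contain the corner $h_{R'_1}$, so their intersection is a nonempty rectangle.

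\textbf{Problems with your proposed buoys.} The rectangle spanned by the crossing point and the nearer hook is degenerate, because the crossing point shares an unstable leaf with one hook and a stable leaf with the other. In addition, a buoy only forces a tight arc to pass on one side of it, not through it. You also do not need a nondegenerate overlap: the lemma allows $r$ to be a point. The phrase ``containing $s_1=s_2$'' cannot be meant literally, since type (II) pairs share no corner.
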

\begin{proof}
We will argue for each sub type (II-1)-(II-4) as in \Cref{lem:anchorrectcases1}.

\begin{figure}
    \centering
    \fontsize{10pt}{10pt}
\begingroup%
  \makeatletter%
  \providecommand\color[2][]{%
    \errmessage{(Inkscape) Color is used for the text in Inkscape, but the package 'color.sty' is not loaded}%
    \renewcommand\color[2][]{}%
  }%
  \providecommand\transparent[1]{%
    \errmessage{(Inkscape) Transparency is used (non-zero) for the text in Inkscape, but the package 'transparent.sty' is not loaded}%
    \renewcommand\transparent[1]{}%
  }%
  \providecommand\rotatebox[2]{#2}%
  \newcommand*\fsize{\dimexpr\f@size pt\relax}%
  \newcommand*\lineheight[1]{\fontsize{\fsize}{#1\fsize}\selectfont}%
  \ifx\svgwidth\undefined%
    \setlength{\unitlength}{310.35983348bp}%
    \ifx\svgscale\undefined%
      \relax%
    \else%
      \setlength{\unitlength}{\unitlength * \real{\svgscale}}%
    \fi%
  \else%
    \setlength{\unitlength}{\svgwidth}%
  \fi%
  \global\let\svgwidth\undefined%
  \global\let\svgscale\undefined%
  \makeatother%
  \begin{picture}(1,0.2215599)%
    \lineheight{1}%
    \setlength\tabcolsep{0pt}%
    \put(0,0){\includegraphics[width=\unitlength,page=1]{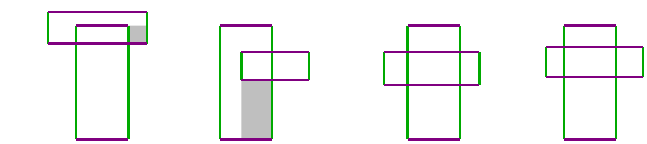}}%
    \put(-0.0017243,0.20242891){\color[rgb]{0,0,0}\makebox(0,0)[lt]{\lineheight{1.25}\smash{\begin{tabular}[t]{l}II-1\end{tabular}}}}%
    \put(0.28342885,0.20242891){\color[rgb]{0,0,0}\makebox(0,0)[lt]{\lineheight{1.25}\smash{\begin{tabular}[t]{l}II-2\end{tabular}}}}%
    \put(0.55408262,0.20242891){\color[rgb]{0,0,0}\makebox(0,0)[lt]{\lineheight{1.25}\smash{\begin{tabular}[t]{l}II-3\end{tabular}}}}%
    \put(0.8101359,0.20242891){\color[rgb]{0,0,0}\makebox(0,0)[lt]{\lineheight{1.25}\smash{\begin{tabular}[t]{l}II-4\end{tabular}}}}%
    \put(0,0){\includegraphics[width=\unitlength,page=2]{anchorrectbuoy2.pdf}}%
  \end{picture}%
\endgroup%

    \caption{\Cref{lem:anchorrectcases2} holds if we place buoys in the shaded rectangles.}
    \label{fig:anchorrectbuoy2}
\end{figure}

\textbf{Type (II-1).}
For each type (II-1) pair $(R'_1,R'_2)$, we let $Q(R'_1,R'_2) = R(h_{R'_1},\alpha_{R'_2})$ be the subrectangle of $R'_2$ spanned by the hook $h_{R'_1}$ and the anchor $\alpha_{R'_2}$.
If $\mathcal{B}$ contains a point $b$ in $Q(R'_1,R'_2)$, then we can apply \Cref{lem:rectcrossnosharecorner} to the rectangles $R(s_1,b) < R'_2$ in order to conclude.
See \Cref{fig:anchorrectbuoy2}.
Thus it suffices to argue that for every fixed $R'_1$, the intersection $Q(R'_1) = \bigcap_{R'_2} Q(R'_1,R'_2)$ taken over all $R'_2$ for which $(R'_1,R'_2)$ is a type (II-1) pair is a rectangle.

But the set of such $R'_2$ is contained in a collection of the form $\{[\alpha_i]^k \cdot S_{i,j} \mid i=1,...,p, j=1,...,J_i, k \geq -K_{i,j} \}$, where $\alpha_i$ are the anchors in $R'_1$, since for each $i,j$, $[\alpha_i]^k \cdot S_{i,j}$ is shorter than $R'_2$ for large $-k$.
From this, we deduce that $Q(R'_1)$ is a rectangle.

\textbf{Type (II-2).}
This case is similar to the type (II-1) case, but with the roles of $R'_1$ and $R'_2$ interchanged. We omit the details.

\textbf{Type (II-3) and (II-4).}
If $(R'_1,R'_2)$ is of type (II-3) or (II-4), then $R'_2 < R'_1$ and we can directly apply \Cref{lem:rectcrossnosharecorner}. Note that in these cases we do not have to specify a collection $\mathcal{B}_0$.
\end{proof}

\subsection{Putting the diagonals back together} \label{subsec:pldiaggoalsproof}

\begin{proof}[Proof of \Cref{prop:pldiaggoals}]
Let $\mathcal{B}$ be the union of the discrete $\pi_1 M$-invariant collections of points in \Cref{lem:diagonalsoffboundaryorbits}, \Cref{lem:anchorrectcases1}, and \Cref{lem:anchorrectcases2}.
Let $\{d_R\}$ be the diagonal system consisting of anchor diagonals with respect to $\mathcal{B}$.
Then (1) is true by construction.

Next, we show (2).
Suppose $R_1 < R_2$ are two edge rectangles of the same color. 
If $R_1$ and $R_2$ share a corner $s$, then the pair of anchor subrectangles $R'_1 \subset R_1$ and $R'_2 \subset R_2$ that contain $s$ intersect and is of type (0). Aside from this pair, the only other pair of anchor subrectangles that could possibly intersect are the other anchor subrectangles $R''_1 \subset R_1$ and $R''_2 \subset R_2$, and they would be of type (I-1).
Thus in this case (2) follows from \Cref{lem:anchorrectcases0} and \Cref{lem:anchorrectcases1}.

If $R_1$ and $R_2$ share an anchor, then two pairs of anchor subrectangles intersect and both are of type (0).
Thus in this case (2) follows from \Cref{lem:anchorrectcases0}.

In the remaining cases, the pairs of anchor subrectangles can only be of types (I) or (II).
In fact, exactly one pair will be of type (II), since the union of the two hooked arcs (defined in \Cref{defn:type012}) in the two anchor subrectangles of $R_i$, for $i=1,2$, intersect exactly once.
Thus in this case (2) follows from \Cref{lem:anchorrectcases1} and \Cref{lem:anchorrectcases2}.

Finally, we address (3) and (4). 
Since the half-diagonals are tight paths with respect to $\mathcal{B}$, by definition their nodes are exactly the point that lie on $\mathcal{B}$. Thus (3) holds.
Suppose there is a segment $\sigma$ whose endpoints are nodes that lie in the same $\pi_1 M$-orbit. We can add a node at any point in the interior of $\sigma$. Since the $\pi_1 M$-orbits of the existing nodes is a countable set, while $\sigma$ consists of uncountably many points, we can choose the new node so that $\sigma$ is split into two segments, each satisfying (4).

A priori one has to worry about the new node violating (3). This only happens if the new node is chosen to be a point lying on a segment $\sigma'$ intersecting $\sigma$ transversely. But since there are only countably many segments in total, there are only countably many choices of the new node that can result in this, so this situation can be avoided as well.
\end{proof}

\section{Perturbing the diagonals} \label{sec:perturbdiag}

In this section, we explain how the diagonal systems of \Cref{prop:pldiaggoals} can be perturbed to satisfy the slope criterion.

\subsection{Perturbing for transversality} \label{subsec:perturbfortransverse}

We refer to a diagonal system $\{d_R\}$ satisfying \Cref{prop:pldiaggoals}(1)-(4) as a \textbf{PLO (Piecewise Linear Overlapping) diagonal system}.
Observe that \Cref{prop:pldiaggoals}(3) allows us to talk about segments of the diagonal system unambiguously.

\begin{defn}[Overlap] \label{defn:overlap}
Suppose $\sigma$ is a nondegenerate segment in the intersection of two diagonals $d_{R_1}$ and $d_{R_2}$.
Then the rectangles $R_1$ and $R_2$ are of the same color and they intersect. Up to swapping $R_1$ and $R_2$, we can assume that $R_1<R_2$.
We say that the triple $(d_{R_1},d_{R_2},\sigma)$ is an \textbf{overlap} of $d_{R_2}$ over $d_{R_1}$ along $\sigma$.
\end{defn}

\begin{lem} \label{lem:overlapfinite}
For any PLO diagonal system, the set of $\pi_1 M$-orbits of overlaps is finite.
\end{lem}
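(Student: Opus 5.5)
The plan is to reduce finiteness of overlaps to finiteness of segments up to the $\pi_1 M$-action, and then to finiteness of the diagonals that can contain a given segment.

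\textbf{Step 1: finitely many orbits of segments.} There are only finitely many $\pi_1 M$-orbits of edge rectangles, since $\pi_1 M$ acts on them with quotient the finite set of edges of $\Delta$ (\Cref{constr:agolgueritaud}). Each diagonal $d_R$ is a piecewise linear arc with finitely many nodes. By \Cref{prop:pldiaggoals}(3), segments are unambiguous across the system. Together these show that the set of all segments of the diagonal system consists of finitely many $\pi_1 M$-orbits.

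\textbf{Step 2: finitely many diagonals through a fixed segment.} Fix a nondegenerate segment $\sigma$. I claim that only finitely many diagonals $d_R$ contain $\sigma$. Any such $R$ contains $\sigma$.

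First suppose $\sigma$ is not contained in a single leaf of $\mathcal{O}^s$ or $\mathcal{O}^u$, which holds since diagonals are transverse to both foliations. Then $R$ must contain the rectangle $R_\sigma$ spanned by the endpoints of $\sigma$. The edge rectangles containing a fixed rectangle $R_\sigma$ with nonempty interior form a finite set. To see this, note that the corners of such an $R$ lie in $\widetilde{\mathcal{C}}$, and $R$ contains no points of $\widetilde{\mathcal{C}}$ in its interior. If infinitely many such $R$ existed, then by \Cref{prop:astroidlemma} applied to the staircases they would accumulate, becoming arbitrarily tall or arbitrarily wide while containing $R_\sigma$. One then extracts a limit that is a perfect fit, or an infinite rectangle, free of $\widetilde{\mathcal{C}}$, contradicting no perfect fits relative to $\mathcal{C}$.

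A more concrete route is available. The edge rectangles containing $R_\sigma$ are totally ordered by $<$, because two edge rectangles whose interiors share an open set are comparable. They also form a chain of rectangles, each taller and thinner than the last. Every one of them contains a fixed point $x \in \intr(R_\sigma)$. By the core point discussion (\cite[Fact 4.4]{LMT23}), the rectangles in a bi-infinite chain have widths tending to $0$ in one direction and heights tending to $0$ in the other. So their intersection is a single point, which cannot contain the nondegenerate $R_\sigma$. This gives finiteness.

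\textbf{Step 3: counting overlaps.} An overlap is a triple $(d_{R_1},d_{R_2},\sigma)$ with $\sigma$ a segment of both diagonals. Up to $\pi_1 M$, we may fix $\sigma$ to be one of finitely many representatives from Step 1. For each representative, Step 2 leaves only finitely many choices of the pair $(R_1,R_2)$. This proves the lemma.

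The main obstacle is Step 2, namely justifying rigorously that only finitely many edge rectangles contain a given nondegenerate rectangle. I would first try the chain argument: show that the edge rectangles containing a fixed open set form a $<$-chain, and then use the fact that such a chain shrinks to its core point in both directions. This contradicts containing $R_\sigma$ once the chain is infinite in either direction, since an infinite chain is infinite on at least one side.
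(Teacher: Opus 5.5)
Your reduction is sound and differs from the paper's. You first fix one of finitely many orbit representatives of segments and then bound the diagonals through it. The paper instead fixes one of finitely many representative diagonals $\widetilde d_i$ in $\widetilde{\mathcal{O}^\circ}$ and bounds the diagonals overlapping it. The gap is in Step 2, which carries all the content: neither justification you give works as written.

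\emph{The chain argument.} This misreads \cite[Fact 4.4]{LMT23}. That fact concerns the specific sequence $(R_n)$ in which $R_n$ and $R_{n+1}$ are the bottom and top edges of a common tetrahedron rectangle, and says that this particular sequence shrinks to its core point. The edge rectangles containing $R_\sigma$ do form a $<$-chain, but it is not such a sequence. Consecutive members need not be edges of a common tetrahedron, the chain need not be bi-infinite, and no core point is attached to it. That its widths tend to $0$ in one direction and its heights tend to $0$ in the other is exactly what has to be proved, not something Fact 4.4 supplies.

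\emph{The limiting argument.} This is also incomplete. The rectangles containing $R_\sigma$ do not in general share a corner, so \Cref{prop:astroidlemma} does not apply to them as a staircase. Suppose the heights of an increasing chain escape to infinity. Then the limiting object is a $\widetilde{\mathcal{C}}$-free half-infinite product strip, which is not a perfect fit: the orbit space of a suspension Anosov flow is full of half-infinite product strips and has no perfect fits at all. Ruling the strip out needs an extra argument. For instance, apply \Cref{prop:astroidlemma} at a point $c$ inside the strip; its rectangles $R(c,s)$ in an upper quadrant become arbitrarily tall and thin, so eventually some $s \in \widetilde{\mathcal{C}}$ lands in the strip. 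The bounded case likewise requires carrying out the dichotomy between accumulation of corners and a $\widetilde{\mathcal{C}}$-free perfect fit.

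The simplest repair is to prove Step 2 with the paper's slope argument rather than with the dynamics of edge rectangles.
\begin{enumerate}
\item Lift $\sigma$ to $\widetilde\sigma\subset\widetilde{\mathcal{O}^\circ}$, and let $\widetilde d_1,\dots,\widetilde d_r$ represent the $\pi_1(M^\circ)$-orbits of lifted diagonals.
\item Any diagonal containing $\sigma$ lifts to some $g\cdot\widetilde d_j$ containing $\widetilde\sigma$. So $g\cdot\widetilde d_j$ has a segment of slope exactly $\slope(\widetilde\sigma)$.
\item Since $g$ rescales absolute slopes by $\lambda^{2\hght(g)}$, and the $\widetilde d_j$ have only finitely many segment slopes, $|\hght(g)|\le N$ for a uniform $N$.
\item \Cref{lem:transorbitspacelocfinite} then shows that only finitely many $\pi_1(M^\circ)_{[-N,N]}$-translates of the $\widetilde d_j$ meet the compact arc $\widetilde\sigma$.
\end{enumerate}
With this replacement your Steps 1 and 3 go through unchanged. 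The resulting proof is essentially the paper's argument, organized per segment instead of per diagonal.
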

\begin{proof}
Lift the diagonals $\{d_R\}$ to the translation orbit space $\mathcal{O}^\circ$. Here we can talk about the exact values of slopes.
Since there are only finitely many $\pi_1 M$-orbits of edge rectangles, thus diagonals in $\mathcal{O}$, so there are only finitely many $\pi_1 M^\circ$-orbits of diagonals in $\mathcal{O}^\circ$. 
Let $\widetilde{d}_1,...,\widetilde{d}_r$ be a collection of one representative in each $\pi_1 M^\circ$-orbit.
To show the lemma, it suffices to show that for each $i=1,...,r$, there are only finitely many diagonals that can form an overlap with $\widetilde{d}_i$.

To this end, pick a large $N$ so that $\lambda^{2N} > \frac{\max_i \max|\slope(d_i)|}{\min_i \min|\slope(d_i)|}$, where $\lambda$ is the dilatation of the pseudo-Anosov first return map $f$, and $\max|\slope(d_i)|$ and $\min|\slope(d_i)|$ denotes the maximum and minimum, respectively, over $|\slope(\sigma)|$ where $\sigma$ is a segment of $d_i$. 
Then for every overlap $(\widetilde{d}_i,g \cdot \widetilde{d}_j,\sigma)$, we must have $|\hght(g)| \leq N$. 
Meanwhile, by \Cref{lem:transorbitspacelocfinite}, the collection $\pi_1(M^\circ)_{[-N,N]} \cdot \widetilde{d}_j = \bigcup_{h=-N}^N \pi_1(M^\circ)_h \cdot \widetilde{d}_j$ is locally finite, so only finitely many diagonals can form an overlap with $\widetilde{d}_i$.
\end{proof}

The content of this subsection is to show that one can inductively reduce the number of $\pi_1 M$-orbits of pairs $(R_1,R_2)$ where there is an overlap of $d_{R_2}$ over $d_{R_1}$. 
Once this is arranged, we have a diagonal system consisting of piecewise linear arcs that intersect in points.

\begin{defn}[Peripheral, elementary overlap] \label{defn:peripheralelementaryoverlap}
We say that an overlap $(d_{R_1},d_{R_2},\sigma)$ is 
\begin{itemize}
    \item \textbf{peripheral} if $\sigma$ contains an endpoint of the intersection $d_{R_1} \cap d_{R_2}$ that is not a point of $\widetilde{\mathcal{C}}$, and
    \item \textbf{elementary} if there does not exist overlaps $(d_{R_1},d_{R_3},\sigma)$ and $(d_{R_3},d_{R_2},\sigma)$.
\end{itemize}
The following observation underlies the terminology of `elementary': 
If $(d_{R_1},d_{R_3},\sigma)$ and $(d_{R_3},d_{R_2},\sigma)$ are overlaps, then $(d_{R_1},d_{R_2},\sigma)$ is an overlap. 
Thus elementary overlaps are the ones that cannot be `decomposed' in this way.
\end{defn}

\begin{lem} \label{lem:peripheralelementaryoverlapexist}
If there exists an overlap, then there exists a peripheral elementary overlap.
\end{lem}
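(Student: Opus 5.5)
Starting from an arbitrary overlap, the plan is to produce first a peripheral one and then an elementary one, each time staying peripheral. The second step goes by a descent on a finite poset.

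\textbf{Step 1 (peripheral).} Let $(d_{R_1},d_{R_2},\sigma_0)$ be an overlap. By \Cref{prop:pldiaggoals}(2), $\intr(d_{R_1})$ and $\intr(d_{R_2})$ overlap along a single connected subarc $r$, and $\sigma_0 \subset r$. Moreover $r$ is nondegenerate, since it contains $\sigma_0$. The intersection $d_{R_1} \cap d_{R_2}$ consists of $r$, possibly together with a shared corner in $\widetilde{\mathcal{C}}$.
\begin{itemize}
\item If $R_1,R_2$ share no corner, then $r$ is a compact subarc of the interiors, so each endpoint of $r$ is not in $\widetilde{\mathcal{C}}$.
\item If they share a corner, then by \Cref{prop:pldiaggoals}(2) the interiors are disjoint, so there is no overlap at all.
\end{itemize}
So $r$ has an endpoint $v\notin\widetilde{\mathcal{C}}$. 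Let $\sigma$ be the segment of $r$ adjacent to $v$; this is well defined by \Cref{prop:pldiaggoals}(3). Then $(d_{R_1},d_{R_2},\sigma)$ is a peripheral overlap.

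\textbf{Step 2 (elementary).} Fix this segment $\sigma$ and let $\mathcal{D}_\sigma$ be the set of diagonals containing $\sigma$, ordered by the relation ``lying above'' of their rectangles. Two points need checking.
\begin{itemize}
\item Any two elements of $\mathcal{D}_\sigma$ have rectangles of the same color that intersect, so they are comparable. Hence $\mathcal{D}_\sigma$ is totally ordered.
\item $\mathcal{D}_\sigma$ is finite. The argument of \Cref{lem:overlapfinite} applies: slope bounds force bounded height, and \Cref{lem:transorbitspacelocfinite} then gives local finiteness, so only finitely many diagonals contain the compact segment $\sigma$.
\end{itemize}
Now, if $(d_{R_1},d_{R_2},\sigma)$ is not elementary, pick $d_{R_3}$ with $R_1<R_3<R_2$ and $\sigma\subset d_{R_3}$, and replace the pair by $(d_{R_1},d_{R_3})$. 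Iterating on the finite chain between $R_1$ and $R_2$ ends at a consecutive pair $(d_{R},d_{R'})$ in $\mathcal{D}_\sigma$, which is elementary by definition.

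\textbf{Peripherality is preserved.} The main obstacle is to show that the final pair $(d_R,d_{R'})$ is still peripheral. I would prove the following claim: if $v$ is an endpoint of the overlap $d_{R_1}\cap d_{R_2}$ adjacent to $\sigma$, and $d_{R_3}\supset\sigma$ with $R_1<R_3<R_2$, then the overlap of $d_{R_1}$ with $d_{R_3}$ along $\sigma$ ends at $v$, or possibly earlier on the same side. In either case it has an endpoint in $\overline{\sigma}$ that is not in $\widetilde{\mathcal{C}}$, after re-choosing the adjacent segment.

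The argument for the claim: beyond $v$ the arcs $d_{R_1}$ and $d_{R_2}$ split with strict slope inequality, and the slope orderings along the common overlaps, given by \Cref{prop:pldiaggoals}(2), have to be compatible. If $d_{R_3}$ continued along both $d_{R_1}$ and $d_{R_2}$ past $v$, this would be impossible, since it cannot follow both branches. So $d_{R_3}$ separates from at least one of them at or before $v$.

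If it separates from $d_{R_1}$, the pair $(d_{R_1},d_{R_3})$ is peripheral, and we recurse on it. If instead it separates only from $d_{R_2}$, we recurse on $(d_{R_3},d_{R_2})$. Since $\mathcal{D}_\sigma$ is finite, this refinement terminates at a consecutive pair in $\mathcal{D}_\sigma$ that is still peripheral along some segment, which gives the peripheral elementary overlap.

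Care is needed with the segment: when the overlap ends before $v$, the segment adjacent to the new endpoint may differ from $\sigma$. So the descent is better phrased as minimizing over the finite, by \Cref{lem:overlapfinite}, set of peripheral overlaps up to $\pi_1 M$, choosing one whose pair $(R_1,R_2)$ admits no intermediate overlap.
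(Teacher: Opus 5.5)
Your proof is correct, but it preserves peripherality by a different mechanism than the paper.

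\textbf{The paper's route.} The paper fixes the lower diagonal $d_{Q_1}=d_{R_1}$ and lets $\Sigma$ be the set of its segments along which some diagonal overlaps it from above. It shows that adjacent segments in $\Sigma$ are overlapped by exactly the same diagonals; otherwise two upper diagonals would meet at the shared node, violating the slope inequality of \Cref{prop:pldiaggoals}(2). Hence $\bigcup\Sigma\neq d_{Q_1}$, and the paper picks an outermost $\rho\in\Sigma$. This makes \emph{every} overlap $(d_{Q_1},\cdot,\rho)$ automatically peripheral. The descent to an elementary overlap then only lowers the upper diagonal, and peripherality never has to be re-checked.

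\textbf{Your route.} You fix the end segment $\sigma$ and the endpoint $v\notin\widetilde{\mathcal{C}}$ of an arbitrary overlap. You maintain peripherality by a local observation: an intermediate $d_{R_3}\supset\sigma$ passes through $v$, and it cannot follow both of the diverging continuations of $d_{R_1}$ and $d_{R_2}$ beyond $v$. So one of the pairs $(d_{R_1},d_{R_3})$ or $(d_{R_3},d_{R_2})$ still has intersection ending at $v$, and the interval in the finite chain $\mathcal{D}_\sigma$ strictly shrinks. This avoids the adjacent-segment claim and the outermost-segment selection entirely. The cost is that the lower diagonal may change, which the lemma does not care about; the paper's argument additionally keeps the original lower diagonal.

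\textbf{A correction to your write-up.} The worry that the overlap of $d_{R_1}$ and $d_{R_3}$ might end ``earlier'' than $v$ is unfounded. Since $\sigma\subset d_{R_1}\cap d_{R_3}$ and $v$ is an endpoint of $\sigma$, the only question is whether $d_{R_3}$ continues along $d_{R_1}$ past $v$. So at every step the new pair is peripheral along the same $\sigma$ at the same $v$. That is exactly what you need, because elementarity is also checked along that same $\sigma$.

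You should therefore drop the closing rephrasing as a minimization over peripheral overlaps up to $\pi_1 M$. As stated, it does not explain why a minimizer would be elementary along its own peripheral segment, and the fixed-$\sigma$ recursion is already a complete proof.
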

\begin{proof}
Suppose we are given an overlap $(d_{R_1},d_{R_2},\sigma)$.
We write $d_{Q_1} = d_{R_1}$ and keep this diagonal fixed. 
Consider the collection $\Sigma = \{\sigma \mid \text{there exists an overlap $(d_{Q_1},d_{R_2},\sigma)$}\}$.
We claim that $\bigcup_{\sigma \in \Sigma} \sigma$ cannot be the entire $d_{Q_1}$. 

To see this, observe that if $\sigma, \sigma' \in \Sigma$ are adjacent segments on $d_{Q_1}$, then there is an overlap $(d_{Q_1},d_{R_2},\sigma)$ if and only if there is an overlap $(d_{Q_1},d_{R_2},\sigma')$ for the same $R_2$.
Indeed, otherwise there will exist overlaps $(d_{Q_1},d_{R_2},\sigma)$ and $(d_{Q_1},d_{R'_2},\sigma')$ where $R_2 \neq R'_2$. But then $d_{R_2}$ and $d_{R'_2}$ intersect at the shared node of $\sigma$ and $\sigma'$, where it fails the slope inequality of \Cref{prop:pldiaggoals}(2).
See \Cref{fig:peripheralelementaryoverlapexist}.

\begin{figure}
    \centering
    \fontsize{10pt}{10pt}
\begingroup%
  \makeatletter%
  \providecommand\color[2][]{%
    \errmessage{(Inkscape) Color is used for the text in Inkscape, but the package 'color.sty' is not loaded}%
    \renewcommand\color[2][]{}%
  }%
  \providecommand\transparent[1]{%
    \errmessage{(Inkscape) Transparency is used (non-zero) for the text in Inkscape, but the package 'transparent.sty' is not loaded}%
    \renewcommand\transparent[1]{}%
  }%
  \providecommand\rotatebox[2]{#2}%
  \newcommand*\fsize{\dimexpr\f@size pt\relax}%
  \newcommand*\lineheight[1]{\fontsize{\fsize}{#1\fsize}\selectfont}%
  \ifx\svgwidth\undefined%
    \setlength{\unitlength}{99.63562612bp}%
    \ifx\svgscale\undefined%
      \relax%
    \else%
      \setlength{\unitlength}{\unitlength * \real{\svgscale}}%
    \fi%
  \else%
    \setlength{\unitlength}{\svgwidth}%
  \fi%
  \global\let\svgwidth\undefined%
  \global\let\svgscale\undefined%
  \makeatother%
  \begin{picture}(1,0.78315182)%
    \lineheight{1}%
    \setlength\tabcolsep{0pt}%
    \put(0,0){\includegraphics[width=\unitlength,page=1]{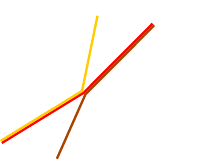}}%
    \put(0.15418629,0.10453328){\color[rgb]{0,0,0}\makebox(0,0)[lt]{\lineheight{1.25}\smash{\begin{tabular}[t]{l}$\sigma$\end{tabular}}}}%
    \put(0.5869697,0.39434849){\color[rgb]{0,0,0}\makebox(0,0)[lt]{\lineheight{1.25}\smash{\begin{tabular}[t]{l}$\sigma'$\end{tabular}}}}%
    \put(0.43343907,0.75225801){\color[rgb]{1,0.8,0}\makebox(0,0)[lt]{\lineheight{1.25}\smash{\begin{tabular}[t]{l}$d_{R_2}$\end{tabular}}}}%
    \put(0.33067985,0.00768431){\color[rgb]{0.66666667,0.26666667,0}\makebox(0,0)[lt]{\lineheight{1.25}\smash{\begin{tabular}[t]{l}$d_{R'_2}$\end{tabular}}}}%
    \put(0.76759081,0.68180602){\color[rgb]{1,0,0}\makebox(0,0)[lt]{\lineheight{1.25}\smash{\begin{tabular}[t]{l}$d_{Q_1}$\end{tabular}}}}%
  \end{picture}%
\endgroup%

    \caption{If $\sigma, \sigma' \in \Sigma$ are adjacent segments on $d_{Q_1}$, then there is an overlap $(d_{Q_1},d_{R_2},\sigma)$ if and only if there is an overlap $(d_{Q_1},d_{R_2},\sigma')$ for the same $R_2$.}
    \label{fig:peripheralelementaryoverlapexist}
\end{figure}

Thus if $\bigcup_{\sigma \in \Sigma} \sigma = d_{Q_1}$, then there would exist $d_{R_2}$ overlapping over $d_{Q_1}$ along all segments of $d_{Q_1}$, which is impossible since $Q_1 < R_2$.

Thus we can pick a element $\rho \in \Sigma$ that is 
\textbf{outermost}, in the sense that $\rho$ contains an endpoint of the subarc $\bigcup_{\sigma \in \Sigma} \sigma$ that is not a point of $\widetilde{\mathcal{C}}$.
This property implies that any overlap $(d_{Q_1},d_{R_2},\rho)$ is automatically peripheral.

We now fix $\rho$ and consider the collection $\mathcal{R} = \{d_{R_2} \mid \text{there exists an overlap $(d_{Q_1},d_{R_2},\rho)$}\}$. 
Pick some $d_{R'_2} \in \mathcal{R}$. If $(d_{Q_1},d_{R'_2},\rho)$ is non-elementary, then there exists overlaps $(d_{Q_1},d_{R''_2},\rho)$ and $(d_{R''_2},d_{R'_2},\rho)$.
Here, $(d_{R''_2},d_{R'_2},\rho)$ being an overlap implies that $R''_2 < R'_2$.
If $(d_{Q_1},d_{R''_2},\rho)$ is non-elementary, then we repeat the argument, and so on.

By \Cref{lem:overlapfinite}, $\mathcal{R}$ is finite, so the process stops eventually and we end up with a peripheral elementary overlap $(d_{Q_1},d_{Q_2},\rho)$.
\end{proof}

Suppose we are given a peripheral elementary overlap $(d_{Q_1},d_{Q_2},\rho)$.
We will explain how to modify the PLO diagonal system so that there are no longer any overlaps of $d_{Q_2}$ over $d_{Q_1}$, while not creating any new pairs of rectangles that overlap.
To this end, we introduce the following definition.

\begin{defn}[Bookkeeping nodes] \label{defn:bookkeepingnodes}
A set of \textbf{bookkeeping nodes} for $(Q_1,Q_2)$ is a subset $V$ of the set of nodes of the diagonal system $\{d_R\}$ that contain the endpoints and turns of $r = d_{Q_1} \cap d_{Q_2}$, such that adjacent elements of $V$ on $d_{Q_1}$ or $d_{Q_2}$ lie in different $\pi_1 M$-orbits.
In particular, $V$ cuts $d_{Q_1}$ and $d_{Q_2}$ into linear subarcs, which we refer to as the \textbf{bookkeeping segments}.

We emphasize that we allow nodes of the diagonal system inbetween bookkeeping nodes, thus bookkeeping segments are unions of segments in general.
\end{defn}

A set of bookkeeping nodes always exist: One can simply choose the $\pi_1 M$-orbits of all nodes on $r$.
Our modification of the PLO diagonal system will be broken into smaller steps, each retaining the property that we have a PLO diagonal system, but reducing the number of bookkeeping nodes on $d_{Q_1} \cap d_{Q_2}$.
Once $d_{Q_1} \cap d_{Q_2}$ only has one bookkeeping node, there will no longer be any overlap of $d_{Q_2}$ over $d_{Q_1}$.

For the rest of this subsection, we fix a peripheral elementary overlap $(d_{Q_1},d_{Q_2},\rho)$ and fix a set of bookkeeping nodes $V$.
Without loss of generality we suppose that $d_{Q_1}$ and $d_{Q_2}$ are red.
Throughout, we will work in the translation orbit space $\widetilde{\mathcal{O}^\circ}$.

As noted in the proof of \Cref{lem:overlapfinite}, there are only finitely many $\pi_1 M^\circ$-orbits of diagonals in $\widetilde{\mathcal{O}^\circ}$. 
Let $\widetilde{d}_1,...,\widetilde{d}_r$ be a collection of one representative in each $\pi_1 M^\circ$-orbit.
Pick a large $N$ so that 
\begin{equation} \label{eq:largeheightignore}
\lambda^{2N} > \frac{\max_i \max|\slope(d_i)|}{\min_i \min|\slope(d_i)|}.
\end{equation}

Since the overlap is peripheral, $\rho$ contains an endpoint $u$ of $d_{Q_1} \cap d_{Q_2}$.
Since at least one of $d_{Q_1}$ and $d_{Q_2}$ is non-smooth at $u$, $\rho$ is contained in a bookkeeping segment $\sigma$ that has $u$ as an endpoint.
Let $v$ be the other endpoint of $\sigma$.
We write $d_{Q_1}$ as $\sigma \ast_u \sigma'_{Q_1}$ and $d_{Q_2}$ as $\sigma \ast_u \sigma'_{Q_2}$ near $u$.
Since $u$ is an endpoint of $d_{Q_1} \cap d_{Q_2}$, we have $\slope(\sigma'_{Q_2}) > \slope(\sigma)$ or $\slope(\sigma'_{Q_1}) < \slope(\sigma)$. Without loss of generality suppose the former is true.
See \Cref{fig:perturbfortransversesetup} an illustration of our setup.

\begin{figure}
    \centering
    \fontsize{10pt}{10pt}
\begingroup%
  \makeatletter%
  \providecommand\color[2][]{%
    \errmessage{(Inkscape) Color is used for the text in Inkscape, but the package 'color.sty' is not loaded}%
    \renewcommand\color[2][]{}%
  }%
  \providecommand\transparent[1]{%
    \errmessage{(Inkscape) Transparency is used (non-zero) for the text in Inkscape, but the package 'transparent.sty' is not loaded}%
    \renewcommand\transparent[1]{}%
  }%
  \providecommand\rotatebox[2]{#2}%
  \newcommand*\fsize{\dimexpr\f@size pt\relax}%
  \newcommand*\lineheight[1]{\fontsize{\fsize}{#1\fsize}\selectfont}%
  \ifx\svgwidth\undefined%
    \setlength{\unitlength}{105.18625214bp}%
    \ifx\svgscale\undefined%
      \relax%
    \else%
      \setlength{\unitlength}{\unitlength * \real{\svgscale}}%
    \fi%
  \else%
    \setlength{\unitlength}{\svgwidth}%
  \fi%
  \global\let\svgwidth\undefined%
  \global\let\svgscale\undefined%
  \makeatother%
  \begin{picture}(1,0.75234092)%
    \lineheight{1}%
    \setlength\tabcolsep{0pt}%
    \put(0,0){\includegraphics[width=\unitlength,page=1]{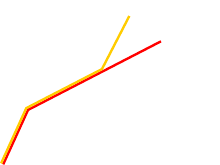}}%
    \put(0.29122352,0.25404764){\color[rgb]{0,0,0}\makebox(0,0)[lt]{\lineheight{1.25}\smash{\begin{tabular}[t]{l}$\sigma$\end{tabular}}}}%
    \put(0.08370801,0.30062622){\color[rgb]{0,0,0}\makebox(0,0)[lt]{\lineheight{1.25}\smash{\begin{tabular}[t]{l}$v$\end{tabular}}}}%
    \put(0.39095094,0.45990353){\color[rgb]{0,0,0}\makebox(0,0)[lt]{\lineheight{1.25}\smash{\begin{tabular}[t]{l}$u$\end{tabular}}}}%
    \put(0.77985495,0.5539782){\color[rgb]{1,0,0}\makebox(0,0)[lt]{\lineheight{1.25}\smash{\begin{tabular}[t]{l}$d_{Q_1}$\end{tabular}}}}%
    \put(0.56128703,0.72307737){\color[rgb]{1,0.8,0}\makebox(0,0)[lt]{\lineheight{1.25}\smash{\begin{tabular}[t]{l}$d_{Q_2}$\end{tabular}}}}%
  \end{picture}%
\endgroup%

    \caption{The setup for modifying a PLO diagonal system.}
    \label{fig:perturbfortransversesetup}
\end{figure}

Let $\mathcal{D}_+ = \{d_R \mid \text{$Q_2 \leq R$ and $d_{Q_2} \cap d_R$ contains a subarc $\sigma_R \subset \sigma$ containing $v$}\}$.
For every $d_R \in \mathcal{D}_+$, we modify $d_R$ as follows:
Consider the linear arc $\sigma^+_R$ with slope $\slope(\sigma)+\epsilon$ starting at $v$ and ending at a point $b_R$ on the segment $\sigma'_R$ of $d_R$ after $\sigma_R$. We delete from $d_R$ the union of $\sigma_R$ and the subarc of $\sigma'_R$ from $u$ to $b_R$, then add back in $\sigma^+_R$.
See \Cref{fig:perturbfortransversemodify}.

\begin{figure}
    \centering
    \fontsize{8pt}{8pt}
    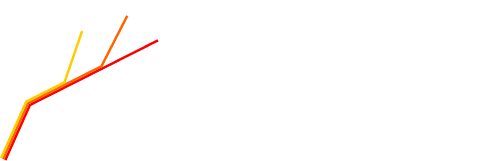
    \caption{Modifying a PLO diagonal system to reduce the number of bookkeeping nodes on $d_{Q_1} \cap d_{Q_2}$.}
    \label{fig:perturbfortransversemodify}
\end{figure}

We extend the modification equivariantly to the $\pi_1 M$-orbit of $d_R$.
Here the condition on consecutive elements in \Cref{defn:bookkeepingnodes} plays a role in ensuring that no two consecutive bookkeeping segments on a diagonal are to be modified, so that the procedure is well-defined.

We verify that the modified diagonal system is still PLO:
Each diagonal is still piecewise linear, and with $\epsilon$ small enough, they stay within $\mathcal{O}^\circ$, so \Cref{prop:pldiaggoals}(1) is clear.

Upon adding $b_R$ as the new nodes after modification, \Cref{prop:pldiaggoals}(3) is violated only if some new segment $\sigma^+_R$ passes through some node, but such a scenario can only occur for countably many values of $\epsilon$.
Similarly, \Cref{prop:pldiaggoals}(4) is violated after modification only if a new node $b_R$ happens to lie in the orbit of a pre-existing node, but this only happens for countably many values of $\epsilon$.
The conclusion is that \Cref{prop:pldiaggoals}(3) and (4) hold after modification for generic choices of $\epsilon$.

We remark that the fact that we have to add new nodes $b_R$ is the reason we have introduced the technical \Cref{defn:bookkeepingnodes}, since we wish to keep track of our progress in the metric of nodes \emph{prior} to the modification.

Thus it remains to show \Cref{prop:pldiaggoals}(2).
Suppose that $R_1 < R_2$ have the same color. 
Lift $d_{R_1}$ and $d_{R_2}$ to $\widetilde{d_{R_1}}$ and $\widetilde{d_{R_2}}$ in $\widetilde{\mathcal{O}^\circ}$.
Up to translation, we can suppose that $\widetilde{d_{R_1}}$ is one of the representatives we chose for \Cref{eq:largeheightignore}. We can then write $\widetilde{d_{R_2}} = g \cdot d_j$ for some $g \in \pi_1(M^\circ)$.
Notice that \Cref{eq:largeheightignore} is preserved if we pick $\epsilon$ small enough. This implies that we must have $\hght(g) \geq -N$, and \Cref{prop:pldiaggoals}(2) holds with $r$ degenerate whenever if $\hght(g) \geq N$.
The upshot is that by \Cref{lem:transorbitspacelocfinite}, we only have to check \Cref{prop:pldiaggoals}(2) for finitely many possibilities of $R_2$.

If neither $d_{R_1}$ nor $d_{R_2}$ has a translate that lies in $\mathcal{D}_+$, then the modification does not involve $d_{R_1}$ and $d_{R_2}$ so \Cref{prop:pldiaggoals}(2) holds from before.

Suppose $d_{R_1}$ has a translate that lies in $\mathcal{D}_+$. Up to applying the translation, we arrange it so that $d_{R_1} \in \mathcal{D}_+$. 
If $d_{R_2} \in \mathcal{D}_+$, then after modification $d_{R_1}$ and $d_{R_2}$ overlap in the same number of segments and the slope inequality continues to hold. See \Cref{fig:perturbfortransverseanalysis1}.

\begin{figure}
    \centering
    \fontsize{8pt}{8pt}
    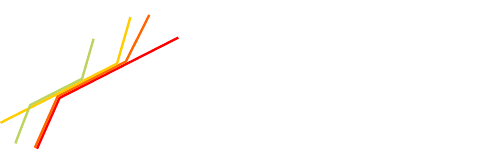
    \caption{Analysis for \Cref{prop:pldiaggoals}(2) if $d_{R_1}, d_{R_2} \in \mathcal{D}_+$.}
    \label{fig:perturbfortransverseanalysis1}
\end{figure}

If $d_{R_2} \not\in \mathcal{D}_+$, then depending on whether $d_{R_1}$ and $d_{R_2}$ overlapped along some segment of $\sigma$, after modification $d_{R_1}$ and $d_{R_2}$ either overlap in the same number of segments, or one less segment, and the slope inequality continues to hold. See \Cref{fig:perturbfortransverseanalysis2}.

\begin{figure}
    \centering
    \fontsize{8pt}{8pt}
    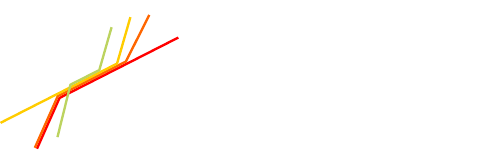
    \caption{Analysis for \Cref{prop:pldiaggoals}(2) if $d_{R_1} \in \mathcal{D}_+$ but $d_{R_2} \not\in \mathcal{D}_+$.}
    \label{fig:perturbfortransverseanalysis2}
\end{figure}

Now suppose $d_{R_1}$ does not have a translate that lies in $\mathcal{D}_+$, but $d_{R_2}$ does.
Up to applying the translation, we arrange it so that $d_{R_2} \in \mathcal{D}_+$. 
By the hypotheses that $(d_{Q_1},d_{Q_2},\rho)$ is elementary, $d_{R_1}$ cannot contain both $v$ and $\sigma'_{R_2}$
Then depending on whether $d_{R_1}$ contained a subarc of $\sigma$, after modification $d_{R_1}$ and $d_{R_2}$ either overlap in the same number of segments, or one less segment, and the slope inequality continues to hold. See \Cref{fig:perturbfortransverseanalysis3}.

\begin{figure}
    \centering
    \fontsize{8pt}{8pt}
    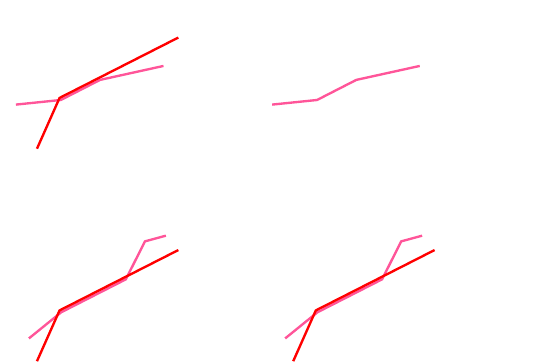
    \caption{Analysis for \Cref{prop:pldiaggoals}(2) if $d_{R_2} \in \mathcal{D}_+$ but $d_{R_1} \not\in \mathcal{D}_+$.
    By the elementary hypothesis, the bottom scenario cannot happen.}
    \label{fig:perturbfortransverseanalysis3}
\end{figure}

Finally, it is clear that the number of bookkeeping nodes on $d_{Q_1} \cap d_{Q_2}$ has been reduced by one.

Performing this construction inductively, we will eventually reduce the number of bookkeeping nodes $d_{Q_1} \cap d_{Q_2}$ to one, at which point $d_{Q_1}$ and $d_{Q_2}$ no longer overlap.
In turn, performing such a sequence of constructions inductively, we will eventually get rid of all overlaps.

We summarize our progress so far in the following lemma.

\begin{lem} \label{lem:perturbfortransverse}
Under the hypothesis of \Cref{prop:pldiaggoals}, there exists a diagonal system $\{d_R\}$
such that:
\begin{enumerate}
    \item The interior of each $d_R$ is a piecewise linear arc in $\mathcal{O}^\circ$, under the quasi-translation structure induced from $S$.
    \item The system $\{d_R\}$ satisfies the slope criterion for diagonals of the same color, i.e. for every $R_1 < R_2$ of the same color, 
    \begin{itemize}
        \item if $R_1$ and $R_2$ share a corner, then $\intr(d_{R_1})$ and $\intr(d_{R_2})$ are disjoint, and
        \item if $R_1$ and $R_2$ do not share a corner, then $\intr(d_{R_1})$ and $\intr(d_{R_2})$ intersect at exactly one point $v$, with $|\slope_v(d_{R_1})| < |\slope_v(d_{R_2})|$.
    \end{itemize}
\end{enumerate}
\end{lem}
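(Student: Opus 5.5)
This lemma packages the inductive procedure just described, so the plan is to run that procedure to completion and read off (1) and (2). We start from a PLO diagonal system $\{d_R\}$ given by \Cref{prop:pldiaggoals}. By \Cref{lem:overlapfinite}, the set $\mathcal{P}$ of $\pi_1 M$-orbits of pairs $(R_1,R_2)$ for which some overlap of $d_{R_2}$ over $d_{R_1}$ exists is finite. We induct on $|\mathcal{P}|$, and for a fixed pair we run an inner induction on the number of bookkeeping nodes on $d_{Q_1} \cap d_{Q_2}$.

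\emph{Inner step.} If $\mathcal{P} \neq \emptyset$, then \Cref{lem:peripheralelementaryoverlapexist} supplies a peripheral elementary overlap $(d_{Q_1},d_{Q_2},\rho)$. We fix bookkeeping nodes $V$, for instance all nodes on $d_{Q_1} \cap d_{Q_2}$, and perform the modification of every $d_R \in \mathcal{D}_+$ along $\sigma$, extended equivariantly, for a generic small $\epsilon$. The verification above shows three things: the result is again PLO; pairs that did not previously overlap still do not overlap, since in each case analysed the number of overlapping segments stays the same or drops; and the number of bookkeeping nodes on $d_{Q_1}\cap d_{Q_2}$ drops by one.

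\emph{Subtle point.} After the modification, $(d_{Q_1},d_{Q_2})$ may no longer have a peripheral elementary overlap in the same position. So at each inner step we re-invoke \Cref{lem:peripheralelementaryoverlapexist}, restricted to the pair $(Q_1,Q_2)$. To justify this, we need to check that the proof of that lemma produces a peripheral elementary overlap whose underlying pair has not been newly created. The cleanest formulation I would aim for is a lexicographic complexity: the finite number $|\mathcal{P}|$, followed by the total number of bookkeeping nodes summed over orbit representatives of overlapping pairs. Each modification strictly decreases this complexity and never increases any of its terms. I expect this to be the main obstacle, because the peripheral elementary overlap chosen at each step may belong to a different pair. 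It therefore has to be confirmed that reducing one pair never adds segments to another; the case analysis (\Cref{fig:perturbfortransverseanalysis1}--\Cref{fig:perturbfortransverseanalysis3}) gives exactly this. Once this is in place, the process terminates with $\mathcal{P}=\emptyset$.

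\emph{Conclusion.} With no overlaps left, \Cref{prop:pldiaggoals}(1) gives item (1) directly. For item (2), take $R_1<R_2$ of the same color. If they share a corner, their interiors are disjoint, since overlaps were only removed and no intersections were created at shared corners. If they do not share a corner, \Cref{prop:pldiaggoals}(2) says the diagonals meet along a subarc $r$ with $|\slope_r(d_{R_1})|<|\slope_r(d_{R_2})|$. Since there are no overlaps, $r$ is a single point $v$.

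The remaining step is to interpret the PL inequality at a degenerate $r$ away from the endpoints. By \Cref{defn:plarcslopes}, it holds on both sides of $v$. Since both arcs have the same sign of slope, the inequality on both sides means $d_{R_2}$ is steeper than $d_{R_1}$ on each side, so they cross at $v$, and this is the PL form of the slope inequality at $v$ required in (2). Should the statement be read as requiring a single tangent slope at a turn, I would add that nodes can be chosen so that transverse intersection points are not turns of both arcs. This uses genericity as in \Cref{subsec:pldiaggoalsproof}, and the inequality then holds on the nose.
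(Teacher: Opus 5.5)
Your overall route is the paper's. You start from a PLO system, repeatedly remove a peripheral elementary overlap by the $\mathcal{D}_+$ modification, and read off (1) and (2) once no overlaps remain. Your reading of the slope inequality at a degenerate overlap, in the sense of \Cref{defn:plarcslopes}, is also what the paper intends.

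The gap is the termination measure you use in place of the paper's nested induction. You assert that the total number of bookkeeping nodes over all overlapping pairs never increases, and that the case analysis gives this. That analysis only tracks whether \Cref{prop:pldiaggoals}(2) survives and how many segments two modified diagonals share, which is what rules out new overlapping pairs. It does not track bookkeeping nodes of pairs other than $(Q_1,Q_2)$, and these can increase, for two reasons.
\begin{itemize}
\item Suppose $d_{R_1},d_{R_2}\in\mathcal{D}_+$ both ran straight through $v$ along the line of $\sigma$. Afterwards both turn at $v$ onto the new segment of slope $\slope(\sigma)+\epsilon$. So $v$ becomes a new turn of $d_{R_1}\cap d_{R_2}$, and the far end of that overlap becomes a freshly created node $b_R$.
\item By \Cref{prop:pldiaggoals}(3), a new node $b_R$ must be declared a node of every diagonal through it. So it can land in the interior of the overlap of a pair the modification does not touch at all.
\end{itemize}
Whether you take minimal bookkeeping sets or all nodes, the second term of your complexity is therefore not monotone. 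It is not even well defined until you say how the bookkeeping sets of the other pairs are updated. This is why the paper fixes $V$ for the single pair $(Q_1,Q_2)$ before the first modification and never adds the $b_R$ to it.

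The concern that led you to a global measure is unfounded, so you can keep the inner induction on one pair. After a step, let $\tau$ be the bookkeeping segment of $d_{Q_1}\cap d_{Q_2}$ ending at $v$, and let $\rho'\subset\tau$ be the segment containing $v$.
\begin{itemize}
\item The overlap $(d_{Q_1},d_{Q_2},\rho')$ is peripheral, since $v$ is now an endpoint of $d_{Q_1}\cap d_{Q_2}$.
\item It is elementary. Suppose $Q_1<R_3<Q_2$ and $d_{R_3}$ overlaps both along $\rho'$. Since $\rho'$ is untouched by the step, in the system before the step there are two possibilities. Either $d_{R_3}$ left $\sigma$ at some point $w$ before reaching $u$, where it leaves $d_{Q_1}$ and $d_{Q_2}$ simultaneously. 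Then \Cref{prop:pldiaggoals}(2) for $(Q_1,R_3)$ and for $(R_3,Q_2)$ forces the slope of the red arc $d_{R_3}$ just past $w$ to be both larger and smaller than $\slope(\sigma)$. Or $d_{R_3}$ contained $\rho$, contradicting the elementarity of $(d_{Q_1},d_{Q_2},\rho)$.
\end{itemize}
So $|V\cap d_{Q_1}\cap d_{Q_2}|$ drops by one at each step until $d_{Q_1}$ and $d_{Q_2}$ stop overlapping. Meanwhile the number of $\pi_1 M$-orbits of overlapping pairs never increases. This is the paper's argument.

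Finally, drop your fallback of choosing nodes so that crossing points are not turns of both arcs. Turns sit at buoys and at the $b_R$, two diagonals can both turn at the same buoy, and no choice of nodes changes that. The fallback is also unnecessary, since \Cref{lem:perturbforsmooth} explicitly treats crossings where both diagonals are non-smooth.
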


\subsection{Perturbing for smoothness} \label{subsec:perturbforsmooth}

The goal of this subsection is to upgrade \Cref{lem:perturbfortransverse} from piecewise linear to smooth.

\begin{lem} \label{lem:perturbforsmooth}
Under the hypothesis of \Cref{prop:pldiaggoals}, there exists a diagonal system $\{d_R\}$
such that:
\begin{enumerate}
    \item The interior of each $d_R$ is a smooth arc in $\mathcal{O}^\circ$ with constant slope near its endpoints, under the quasi-translation structure induced from $S$.
    \item The system $\{d_R\}$ satisfies the slope criterion for diagonals of the same color, i.e. for every $R_1 < R_2$ of the same color, 
    \begin{itemize}
        \item if $R_1$ and $R_2$ share a corner, then $\intr(d_{R_1})$ and $\intr(d_{R_2})$ are disjoint, and
        \item if $R_1$ and $R_2$ do not share a corner, then $\intr(d_{R_1})$ and $\intr(d_{R_2})$ intersect at exactly one point $v$, with $|\slope_v(d_{R_1})| < |\slope_v(d_{R_2})|$.
    \end{itemize}
\end{enumerate}
\end{lem}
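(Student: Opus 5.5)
We start from the diagonal system $\{d_R\}$ of \Cref{lem:perturbfortransverse} and round each corner (turn) of each piecewise linear diagonal inside a small neighborhood of the turn. The rounding is done $\pi_1 M$-equivariantly. Concretely, the nodes fall into finitely many $\pi_1 M$-orbits; this follows from \Cref{lem:overlapfinite}-type finiteness, since there are finitely many orbits of edge rectangles and each diagonal has finitely many nodes. For one representative turn $b$ on a representative diagonal, we work in a translation chart of $\widetilde{\mathcal{O}^\circ}$ and replace the two segments near $b$ by a smooth convex arc inside a disk of radius $\delta$ about $b$. The arc agrees with the original segments outside the disk, and its slope varies monotonically between the two segment slopes. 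Near the endpoints in $\widetilde{\mathcal{C}}$, the diagonals are already linear, so constant slope near the endpoints is automatic, provided $\delta$ is smaller than the first segment length. Because the charts change by $(x,y)\mapsto(\pm\lambda^k x+x_0,\pm\lambda^{-k}y+y_0)$, smoothness and monotonicity of slope are preserved under translation. We therefore choose the rounding once per orbit and transport it, which requires only that the rounding disks around distinct points of one orbit be disjoint; this holds for small $\delta$ by discreteness.

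Next we check that the slope criterion for same-colored pairs survives. First consider a pair $R_1<R_2$ with $\intr(d_{R_1})\cap\intr(d_{R_2})=\{v\}$. If $v$ is not a turn of either diagonal, choose $\delta$ smaller than the distance from $v$ to all turns on these two arcs; then nothing changes near $v$. Otherwise $v$ is a turn of at least one diagonal. There is a strict inequality $|\slope_v(d_{R_1})|<|\slope_v(d_{R_2})|$ for the one-sided slopes on both sides, since the intersection is a single transverse crossing. The rounded arcs have slopes interpolating monotonically between these one-sided values. The main care point is that after rounding, the two arcs might pick up extra intersections inside the disk. To avoid this, we round the diagonal with larger $|\slope|$ in a smaller disk, with radius $\delta'\ll\delta$, than the other. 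In a chart with $v$ at the origin, each rounded arc then stays in the sector between its two original rays. Two convex curves that are graphs over $x$ and cross transversely once between sectors that are separated away from $v$ meet exactly once, and the slope inequality at the new intersection point follows from the sector separation. Whenever the two diagonals are disjoint, they stay disjoint for $\delta$ small.

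The main obstacle is uniformity: we need a single $\delta$ per node orbit that works for all the infinitely many pairs. We handle this as in the proof of \Cref{lem:overlapfinite}. Choose $N$ as in \Cref{eq:largeheightignore}. For a representative $\widetilde{d}_i$, any $g\cdot\widetilde{d}_j$ with $|\hght(g)|>N$ automatically satisfies the slope inequality at every intersection with margin, so its crossing is never near a tie. By \Cref{lem:transorbitspacelocfinite}, only finitely many translates with $|\hght(g)|\le N$ meet a compact neighborhood of $\widetilde{d}_i$. Hence there are finitely many constraints on $\delta$, coming from minimal distances between nodes and intersection points, together with an ordering of radii by slope magnitude where nodes coincide. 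We take $\delta$ below all of them and then propagate the construction equivariantly. This yields \Cref{lem:perturbforsmooth}.
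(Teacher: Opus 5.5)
Your plan is the paper's plan. You start from \Cref{lem:perturbfortransverse}, replace each turn by a convex smooth arc in a small neighborhood, and extend $\pi_1(M^\circ)$-equivariantly. You use \Cref{eq:largeheightignore} and \Cref{lem:transorbitspacelocfinite} to reduce to finitely many pairs, then check them locally (straight versus convex, convex versus convex).

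In the convex-versus-convex case you are more careful than the paper. The paper only asserts that two convex arcs ordered oppositely at the two vertical sides of $\nu_v$ meet once. That is not automatic when the one-sided slopes interleave ($a_1<a_2<b_1<b_2$), and nesting the radii is the right kind of fix. Your justification needs repair, though: the two sectors are not separated, they overlap. The real mechanism is this. The shallower rounding is lifted off $v$ by a definite height. So a much smaller rounding of the steeper corner stays on one side of it until the steeper arc is straight, with slope larger in absolute value than every slope of the shallower arc. That gives exactly one crossing.

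\textbf{The gap: you cannot choose the radii freely at periodic nodes.} Buoys and anchors are periodic, so many turns sit at periodic points. Suppose $g\in\mathrm{Stab}(v)$ acts near $v$ by $(x,y)\mapsto(\lambda^k x,\lambda^{-k}y)$ with $0<|k|\le N$. Then $d$ and $g\cdot d$ both turn at $v$, and equivariance \emph{forces} the rounding of one to be the image of the other. Your remark that only disks about distinct points of an orbit need to be disjoint overlooks exactly this case.

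Concretely, write the shallower rounding as $y=\phi(x)$ and set $s=\lambda^{|k|}$. The steeper rounding is then $y=s\phi(sx)$. It is narrower but lifted \emph{higher} above $v$, the opposite of what nesting requires. The one-sided slopes interleave as soon as the turn is sharper than $s^2$.

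This can genuinely fail. Take $\phi$ convex piecewise linear with slopes $\tfrac12,1,10,20$, breaks at $-0.65,-0.075,0.1$, $\phi(0)=1$, and $s=2$. Then $2\phi(2x)-\phi(x)$ has transverse zeros near $-0.108$, $-0.058$ and $-0.033$. So the two arcs meet three times, and at the middle point the shallower arc is the steeper one. This survives smoothing and rescaling. The same rigidity also constrains the relative sizes between different orbits of diagonals through $v$, since a $\mathrm{Stab}(v)$-orbit can lie on both sides of another diagonal in the order at $v$.

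So at periodic nodes you need a $\mathrm{Stab}(v)$-natural rule, and one works as follows:
\begin{enumerate}
\item In a chart centered at $v$, replace each red corner at $v$ by the chord joining the points where its two rays meet the invariant hyperbola $xy=\epsilon^2$. Use $xy=-\epsilon^2$ for blue corners.
\item The rays beyond these points lie in $\{xy\ge\epsilon^2\}$, and each chord meets that set only at its endpoints.
\item Hence two such curves can meet only along their chords, so exactly once. The slope inequality there follows from their ordering on either side.
\item For generic $\epsilon$ the new corners are non-periodic, and your nested rounding then applies to them.
\end{enumerate}
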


\begin{proof}
Let $\{d_R\}$ be a diagonal system as in \Cref{lem:perturbfortransverse}.
We lift everything to the translation orbit space $\widetilde{\mathcal{O}^\circ}$.
As noted in the proof of \Cref{lem:overlapfinite}, there are only finitely many $\pi_1 M^\circ$-orbits of diagonals in $\mathcal{O}^\circ$. 
Let $\widetilde{d}_1,...,\widetilde{d}_r$ be a collection of one representative in each $\pi_1 M^\circ$-orbit.
Pick a large $N$ so that \Cref{eq:largeheightignore} holds.

By \Cref{lem:transorbitspacelocfinite}, the set of $\pi_1(M^\circ)_{[-N,N]}$-orbits of $\widetilde{d}_i$ is locally finite. Thus we can choose a rectangular neighborhood $\nu_v$ of each non-smooth node $v$ of $\widetilde{d}_i$, such that 
\begin{itemize}
    \item each $\widetilde{d}_i \cap \nu_v$ can be written as $\sigma_1 \ast_v \sigma_2$ where $\sigma_1$ and $\sigma_2$ are linear and exit $\nu_v$ through its vertical sides,
    \item $\nu_v$ is disjoint from $g \cdot \nu_w$, for every $g \in \pi_1(M^\circ)_{[-N,N]}$, unless $g \cdot w = v$, and
    \item $\nu_v$ is disjoint from $g \cdot \widetilde{d}_j$, for every $g \in \pi_1(M^\circ)_{[-N,N]}$, $j = 1,...,r$, unless $g \cdot \widetilde{d}_j$ passes through $v$.
\end{itemize}
See \Cref{fig:perturbforsmooth} left.

\begin{figure}
    \centering
    \fontsize{8pt}{8pt}
\begingroup%
  \makeatletter%
  \providecommand\color[2][]{%
    \errmessage{(Inkscape) Color is used for the text in Inkscape, but the package 'color.sty' is not loaded}%
    \renewcommand\color[2][]{}%
  }%
  \providecommand\transparent[1]{%
    \errmessage{(Inkscape) Transparency is used (non-zero) for the text in Inkscape, but the package 'transparent.sty' is not loaded}%
    \renewcommand\transparent[1]{}%
  }%
  \providecommand\rotatebox[2]{#2}%
  \newcommand*\fsize{\dimexpr\f@size pt\relax}%
  \newcommand*\lineheight[1]{\fontsize{\fsize}{#1\fsize}\selectfont}%
  \ifx\svgwidth\undefined%
    \setlength{\unitlength}{209.0907588bp}%
    \ifx\svgscale\undefined%
      \relax%
    \else%
      \setlength{\unitlength}{\unitlength * \real{\svgscale}}%
    \fi%
  \else%
    \setlength{\unitlength}{\svgwidth}%
  \fi%
  \global\let\svgwidth\undefined%
  \global\let\svgscale\undefined%
  \makeatother%
  \begin{picture}(1,0.37798163)%
    \lineheight{1}%
    \setlength\tabcolsep{0pt}%
    \put(0,0){\includegraphics[width=\unitlength,page=1]{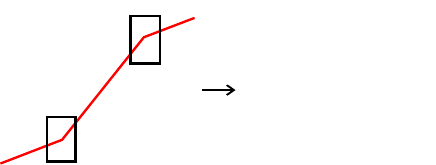}}%
    \put(0.12711997,0.02069925){\color[rgb]{0,0,0}\makebox(0,0)[lt]{\lineheight{1.25}\smash{\begin{tabular}[t]{l}$v_1$\end{tabular}}}}%
    \put(0.11603157,0.12908986){\color[rgb]{0,0,0}\makebox(0,0)[lt]{\lineheight{1.25}\smash{\begin{tabular}[t]{l}$\nu_{v_1}$\end{tabular}}}}%
    \put(0.31013196,0.36326015){\color[rgb]{0,0,0}\makebox(0,0)[lt]{\lineheight{1.25}\smash{\begin{tabular}[t]{l}$\nu_{v_2}$\end{tabular}}}}%
    \put(0.30576011,0.30929265){\color[rgb]{0,0,0}\makebox(0,0)[lt]{\lineheight{1.25}\smash{\begin{tabular}[t]{l}$v_2$\end{tabular}}}}%
    \put(0,0){\includegraphics[width=\unitlength,page=2]{perturbforsmooth.pdf}}%
  \end{picture}%
\endgroup%

    \caption{Perturbing the diagonals to be smooth.}
    \label{fig:perturbforsmooth}
\end{figure}

We now modify each $\widetilde{d}_i$ as follows: 
For each non-smooth node $v$ of $\widetilde{d}_i$, we write $\widetilde{d}_i \cap \nu_v = \sigma_1 \ast_v \sigma_2$ as above.
Choose a chart of $\nu_v$, identifying it with a rectangle $[x_0,x_1] \times [y_0,y_1] \subset \mathbb{R}^2$.
Without loss of generality suppose $0 < \slope(\sigma_1) < \slope(\sigma_2)$, so that we can write $\widetilde{d}_i \cap \nu_v$ as a graph $y = f(x)$, where $f$ is a piecewise linear function that concaves upwards.
We replace $\widetilde{d}_i \cap \nu_v$ with a graph $y = g(x)$ instead, where $g$ is a smooth function that concaves upwards, and agreeing with $f$ near $x = x_0$ and $x = x_1$.
See \Cref{fig:perturbforsmooth} right.

We then extend these modifications in a $\pi_1(M^\circ)$-equivariant way.

We now check the items in the statement of the proposition.
(1) is clear provided that the neighborhoods $\nu_v$ are chosen to be small enough.

Again, the bulk of the proof is checking (2).
Since $g$ concaves upwards, $\max\slope(\widetilde{d}_i)$ and $\min\slope(\widetilde{d}_i)$ are unchanged by the modification.
Thus \Cref{eq:largeheightignore} is preserved.

Suppose that $R_1 < R_2$ have the same color. 
Lift $d_{R_1}$ and $d_{R_2}$ to $\widetilde{d_{R_1}}$ and $\widetilde{d_{R_2}}$ in $\widetilde{\mathcal{O}^\circ}$.
Up to translation, we can suppose that $\widetilde{d_{R_1}}$ is one of the representatives we chose for \Cref{eq:largeheightignore}. We can then write $\widetilde{d_{R_2}} = g \cdot d_j$ for some $g \in \pi_1(M^\circ)$.
Then as in \Cref{subsec:perturbfortransverse}, we can assume that $|\hght(g)| \leq N$.

Let $v$ be the point of intersection between $\widetilde{d_{R_1}}$ and $\widetilde{d_{R_2}}$ prior to the modification.
If both $\widetilde{d_{R_1}}$ and $\widetilde{d_{R_2}}$ were smooth at $v$, then by the choice of $\nu_v$, the modifications on $\widetilde{d_{R_1}}$ and $\widetilde{d_{R_2}}$ were done away from $v$, so the slope inequality continues to hold.

If $\widetilde{d_{R_1}}$ was smooth at $v$ but not $\widetilde{d_{R_2}}$, then by the choice of $\nu_v$, the modifications on $\widetilde{d_{R_1}}$ were done away from $\nu_v$, so that after modification, $\widetilde{d_{R_1}}$ and $\widetilde{d_{R_2}}$ is a straight line and a convex curve within $\nu_v \cong [x_0,x_1] \times [y_0,y_1]$, with $\widetilde{d_{R_1}}$ lying above $\widetilde{d_{R_2}}$ near $x = x_0$ and the other way around near $x = x_1$. Thus they intersect once and satisfy the slope inequality.

If both $\widetilde{d_{R_1}}$ and $\widetilde{d_{R_2}}$ were not smooth at $v$, then $\widetilde{d_{R_1}}$ and $\widetilde{d_{R_2}}$ are convex curves within $\nu_v \cong [x_0,x_1] \times [y_0,y_1]$, with $\widetilde{d_{R_1}}$ lying above $\widetilde{d_{R_2}}$ near $x = x_0$ and the other way around near $x = x_1$. Thus they intersect once and satisfy the slope inequality as well.
\end{proof}

\subsection{Rotating to eliminate misalignments} \label{subsec:realigningedges}

The goal of this subsection is to remove `of the same color' in \Cref{lem:perturbforsmooth}(2). This will finally give us a diagonal system satisfying the slope criterion.

Let $\{d_R\}$ be a diagonal system as in \Cref{lem:perturbforsmooth}.
As noted in the proof of \Cref{lem:overlapfinite}, there are only finitely many $\pi_1 M^\circ$-orbits of diagonals in $\mathcal{O}^\circ$. 
Let $\widetilde{d}_1,...,\widetilde{d}_r$ be a collection of one representative in each $\pi_1 M^\circ$-orbit.
Pick a large $N$ so that \Cref{eq:largeheightignore} holds.

\begin{defn}[Misalignment] \label{defn:misalignment}
Suppose $R_1 < R_2$ are of different colors.
Then the diagonals $d_{R_1}$ and $d_{R_2}$ must intersect at a single point $v$.
Suppose $|\slope_v(d_{R_1})| \geq |\slope_v(d_{R_2})|$.
If $R_1$ is red while $R_2$ is blue, then we say that $(d_{R_1},d_{R_2},v)$ is a \textbf{upward misalignment}.
If $R_1$ is blue while $R_2$ is red, then we say that $(d_{R_1},d_{R_2},v)$ is a \textbf{downward misalignment}.
In both cases, we say that $v$ is the \textbf{projection} of the misalignment.
\end{defn}

It is helpful to have a 3-dimensional mental picture of a misalignment:
Let $\{d^\wedge_R\}$ be the collection of canonical lifts of $\{d_R\}$. 
Let $v_1 \in d^\wedge_{R_1}$ and $v_2 \in d^\wedge_{R_2}$ be the points projecting down to $v$. 
Then there is an orbit segment of $\widetilde{\phi}$ from $v_2$ to $v_1$. 
One should think of this orbit segment as the misalignment.
Furthermore, if we orient the orbit segment to go from the blue canonical lift to the red canonical lift, then the orbit segment is oriented upwards/downwards if the misalignment is upwards/downwards respectively.

\begin{lem} \label{lem:misalignmentfinite}
There are only finitely many $\pi_1 M$-orbits of misalignments.
\end{lem}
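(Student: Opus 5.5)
The plan is to follow the proof of \Cref{lem:overlapfinite} almost verbatim. A misalignment is determined by the ordered pair of diagonals $(d_{R_1},d_{R_2})$, since by definition their interiors meet in the single point $v$. So it suffices to show that, for a fixed diagonal, only finitely many diagonals of the opposite color can form a misalignment with it.

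First I lift everything to the translation orbit space $\widetilde{\mathcal{O}^\circ}$, where exact slopes make sense. Up to the $\pi_1 M^\circ$-action, I may assume the first diagonal of the pair is one of the representatives $\widetilde{d}_i$. The second can then be written as $g \cdot \widetilde{d}_j$ with $g \in \pi_1(M^\circ)$, and its slopes are those of $\widetilde{d}_j$ multiplied by $\lambda^{2\hght(g)}$ up to sign.

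Next I use the choice of $N$ in \Cref{eq:largeheightignore}. If $\hght(g) > N$, then every slope of $g\cdot\widetilde{d}_j$ is strictly larger in absolute value than every slope of $\widetilde{d}_i$. In that case the inequality $|\slope_v(d_{R_1})| \geq |\slope_v(d_{R_2})|$ defining a misalignment fails. If $\hght(g) < -N$, the same argument applies with the roles of the two diagonals reversed. Hence only $|\hght(g)| \leq N$ can contribute, whichever of the two diagonals is the lower rectangle.

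The step needing the most care is fixing the orientation of the height inequality. Since the misalignment inequality goes against the ordering $R_1<R_2$, I would treat both possibilities, $\widetilde{d}_i$ being the lower or the upper diagonal of the pair, and observe that the slope bound excludes large $|\hght(g)|$ in both. It also helps to note that the diagonals are compact arcs in $\cl(\widetilde{\mathcal{O}^\circ})$ with slopes bounded away from $0$ and $\infty$. This holds because each is smooth with constant slope near its endpoints, by \Cref{lem:perturbforsmooth}(1), so the maxima and minima in \Cref{eq:largeheightignore} are finite and positive.

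Finally, by \Cref{lem:transorbitspacelocfinite}, the collection $\bigcup_{h=-N}^{N}\pi_1(M^\circ)_h \cdot \widetilde{d}_j$ is locally finite. Since $\widetilde{d}_i$ is compact in $\cl(\widetilde{\mathcal{O}^\circ})$, only finitely many of these translates meet it. Taking the union over $i,j$ gives finitely many $\pi_1 M^\circ$-orbits of pairs, hence finitely many $\pi_1 M$-orbits of misalignments.
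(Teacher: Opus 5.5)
Your outline (normalize one diagonal to a representative $\widetilde d_i$, bound $|\hght(g)|$, then invoke \Cref{lem:transorbitspacelocfinite}) is the paper's. However, the step where you bound the height on both sides fails. Normalize the lower diagonal, $\widetilde{d_{R_1}}=\widetilde d_i$ and $\widetilde{d_{R_2}}=g\cdot\widetilde d_j$.
\begin{itemize}
\item If $\hght(g)\ge N$, then $d_{R_2}$ is steeper than $d_{R_1}$ everywhere. This contradicts $|\slope_v(d_{R_1})|\ge|\slope_v(d_{R_2})|$, so these pairs are correctly excluded.
\item If $\hght(g)\le -N$, then $d_{R_2}$ is flatter than $d_{R_1}$ everywhere. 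The misalignment inequality then holds automatically, so nothing is excluded.
\end{itemize}
Your claim that ``the same argument applies with the roles of the two diagonals reversed'' is wrong. Normalizing $d_{R_2}$ instead replaces $g$ by $g^{-1}$ and puts the translation on the lower diagonal. Applying $g^{-1}$ to both lifts rescales all slopes by the same factor, so that pair still satisfies the misalignment inequality. Your two ``cases'' are the same misalignments written two ways, and in each only one sign of the height is ruled out. So you only get $\hght(g)\le N$ (with the lower diagonal fixed). Since \Cref{lem:transorbitspacelocfinite} only controls a bounded window of heights, this yields no finiteness.

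The missing ingredient is a lower bound on $\hght(g)$ forced by $R_1<R_2$ itself, i.e.\ by $R_2$ being taller and thinner than $R_1$. It cannot come from the misalignment condition, which only says the upper diagonal is not too steep. The paper's proof imports this bound by writing ``as in \Cref{subsec:perturbfortransverse}''. There, $\hght(g)\ge -N$ comes from \Cref{eq:largeheightignore} together with the slope inequality that same-colored pairs already satisfy. For opposite-colored pairs no such inequality is available, so you must supply an argument yourself. One natural route is to compare the two diagonals inside $R_1\cap R_2$, where $d_{R_1}$ runs from one stable side to the other and $d_{R_2}$ from one unstable side to the other. Combine this with the fact that $\max|\slope|/\min|\slope|$ is uniformly bounded along each diagonal. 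Be careful, though: points of $\widetilde{\partial S}$ inside $R_1\cap R_2$ mean this rectangle need not carry a single translation chart. The rest of your proposal (compactness of $\widetilde d_i$ in $\cl(\widetilde{\mathcal{O}^\circ})$ and local finiteness on a bounded height window) is fine once this bound is in place.
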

\begin{proof}
Suppose we have a misalignment between $d_{R_1}$ and $d_{R_2}$.
Up to translation, we can suppose that $\widetilde{d_{R_1}}$ is one of the representatives we chose for \Cref{eq:largeheightignore}. We can then write $\widetilde{d_{R_2}} = g \cdot d_j$ for some $g \in \pi_1(M^\circ)$.
Then as in \Cref{subsec:perturbfortransverse}, we have $|\hght(g)| \leq N$, thus there are only finitely many possibilities for $R_2$.
\end{proof}

We are now ready to modify our diagonals.

\begin{prop} \label{prop:smoothdiaggoals}
Under the hypothesis of \Cref{prop:pldiaggoals}, there exists a diagonal system $\{d_R\}$ satisfying the slope criterion.
\end{prop}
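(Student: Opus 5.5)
Start from a diagonal system $\{d_R\}$ as in \Cref{lem:perturbforsmooth}, which already satisfies the slope criterion for pairs of the same color. It remains to fix the misalignments of \Cref{defn:misalignment}. For pairs of different colors the diagonals already meet in exactly one point. Indeed, $d_{R_1}$ and $d_{R_2}$ are monotone arcs of opposite slope signs crossing a rectangle configuration in which $R_1<R_2$ forces a single transverse crossing, and the interiors of edge rectangles of opposite colors sharing a corner are disjoint. So only the slope inequality at that point can fail. By \Cref{lem:misalignmentfinite} there are finitely many $\pi_1 M$-orbits of misalignments, and I would remove them one orbit at a time by a local modification near the projection point. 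Each modification is extended $\pi_1 M$-equivariantly.

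\textbf{Local modification.} Fix a misalignment $(d_{R_1},d_{R_2},v)$, say upward, with $R_1$ red and $R_2$ blue. Work in a chart of $\widetilde{\mathcal{O}^\circ}$ near a lift of $v$. As in the proof of \Cref{lem:perturbforsmooth}, use \eqref{eq:largeheightignore} and \Cref{lem:transorbitspacelocfinite} to choose a small rectangular neighborhood $\nu_v$. It should meet only the diagonals passing through $v$ among the relevant $\pi_1(M^\circ)_{[-N,N]}$-translates, and be disjoint from its own nontrivial translates of height in $[-N,N]$. The diagonals through $v$ form finitely many red and finitely many blue smooth arcs, each of which crosses $\nu_v$ as a graph. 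Inside $\nu_v$, I would rotate them: replace each red arc by a smooth monotone arc agreeing with the original near $\partial\nu_v$, whose slope at $v$ is pushed towards $0$. Likewise push the slopes of the blue arcs at $v$ towards $\infty$ in absolute value, or rotate only the arcs of one color if that suffices. The aim is that at $v$ every red slope has absolute value smaller than every blue slope. The same-color slope ordering at $v$ must be kept intact. I would do this by applying to all arcs of one color through $v$ a common local diffeomorphism of $\nu_v$ fixing $v$, preserving the foliations' directions up to orientation and equal to the identity near $\partial\nu_v$: for instance the time-one flow of a vector field supported in $\nu_v$ that linearizes at $v$ to a shear $(x,y)\mapsto(x,\mu y)$. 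This map acts monotonically on slopes, so it preserves same-color inequalities and the single-intersection property among same-color arcs. Applying it only to red arcs leaves the red–blue intersection at $v$ unique, because red and blue arcs are transverse graphs of opposite monotonicity inside $\nu_v$ and the modified red arcs are still monotone increasing.

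The main obstacle is doing this without creating new problems. Pairs through $v$ involving both colors and ordered the other way, with the red rectangle above the blue one, must not become misaligned by the same rotation. For these I would check that the relevant order is consistent. Given red $R_1<$ blue $R_2$ through $v$ and blue $R_3<$ red $R_4$ through $v$, the desired inequalities are $|\slope(R_1)|<|\slope(R_2)|$ and $|\slope(R_3)|<|\slope(R_4)|$. These are compatible provided the ``lying above'' relation restricted to rectangles whose diagonals pass through $v$ is a linear order with no cycle. I expect this to follow from the canonical-lift picture, since heights over $v$ form a total order. The concrete step is then to choose target slopes at $v$ realizing a linear extension of $<$ among all diagonals through $v$, with red slopes and blue slopes interleaved appropriately, and to deform each color class by the supported shear, possibly applying several nested shears with decreasing supports. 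Strictness of the inequalities and the fact that the deformations are supported in the $\nu_v$ and equivariant ensure that the slope criterion holds away from the modified region and that the number of misalignment orbits strictly drops. Iterating over the finitely many orbits yields the proposition.
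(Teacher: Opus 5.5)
There is a genuine gap in your local step at a point $v$ where several diagonals of both colors meet. Every map you propose fixes $v$ and has derivative $\mathrm{diag}(1,\mu)$ there, and so does any composition of your ``nested shears''. Applying such maps to one color class therefore multiplies all red slopes at $v$ by a single common factor $\mu$ relative to the blue slopes. That is far weaker than realizing an arbitrary ``linear extension of $<$ with red and blue slopes interleaved appropriately''.

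Here is a configuration your argument does not exclude. Take four diagonals through $v$ with $d_1<d_2<d_3<d_4$, colored red, blue, blue, red, with $|\slope_v|$ equal to $1,2,100,4$.
\begin{itemize}
\item The same-color criterion holds, since $1<4$ and $2<100$.
\item The only bad pair is the downward misalignment $(d_3,d_4,v)$.
\item Fixing it requires $4\mu>100$, while keeping the good pair $(d_1,d_2)$ requires $\mu<2$.
\end{itemize}
Nothing in \Cref{lem:perturbforsmooth} prevents many diagonals of both colors from meeting at one point, so this step cannot be carried out as stated.

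The missing idea is the paper's preliminary genericity step. First make a $C^1$-small perturbation, which preserves the same-color criterion and \eqref{eq:largeheightignore}. Use it so that projections of misalignments are pairwise distinct, non-periodic, and in distinct $\pi_1 M^\circ$-orbits. Then, if $(d_{R_1},d_{R_2},w)$ is a misalignment, no third diagonal $d_{R_3}$ through $w$ has $|\slope_w(d_{R_3})|$ between $|\slope_w(d_{R_2})|$ and $|\slope_w(d_{R_1})|$. Comparing $R_3$ with $R_1$ and $R_2$, such a $d_{R_3}$ would either violate the same-color criterion or give a second misalignment at $w$. So it suffices to rotate a single arc near $w$, for instance giving the red one slope $(1-\epsilon)$ times the blue one's. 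This fixes that pair and changes no other comparison at $w$.

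You could instead reassign slopes arc by arc. But then you must show directly that same-color arcs through $v$ still meet only at $v$, which is exactly what your common-diffeomorphism device was meant to avoid.

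Three smaller points:
\begin{itemize}
\item A diffeomorphism that is the identity near $\partial\nu_v$ and preserves both foliations is the identity. So your shear acts by scaling on slopes only at $v$, and positivity of the red slopes elsewhere in $\nu_v$ needs its own estimate.
\item Large slope changes spoil the height bound used to choose $\nu_v$. This is why the paper keeps slopes within a factor $\lambda^{\pm N}$ and uses heights in $[-3N,3N]$.
\item Justifying the order through heights of canonical lifts is circular, since those heights are determined by the slopes you are adjusting. You only need that $<$ is a partial order, which follows from transitivity of taller and wider.
\end{itemize}
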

\begin{proof}
We first argue that up to a $C^1$-small perturbation, we can assume that all misalignments have distinct projections:
For each $i=1,...,r$, we consider the set of points on $\widetilde{d}_i$ that is the projection of a misalignment involving $\widetilde{d}_i$. By a $C^1$-small perturbation of $\widetilde{d}_i$, we can make these points all disjoint. See \Cref{fig:rotateformisalignment} top.
Then up to a further perturbation, we can make these points all non-periodic and occupy distinct $\pi_1 M^\circ$-orbits.
Once this is arranged, all misalignments have distinct projections.

\begin{figure}
    \centering
    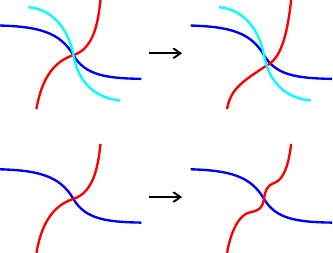
    \caption{Modifications in the proof of \Cref{prop:smoothdiaggoals}.}
    \label{fig:rotateformisalignment}
\end{figure}

Note that this also implies that if $(d_{R_1},d_{R_2},w)$ is a misalignment, then there does not exist a third diagonal $d_{R_3}$ such that $|\slope_w(d_{R_1})| \geq |\slope_w(d_{R_3})| \geq |\slope_w(d_{R_2})|$, for otherwise either $(d_{R_1},d_{R_3},w)$ or $(d_{R_3},d_{R_2},w)$ is a misalignment with the same projection.

By doing the perturbation $C^1$-small enough, we preserve \Cref{eq:largeheightignore}.
Now let $\ell_1,...,\ell_s$ be a collection of one representative in each $\pi_1 M^\circ$-orbit of misalignments, so that each $\ell_j$ involves some $\widetilde{d}_i$, and let $w_1,...,w_s$ be their projections.

By \Cref{lem:transorbitspacelocfinite}, the set of $\pi_1(M^\circ)_{[-3N,3N]}$-orbits of the points $w_j$, and the set of $\pi_1(M^\circ)_{[-3N,3N]}$-orbits of diagonals is locally finite.
Thus we can choose neighborhoods $\nu_{w_j}$ of $w_j$ so that 
\begin{itemize}
    \item $\nu_{w_j}$ and $g \cdot \nu_{w_i}$ are disjoint for every $g \in \pi_1(M^\circ)_{[-3N,3N]}$ and $i,j = 1,...,s$, unless $g=1$ and $i=j$, and
    \item $\nu_{w_j}$ and $g \cdot \widetilde{d}_i$ are disjoint for every $g \in \pi_1(M^\circ)_{[-3N,3N]}$ and $i,j = 1,...,s$, unless $g \cdot \widetilde{d}_i$ passes through $w_j$.
\end{itemize}

We now explain how to modify the diagonals within the neighborhoods $\nu_{w_j}$. Without loss of generality suppose $w_j$ is the projection of an upward misalignment $(\widetilde{d}_1,\widetilde{d}',w_j)$.
Then we modify $\widetilde{d}_1$ so that its slope at $w_j$ is $(1-\epsilon)\slope_{w_i}(\widetilde{d}')$, and such that its slope in $\nu_{w_j}$ stays within a factor of $[-\lambda^{-N},\lambda^N]$ of its original slope at $w_j$.
See \Cref{fig:rotateformisalignment} bottom.

We then extend these modifications in a $\pi_1(M^\circ)$-equivariant way.

By the choice of $\nu_{w_j}$, these modifications eliminate the misalignments at $w_j$ while not creating any new misalignments.
Thus the slope criterion is satisfied. 
\end{proof}

\subsection{Steady position} \label{subsec:steadypositionproof}

\Cref{thm:steadyposition} now quickly follows.

\begin{proof}[Proof of \Cref{thm:steadyposition}]
By \Cref{prop:smoothdiaggoals}, we have a diagonal system satisfying the slope criterion. Hence by \Cref{prop:diagtoveertri}, we can put $\Delta$ in Legendrian position. By \Cref{prop:legendriansteady}, this implies that $\Delta$ can be put in steady position.
\end{proof}

\section{Filling in the bicontact form} \label{sec:fillinginbicontactform}

In this section, we finish the proof of \Cref{thm:legendrianposition}. The key construction is filling the bicontact form $(\alpha^S_+,\alpha^S_-)$ from \Cref{eq:surfacebicontact} over the boundary orbits $\partial S$.

\subsection{Filling models} \label{subsec:fillingmodels}

Fix a Birkhoff section $S$ as in \Cref{thm:birkhoffsectionexist} and a diagonal system $\{d_R\}$ as in \Cref{prop:smoothdiaggoals}.
Applying \Cref{prop:diagtoveertri} as in the proof of \Cref{thm:steadyposition}, we can put $\Delta$ in Legendrian position with respect to $\phi$ and the bicontact form $(\alpha^S_+,\alpha^S_-)$ from \Cref{eq:surfacebicontact}, and such that the projections to $\mathcal{O}$ of the edges of $\widetilde{\Delta}$ are $d_R$.
In particular, since $d_R$ have interiors in $\mathcal{O}^\circ$, the edges of $\Delta$ are disjoint from the boundary orbits $\partial S$.
In other words, $\partial S$ intersects $\Delta$ by passing through the interior of faces.

Let $\gamma$ be a boundary orbit of $S$. Let $\nu_\gamma$ be a tubular neighborhood of $\gamma$, and let $m_\gamma \in \pi_1(\partial \nu_\gamma)$ be the (oriented) meridian.
Since we assumed that $\gamma$ is orientation-preserving, the local stable leaf of $\gamma$ intersects $\nu_\gamma$ in two parallel curves, each of slope $d_\gamma \in \pi_1(\partial \nu_\gamma)$ with intersection number one with $m_\gamma$. 
(In the literature $d_\gamma$ is usually referred to as the \textbf{degeneracy slope} at $\gamma$.)
We can then write the slope on $\partial \nu_\gamma$ induced by $S$ as $p_\gamma \mu_\gamma + q_\gamma d_\gamma$, where $p_\gamma > 0$ and $q_\gamma \neq 0$. We refer to $\frac{p_\gamma}{q_\gamma}$ as the \textbf{slope} of $S$ at $\gamma$.

In the following construction, we will specify a choice of tubular neighborhood $\nu_\gamma$ for each boundary orbit $\gamma$. 
We will also construct a `shell' around $\gamma$, i.e. a collar neighborhood of $\partial \nu_\gamma$, and endow specific coordinates on it.

\begin{constr}[Shell around boundary orbits] \label{constr:shell}
The foliations $\ell^{s/u}$ have $4p_\gamma$ quadrants at the puncture $\gamma^\circ$, which we label as $\mathfrak{q}_1,...,\mathfrak{q}_{4p_\gamma}$ in a counterclockwise way starting from an unstable half-leaf.
Recall that the measures on $\overline{\ell^{s/u}}$ determine closed 1-forms $ds$ and $du$ on each quadrant $\mathfrak{q}_k$.
We can locally integrate these closed 1-forms into functions $s$ and $u$ such that $(s,u)=(0,0)$ at $\gamma^\circ$.
Set 
\begin{align*}
x &= \lambda^t s - \lambda^{-t} u \\
y &= \lambda^t s + \lambda^{-t} u.
\end{align*}
Then for each $j \in \mathbb{Z}/4$, the region $\bigcup_k \{(t,x,y) \in \mathbb{R} \times \mathfrak{q}_{j+4k} \mid x \in [-\epsilon,\epsilon],y \in [-\epsilon,\epsilon]\} \subset \mathbb{R} \times S^\circ$ descends to a quadrant $Q_j$ of $\gamma$ in $M^\circ$.
Taking the union over $Q_j$ and $\gamma$ itself, we get a tubular neighborhood $\nu_\gamma$ of $\gamma$ in $M$.

Note that
\begin{align*}
Q_0 &\cong \{(t,x,y) \mid \mathbb{R}/(p_\gamma \mathbb{Z}) \times \mathbb{R}^2 \backslash \{(0,0)\} \mid x \in [0,\epsilon], -x \leq y \leq x\} \\
Q_1 &\cong \{(t,x,y) \mid \mathbb{R}/(p_\gamma \mathbb{Z}) \times \mathbb{R}^2 \backslash \{(0,0)\} \mid y \in [0,\epsilon], -y \leq x \leq y\} \\
Q_2 &\cong \{(t,x,y) \mid \mathbb{R}/(p_\gamma \mathbb{Z}) \times \mathbb{R}^2 \backslash \{(0,0)\} \mid x \in [-\epsilon,0],\epsilon], x \leq y \leq -y\} \\
Q_3 &\cong \{(t,x,y) \mid \mathbb{R}/(p_\gamma \mathbb{Z}) \times \mathbb{R}^2 \backslash \{(0,0)\} \mid y \in [-\epsilon,0], y \leq x \leq -y\}
\end{align*}
We set
\begin{align*}
Q_0^\square &\cong \{(t,x,y) \mid \mathbb{R}/(p_\gamma \mathbb{Z}) \times \mathbb{R}^2 \backslash \{(0,0)\} \mid x \in [\frac{\epsilon}{2},\epsilon], -x \leq y \leq x\} \\
Q_1^\square &\cong \{(t,x,y) \mid \mathbb{R}/(p_\gamma \mathbb{Z}) \times \mathbb{R}^2 \backslash \{(0,0)\} \mid y \in [\frac{\epsilon}{2},\epsilon], -y \leq x \leq y\} \\
Q_2^\square &\cong \{(t,x,y) \mid \mathbb{R}/(p_\gamma \mathbb{Z}) \times \mathbb{R}^2 \backslash \{(0,0)\} \mid x \in [-\epsilon,-\frac{\epsilon}{2}], x \leq y \leq -y\} \\
Q_3^\square &\cong \{(t,x,y) \mid \mathbb{R}/(p_\gamma \mathbb{Z}) \times \mathbb{R}^2 \backslash \{(0,0)\} \mid y \in [-\epsilon,-\frac{\epsilon}{2}], y \leq x \leq -y\}
\end{align*}
We refer to the union $\bigcup Q_j^\square$ as the \textbf{shell} around $\gamma$.

We can uniformize the coordinates on $Q_j^\square$ by mapping them into the set 
$$\{(x,y,z) \in \mathbb{R}^2 \times \mathbb{R}/(p_\gamma \mathbb{Z}) \mid (x,y) \in [-\epsilon,\epsilon]^2 \backslash (-\frac{\epsilon}{2},\frac{\epsilon}{2})^2 \}$$
as follows:
\begin{align} \label{eq:shellcoordinpos}
\begin{split}
(t,x,y) &\mapsto (x,y,t+\rho(x)) \text{ on $Q_1$} \\
(t,x,y) &\mapsto (x,y,t) \text{ on $Q_2,Q_3,Q_4$}
\end{split}
\end{align}
where
$$\rho(x) = \begin{cases}
0 & \text{for $x \leq -\frac{\epsilon}{2}$} \\
-q_\gamma & \text{for $x \geq \frac{\epsilon}{2}$}
\end{cases}$$
See \Cref{fig:shell} left.

\begin{figure}
    \centering
    \fontsize{8pt}{8pt}
    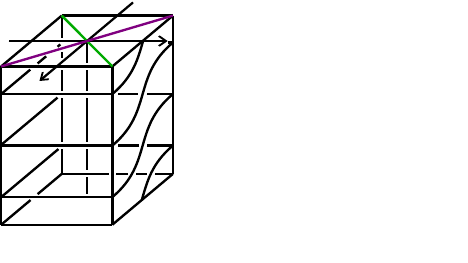
    \caption{The tubular neighborhoods constructed in \Cref{constr:shell}.}
    \label{fig:shell}
\end{figure}

Alternatively, we can map
\begin{align} \label{eq:shellcoordinneg}
\begin{split}
(t,x,y) &\mapsto (x,y,t+\rho(y)) \text{ on $Q_0$} \\
(t,x,y) &\mapsto (x,y,t) \text{ on $Q_1,Q_2,Q_3$}
\end{split}
\end{align}
where
$$\rho(x) = \begin{cases}
0 & \text{for $y \leq -\frac{\epsilon}{2}$} \\
q_\gamma & \text{for $y \geq \frac{\epsilon}{2}$}
\end{cases}$$
See \Cref{fig:shell} right.

Under either of these coordinates, we can isotope the faces of $\Delta$ along orbits so that they are surfaces of the form $\{z=z_0\}$ within the shells $\bigcup Q^\square_j$.

Now recall the bicontact form $(\alpha^S_+,\alpha^S_-)$ constructed in \Cref{subsec:canonicallift}.
We can compute that
\begin{align*}
\alpha^S_+ &= \lambda^t ds + \lambda^{-t} du = dy - (\log \lambda) x dt \\
\alpha^S_- &= \lambda^t ds - \lambda^{-t} du = dx - (\log \lambda) y dt
\end{align*}
on each $Q_j$.

Thus under \Cref{eq:shellcoordinpos}, we have
\begin{align*}
\alpha^S_+ &= -(\log \lambda)x\rho'(x)\eta'(y)dx + dy - (\log \lambda) x dz \\
\alpha^S_- &= (1-(\log \lambda)\rho'(x)\eta(y)) dx - (\log \lambda) y dz
\end{align*}
where
$$\eta(y) = 
\begin{cases}
0 & \text{for $y \leq - \frac{\epsilon}{2}$} \\
y & \text{for $y \geq \frac{\epsilon}{2}$}
\end{cases}$$

Similarly, under \Cref{eq:shellcoordinneg}, we have
\begin{align*}
\alpha^S_+ &= (1-(\log \lambda)\eta(x)\rho'(y)) dy - (\log \lambda) x dz \\
\alpha^S_- &= dx - (\log \lambda)\eta'(x)\rho'(y)ydy - (\log \lambda) y dz
\end{align*}
where
$$\eta(x) = 
\begin{cases}
0 & \text{for $x \leq - \frac{\epsilon}{2}$} \\
x & \text{for $x \geq \frac{\epsilon}{2}$}
\end{cases}$$
\end{constr}

Next, we will construct, for each $\frac{p}{q} \in \mathbb{Q} \backslash \{0\}$, a strongly adapted bicontact form $(\alpha^{\frac{p}{q}}_+,\alpha^{\frac{p}{q}}_-)$ on $\nu^{\frac{p}{q}} = [-\epsilon,\epsilon] \times [-\epsilon,\epsilon] \times \mathbb{R}/p\mathbb{Z}$ such that $(\alpha^{\frac{p}{q}}_+,\alpha^{\frac{p}{q}}_-) = (\alpha^S_+,\alpha^S_-)$ on the shell constructed in \Cref{constr:shell}.
These will be used to fill the bicontact form $(\alpha^S_+,\alpha^S_-)$ into the boundary orbits.

Note that the constructions for the cases when $\frac{p}{q} > 0$ and $\frac{p}{q} < 0$ are different. 
This asymmetry is due to the asymmetry between $\alpha_+$ and $\alpha_-$ in the definition of strongly adapted.

\begin{constr}[Filling bicontact form when $\frac{p}{q} > 0$] \label{constr:fillbicontactformpos}
Choose a smooth decreasing function $\rho:\mathbb{R} \to \mathbb{R}$ such that 
$$\rho(x) = 
\begin{cases}
0 & \text{for $x \leq - \frac{\epsilon}{2}$} \\
-q & \text{for $x \geq \frac{\epsilon}{2}$}
\end{cases}$$
and such that $\rho'$ is an even function.
Let 
$$h(x) = \int_{-\epsilon}^x -(\log \lambda) u\rho'(u) du.$$
Note that $h(x) = 0$ for $x \leq -\frac{\epsilon}{2}$ and $x \geq \frac{\epsilon}{2}$.

Choose a smooth increasing convex function $\eta:\mathbb{R} \to \mathbb{R}$ such that
$$\eta(y) = 
\begin{cases}
0 & \text{for $y \leq - \frac{\epsilon}{2}$} \\
y & \text{for $y \geq \frac{\epsilon}{2}$}
\end{cases}$$

We record the following positivity properties
\begin{align} \label{eq:fillbicontactformpossign}
\begin{split}
\rho'(x) \leq 0 \\
h(x) \geq 0 \\
\eta(y) \geq 0 \\
\eta(y)-\eta'(y)y \geq 0 \\
\eta''(y) \geq 0.
\end{split}
\end{align}

We now define
\begin{align*}
\alpha^{\frac{p}{q}}_+ &= h'(x)\eta'(y) dx + (1+h(x)\eta''(y)) dy - (\log \lambda) x dz \\
&= dy - (\log \lambda) x dz + d(h(x)\eta'(y))
\end{align*}
We compute 
$$d\alpha^{\frac{p}{q}}_+ = -(\log \lambda) dxdz$$
thus 
$$\alpha^{\frac{p}{q}}_+ \wedge d\alpha^{\frac{p}{q}}_+ = (\log \lambda)(1+h(x)\eta''(y)) dxdydz > 0$$
under the orientation $dxdydz > 0$, by \Cref{eq:fillbicontactformpossign}. Thus $\alpha_+$ is a positive contact form with Reeb vector field $R_+ = \frac{\partial}{\partial y}$.

Meanwhile, we define
$$\alpha^{\frac{p}{q}}_- = (1-(\log \lambda)\rho'(x)\eta(y)) dx - (\log \lambda) y dz.$$
We compute 
$$d\alpha^{\frac{p}{q}}_- = (\log \lambda)\rho'(x)\eta'(y) dxdy - (\log \lambda) dydz$$
thus
$$\alpha^{\frac{p}{q}}_- \wedge d\alpha^{\frac{p}{q}}_- = (\log \lambda)^2(-1+\rho'(x)(\eta(y)-\eta'(y)y)) dxdydz < 0$$
by \Cref{eq:fillbicontactformpossign}. Thus $\alpha^{\frac{p}{q}}_-$ is a negative contact form.

We compute that 
\begin{align*}
\alpha^{\frac{p}{q}}_- \wedge \alpha^{\frac{p}{q}}_+ &= (1+h(x)\eta''(y))(\log \lambda)y dydz \\
&~+ (-h'(x)\eta'(y)(\log \lambda)y+(1-(\log \lambda)\rho'(x)\eta(y))(\log \lambda)x) dzdx \\
&~+(1-(\log \lambda)\rho'(x)\eta(y))(1+h(x)\eta''(y)) dxdy \\
&= (1+h(x)\eta''(y))(\log \lambda)y dydz \\
&~+ (1+\rho'(x)(\eta(y)-\eta'(y)y))(\log \lambda)x dzdx \\
&~+(1-(\log \lambda)\rho'(x)\eta(y))(1+h(x)\eta''(y)) dxdy.
\end{align*}
By \Cref{eq:fillbicontactformpossign}, the last coefficient is positive, thus $\alpha^{\frac{p}{q}}_- \wedge \alpha^{\frac{p}{q}}_+ \neq 0$.
That is, $(\alpha^{\frac{p}{q}}_+,\alpha^{\frac{p}{q}}_-)$ is a bicontact form, and the line field $\ker \alpha^{\frac{p}{q}}_+ \cap \ker \alpha^{\frac{p}{q}}_-$ is transverse to the meridional discs $\{z=z_0\}$.

Since $\alpha^{\frac{p}{q}}_-(R_+) = 0$, $(\alpha^{\frac{p}{q}}_+,\alpha^{\frac{p}{q}}_-)$ is a strongly adapted bicontact form.
Also note that $(\alpha^S_+,\alpha^S_-) = (\alpha^{\frac{p}{q}}_+,\alpha^{\frac{p}{q}}_-)$ on the shell constructed in \Cref{constr:shell}.
\end{constr}

\begin{constr}[Filling bicontact form when $\frac{p}{q} < 0$] \label{constr:fillbicontactformneg}
Choose a smooth decreasing function $\rho:\mathbb{R} \to \mathbb{R}$ such that 
$$\rho(y) = 
\begin{cases}
0 & \text{for $y \leq - \frac{\epsilon}{2}$} \\
q & \text{for $y \geq \frac{\epsilon}{2}$}
\end{cases}$$

Choose a smooth increasing convex function $\eta:\mathbb{R} \to \mathbb{R}$ such that
$$\eta(x) = 
\begin{cases}
0 & \text{for $x \leq - \frac{\epsilon}{2}$} \\
x & \text{for $x \geq \frac{\epsilon}{2}$}
\end{cases}$$

We record the following positivity properties
\begin{align} \label{eq:fillbicontactformnegsign}
\begin{split}
\rho'(y) \leq 0 \\
\eta(x) \geq 0 \\
\eta(x)-\eta'(x)x \geq 0 \\
\eta''(x) \geq 0.
\end{split}
\end{align}

We now define
\begin{align*}
\alpha^{\frac{p}{q}}_+ &= (1-(\log \lambda)\eta(x)\rho'(y)) dy - (\log \lambda) x dz
\end{align*}
We compute 
$$d\alpha^{\frac{p}{q}}_+ = -(\log \lambda)\eta'(x)\rho'(y) dx dy -(\log \lambda) dxdz$$
thus 
$$\alpha^{\frac{p}{q}}_+ \wedge d\alpha^{\frac{p}{q}}_+ = (\log \lambda)^2 (1+\rho'(y)(\eta'(x)x-\eta(x))) > 0$$
by \Cref{eq:fillbicontactformnegsign}. Thus $\alpha_+$ is a positive contact form with Reeb vector field $R_+ = \frac{\partial}{\partial y} - \eta'(x)\rho'(y) \frac{\partial}{\partial t}$.

Meanwhile, we define
$$\alpha^{\frac{p}{q}}_- = dx - (\log \lambda) \eta'(x)\rho'(y)y dy - (\log \lambda) y dz.$$
We compute 
$$d\alpha^{\frac{p}{q}}_- = - (\log \lambda) \eta''(x)\rho'(y)y dxdy - (\log \lambda) dydz$$
thus
$$\alpha^{\frac{p}{q}}_- \wedge d\alpha^{\frac{p}{q}}_- = -(\log \lambda)(1-(\log \lambda) \eta''(x)\rho'(y)y^2) dxdydz < 0$$
by \Cref{eq:fillbicontactformnegsign}. Thus $\alpha^{\frac{p}{q}}_-$ is a negative contact form.

We compute that 
\begin{align*}
\alpha^{\frac{p}{q}}_- \wedge \alpha^{\frac{p}{q}}_+ &= (\log \lambda)y(1-(\log \lambda)(\eta(x)-\eta'(x)x)\rho'(y)) dydz \\
&~+ (\log \lambda)x dzdx \\
&~+ (1-(\log \lambda)\eta(x)\rho'(y)) dxdy 
\end{align*}
The third coefficient is positive by \Cref{eq:fillbicontactformnegsign}, thus $\alpha^{\frac{p}{q}}_- \wedge \alpha^{\frac{p}{q}}_+ \neq 0$.
That is, $(\alpha^{\frac{p}{q}}_+,\alpha^{\frac{p}{q}}_-)$ is a bicontact form, and the line field $\ker \alpha^{\frac{p}{q}}_+ \cap \ker \alpha^{\frac{p}{q}}_-$ is transverse to the meridional discs $\{z=z_0\}$.

Since $\alpha^{\frac{p}{q}}_-(R_+) = 0$, $(\alpha^{\frac{p}{q}}_+,\alpha^{\frac{p}{q}}_-)$ is a strongly adapted bicontact form.
Also note that $(\alpha^S_+,\alpha^S_-) = (\alpha^{\frac{p}{q}}_+,\alpha^{\frac{p}{q}}_-)$ on the shell constructed in \Cref{constr:shell}.
\end{constr}

\subsection{Legendrian position} \label{subsec:legpositionproof}

We are now ready to prove \Cref{thm:legendrianposition}.

\begin{proof}[Proof of \Cref{thm:legendrianposition}]
For each boundary orbit $\gamma$, we first isotope the faces of $\Delta$ along orbits so that they are surfaces of the form $\{z=z_0\}$ within the shells.
Next, we cut out $\bigcup_\gamma \nu_\gamma$ from $M$ and glue in $\bigcup_\gamma \nu^{\frac{p_\gamma}{q_\gamma}}$ in order to get a strongly adapted bicontact form $(\alpha_+,\alpha_-)$ defined on a 3-manifold homeomorphic to $M$.
In particular $(\alpha_+,\alpha_-)$ supports an Anosov flow $\psi$.

Meanwhile, we can extend $\Delta \cap (M \backslash \bigcup_\gamma \nu_\gamma)$ into a copy of $\Delta$ placed in transverse position with respect to $\psi$ on the reglued manifold by extending the faces by meridional discs $\{z=z_0\}$.
Applying the uniqueness part of \Cref{thm:transpositionexistunique}, this implies that $\phi$ and $\psi$ are orbit equivalent through a map isotopic to identity.

Moreover, since the blue edges are Legendrian with respect to $\xi_+ = \ker \alpha_+$ and the red edges are Legendrian with respect to $\xi_- = \ker \alpha_-$, $\Delta$ is in fact placed in Legendrian position with respect to $\psi$.
We pull back $(\alpha_+,\alpha_-)$ using the orbit equivalence to get a bicontact structure supporting $\phi$.
Then the pullback of $\Delta$ is in Legendrian position with respect to $\phi$ and the pulled back bicontact structure.
\end{proof}

\bibliographystyle{alphaurl}

\bibliography{bib.bib}

\end{document}